\documentclass[10pt]{amsart}
\title[Quantum Periods for Fano Manifolds]{Quantum Periods for
  3-Dimensional Fano Manifolds}

\author[Coates]{Tom Coates}
\address{Department of Mathematics\\
Imperial College London\\
180 Queen's Gate\\
London SW7 2AZ
\\UK}
\email{t.coates@imperial.ac.uk}

\author[Corti]{Alessio Corti}
\address{Department of Mathematics\\
Imperial College London\\
180 Queen's Gate\\
London SW7 2AZ\\
UK}
\email{a.corti@imperial.ac.uk}

\author[Galkin]{Sergey Galkin}
\address{IPMU, MIPT, IUM, Laboratory of Algebraic Geometry, and Universit\"at Wien}
\email{Sergey.Galkin@phystech.edu}

\author[Kasprzyk]{Alexander Kasprzyk}
\address{Department of Mathematics\\
Imperial College London\\
180 Queen's Gate\\
London SW7 2AZ\\
UK}
\email{a.m.kasprzyk@imperial.ac.uk}

\usepackage{amsrefs}
\usepackage{graphicx}
\usepackage{color}
\usepackage{amsfonts}
\usepackage{amssymb}
\usepackage{amscd}
\usepackage{array}
\usepackage{subfigure}
\usepackage{xypic}
\usepackage[margin=2.5cm]{geometry}
\usepackage{booktabs}
\usepackage{hyperref}
\usepackage{longtable}
\usepackage{pdflscape}
\usepackage{colortbl}

\allowdisplaybreaks[3]
\raggedbottom

\DeclareMathOperator{\Amp}{Nef}
\newcommand{\be}{{\boldsymbol{e}}}

\newcommand{\CC}{\mathbb{C}}
\newcommand{\cC}{\mathcal{C}}
\newcommand{\Cstar}{\CC^\times}
\DeclareMathOperator{\ev}{ev}
\DeclareMathOperator{\Ext}{Ext}
\newcommand{\FF}{\mathbb{F}}
\DeclareMathOperator{\Fl}{Fl}
\newcommand{\hG}{\widehat{G}}
\DeclareMathOperator{\GL}{GL}
\DeclareMathOperator{\Gr}{Gr}
\DeclareMathOperator{\Hom}{Hom}
\DeclareMathOperator{\id}{id}
\DeclareMathOperator{\Ker}{Ker}
\newcommand{\LL}{\mathbb{L}}
\newcommand{\cO}{\mathcal{O}}
\DeclareMathOperator{\OGr}{OGr}
\newcommand{\PP}{\mathbb{P}}
\DeclareMathOperator{\Pic}{Pic}
\newcommand{\QQ}{\mathbb{Q}}
\newcommand{\RR}{\mathbb{R}}
\DeclareMathOperator{\rk}{rk}
\DeclareMathOperator{\Spec}{Spec}
\DeclareMathOperator{\SpGr}{SpGr}
\newcommand{\TT}{\mathbb{T}}
\newcommand{\tw}{\text{\rm tw}}
\newcommand{\cV}{\mathcal{V}}
\newcommand{\Vstd}{V_{\text{\rm std}}}
\newcommand{\Vtriv}{V_{\text{\rm triv}}}
\newcommand{\vir}{\text{\rm vir}}
\newcommand{\vol}{\text{\rm vol}}
\newcommand{\ZZ}{\mathbb{Z}}

\newcommand{\all}{{1 \cdots r}}
\newcommand{\GIT}{/\!\!/}

\newcommand{\MM}[2]{\mathrm{MM}_{#1\text{--}#2}} 

\theoremstyle{plain}
\newtheorem{theorem}{Theorem}[section]
\newtheorem{proposition}[theorem]{Proposition}
\newtheorem{lemma}[theorem]{Lemma}
\newtheorem*{lem*}{Lemma}

\newtheorem{corollary}[theorem]{Corollary}

\theoremstyle{definition}

\newtheorem{remark}[theorem]{Remark}
\newtheorem*{remark*}{Remark}
\newtheorem{example}[theorem]{Example}
\newtheorem{assumptions}[theorem]{Assumptions}

\begin{document}

\begin{abstract}
  The quantum period of a variety $X$ is a generating function for
  certain Gromov--Witten invariants of $X$ which plays an important
  role in mirror symmetry. In this paper we compute the quantum
  periods of all 3-dimensional Fano manifolds.  In particular we show
  that 3-dimensional Fano manifolds with very ample anticanonical
  bundle have mirrors given by a collection of Laurent polynomials
  called Minkowski polynomials.  This was conjectured in joint work
  with Golyshev.  It suggests a new approach to the classification of
  Fano manifolds: by proving an appropriate mirror theorem and then
  classifying Fano mirrors.

  Our methods are likely to be of independent interest.  We rework the
  Mori--Mukai classification of \mbox{3-dimensional} Fano manifolds,
  showing that each of them can be expressed as the zero locus of a
  section of a homogeneous vector bundle over a GIT quotient $V/G$,
  where $G$ is a product of groups of the form $\GL_n(\CC)$ and $V$ is
  a representation of $G$.  When $G=\GL_1(\CC)^r$, this expresses the
  Fano 3-fold as a toric complete intersection; in the remaining
  cases, it expresses the Fano 3-fold as a tautological subvariety of
  a Grassmannian, partial flag manifold, or projective bundle thereon.
  We then compute the quantum periods using the Quantum Lefschetz
  Hyperplane Theorem of Coates--Givental and the Abelian/non-Abelian
  correspondence of Bertram--Ciocan-Fontanine--Kim--Sabbah.
\end{abstract}

\maketitle

\renewcommand\thesection{\Alph{section}}

\section{Introduction}
The quantum period of a Fano manifold $X$ is a generating function for
Gromov--Witten invariants. It is a deformation invariant of $X$ that 
carries detailed information about quantum cohomology.  In this paper
we give closed formulas for the quantum periods for all 3-dimensional
Fano manifolds.  As a consequence we prove a conjecture, made jointly
with Golyshev, that identifies Laurent polynomials which correspond
under mirror symmetry to each of the 98~deformation families of
3-dimensional Fano manifolds with very ample anticanonical bundle.  We
also exhibit Laurent polynomial mirrors for the remaining
7~deformation families.  Our arguments rely on the classification of
\mbox{3-dimensional} Fano manifolds, due to Iskovskikh and
Mori--Mukai: this is a difficult theorem whose proof, even today,
requires delicate arguments in explicit birational geometry.  On the
other hand our mirror Laurent polynomials have a simple combinatorial
definition and classification.  Given a suitable mirror theorem this
classification would give a straightforward, combinatorial, and
uniform alternative proof of the classification of 3-dimensional Fano
manifolds.  The general outlines of such a mirror theorem are beginning to emerge~\citelist{\cite{Kontsevich:ICM}\cite{SYZ}\cite{Auroux:complement}\cite{Auroux:special}\cite{Katzarkov--Kontsevich--Pantev}}, as are some promising approaches to proving it~\citelist{\cite{Kontsevich--Soibelman:1}\cite{Kontsevich--Soibelman:2}\cite{Gross--Siebert:affine}\cite{Gross}\cite{Gross--Siebert:1}\cite{Gross--Siebert:2}}.

Let $X$ be a Fano manifold, that is, a smooth projective variety such
that the anticanonical bundle ${-K_X}$ is ample.  The quantum period
$G_X(t)$ of $X$, defined in \S\ref{sec:quantum_period}~below, is a
generating function for certain genus-zero Gromov--Witten invariants
of $X$.  It satisfies a differential equation:
\begin{equation}
  \label{eq:QDE}
  \left(\sum_{k=0}^r t^k p_k(D) \right)G_X = 0 
\end{equation}
where $D = t {d \over dt}$ and the $p_k$ are polynomials, called the
quantum differential equation for $X$.  The quantum differential
equation carries information about the quantum cohomology of $X$: the
local system of solutions to the quantum differential equation is an
irreducible piece of the restriction of the Dubrovin connection (in
the Frobenius manifold given by the quantum cohomology of $X$) to the
line in $H^\bullet(X)$ spanned by $c_1(X)$.  In \S\S1--105 below we
give closed formulas for the quantum periods of the 105 deformation
families of 3-dimensional Fano manifolds.

In joint work with Golyshev \cite{CCGGK} we introduced \emph{Minkowski
  polynomials}: these are a collection of Laurent polynomials $f$ in
three variables such that the Newton polytope $\Delta$ of $f$ is a
reflexive polytope, defined\footnote{Some of these Laurent polynomials
  correspond under mirror symmetry to 3-dimensional Fano manifolds
  which admit a small toric degeneration \cite{Batyrev}.  These
  Laurent polynomials were considered earlier by Galkin
  \citelist{\cite{Galkin:preprint}\cite{Galkin:thesis}}.}  in terms of
Minkowski decompositions of the facets of $\Delta$.  Given a Laurent
polynomial $f$, one can define the period of $f$:
\[
\pi_f(t) = \Bigl(\frac{1}{2\pi \tt{i}}\Bigr)^n \int_{|x_1|=\cdots =|x_n|=1}
\frac1{1-tf(x_1,\ldots,x_n)}
\frac{dx_1}{x_1} \cdots   \frac{dx_n}{x_n}
\]
and this satisfies a differential equation called the Picard--Fuchs
equation:
\begin{equation}
  \label{eq:PF}
  \left(\sum_{k=0}^r t^k P_k(D) \right)\pi_f = 0 
\end{equation}
where the $P_k$ are polynomials.  There are 3747 Minkowski polynomials
(up to monomial change of variables) but
Akhtar--Coates--Galkin--Kasprzyk showed that these Laurent polynomials
together generate only $165$ periods \cite{ACGK}.  That is, Minkowski
polynomials fall into 165 equivalence classes where $f$ and $g$ are
equivalent if and only if they have the same period.  The quantum
differential equation \eqref{eq:QDE} of a K\"ahler manifold has the
property that every complex root of the polynomial $p_0$ is an
integer---this reflects the fact that the quantum cohomology algebra
of $X$ carries an integer grading---and we say that a Laurent
polynomial $f$ is of \emph{manifold type} if the Picard--Fuchs
operator \eqref{eq:PF} has the property that every complex root of
$P_0$ is an integer.  Coates--Galkin--Kasprzyk have computed the
Picard--Fuchs operators for the Minkowski polynomials numerically
\cite{CGK}.  Their results, which are computer-assisted rigorous and
which pass a number of stringent checks, show that exactly 98 of the
165 Minkowski periods are of manifold type.

We conjectured, jointly with Golyshev, that the 98 Minkowski periods
of manifold type\footnote{We expect that the remaining Minkowski
  periods correspond to smooth $3$-dimensional Fano orbifolds.}
correspond under mirror symmetry to the 98 deformation families of
3-dimensional Fano manifolds with very ample anticanonical bundle
\cite{CCGGK}.  That is, there is a one-to-one correspondence between
deformation families of 3-dimensional Fano manifolds $X$ with very
ample anticanonical bundle and equivalence classes of Minkowski
polynomials $f$, such that\footnote{This is a very weak notion of
  mirror symmetry.  It is natural to conjecture much more: that the
  Minkowski polynomials $f$ give mirrors to the Fano manifolds $X$ in
  the sense of Kontsevich's Homological Mirror Symmetry program.}  the
Fourier--Laplace transform $\hG_X$ of the quantum period of $X$
coincides with the period $\pi_f$ of $f$.  Assuming the numerical
calculations of Minkowski periods in \cite{CGK}, our results here
prove this conjecture.

\subsection*{The Classification of Fano $3$-Folds}

There are exactly $105$ deformation families of Fano
\mbox{3-folds}. Of these, $17$ parameterise \mbox{3-folds} $X$ with
Picard rank $\rho (X) = b_2 (X)=1$. All but one of these $17$ families
were known to Fano himself. The first modern rank-$1$ classification,
in the style of Fano's double projection from a line, is due to
Iskovskikh \citelist{\cite{Isk:1}\cite{Isk:2}\cite{Isk:3}}. More
recently Mukai, in a program announced in \cite{Mukai:natacad} and
still ongoing, re-proved the rank-$1$ classification from the study of
exceptional vector bundles
\citelist{\cite{Mukai:Trieste}\cite{Mukai:CS}\cite{Mukai-CG}\cite{Mukai-CSI}\cite{Mukai:new_developments}\cite{Mukai-CSII}\cite{Mukai-G2}}
. In
particular, Mukai gave new model constructions for some of the
rank-$1$ Fano \mbox{3-folds} as linear sections of homogeneous spaces;
we make use of these models below. Mori and Mukai
\citelist{\cite{MM:Manuscripta}\cite{MM81}\cite{MM84}\cite{Mori--Mukai:erratum}\cite{MM:fanoconf}}
proved that there are precisely $88$ families of nonsingular Fano
\mbox{3-folds} of rank $\geq 2$; their proof was a spectacular display
of the power of Mori's then-new theory of extremal rays.

The model constructions given by Mori and Mukai are, however, not
well-suited for the calculation of quantum periods. Indeed, these
model constructions are in terms of extremal rays: typically $X$ is
constructed by giving an extremal contraction $f\colon X\to Y$, for
instance the blow up of some curve in $Y$. For example, consider
family number~13 in the table of 3-dimensional Fano manifolds of
Picard rank~$3$ in \cite{MM:fanoconf}:

\subsubsection*{Rank~$3$, number~13: Mori--Mukai construction}

$X$ is the blow-up of a hypersurface $W\subset \PP^2\times \PP^2$ with
centre a curve $C$ of bi-degree $(2,2)$ on it such that
$C\hookrightarrow W \to \PP^2\times
\PP^2\overset{p_i}{\longrightarrow} \PP^2$ is an embedding for both
$i=1, 2$, where $p_i$ is the projection to the $i$th factor of the product
$\PP^2 \times \PP^2$.

\smallskip

This construction, elegant though it is, and natural from the point of
view of extremal rays, is not well-adapted for doing calculations in
Gromov--Witten theory. There are procedures for computing
Gromov--Witten invariants of blow-ups
\citelist{\cite{Gathmann}\cite{Hu:curves}\cite{Hu:surfaces}\cite{Lai}\cite{Manolache}}
but, because they are not based on a satisfactory structural
understanding of blow-ups on the Gromov--Witten side, they are very
difficult to use.  Instead, our preferred tools are those for which we
have a good structural understanding on the Gromov--Witten side:
Givental's mirror theorem \cite{Givental:toric}, the Quantum Lefschetz
theorem of Coates--Givental \cite{Coates--Givental}, and the
Abelian/non-Abelian correspondence of
Bertram--Ciocan-Fontanine--Kim--Sabbah \cite{CFKS}. These tools
require that $X$ be constructed inside the GIT quotient $F=V\GIT G$ of
a vector space $V$ by the action of a complex Lie group $G$ as the
zero-locus of a general section of a homogeneous vector bundle $E \to
V \GIT G$.  Thus we rework the Mori--Mukai classification of
3-dimensional Fano manifolds, proving:

\pagebreak[2]
\begin{theorem}\label{thm:models}
  Let $X$ be a 3-dimensional Fano manifold. Then there exist:
  \begin{itemize}
  \item a vector space $V=\CC^n$;
  \item a representation of $G=\prod_{i=1}^r \GL_{k_i}(\CC)$ on $V$; and
  \item a representation $\rho$ of $G$;
  \end{itemize}
  such that $X$ is the vanishing locus, inside a GIT quotient $F=V\GIT
  G$ with respect to a suitably chosen stability condition, of a
  section of the vector bundle $E \to F$ determined by $\rho$.
\end{theorem}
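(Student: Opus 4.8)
The plan is to proceed by direct construction, family by family, using the classification of 3-dimensional Fano manifolds due to Iskovskikh (for Picard rank~$1$) and Mori--Mukai (for rank~$\geq 2$). For each of the $105$ deformation families I will exhibit an explicit triple $(V, G, \rho)$ realising a representative as the required zero locus, and the families fall naturally into two groups. When $X$ is a toric complete intersection I take $G = \GL_1(\CC)^r = (\Cstar)^r$, so that $F = V \GIT G$ is the ambient toric variety and $\rho$ encodes the line bundles cutting out $X$; here the model is read off directly from a Cox-ring description of the fan. The remaining families require a non-abelian $G$, and the main input is a supply of tautological models over Grassmannians and flag manifolds.

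For the rank-$1$ families that are not (weighted) complete intersections I use Mukai's realisations as linear sections of homogeneous spaces. A subtlety is that several of these homogeneous spaces---the orthogonal and symplectic Grassmannians, and the $G_2$-Grassmannian---are \emph{not} GIT quotients of a representation of a product of general linear groups. I therefore realise each of them as the zero locus of a section of a homogeneous bundle over an ordinary Grassmannian $\Gr(k,n) = \Hom(\CC^k,\CC^n)\GIT\GL_k(\CC)$: for instance $\SpGr$ and $\OGr$ arise as the zero loci of the tautological sections of $\wedge^2 \mathcal{S}^\vee$ and $\mathrm{Sym}^2 \mathcal{S}^\vee$ induced by the defining (anti)symmetric form, where $\mathcal{S}$ is the tautological subbundle. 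Composing with Mukai's linear sections keeps the whole construction inside the category of the theorem.

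For the bulk of the rank-$\geq 2$ families, which Mori--Mukai describe via extremal contractions---typically as the blow-up $\Bl_C Y$ of a previously constructed variety along a curve $C$---I use the following device. If $C = Z(s)$ is the zero locus of a regular section $s$ of a rank-$2$ homogeneous bundle $E \to Y$, then $s$ determines a section of the relative $\cO(1)$ on the projective bundle $\PP(E) \to Y$ whose zero locus is precisely $\Bl_C Y$: over the locus where $s \neq 0$ the section picks out a single point of each $\PP^1$-fibre, while over $C$ it vanishes on the whole fibre, producing the exceptional divisor. Since $\PP(E) \to Y$ is again a GIT quotient (adjoining one $\GL_1$ factor to $G$ and extending $V$ and $\rho$ accordingly), and the base $Y$ is in each case either toric or a Fano $3$-fold of smaller Picard rank already treated, this expresses $\Bl_C Y$ as a zero locus in a quotient by a product of general linear groups. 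Iterating handles the families obtained by successive blow-ups, and double covers are treated as degree-$2$ hypersurfaces in a $\PP^1$-bundle $\PP(\cO \oplus \mathcal{L})$.

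The main obstacle is not any single construction but the verification, for each family individually, that the prescribed centre really is cut out by a section of a homogeneous bundle of the right rank. Translating Mori--Mukai's geometric descriptions of the curves $C$ (often specified by degree and incidence conditions) into explicit equivariant bundle data is where the work lies; some centres are not complete intersections and require a bundle of rank $>2$ together with a more careful analysis of the resulting zero locus, and in a handful of cases one must first re-embed the ambient variety as a tautological subvariety and only then restrict. For every family one must finally check that the general section is smooth, has the correct anticanonical degree and Hodge numbers, and therefore lies in the intended deformation family; this bookkeeping, carried out case by case in the sections that follow, constitutes the proof.
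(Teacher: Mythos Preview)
Your proposal is correct and follows essentially the same route as the paper: a case-by-case construction through all $105$ deformation families, with toric complete intersections handling the abelian cases, Mukai-type models re-expressed over ordinary (type-$A$) Grassmannians handling the non-toric rank-$1$ cases, and a blow-up lemma plus projective-bundle constructions handling the higher-rank families. The paper's blow-up lemma (Lemma~\ref{lem:blowups}) is formulated slightly more generally than your rank-$2$ version---for an arbitrary map $V^{n+1}\to W^n$ of locally free sheaves, with the blow-up cut out in $\PP(V)$ by the induced section of $\pi^\star W(1)$---which is what makes the non-complete-intersection centres (twisted cubics, determinantal curves) go through cleanly, but you have already flagged this.
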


We think of $F$ as what Miles Reid would call a \emph{key variety}: by
construction, $F$ is endowed with a universal property characterising
the embedding $X\hookrightarrow F$. Both the algebraic geometry and
the Gromov--Witten theory of $X$ are inherited from $F$ through the
universal property.

The proof of Theorem~\ref{thm:models} occupies a substantial portion
of this paper. For many of the 105 families the proof is
straightforward; for a few families it is rather
tricky. In the majority of cases, $G = \GL_1(\CC)^r$ and so $X$ is a
complete intersection in a toric variety (and in practice a complete
intersection of codimension at most~$3$). Here is a typical example:

\subsubsection*{Rank~$3$, number~13: our construction}

$X$ is the codimension-$3$ complete intersection in $\PP^2\times \PP^2\times \PP^2$ of general sections of the line bundles $\cO(1,1,0)$, $\cO(1,0,1)$, and $\cO(0,1,1)$.

\smallskip

An immediate consequence of Theorem~\ref{thm:models} is that the
moduli space of 3-dimensional Fano manifolds is unirational: the
obvious map from $\PP\big(H^0(F,E)\big)$ to the moduli space of $X$ is
dominant.

\subsection*{Highlights} With our model constructions in hand, we then
compute the quantum periods.  Most of these calculations are routine,
but a number are more interesting.  The varieties 2--2
(\S\ref{sec:2-2}), 3--2 (Example~\ref{ex:ql_with_mirror_map}), 3--5
(\S\ref{sec:3-5}), and 4--2 (\S\ref{sec:4-2}) require sophisticated
applications of the Quantum Lefschetz theorem.  Challenging (and new)
applications of the Abelian/non-Abelian correspondence include
Theorem~\ref{thm:rank_1_A_nA}, which gives a uniform treatment of
seven of the seventeen 3-dimensional Fano manifolds of rank-$1$, and
the varieties 2--17 (\S\ref{sec:2-17}), 2--20 (\S\ref{sec:2-20}),
2--21 (\S\ref{sec:2-21}), 2--22 (\S\ref{sec:2-22}), and 2--26
(\S\ref{sec:2-26}).  

We draw the reader's attention, too, to
\S\ref{sec:moduli_not_unirational}, where we exhibit an example of a
high-dimensional Fano manifold with non-unirational moduli space.  In
essence, this means that there is no explicit\footnote{Our work here
  relies on the explicit construction of 3-dimensional Fano manifolds
  given in Theorem~\ref{thm:models}.  But we hope that, in the future,
  a more conceptual approach will be possible.  Such an approach is
  likely to construct Fano manifolds via deformation methods, as in
  the Gross--Siebert mirror symmetry program
  \citelist{\cite{Gross--Siebert:affine}\cite{Gross--Siebert:1}\cite{Gross--Siebert:2}},
  as opposed to explicit descriptions in the style of Theorem~\ref{thm:models}.
} way to write down a general Fano $n$-manifold for $n$ large.

\subsection*{Perspectives and Future Directions}

As discussed above, Minkowski polynomials have a combinatorial
definition and are classified directly from this definition.  Given an
appropriate mirror theorem, therefore, we could reverse the
perspective of this paper and recover the classification of
3-dimensional Fano manifolds from the classification of their mirror
Laurent polynomials. Even once such a mirror theorem has been proved,
the calculations in this paper are likely to remain a very  efficient way
\emph{in practice} to determine the mirror partner to a 3-dimensional
Fano manifold.  Our results suggest, too, that one should search for
higher-dimensional Fano manifolds systematically by searching for
their Laurent polynomial mirrors. This is discussed in our joint work
with Golyshev, where we outline a program to classify 4-dimensional
Fano manifolds using these ideas \cite{CCGGK}.

We know of no \emph{a~priori} reason why every 3-dimensional Fano
manifold admits a construction as in Theorem~\ref{thm:models}. At
present this can be proven only post-classification, by a case-by-case
analysis. The obvious generalization of Theorem~\ref{thm:models} fails
in high dimensions (see \S\ref{sec:moduli_not_unirational} for an
example in dimension~66) but it may still hold in low dimensions.  In
particular, does the generalization of Theorem~\ref{thm:models} hold
in dimension~4? For now perhaps the following remarks are not out of
place. Since the beginning of the subject people have asked what can
birational geometry do for Gromov--Witten theory. For instance a
natural question that was asked early on was how do Gromov--Witten
invariants transform under birational maps, for instance crepant
birational morphisms or blow-ups of nonsingular centres. By now we
have learned that these questions are often very subtle; in the case
of blow-ups of a smooth centre we have a procedure but not a good
structural understanding of the problem. On the other hand, in some
areas, we have made good progress in Gromov--Witten calculus: the
Abelian/non-Abelian correspondence being the most general and best
example. Perhaps now is the right time to ask what can Gromov--Witten
theory do for birational geometry: what view of birational
classification do we get if we take seriously\footnote{For instance,
  our model constructions of the 3-dimensional Fano manifolds of
  Picard rank $\geq 2$ can be used to better organise the calculations
  in \cite{Matsuki}---which we found very helpful on many occasions.
  We do not pursue this line here, apart from a few scattered comments
  in the text below.} the perspective of the Abelian/non-Abelian
correspondence? Does something like Theorem~\ref{thm:models} hold, and
if so why?

\subsection*{Remarks on the Rank-$1$ Case} As far as we know, most of
our constructions of 3-dimensional Fano manifolds of Picard rank $\geq
2$ are new.  In Picard rank $1$ this is not the case: all of the
models that we give are either already in the literature or were known
to Mukai.  As we have said, Mukai gave model constructions for some of
the rank-$1$ 3-dimensional Fano manifolds $X$ as linear sections of
homogeneous manifolds $G/P$ in their canonical projective
embedding. In other words, $X$ is the complete intersection of
$G/P\subset \PP^N$ with a linear subspace of the appropriate
codimension in $\PP^N$. Mukai's models are not always the best for our
purposes.  The Abelian/non-Abelian correspondence is currently known
to hold only for Lie groups of type $A$, and so we prefer to exhibit
$X$ as a subvariety of $F=A\GIT G$ where $G$ is a product of groups of
the form $\GL_k(\CC)$.  Our rank-$1$ models are thus in some sense
simpler than Mukai's; in each case they either occur as an
intermediate step in Mukai's published construction or were known to
Mukai.

\subsection*{Remarks on Quantum Periods of Fano Manifolds}

Golyshev, based on a heuristic involving mirror symmetry and modular
forms, gave a conjectural form of the matrices of small quantum
multiplication by the anticanonical class for each of the rank-$1$
Fano \mbox{3-folds} \cite{Golyshev}, and verified it by explicit
calculation of Gromov--Witten numbers (unpublished).  This work is the
fundamental source of the perspective taken in this paper; it is also
an important antecedent to the more precise conjecture (joint with
Golyshev) that we prove here.  The regularized quantum period of
rank-$1$ Fano \mbox{3-folds} was computed by Beauville
\cite{Beauville}, Kuznetsov (unpublished), and Przyjalkowski
\citelist{\cite{Przyjalkowski:68}\cite{Przyjalkowski:LG}\cite{Przyjalkowski:QC}}.
Ciolli has computed the small quantum cohomology rings of 13
higher-rank Fano \mbox{3-folds} \cite{Ciolli}.

\subsection*{Acknowledgements} This research was supported by a Royal
Society University Research Fellowship (TC); ERC Starting Investigator
Grant number~240123; the Leverhulme Trust; AG Laboratory NRU-HSE, RF
government grant, ag.~11.G34.31.0023; EPSRC grant EP/I008128/1; JSPS
Grant-in-Aid for Scientific Research 10554503; the WPI Initiative of
MEXT, Japan; NSF Grant DMS-0600800; NSF FRG Grant DMS-0652633; FWF
Grant P20778; and the ERC Advanced Grant GEMIS. SG thanks Bumsig Kim
and Alexander Kuznetsov for useful conversations.

\subsection*{Plan of the Paper}
Sections~\ref{sec:quantum_period}--\ref{sec:dim12} are devoted to some
preliminaries and examples, mostly to fix our notation. In particular
we summarise all the results from Gromov--Witten theory that we need.
The subsequent sections 1--105 are self-contained essays, one for each
of the deformation families of 3-dimensional Fano manifolds, giving:
the standard known model construction; our model construction; a proof
that the two constructions coincide; the calculation of the
regularized quantum period; and---where appropriate---a match with a
Minkowski period of manifold type. In more detail:
\S\ref{sec:quantum_period} gives the definition of and basic facts
about quantum periods; \S\ref{sec:toric} treats toric Fano manifolds
and Givental's mirror theorem; \S\ref{sec:toric_ci} introduces
notation for Fano complete intersections in toric varieties and
discusses the Quantum Lefschetz theorem; \S\ref{sec:blowing-things-up}
provides some geometric constructions and notation that are used in
our model constructions; \S\ref{sec:A_nA} summarizes the
Abelian/non-Abelian correspondence; and in \S\ref{sec:dim12} we
compute the quantum periods for Fano manifolds of dimensions~$1$
and~$2$.  The table in Appendix~\ref{appendix} exhibits Laurent polynomial mirrors
for each of the~105 deformation families of 3-dimensional Fano manifolds.

\section{The J-Function and the Quantum Period}
\label{sec:quantum_period}

Let $X$ be a smooth projective variety over $\CC$.  For
$\beta \in H_2(X;\ZZ)$, let $X_{0,1,\beta}$ denote the moduli space of
degree-$\beta$ stable maps to $X$ from genus-zero curves with one
marked point \citelist{\cite{Kontsevich--Manin}\cite{Kontsevich:enumeration}}; let
$[X_{0,1,\beta}]^\vir \in H_\bullet(X_{0,1,\beta};\QQ)$ denote the
virtual fundamental class of $X_{0,1,\beta}$
\citelist{\cite{Li--Tian}\cite{Behrend--Fantechi}}; let
$\ev \colon X_{0,1,\beta} \to X$ denote the evaluation map at the marked
point; and let $\psi \in H^2(X_{0,1,\beta};\QQ)$ denote the first
Chern class of the universal cotangent line at the marked point.  The
\emph{$J$-function} of $X$ is a generating function for genus-zero
Gromov--Witten invariants of $X$:
\begin{equation}
  \label{eq:JX}
  J_X(\sigma + \tau) = e^{\sigma/z} e^{\tau/z}
  \Bigg( 1 +
  \sum_{\substack{\beta \in H_2(X;\ZZ): \\ \beta \ne 0}}
  Q^\beta e^{\langle \beta,\tau \rangle}
  \ev_\star \bigg([X_{0,1,\beta}]^\vir \cap
  {1 \over z(z-\psi)}
  \bigg)
  \Bigg)
\end{equation}
Here $\sigma \in H^0(X;\QQ)$, $\tau \in H^2(X;\QQ)$, $Q^\beta$ is the
representative of $\beta$ in the group ring $\QQ[H_2(X;\ZZ)]$, and we expand ${1 \over z(z-\psi)}$ as the series
$\sum_{k \geq 0} z^{-k-2} \psi^k$.  Let $\Lambda_X$ denote the completion of
$\QQ[H_2(X;\ZZ)]$ with respect to the valuation $v$ defined by
$v(Q^\beta) = \langle \beta, \omega \rangle$, where $\omega$ is the K\"ahler class of $X$. The $J$-function is a function on
$H^0(X;\QQ) \oplus H^2(X;\QQ)$ that takes values in
$H^\bullet\big(X;\Lambda_X\big)[\![z^{-1}]\!]$.  It plays a key role
in mirror symmetry \citelist{\cite{Givental:equivariant}
  \cite{Givental:toric} \cite{Cox--Katz}}.  We have:
\begin{equation}
  \label{eq:JXasymptotics}
  J_X(\sigma + \tau) = 1 + (\sigma + \tau) z^{-1} + O(z^{-2})
\end{equation}
where $1$ is the unit element in $H^\bullet(X)$.

Suppose now that $X$ is a Fano manifold, i.e.~a smooth projective variety over $\CC$ such that the anticanonical line bundle ${-K}_X$ is ample. 
Consider the component of the $J$-function
$J_X(\sigma + \tau)$ along the unit class $1 \in H^\bullet(X;\QQ)$.
Set $\sigma = \tau = 0$, $z = 1$, and replace $Q^\beta \in \Lambda_X$ by
$t^{\langle \beta,{-K_X} \rangle}$.  The resulting formal power series
in the variable $t$ is called the \emph{quantum period} of $X$:
\[
G_X(t) = 1 + \sum_{d \geq 2}
\sum_{\substack{\beta \in H_2(X;\ZZ) : \\ \langle \beta, {-K_X} \rangle
  = d}}
t^d \left \langle \phi_\vol \cdot \psi^{d-2} \right \rangle^X_{0,1,\beta}
\]
where $\phi_\vol$ is a top-degree cohomology class on $X$ such that
$\int_X \phi_\vol = 1$, and the correlator denotes a Gromov--Witten
invariant:
\[
\left \langle \phi_\vol \cdot \psi^{d-2} \right \rangle^X_{0,1,\beta} = \int_{[X_{0,1,\beta}]^\vir} \ev^\star (\phi_\vol) \cup \psi^{d-2}
\]
Write the quantum period as:
\begin{align*}
  & G_X(t) = 1 + \sum_{d \geq 2} c_d t^d
\intertext{The \emph{regularized quantum period} of $X$ is:}
  & \hG_X(t) = 1 + \sum_{d \geq 2} d!  c_d t^d
\end{align*}


\subsection{The Big $J$-function and the Small $J$-function}
\label{sec:bigJsmallJ}
Our $J$-function $J_X(t)$ is sometimes called the ``small
$J$-function''; it coincides with the $J$-function defined by Givental
in \cite{Givental:toric}. For the small $J$-function $J_X(t)$, the
parameter $t$ is taken to lie in $H^0(X) \oplus H^2(X)$.  Other
authors consider a ``big J-function'' $J(t)$ where the parameter $t$
ranges over all of $H^\bullet(X)$.  The big $J$-functions $J(t)$
considered by Coates--Givental and Ciocan-Fontanine--Kim--Sabbah
coincide with our $J_X(t)$, except for an overall factor of $z$, when
$t$ is restricted to lie in $H^0(X) \oplus H^2(X)$: to see this, apply
the String Equation and the Divisor Equation
\cite{Pandharipande}*{\S1.2} to the definition of the big $J$-function
\citelist{\cite{Coates--Givental}*{equation~11}\cite{CFKS}*{equation~5.2.1}}.
The overall factor of $z$ here comes from an unfortunate mismatch of
conventions.

\section{Fano and Nef Toric Manifolds}
\label{sec:toric}

Let $T = (\Cstar)^r$.  Write $\LL = \Hom(\Cstar,T)$ for the lattice of
subgroups of $T$, and write $\LL^\vee$ for the dual lattice
$\Hom(T,\Cstar)$.  Elements of $\LL^\vee$ are characters of $T$.
Consider an $r \times N$ integer matrix $M$ of rank $r$ such that the
columns of $M$ span a strictly convex cone $\cC$ in $\RR^r$.  The
columns of $M$ define characters of $T$, via the canonical isomorphism
$\LL^\vee \cong \ZZ^r$, and hence determine an action of $T$ on
$\CC^N$.  Given a stability condition $\omega \in \LL^\vee \otimes
\RR$ we can form the GIT quotient:
\[
X_\omega := \CC^N \GIT_\omega \, T
\]
Any smooth projective toric variety $X$ arises via this construction
for some choice of $M$ and $\omega$; we refer to the matrix $M$ as
\emph{weight data} for $X$ and to $\omega$ as a \emph{stability
  condition} for $X$.

There is a wall-and-chamber decomposition of $\cC \subset \LL^\vee
\otimes \RR$, called the \emph{secondary fan}, and if stability
conditions $\omega_1$ and $\omega_2$ lie in the same chamber then the
GIT quotients $X_{\omega_1}$ and $X_{\omega_2}$ coincide.  Write $c_i
\in \LL^\vee$ for the $i$-th column of $M$, and $\langle
c_{i_1},\dots,c_{i_k}\rangle$ for the $\RR_{\geq 0}$-span of the
columns $c_{i_1},\dots,c_{i_k}$. The walls of the secondary fan are
given by all cones of the form $\langle c_{i_1},\dots,c_{i_k}\rangle$
that have dimension $r-1$.  The chambers of the secondary fan are the
connected components of the complement of the walls; these are
$r$-dimensional open cones in $\cC \subset \LL^\vee \otimes \RR$.  We
always take our stability condition $\omega$ to lie in a
chamber. Given such an $\omega$, the \emph{irrelevant ideal} $I_\omega
\subset \CC[x_1,\ldots,x_N]$ is:
\[
I_\omega = \big( x_{i_1} x_{i_2} \cdots x_{i_r} \mid \omega \in \langle c_{i_1},\ldots,c_{i_r} \rangle \big)
\]
and the \emph{unstable locus} is $V(I_\omega) \subset \CC^N$.  The GIT
quotient $X_\omega$ is:
\begin{equation}
  \label{eq:GIT_quotient}
  X_\omega = \Big( \CC^N \setminus V(I_\omega)\Big) \Big/ T
\end{equation}
The variety $X_\omega$ is nonsingular if and only if
$c_{i_1},\ldots,c_{i_r}$ is an integer basis for $\LL^\vee$ for each
$\{i_1,\ldots,i_r\}$ such that $\omega \in \langle
c_{i_1},\ldots,c_{i_r} \rangle$.

Suppose now that $M$, $\omega$ are respectively weight data and a
stability condition for $X$.  A character $\xi \in \LL^\vee$ defines a
line bundle $L_\xi$ on $X$ and hence a cohomology class $c_1(L_\xi)
\in H^2(X;\QQ)$.  Thus the columns of $M$ define cohomology classes
$D_1,\ldots,D_N \in H^2(X;\QQ)$.  Define the \emph{$I$-function} of $X$:
\[
I_X(\tau) = e^{\tau/z}
\sum_{\substack{\beta \in H_2(X;\ZZ)}}
Q^\beta e^{\langle \beta, \tau \rangle}
{
\prod_{i=1}^{i=N} \prod_{m \leq 0} D_i + m z
\over
\prod_{i=1}^{i=N} \prod_{m \leq \langle \beta, D_i \rangle} D_i + m z
}
\]
Here $\tau \in H^2(X;\QQ)$ and $Q^\beta$ is, as before, the
representative of $\beta$ in the group ring $\QQ[H_2(X;\ZZ)]$. The
$I$-function $I_X$ is a function on $H^2(X;\QQ)$ that takes values in
$H^\bullet\big(X;\Lambda_X\big)[\![z^{-1}]\!]$.  Note that all but
finitely many terms in the infinite products cancel, and that 
\[
\frac{1}{D_i + m z} = \frac{1}{mz} - \frac{D_i}{(mz)^2} + \frac{D_i^2}{(mz)^3} + \cdots
\] 
is well-defined as an element of $H^\bullet(X)[\![z^{-1}]\!]$.

\begin{theorem}[Givental]
  \label{thm:toric_mirror}
  Let $X$ be a toric manifold such that ${-K_X}$ is nef.  Then:
  \[
  J_X\big(\theta(\tau)\big) = I_X(\tau)
  \]
  for some function $\theta \colon H^2(X;\QQ) \to H^0\big(X;\Lambda_X\big) \oplus
  H^2\big(X;\Lambda_X\big)$.  Furthermore, the function $\theta$ is uniquely
  determined by the expansion:
  \[
  I_X(\tau) = 1 + \theta(\tau) z^{-1} + O(z^{-2})
  \]
  If $X$ is Fano then $\theta(\tau) = \tau$.
\end{theorem}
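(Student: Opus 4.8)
The plan is to place both $J_X$ and $I_X$ on Givental's genus-zero Lagrangian cone $\mathcal{L}_X\subset H^\bullet(X;\Lambda_X)(\!(z^{-1})\!)$, the cone cut out by the genus-zero Gromov--Witten invariants that enter \eqref{eq:JX}. The small $J$-function provides a chart on this cone: by the asymptotics \eqref{eq:JXasymptotics} the coefficient of $z^{-1}$ in $J_X(s)$ equals $s$, and the cone structure guarantees that any point of $\mathcal{L}_X$ expanding as $1+s\,z^{-1}+O(z^{-2})$ with $s\in H^0(X)\oplus H^2(X)$ is exactly $J_X(s)$. Granting that $I_X(\tau)$ lies on $\mathcal{L}_X$, the theorem is then formal: the stated expansion $I_X(\tau)=1+\varphi(\tau)z^{-1}+O(z^{-2})$ has this shape with $\varphi(\tau)\in H^0(X)\oplus H^2(X)$, so it equals $J_X(\varphi(\tau))$, which simultaneously yields $J_X(\varphi(\tau))=I_X(\tau)$ and identifies $\varphi(\tau)$ with the $z^{-1}$-coefficient of $I_X(\tau)$, giving the asserted uniqueness. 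The entire content is therefore to show $I_X(\tau)\in\mathcal{L}_X$.

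To do this I would argue equivariantly for the big torus $\TT=(\Cstar)^{\dim X}$ acting on $X$, with isolated fixed points $p_\sigma$ indexed by the maximal cones $\sigma$ of the fan and joined by isolated invariant curves. By the localization theorem an equivariant class is determined by its restrictions to the $p_\sigma$, so it suffices to match the fixed-point components $I^\sigma_X$ and $J^\sigma_X$. Givental's characterization of the equivariant cone $\mathcal{L}^{\TT}_X$ states that a tuple $\{S^\sigma\}$ comes from a point of the cone precisely when (i) each $S^\sigma(z)$ has prescribed principal parts --- the residue at the pole attached to an invariant edge of degree $d$ being governed by the component $S^{\sigma'}$ at the neighbouring fixed point through an explicit edge factor --- and (ii) suitable Poincar\'e pairings of the $S^\sigma$ are polynomial in $z$, reflecting the absence of any further poles. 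One then verifies that the hypergeometric factors $\prod_{i}\prod_{m\le 0}(D_i+mz)\,/\,\prod_{i}\prod_{m\le\langle\beta,D_i\rangle}(D_i+mz)$ appearing in $I_X$, restricted to $p_\sigma$, satisfy exactly (i) and (ii): the recursion mirrors the factorization of these products when $\langle\beta,D_i\rangle$ is broken along an edge, and the polynomiality is read off from their explicit form. This recursion-and-polynomiality check is the technical heart of the matter and is where I expect the real work to lie; it is precisely Givental's toric mirror theorem, and I would invoke \cite{Givental:toric} (see also \cite{Coates--Givental} for the cone formalism).

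It remains to show that $\varphi(\tau)=\tau$ when $X$ is Fano, i.e.\ that the coefficient of $z^{-1}$ in $I_X(\tau)$ is $\tau$. The tool is a homogeneity count: assign cohomological degree $2$ to $z$ and to each $D_i$. Because $-K_X=\sum_i D_i$ on a toric manifold, the $\beta$-summand of $I_X$ is homogeneous of degree $-2\langle\beta,-K_X\rangle$, so its $z^{-1}$-coefficient lies in cohomological degree $2(1-\langle\beta,-K_X\rangle)$. For $X$ Fano one has $\langle\beta,-K_X\rangle\ge 1$ for every nonzero effective $\beta$, so this degree is non-positive; it can support a nonzero class only when $\langle\beta,-K_X\rangle=1$, and then the contribution lies in $H^0(X)$. (In the merely nef case it is the classes with $\langle\beta,-K_X\rangle=0$, contributing in $H^2(X)$, that produce the nontrivial part of $\varphi$; ampleness is exactly what removes them.) Finally, the surviving $H^0$-contributions vanish as well. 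For the $\beta$-summand to have a nonzero degree-zero part every $\langle\beta,D_i\rangle$ must be $\ge 0$, since a negative $\langle\beta,D_i\rangle$ forces the factor $D_i$ (the $m=0$ term of the product) into the summand and raises its cohomological degree; together with $\sum_i\langle\beta,D_i\rangle=\langle\beta,-K_X\rangle=1$ this means $\langle\beta,D_j\rangle=1$ for a single $j$ and $\langle\beta,D_i\rangle=0$ otherwise. But the $D_i$ generate $H^2(X;\QQ)$, and since $X$ is complete each $D_j$ lies in the $\QQ$-span of the others, so $\{D_i\}_{i\ne j}$ already generate $H^2(X;\QQ)$; the vanishing $\beta\cdot D_i=0$ for all $i\ne j$ then forces $\beta=0$, contradicting $\beta\cdot D_j=1$. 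Thus only $\beta=0$ contributes at order $z^{-1}$, and its contribution is the $\tau$ coming from the prefactor $e^{\tau/z}$ in $I_X$, whence $\varphi(\tau)=\tau$.
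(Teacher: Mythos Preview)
Your sketch is correct and follows the standard route to this result: place $I_X$ on Givental's Lagrangian cone via the equivariant recursion/polynomiality argument of \cite{Givental:toric}, then read off the mirror map from the $z^{-1}$-asymptotics, with a homogeneity count to handle the Fano statement.  The paper itself offers no argument beyond the one-line citation of \cite{Givental:toric}; you have supplied a sketch of precisely the proof being cited, so there is no divergence of method, only of detail.

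One small addition worth making explicit: in the merely nef case you assert but do not verify that the $z^{-1}$-coefficient $\varphi(\tau)$ lands in $H^0\oplus H^2$.  This follows from the same grading argument you give for the Fano case---with $\langle\beta,-K_X\rangle\ge 0$ in place of $\ge 1$, the $z^{-1}$-coefficient of each $\beta$-summand has cohomological degree at most~$2$---and you need it before you can invoke the characterization of the small $J$-function as the unique point of $\mathcal{L}_X$ with that asymptotic shape.
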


\begin{proof}
  This follows immediately from Givental's mirror theorem for toric
  varieties \cite{Givental:toric}.
\end{proof}

\begin{corollary}
  \label{cor:toric_mirror}
  Let $X$ be a Fano toric manifold and let $D_1,\ldots,D_N \in
  H^2(X;\QQ)$ be the cohomology classes of the
  torus-invariant divisors on $X$.  The quantum period of $X$ is:
  \[
  G_X(t) = \sum_{\substack{
      \beta \in H_2(X;\ZZ) : \\
      \text{$\langle \beta, D_i \rangle \geq 0$ $\forall i$}
    }}
  \frac{
    t^{\langle \beta,{-K_X}\rangle}
  }{
    \prod_{i=1}^N \langle \beta, D_i \rangle !
  }
  \]
\end{corollary}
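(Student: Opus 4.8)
The plan is to feed Givental's theorem (Theorem~\ref{thm:toric_mirror}) into the definition of the quantum period, reducing everything to an explicit extraction from the combinatorially-given $I$-function. Since $X$ is Fano, $-K_X$ is ample and hence nef, so Theorem~\ref{thm:toric_mirror} applies; moreover the Fano hypothesis forces $\varphi(\tau) = \tau$, so $J_X(\tau) = I_X(\tau)$ with no mirror map to worry about. By the definition of $G_X$ in~\S\ref{sec:quantum_period}, the quantum period is obtained from the component of $J_X(\sigma+\tau)$ along the unit class $1$ by setting $\sigma = \tau = 0$ and $z=1$ and replacing $Q^\beta$ by $t^{\langle\beta,-K_X\rangle}$. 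I would therefore run this recipe on $I_X(0)$ instead of $J_X$, noting that $e^{\sigma/z}$ trivializes at $\sigma=0$ and that the component along $1$ is exactly the degree-zero (i.e.\ $H^0(X)$) part of a cohomology class.

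Next I would simplify $I_X(0)$ factor by factor. Putting $\tau=0$ removes the $e^{\tau/z}$ prefactor and all $e^{\langle\beta,\tau\rangle}$, leaving
\[
I_X(0) = \sum_{\beta\in H_2(X;\ZZ)} Q^\beta \prod_{i=1}^N \frac{\prod_{m\leq 0}(D_i+mz)}{\prod_{m\leq a_i}(D_i+mz)}, \qquad a_i := \langle \beta, D_i\rangle.
\]
For each $i$ I would cancel the common factors of the two infinite products according to the sign of $a_i$: when $a_i\geq 0$ the ratio collapses to $1/\prod_{m=1}^{a_i}(D_i+mz)$, and when $a_i<0$ it becomes the polynomial $\prod_{m=a_i+1}^{0}(D_i+mz)$.

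The crux is how each shape contributes to the $H^0$-part. When $a_i\geq 0$, expanding $1/(D_i+mz) = (mz)^{-1}\bigl(1 + O(D_i/z)\bigr)$ shows that the unique degree-zero term is the leading one, $1/(a_i!\,z^{a_i})$, because every correction carries a positive power of $D_i\in H^2(X)$. When $a_i<0$, the $m=0$ factor equals $D_i$, so the whole ratio is divisible by $D_i$ and hence lands in $H^{\geq 2}(X)$; consequently any $\beta$ for which some $\langle\beta,D_i\rangle<0$ contributes nothing to the unit component. (If $D_i=0$ in cohomology then $\langle\beta,D_i\rangle=0$, so the negative case never arises for that index.) Projecting the product over $i$ onto $H^0$ thus yields, for each $\beta$ with all $a_i\geq 0$, the scalar $z^{-\sum_i a_i}/\prod_i a_i!$.

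Finally I would invoke the toric identity $-K_X = \sum_{i=1}^N D_i$, which gives $\sum_i a_i = \langle\beta,-K_X\rangle$, so the power of $z$ is $z^{-\langle\beta,-K_X\rangle}$. Setting $z=1$ and replacing $Q^\beta$ by $t^{\langle\beta,-K_X\rangle}$ then converts the unit component of $I_X(0)$ into the asserted sum over $\beta$ with $\langle\beta,D_i\rangle\geq 0$ for all $i$, completing the proof. The only genuine obstacle is the careful bookkeeping of the infinite products and the argument that negative-pairing summands drop out of the degree-zero part; all the Gromov--Witten content is already packaged in Theorem~\ref{thm:toric_mirror}.
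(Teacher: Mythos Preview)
Your proposal is correct and follows essentially the same approach as the paper: use Theorem~\ref{thm:toric_mirror} (with the Fano hypothesis giving $\varphi(\tau)=\tau$, hence $J_X=I_X$) and then extract the unit component of $I_X(0)$ at $z=1$, $Q^\beta=t^{\langle\beta,-K_X\rangle}$. The paper's proof is a two-line pointer to this procedure, whereas you have carefully spelled out the cancellation in the infinite products, the vanishing of the $H^0$-part when some $\langle\beta,D_i\rangle<0$, and the identity $-K_X=\sum_i D_i$; these are precisely the details the paper leaves implicit.
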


\begin{proof}
  The quantum period $G_X$ is obtained from the component of the
  $J$-function $J_X(\tau)$ along the unit class $1 \in
  H^\bullet(X;\QQ)$ by setting $\tau = 0$, $z = 1$, and $Q^\beta =
  t^{\langle \beta, {-K_X} \rangle}$.  Now apply
  Theorem~\ref{thm:toric_mirror}.
\end{proof}

\begin{example}[number 36 on the Mori--Mukai list of 3-dimensional
  Fano manifolds of rank 2]
  \label{ex:toric}
  Here $X$ is the projective bundle $\PP(\cO\oplus\cO(2))$ over
  $\PP^2$.  This is a toric variety with weight data:
  \[
  \begin{array}{rrrrrl}
    1 & 1 & 1 & -2 & 0 & \hspace{1.5ex} L\\
    0 & 0 & 0 & 1 & 1 & \hspace{1.5ex} M
  \end{array}
  \]
  and nef cone $\Amp(X)$ spanned by $L$ and $M$.  The $L$ and $M$ next to the weight data here denote the line bundles associated to the standard basis of $\LL^\vee = \ZZ^2$; we use this notation, and its natural extension to the case where $\LL^\vee = \ZZ^r$ with $r \ne 2$, freely throughout the paper.  
  Corollary~\ref{cor:toric_mirror} yields:
  \[
  G_X(t) = \sum_{d_1=0}^\infty \sum_{d_2 = 2 d_1}^\infty
  { t^{d_1 + 2 d_2} \over (d_1!)^3 \, (d_2 - 2 d_1)! \, d_2!}
  \]
  and regularizing gives:
  \[
  \hG(t) = 1 + 2t^{2} + 6t^{4} + 60t^{5} + 20t^{6} + 840t^{7} +
  70t^{8} + 7560t^{9} + \cdots
  \]
\end{example}

\section{Fano Toric Complete Intersections}
\label{sec:toric_ci}

\begin{assumptions}
  \label{assumptions:toric_ci} Throughout \S\ref{sec:toric_ci}, take
  $Y$ to be a smooth projective toric variety such that ${-K_Y}$ is nef, and take
  $X$ to be a smooth Fano complete intersection in $Y$ defined by a
  section of $E = L_1 \oplus \cdots \oplus L_s$ where each $L_i$ is a
  nef line bundle.  Let $\rho_i = c_1(L_i)$, and let $\Lambda = \rho_1
  + \cdots + \rho_s$.  By the Adjunction Formula:
  \[
  {-K_{X}} = \big({-K_Y} - \Lambda\big)\big|_X
  \]
  We assume that the line bundle ${-K_Y}- \Lambda$ on $Y$ is nef on
  $Y$, that is, we assume that $\langle \beta, {-K_Y}- \Lambda \rangle
  \geq 0$ for all $\beta$ in the Mori cone of $Y$.
\end{assumptions}

\subsection{The Quantum Lefschetz Theorem}
\label{sec:quantum_lefschetz}

We will compute the quantum period of $X$ by computing certain twisted
Gromov--Witten invariants of the ambient space $Y$ using the Quantum
Lefschetz theorem \cite{Coates--Givental}.  Consider the
$\Cstar$-action on the total space of $E$ given by rescaling fibers
(with the trivial action on the base).  Let $\lambda$ denote the first
Chern class of the line bundle $\cO(1)$ over $\CC P^\infty \cong B
\Cstar$, so that the $\Cstar$-equivariant cohomology of a point is
$\QQ[\lambda]$, and let $\be(\cdot)$ denote the $\Cstar$-equivariant
Euler class.  Coates--Givental (ibid.) define a complex of
$\Cstar$-equivariant sheaves $E_{0,1,\beta}$ on $Y_{0,1,\beta}$.  In
this case $E_{0,1,\beta}$ is a $\Cstar$-equivariant vector bundle over
$Y_{0,1,\beta}$, and there is a $\Cstar$-equivariant evaluation map
$E_{0,1,\beta} \to \ev^\star E$.  Let $E_{0,1,\beta}'$ be the kernel of this
evaluation map.  The \emph{twisted $J$-function} is:
\begin{equation}
  \label{eq:twisted_J}
  J_{\be,E}(\sigma + \tau) =
  e^{\sigma/z} e^{\tau/z}
  \Bigg( 1 +
  \sum_{\substack{\beta \in H_2(Y;\ZZ): \\ \beta \ne 0}}
  Q^\beta e^{\langle \beta,\tau \rangle}
  \ev_\star \bigg([Y_{0,1,\beta}]^\vir \cap
  \be(E_{0,1,\beta}') \cap
  {1 \over z(z-\psi)}
  \bigg)
  \Bigg)
\end{equation}
Here $\sigma \in H^0(Y;\QQ)$, $\tau \in H^2(Y;\QQ)$, $Q^\beta$ is the
representative of $\beta$ in the group ring $\QQ[H_2(Y;\ZZ)]$, and we
expand ${1 \over z(z-\psi)}$ as the series $\sum_{k \geq 0} z^{-k-2}
\psi^k$.  The twisted $J$-function is\footnote{Coates--Givental
  consider a ``big twisted $J$-function'' $J_{\be,E}(t)$ where the
  parameter $t$ ranges over all of $H^\bullet(X)$.  Exactly as in
  \S\ref{sec:bigJsmallJ} this coincides with our twisted $J$-function,
  up to an overall factor of $z$, when $t$ is restricted to lie in
  $H^0(X) \oplus H^2(X)$.}  a function on $H^0(Y;\QQ) \oplus
H^2(Y;\QQ)$ that takes values in
$H^\bullet\big(Y;\Lambda_Y[\lambda]\big)[\![z^{-1}]\!]$.  It
satisfies:
\begin{equation}
  \label{eq:twistedJasymptotics}
  J_{\be,E}(\sigma + \tau) = 1 + (\sigma + \tau) z^{-1} + O(z^{-2})
\end{equation}
where $1$ is the unit element in $H^\bullet(Y)$.  The twisted
$J$-function admits a non-equivariant limit $J_{Y,X}$ which satisfies:
\begin{equation}
  \label{eq:KKP}
  j_\star J_X\big(j^\star(\sigma+\tau)\big) =
  J_{Y,X}(\sigma+\tau) \cup \prod_{i=1}^{i=s} \rho_i
\end{equation}
Here $j:X \to Y$ is the inclusion, and the equality holds after
applying the homomorphism between $\Lambda_X$ and $\Lambda_Y$ induced
by $j$.  Since we can determine the quantum period $G_X$ from the
component of $J_X$ along the unit class $1 \in H^\bullet(X)$, we can
determine $G_X$ from the component of $J_{Y,X}$ along the unit class
$1 \in H^\bullet(Y)$.

The Quantum Lefschetz theorem determines the twisted $J$-function
$J_{\be,E}$ from the twisted $I$-function:
\[
I_{\be,E}(\tau) = \sum_{\beta \in H_2(Y;\ZZ)} Q^\beta J_\beta(\tau)
\prod_{i=1}^{s} \prod_{m=1}^{\langle \beta,\rho_i\rangle}
(\lambda + \rho_i + m z)
\]
where:
\[
J_Y(\tau) = \sum_{\beta \in H_2(Y;\ZZ)} Q^\beta J_\beta(\tau)
\]
and so, in particular, $J_0(\tau) = e^{\tau/z}$.
\begin{proposition}
  \label{pro:ABC}
  Under Assumptions~\ref{assumptions:toric_ci}, we have:
  \[
  I_{\be,E}(\tau) = A(\tau) + B(\tau) z^{-1} + O(z^{-2})
  \]
  for some functions:
  \begin{align*}
    & A \colon H^2(Y;\QQ) \to H^0\big(Y;\Lambda_Y\big) \\
    & B \colon H^2(Y;\QQ) \to H^0\big(Y;\Lambda_Y[\lambda]\big) \oplus H^2\big(Y;\Lambda_Y[\lambda]\big)
  \end{align*}
  If ${-K_X}$ is the restriction of an ample line bundle on $Y$,
  i.e.~if $\langle \beta, {-K_Y}- \Lambda \rangle > 0 $ for all $\beta$ in
  the Mori cone of $Y$, then $A$ is the constant function with value
  the unit class $1 \in H^0(Y;\QQ)$ and $B(\tau) = \tau + C(\tau) 1$ with:
  \[
  C(\tau) = \sum_{\substack{\beta \in H_2(Y;\ZZ): \\ \langle
      \beta, {-K_Y}- \Lambda\rangle = 1}} n_\beta Q^\beta e^{\langle
    \beta,\tau \rangle}
  \]
  for some rational numbers $n_\beta$.  In general we have:
  \[
  A \equiv 1 \mod \{Q^\beta : \text{$\beta \ne 0$, $\beta$ in the Mori
    cone of $Y$}\}
  \]
\end{proposition}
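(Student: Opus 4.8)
The plan is to extract $A$ and $B$ directly from the formula for the twisted $I$-function, by isolating the zero-degree contribution and controlling all other terms by a cohomological grading argument.

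First I would isolate the $\beta=0$ term. Since the inner product $\prod_{i=1}^{s}\prod_{m=1}^{\langle\beta,\rho_i\rangle}(\lambda+\rho_i+mz)$ is an empty product when $\beta=0$, the $\beta=0$ contribution to $I_{\be,E}(\tau)$ is exactly the $Q^0$-component $J_0(\tau)$ of $J_Y(\tau)$. By the definition \eqref{eq:JX} of the $J$-function (with $\sigma=0$), this component is $e^{\tau/z}=1+\tau z^{-1}+O(z^{-2})$; it contributes $1$ to $A$ and $\tau$ to $B$.

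Next I would control the terms with $\beta\neq0$ by grading. Give $z$ and $\lambda$ degree $2$, give each cohomology class its usual degree, and give $Q^\beta$ degree $2\langle\beta,{-K_Y}-\Lambda\rangle$. With this grading $I_{\be,E}(\tau)$ is homogeneous of degree $0$: the untwisted $J$-function $J_Y$ is homogeneous of degree $0$ once $Q^\beta$ is assigned degree $2\langle\beta,{-K_Y}\rangle$ (the virtual dimension axiom for genus-zero Gromov--Witten invariants), while for fixed $\beta$ the factor $\prod_{i}\prod_{m}(\lambda+\rho_i+mz)$ is homogeneous of degree $2\sum_i\langle\beta,\rho_i\rangle=2\langle\beta,\Lambda\rangle$, so lowering the degree of $Q^\beta$ by $2\langle\beta,\Lambda\rangle$ restores homogeneity. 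Two consequences follow. First, the top power of $z$ in the $Q^\beta$-term is at most $z^{-\langle\beta,{-K_Y}-\Lambda\rangle}$, since homogeneity of $J_Y$ forces $J_\beta$ to have top $z$-power at most $z^{-\langle\beta,{-K_Y}\rangle}$ and the Euler-class factor has $z$-degree $\langle\beta,\Lambda\rangle$; as ${-K_Y}-\Lambda$ is nef by Assumptions~\ref{assumptions:toric_ci}, this exponent is $\leq0$, so $I_{\be,E}$ contains no positive powers of $z$ and is of the stated form $A+Bz^{-1}+O(z^{-2})$. Second, degree-counting shows that the $z^0$-coefficient of each $Q^\beta$-term is a class of degree $-2\langle\beta,{-K_Y}-\Lambda\rangle$ carrying no factor of $\lambda$, hence lies in $H^0$ and vanishes unless $\langle\beta,{-K_Y}-\Lambda\rangle=0$, while the $z^{-1}$-coefficient lies in $H^0\oplus H^2$ and may carry a factor of $\lambda$. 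This pins down the targets of $A$ and $B$ claimed in the first part.

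The remaining assertions are then read off. Modulo the ideal $\{Q^\beta:\beta\neq0\}$ only the $\beta=0$ term of $I_{\be,E}$ survives, and its $z^0$-coefficient is $1$ by the first step; this gives the general statement $A\equiv1 \bmod \{Q^\beta:\beta\neq0\}$. In the ample case $\langle\beta,{-K_Y}-\Lambda\rangle>0$ for every nonzero $\beta$ in the Mori cone, so the degree constraint forbids any such $\beta$ from reaching the $z^0$-coefficient and $A\equiv1$ holds exactly; for $B$, only $\beta$ with $\langle\beta,{-K_Y}-\Lambda\rangle=1$ can contribute to the $z^{-1}$-coefficient, and for these the constraint forces the contribution to be a rational multiple $n_\beta$ of the unit class with no $\lambda$, giving $B(\tau)=\tau+C(\tau)1$ with $C$ as stated. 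The step I expect to be the main obstacle is justifying the homogeneity of $J_Y$ and the bound on the top power of $z$ in each $J_\beta$ when ${-K_Y}$ is merely nef rather than ample: in that case $J_Y$ is not given by the explicit hypergeometric series (Theorem~\ref{thm:toric_mirror} involves a nontrivial mirror map $\varphi$), so these facts must be deduced from the general structure of the $J$-function---the string and divisor equations together with the virtual dimension axiom---rather than read off a closed form. Once that grading is in hand, the rest is bookkeeping of degrees.
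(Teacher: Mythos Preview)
Your proposal is correct and follows essentially the same approach as the paper: isolate the $\beta=0$ term as $e^{\tau/z}=1+\tau z^{-1}+O(z^{-2})$, then use homogeneity of $I_{\be,E}$ under the grading $\deg Q^\beta=\langle\beta,{-K_Y}-\Lambda\rangle$, $\deg z=\deg\lambda=1$, $\deg\phi=k$ for $\phi\in H^{2k}$, to force $A$ into $H^0$ and $B$ into $H^0\oplus H^2$ and to read off the remaining claims. Your worry about the merely-nef case is unnecessary: you already named the correct mechanism (the virtual dimension axiom for genus-zero Gromov--Witten invariants), and that axiom gives the homogeneity of $J_Y$ directly from the definition \eqref{eq:JX} without any appeal to the explicit hypergeometric formula or to Theorem~\ref{thm:toric_mirror}, so no additional work is needed there.
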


\begin{proof}
  Combine the fact that $J_0(\tau) = e^{\tau/z} = 1 + \tau z^{-1} +
  O(z^{-2})$ with the fact that $I_{\be,E}$ is homogeneous of degree zero with
  respect to the grading:
  \begin{align*}
    \deg Q^\beta = \langle\beta,{-K_Y}- \Lambda \rangle &&
    \deg z = 1 && \deg \lambda = 1 &&
    \text{$\deg \phi = k$ if $\phi \in H^{2k}(Y;\QQ)$}
  \end{align*}
  With respect to this grading, $A(\tau)$ is homogeneous of degree
  zero and $B(\tau)$ is homogeneous of degree one.
\end{proof}

\begin{theorem}
  \label{thm:toric_ci_ql}
  Under Assumptions~\ref{assumptions:toric_ci}, with $A, B, C$ as in
  Proposition~\ref{pro:ABC}, we have:
  \begin{align*}
    & J_{\be,E}(\theta(\tau)) = \frac{I_{\be,E}(\tau)}{A(\tau)}
    && \text{where } \theta(\tau) = \frac{B(\tau)}{A(\tau)}
  \end{align*}
  If ${-K_X}$ is the restriction of an ample class on $Y$ then $J_{\be,E}(\tau) = e^{-C(\tau)/z} I_{\be,E}(\tau)$.
\end{theorem}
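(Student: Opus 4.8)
The plan is to deduce the theorem from the abstract Quantum Lefschetz theorem of Coates--Givental \cite{Coates--Givental} together with the homogeneity of Givental's Lagrangian cone. Recall that the genus-zero $(\be,E)$-twisted Gromov--Witten theory of $Y$ is encoded by a Lagrangian cone $\mathcal{L}_{\be,E}$ inside $H^\bullet\big(Y;\Lambda_Y[\lambda]\big)(\!(z^{-1})\!)$, that the twisted $J$-function $J_{\be,E}$ parametrises $\mathcal{L}_{\be,E}$, and that the content of Quantum Lefschetz is precisely that the family $\tau \mapsto I_{\be,E}(\tau)$ also lies on $\mathcal{L}_{\be,E}$. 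First I would record the two structural properties of $\mathcal{L}_{\be,E}$ that we need: it is a cone over the coefficient ring $\Lambda_Y[\lambda]$, hence stable under multiplication by any invertible scalar; and it is \emph{ruled}, in the sense that each tangent space $T$ meets $\mathcal{L}_{\be,E}$ exactly in $zT$. The ruling property is what makes $J_{\be,E}$ the \emph{unique} family on $\mathcal{L}_{\be,E}$ of the normalised shape $1 + (\text{parameter})z^{-1} + O(z^{-2})$ with parameter in $H^0 \oplus H^2$, the parameter being read off as the $z^{-1}$-coefficient.

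Next I would run the normalisation. By Proposition~\ref{pro:ABC} the scalar $A(\tau) \in H^0(Y;\Lambda_Y)$ satisfies $A \equiv 1 \bmod Q$ and is therefore invertible, so the cone property gives $I_{\be,E}(\tau)/A(\tau) \in \mathcal{L}_{\be,E}$. Dividing the expansion of Proposition~\ref{pro:ABC} by $A(\tau)$ shows that this normalised family has the form $1 + \big(B(\tau)/A(\tau)\big)z^{-1} + O(z^{-2})$, and since $B(\tau) \in H^0 \oplus H^2$ while $A(\tau) \in H^0$ is a scalar, the $z^{-1}$-coefficient $B/A$ lies in $H^0 \oplus H^2$. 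The uniqueness recorded above then forces $I_{\be,E}(\tau)/A(\tau) = J_{\be,E}\big(\varphi(\tau)\big)$ with $\varphi(\tau) = B(\tau)/A(\tau)$, which is the first assertion.

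Finally I would treat the ample case by isolating the $H^0$-direction with the String Equation. Here Proposition~\ref{pro:ABC} gives $A \equiv 1$ and $B(\tau) = \tau + C(\tau) 1$, so $\varphi(\tau) = \tau + C(\tau) 1$ with $C(\tau) 1 \in H^0(Y)$. The dependence of $J_{\be,E}$ on its $H^0$-argument is entirely through the prefactor $e^{\sigma/z}$ in the definition~\eqref{eq:twisted_J} (this is the String Equation), so $J_{\be,E}\big(C(\tau) 1 + \tau\big) = e^{C(\tau)/z}\,J_{\be,E}(\tau)$. Combining with the first assertion, where now $A = 1$, yields $e^{C(\tau)/z}J_{\be,E}(\tau) = I_{\be,E}(\tau)$, that is, $J_{\be,E}(\tau) = e^{-C(\tau)/z}I_{\be,E}(\tau)$.

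The hard part will be the ruling step: justifying that a family on $\mathcal{L}_{\be,E}$ normalised to begin with $1$ and with $z^{-1}$-coefficient in $H^0 \oplus H^2$ must coincide with the small twisted $J$-function evaluated at that coefficient. This is the one place where the fine geometry of Givental's cone---the relation $\mathcal{L}_{\be,E}\cap T = zT$---is genuinely used, and it is also where one must check that $B/A$ really does land in $H^0 \oplus H^2$ rather than acquiring $H^{\geq 4}$-components, which is guaranteed here only by Proposition~\ref{pro:ABC}. A secondary bookkeeping point is to apply the cone property in the paper's normalisation (leading term $1$ rather than $-z$), absorbing the overall factor-of-$z$ convention mismatch flagged in \S\ref{sec:bigJsmallJ}.
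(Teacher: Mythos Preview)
Your proposal is correct and follows essentially the same approach as the paper. The paper's proof simply cites the first statement as a slight generalization of Corollary~7 in Coates--Givental, proved in exactly the same way; what you have written is precisely an unpacking of that Lagrangian-cone argument, and your treatment of the ample case via the String Equation matches the paper's verbatim.
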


\begin{proof}
  The first statement is a slight generalization of Corollary~7 in
  Coates--Givental \cite{Coates--Givental}, and is proved in exactly
  the same way.  When ${-K_X}$ is the restriction of an ample class on
  $Y$, combining the first statement with Proposition~\ref{pro:ABC}
  gives:
  \[
  J_{\be,E}(\tau + C(\tau) 1) = I_{\be,E}(\tau)
  \]
  The String Equation~\cite{Pandharipande}*{\S1.2} now implies that:
  \[
   J_{\be,E}(\tau + C(\tau) 1) = e^{C(\tau)/z}  J_{\be,E}(\tau)
   \]
   completing the proof.
\end{proof}

The twisted $I$-function admits a non-equivariant limit:
\[
I_{Y,X}(\tau) = \sum_{\beta \in H_2(Y;\ZZ)} Q^\beta J_\beta(\tau)
\prod_{i=1}^{s} \prod_{m=1}^{\langle \beta,\rho_i\rangle}
(\rho_i + m z)
\]

\begin{corollary}
  \label{cor:toric_ci_JYX}
   Under Assumptions~\ref{assumptions:toric_ci}, with $A, B, C$ as in
  Proposition~\ref{pro:ABC}, we have:
  \[
  I_{Y,X}(\tau) = A(\tau) + B'(\tau) z^{-1} + O(z^{-2})
  \]
  where $B'(\tau) = B(\tau)\big|_{\lambda=0}$, and:
  \begin{align*}
    & J_{Y,X}(\theta(\tau)) = \frac{I_{Y,X}(\tau)}{A(\tau)}
    && \text{where } \theta(\tau) = \frac{B'(\tau)}{A(\tau)}
  \end{align*}
  If ${-K_X}$ is the restriction of an ample class on $Y$ then
  $J_{Y,X}(\tau) = e^{-C(\tau)/z} I_{Y,X}(\tau)$.
\end{corollary}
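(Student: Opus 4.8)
The plan is to obtain the statement as the non-equivariant limit of Theorem~\ref{thm:toric_ci_ql}. The existence of the relevant limits is already granted: the discussion preceding the corollary records that $J_{\be,E}$ admits a non-equivariant limit $J_{Y,X}$ satisfying \eqref{eq:KKP}, and that $I_{\be,E}$ admits the non-equivariant limit $I_{Y,X}$ given by the displayed formula. Both limits are taken by setting $\lambda = 0$, and under Assumptions~\ref{assumptions:toric_ci} the nefness hypotheses are what guarantee that $J_{\be,E}$ and $I_{\be,E}$ are regular at $\lambda = 0$, so the substitution is legitimate. The entire proof therefore reduces to setting $\lambda = 0$ in the conclusions of Theorem~\ref{thm:toric_ci_ql} and Proposition~\ref{pro:ABC}, and checking that this limit commutes with the algebraic operations appearing there.

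First I would treat the asymptotic expansion. By Proposition~\ref{pro:ABC} we have $I_{\be,E}(\tau) = A(\tau) + B(\tau) z^{-1} + O(z^{-2})$. The key observation is that $A$ takes values in $H^0(Y;\Lambda_Y)$ and is therefore independent of $\lambda$, whereas $B$ takes values in $H^\bullet(Y;\Lambda_Y[\lambda])$ and so genuinely depends on $\lambda$. Setting $\lambda = 0$ thus leaves $A$ unchanged and replaces $B$ by $B' = B|_{\lambda = 0}$, yielding $I_{Y,X}(\tau) = A(\tau) + B'(\tau) z^{-1} + O(z^{-2})$, as claimed.

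Next I would pass to the functional equation. Theorem~\ref{thm:toric_ci_ql} gives $J_{\be,E}(\varphi(\tau)) = I_{\be,E}(\tau)/A(\tau)$ with $\varphi(\tau) = B(\tau)/A(\tau)$. Since $A$ is $\lambda$-free and, by Proposition~\ref{pro:ABC}, has constant term the unit class $1$ (so $A$ is invertible in the relevant ring), division by $A$ commutes with the $\lambda = 0$ limit; moreover the substitution point $\varphi(\tau) = B(\tau)/A(\tau)$ limits to $B'(\tau)/A(\tau)$. Taking $\lambda = 0$ on both sides and using that the limit of $J_{\be,E}$ is $J_{Y,X}$ then gives $J_{Y,X}(\varphi(\tau)) = I_{Y,X}(\tau)/A(\tau)$ with $\varphi(\tau) = B'(\tau)/A(\tau)$. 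For the ample case I would set $\lambda = 0$ in $J_{\be,E}(\tau) = e^{-C(\tau)/z} I_{\be,E}(\tau)$; because $C(\tau) = \sum_\beta n_\beta Q^\beta e^{\langle \beta, \tau\rangle}$ is manifestly $\lambda$-free, the prefactor $e^{-C(\tau)/z}$ is unaffected, giving $J_{Y,X}(\tau) = e^{-C(\tau)/z} I_{Y,X}(\tau)$.

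The one genuinely delicate point is the interchange of the non-equivariant limit with the evaluation of $J_{\be,E}$ at the $\lambda$-dependent argument $\varphi(\tau)$. The hard part is to know that $J_{\be,E}$ is regular at $\lambda = 0$ as a function of its argument and that $\lim_{\lambda\to 0} J_{\be,E}(\varphi(\tau)) = J_{Y,X}\big(\lim_{\lambda\to 0}\varphi(\tau)\big)$; this is exactly the content of the assertion, already invoked above, that the twisted $J$-function has a non-equivariant limit under Assumptions~\ref{assumptions:toric_ci}. Granting that, everything else is the elementary bookkeeping of tracking which of $A$, $B$, $C$ carry $\lambda$-dependence, so I expect no further obstacle.
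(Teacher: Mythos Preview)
Your proof is correct and follows exactly the same approach as the paper, which proves the corollary in a single sentence: ``Take the non-equivariant limit $\lambda \to 0$ of Proposition~\ref{pro:ABC} and Theorem~\ref{thm:toric_ci_ql}.'' Your version simply unpacks this by tracking which of $A$, $B$, $C$ carry $\lambda$-dependence and flagging the (harmless) interchange-of-limits point, but the underlying argument is identical.
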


\begin{proof}
  Take the non-equivariant limit $\lambda \to 0$ of
  Proposition~\ref{pro:ABC} and Theorem~\ref{thm:toric_ci_ql}.
\end{proof}

\begin{corollary}
  \label{cor:QL}
  Let the toric complete intersection $X$ and the toric variety $Y$ be such that Assumption~\ref{assumptions:toric_ci} holds.  Let $D_1,\ldots,D_N \in H^2(Y;\QQ)$ be the cohomology classes
  of the torus-invariant divisors on $Y$, and let the classes $\rho_i$ and $\Lambda = \rho_1 + \cdots + \rho_s$ be as in Assumption~\ref{assumptions:toric_ci}.  Suppose that the line bundles ${-K}_Y$ and ${-K_Y}- \Lambda$ on $Y$ are ample.  Then the quantum
  period of $X$ is:
  \[
  G_X(t) = e^{{-c} t} \sum_{\substack{
      \beta \in H_2(Y;\ZZ) : \\
      \text{$\langle \beta, D_i \rangle \geq 0$ $\forall i$}
      }}
    t^{\langle \beta,{-K_Y}- \Lambda\rangle}
    \frac{
      \prod_{j=1}^s \langle \beta, \rho_j \rangle !
    }{
      \prod_{i=1}^N \langle \beta, D_i \rangle !
    }
    \]
    where $c$ is the unique rational number such that the right-hand
    side has the form $1 + O(t^2)$.
\end{corollary}

\begin{proof}
  Recall that $G_X$ is obtained from the component of the $J$-function
  $J_X(\sigma + \tau)$ along the unit class $1 \in H^\bullet(X;\QQ)$
  by setting $\sigma = \tau = 0$, $z = 1$, and $Q^\beta = t^{\langle
    \beta, {-K_X} \rangle}$.  In view of equation \eqref{eq:KKP}, we need
  the component of $J_{Y,X}(0)$ along $1 \in H^\bullet(Y;\QQ)$.
  Computing $J_Y(\tau)$ using Theorem~\ref{thm:toric_mirror}, we see
  that:
  \[
  I_{Y,X}(\tau) = e^{\tau/z}
  \sum_{\substack{\beta \in H_2(X;\ZZ)}}
  Q^\beta e^{\langle \beta, \tau \rangle}
  {
    \prod_{i=1}^{i=N} \prod_{m \leq 0} D_i + m z
    \over
    \prod_{i=1}^{i=N} \prod_{m \leq \langle \beta, D_i \rangle} D_i + m z
  }
  \prod_{j=1}^{s} \prod_{m=1}^{\langle \beta,\rho_j\rangle}
  (\rho_j + m z)
  \]
  Applying Corollary~\ref{cor:toric_ci_JYX}, we see that the component
  of $J_{Y,X}(\tau)$ along $1 \in H^\bullet(Y;\QQ)$ is:
  \begin{align*}
    & e^{-C(\tau)/z}     \sum_{\substack{
        \beta \in H_2(Y;\ZZ) : \\
        \text{$\langle \beta, D_i \rangle \geq 0$ $\forall i$}
      }}
    Q^\beta e^{\langle \beta, \tau \rangle}
    \frac{
      \prod_{j=1}^{s} \prod_{m=1}^{\langle \beta,\rho_j\rangle}
      (m z)
    }{
      \prod_{i=1}^{N} \prod_{1 \leq m \leq \langle \beta, D_i \rangle} (m z)
    }
    \intertext{where:}
    & C(\tau) = \sum_{\substack{\beta \in H_2(Y;\ZZ): \\ \langle
        \beta, {-K_Y}- \Lambda\rangle = 1}} n_\beta Q^\beta e^{\langle
      \beta,\tau \rangle}
  \end{align*}
  for rational numbers $n_\beta$ as in
  Proposition~\ref{pro:ABC}. Setting $\tau = 0$, $z = 1$, and $Q^\beta = t^{\langle
    \beta, {-K_Y}- \Lambda \rangle}$ yields:
  \[
  G_X(t) = e^{{-c} t} \sum_{\substack{
      \beta \in H_2(Y;\ZZ) : \\
      \text{$\langle \beta, D_i \rangle \geq 0$ $\forall i$}
    }}
  t^{\langle \beta,{-K_Y}- \Lambda\rangle}
  \frac{
    \prod_{j=1}^s \langle \beta, \rho_j \rangle !
  }{
    \prod_{i=1}^N \langle \beta, D_i \rangle !
  }
  \]
  for some rational number $c$.  We saw in \S\ref{sec:quantum_period}
  that the right-hand side has no linear term in $t$; this determines
  $c$.
\end{proof}

\begin{remark} \label{rem:twisting_by_multiples_of_K} Comparing
  Corollary~\ref{cor:QL} with Corollary~\ref{cor:toric_mirror}, we see
  that if each of the line bundles $L_1,\ldots,L_s$ in
  Corollary~\ref{cor:QL} is a tensor power or fractional tensor power
  of ${-K_Y}$ then we can compute the quantum period $G_X$ from the
  quantum period $G_Y$ and the line bundles $L_i$ alone, without
  needing to know the full $J$-function $J_Y$.
\end{remark}

\begin{example}
  \label{ex:W22}
  Let $X$ be the divisor on $Y = \PP^2 \times \PP^2$ of bigree
  $(2,2)$.  The toric variety $Y$ has weight data:
  \[
  \begin{array}{rrrrrrl}
    1 & 1 & 1 & 0 & 0 & 0 & \hspace{1.5ex} L\\
    0 & 0 & 0 & 1 & 1 & 1 & \hspace{1.5ex} M \\
  \end{array}
  \]
  and the nef cone $\Amp(Y)$ is spanned by $L$ and $M$.  The variety
  $X$ is a member of the ample linear system $|2L+2M|$, and ${-(}K_Y + X)
  \sim L+M$ is ample.  Corollary~\ref{cor:QL} yields:
  \begin{align*}
    & G_X(t) = e^{-4t} \sum_{l=0}^\infty \sum_{m=0}^\infty
    t^{l+m}
    \frac{(2l+2m)!}{(l!)^3 (m!)^3}
    \intertext{and regularizing gives:}
    & \hG_X(t) = 1+44 t^2+528 t^3+11292 t^4+228000 t^5+4999040
    t^6\\ & \qquad \qquad \qquad +112654080 t^7+2613620380 t^8+61885803840 t^9+ \cdots
  \end{align*}
\end{example}

\begin{example}
  \label{ex:ql_with_mirror_map}
  Let $F$ be the toric variety with weight data:
  \[
  \begin{array}{rrrrrrrl}
    1 & 1 & 0 & 0 & -1 & 0 & 0 &\hspace{1.5ex} L \\
    0 & 0 & 1 & 1 & -1 & 0 & 0 & \hspace{1.5ex} M \\
    0 & 0 & 0 & 0 &  1  & 1 & 1 & \hspace{1.5ex} N
  \end{array}
  \]
  and nef cone $\Amp F$ spanned by $L$, $M$, and $N$.  Let $X$ be a
  member of the nef linear system $|M+2N|$.  We have that
  $-K_F=L+M+3N$ is ample, so $F$ is a Fano variety, and that
  $-K_F - \Lambda\sim L+N$ is nef but not ample on $F$.  As is discussed in detail in \S\ref{sec:3-2}, even though ${-K_F} - \Lambda$ is not ample on $F$, it
  becomes ample  when restricted to $X$; thus the variety $X$ is Fano.

  Write $p_1$, $p_2$, $p_3 \in H^\bullet(F;\ZZ)$ for the first Chern
  classes of $L$, $M$, $N$ respectively; these classes form a basis
  for $H^2(F;\ZZ)$.  Identify the group ring $\QQ[H_2(F;\ZZ)]$ with
  the polynomial ring $\QQ[Q_1,Q_2,Q_3]$ via the $\QQ$-linear map that
  sends the element $Q^\beta \in \QQ[H_2(F;\ZZ)]$ to $Q_1^{\langle
    \beta,p_1\rangle} Q_2^{\langle \beta,p_2\rangle} Q_3^{\langle
    \beta,p_3\rangle}$.  Theorem~\ref{thm:toric_mirror} gives:
  \begin{multline*}
    J_F(\tau) = e^{\tau/z}
    \sum_{(l, m, n) \in \ZZ^3}
    Q_1^l Q_2^m Q_3^n e^{\langle \beta,\tau \rangle}
    \frac{\prod_{k = -\infty}^0 (p_1 + k z)^2}
    {\prod_{k = -\infty}^{l} (p_1 + k z)^2}
    \frac{\prod_{k = -\infty}^0 (p_2 + k z)^2}
    {\prod_{k = -\infty}^{m} (p_2 + k z)^2} \\
    \times
    \frac{\prod_{k = -\infty}^0 (p_3 + k z)^2}
    {\prod_{k = -\infty}^{n} (p_3 + k z)^2}
    \frac{\prod_{k = -\infty}^0 (p_3-p_2-p_1 + k z)}
    {\prod_{k = -\infty}^{n-l-m} (p_3-p_2-p_1 + k z)}
  \end{multline*}
  and, since $p_1^2 = p_2^2 = p_3^2(p_3-p_2-p_1) = 0$ in the cohomology of $F$, this reduces to:
  \[
  J_F(\tau) = e^{\tau/z}
  \sum_{l, m, n \geq 0}
  \frac{Q_1^l Q_2^m Q_3^n e^{\langle \beta,\tau \rangle}}
  {
    \prod_{k=1}^l (p_1 + k z)^2
    \prod_{k=1}^m (p_2 + k z)^2
    \prod_{k=1}^n (p_3 + k z)^2
  }
  \frac{\prod_{k = -\infty}^0 (p_3-p_2-p_1 + k z)}
  {\prod_{k = -\infty}^{n-l-m} (p_3-p_2-p_1 + k z)}
  \]
  Thus:
  \[
   I_{\be,E}(\tau) = e^{\tau/z}
  \sum_{l, m, n \geq 0}
  \frac
  {
    Q_1^l Q_2^m Q_3^n e^{\langle \beta,\tau \rangle}
    \prod_{k=1}^{m+2n} (\lambda + p_2+2p_3 + k z)
  }
  {
    \prod_{k=1}^l (p_1 + k z)^2
    \prod_{k=1}^m (p_2 + k z)^2
    \prod_{k=1}^n (p_3 + k z)^2
  }
  \frac
  {
    \prod_{k = -\infty}^0 (p_3-p_2-p_1 + k z)
  }
  {
    \prod_{k = -\infty}^{n-l-m} (p_3-p_2-p_1 + k z)
  }
  \]
  We now apply Theorem~\ref{thm:toric_ci_ql}.  Setting $\tau = 0$, we
  find that:
  \[
  I_{\be,E}(0) = A+ B  z^{-1} + O(z^{-2})
  \]
  where:
  \begin{align*}
    A & = 1\\
    B & = (2Q_3 + 6 Q_2 Q_3) 1 + (p_3-p_2-p_1)\sum_{m > 0}
    {(-1)^{m-1} Q_2^m \over m} \\
    &= (2Q_3 + 6 Q_2 Q_3) 1 + (p_3-p_2-p_1)\log(1+Q_2)
  \end{align*}
  Thus:
  \[
  J_{\be,E}(B) = I_{\be,E}(0)
  \]
  The String Equation gives:
  \[
  J_{\be,E}(c 1 + \tau) = e^{c/z} J_{\be,E}(\tau)
  \]
  so:
  \[
  J_{\be,E}\big((p_3-p_2-p_1)\log(1+Q_2)\big) =
  e^{-(2Q_3 + 6 Q_2 Q_3) /z} I_{\be,E}(0)
  \]

  The twisted $J$-function satisfies:
  \[
  J_{\be,E}(t_1 p_1 + t_2 p_2 + t_3 p_3) =
  e^{(t_1 p_1 + t_2 p_2 + t_3 p_3)/z}
  \Big( 1 +
  \sum_{l, m, n \geq 0}
  Q_1^l Q_2^m Q_3^n e^{l t_1} e^{m t_2}e^{n t_3}
  c_{l,m,n}\Big)
  \]
  for classes $c_{l,m,n} \in H^\bullet(F;\QQ[\lambda])[[z^{-1}]]$ that
  do not depend on $t_1$, $t_2$, $t_3$.  So, substituting $t_1 = t_2 =
  {-\log(1+Q_2)}$, $t_3 = \log(1+Q_2)$, we see that:
  \[
  J_{\be,E}\big((p_3-p_2-p_1)\log(1+Q_2)\big)
  =
  e^{(p_3-p_2-p_1)\log(1+Q_2)/z}
  \Big[ J_{\be,E}(0)\Big]_{Q_1 = {Q_1 \over 1+Q_2}, Q_2 = {Q_2 \over
      1+Q_2}, Q_3 = Q_3(1+Q_2)}
  \]
  The change of variables:
  \begin{align}
    Q_1 = {Q_1 \over 1+Q_2} &&
    Q_2 = {Q_2 \over 1+Q_2} &&
    Q_3 = Q_3(1+Q_2) \notag
    \intertext{is called the mirror map; the inverse change of
      variables is:}
    Q_1 = {Q_1 \over 1-Q_2} &&
    Q_2 = {Q_2 \over 1-Q_2} &&
    Q_3 = Q_3(1-Q_2)
    \label{eq:inverse_mirror_map}
  \end{align}
  and so:
  \begin{align*}
    J_{\be,E}(0) &=
    \Big[e^{-(p_3-p_2-p_1)\log(1+Q_2)/z}
    J_{\be,E}\big((p_3-p_2-p_1)\log(1+Q_2)\big)\Big]_{Q_1 = {Q_1 \over 1-Q_2}, Q_2 = {Q_2 \over
      1-Q_2}, Q_3 = Q_3(1-Q_2)}  \\
  &=
    e^{(p_3-p_2-p_1)\log(1-Q_2)/z}
    \Big[e^{-(2Q_3 + 6 Q_2 Q_3) /z} I_{\be,E}(0)\Big]_{Q_1 = {Q_1 \over 1-Q_2}, Q_2 = {Q_2 \over
      1-Q_2}, Q_3 = Q_3(1-Q_2)}
  \end{align*}
  Taking the non-equivariant limit yields:
  \begin{multline*}
    J_{F,X}(0) = e^{(p_3-p_2-p_1)\log(1-Q_2)/z} e^{{-2} Q_3 - 4 Q_2
      Q_3} \times \\
    \sum_{l, m, n \geq 0}
    \frac
    {
      Q_1^l Q_2^m Q_3^n (1-Q_2)^{n-l-m}
      \prod_{k=1}^{m+2n} (p_2+2p_3 + k z)
    }
    {
      \prod_{k=1}^l (p_1 + k z)^2
      \prod_{k=1}^m (p_2 + k z)^2
      \prod_{k=1}^n (p_3 + k z)^2
    }
    \frac
    {
      \prod_{k = -\infty}^0 (p_3-p_2-p_1 + k z)
    }
    {
      \prod_{k = -\infty}^{n-l-m} (p_3-p_2-p_1 + k z)
    }
  \end{multline*}

  Recall that the quantum period $G_X$ is obtained from the component
  of $J_X(0)$ along the unit class $1 \in H^\bullet(X;\QQ)$ by setting
  $z = 1$ and $Q^\beta = t^{\langle \beta, {-K_X} \rangle}$.  Consider
  equation \eqref{eq:KKP}.  To obtain $G_X$, therefore, we need to
  extract the component of $J_{F,X}(0)$ along the unit class $1 \in
  H^\bullet(F;\QQ)$, set $z=1$, set $Q_1 = t$, set $Q_2 = 1$, and set $Q_3
  =t$.  This gives:
  \begin{align*}
    & G_X(t) = e^{-6t} \sum_{l=0}^\infty \sum_{m = 0}^\infty
    t^{2l+m} \frac{(2l+3m)!}{(l!)^2 (m!)^2 ((l+m)!)^2}
    \intertext{Regularizing gives:}
    & \hG_X(t) = 1 + 58t^2 + 600t^3 + 13182t^4 + 247440t^5 +
    5212300t^6 + \\
    & \qquad \qquad \qquad \qquad 111835920t^7 +
    2480747710t^8 + 56184565920t^9 + \cdots
  \end{align*}
\end{example}

\subsection{Weighted Projective Complete Intersections}

We will need also an analog of Corollary~\ref{cor:QL} where the
ambient space is weighted projective space, regarded as a smooth toric
Deligne--Mumford stack rather than as a singular variety.

\begin{proposition}
  \label{pro:wps}
  Let $Y$ be the weighted projective space $\PP(w_0,\ldots,w_n)$, let
  $X$ be a smooth Fano variety given as a complete intersection in $Y$
  defined by a section of $E = \cO(d_1) \oplus \cdots \oplus
  \cO(d_m)$, and let ${-k} = w_0 + \cdots + w_n - d_1 - \cdots - d_m$.
  Suppose that each $d_i$ is a positive integer, that ${-k}>0$, and
  that:
  \begin{equation}
    \label{eq:divisibility}
    \text{$w_i$ divides $d_j$ for all $i$, $j$ such that $0 \leq i \leq n$ and
      $1 \leq j \leq m$}
  \end{equation}
  Then the quantum period of $X$ is:
  \[
  G_X(t) = e^{{-c} t} \sum_{d \geq 0}
  t^{{-k} d}
  \frac{
    \prod_{j=1}^m (d d_j) !
  }{
    \prod_{i=1}^n (d w_i) !
  }
  \]
  where $c$ is the unique rational number such that the right-hand
  side has the form $1 + O(t^2)$.
\end{proposition}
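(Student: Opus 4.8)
The plan is to follow the proof of Corollary~\ref{cor:QL} essentially verbatim, replacing the smooth toric variety $Y$ there by the smooth toric Deligne--Mumford stack $\PP(w_0,\ldots,w_n)$, and replacing Givental's mirror theorem and the Quantum Lefschetz theorem by their counterparts for toric stacks. First I would record the $I$-function of $Y=\PP(w_0,\ldots,w_n)$. Writing $P=c_1\big(\cO(1)\big)$ and regarding $Y$ as the quotient of $\CC^{n+1}\setminus\{0\}$ by the $\Cstar$-action with weights $(w_0,\ldots,w_n)$, the stacky mirror theorem expresses $J_Y$ in terms of an explicit hypergeometric $I$-function summed over effective degrees $d$, with each summand supported in the component of the inertia stack determined by the fractional parts $\langle w_0 d\rangle,\ldots,\langle w_n d\rangle$. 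Because all $w_i>0$, the class ${-K_Y}=\cO(w_0+\cdots+w_n)$ is ample, so $Y$ is Fano and the mirror map is trivial on the untwisted directions, exactly as in the last sentence of Theorem~\ref{thm:toric_mirror}.

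Next I would apply the orbifold analog of Theorem~\ref{thm:toric_ci_ql} and Corollary~\ref{cor:toric_ci_JYX} to pass from $Y$ to the complete intersection $X$. Twisting $J_Y$ by the $\Cstar$-equivariant Euler classes associated to $E=\cO(d_1)\oplus\cdots\oplus\cO(d_m)$ produces a twisted $I$-function whose summands acquire factors of the shape $\prod_{j=1}^{m}\prod_{l=1}^{d_j d}(d_j P+l z)$, and whose non-equivariant limit $I_{Y,X}$ controls $J_X$ through the analog of equation~\eqref{eq:KKP}. Since ${-K_X}=\cO({-k})\big|_X$ with ${-k}>0$, the class ${-K_X}$ is the restriction of an ample class on $Y$, so Proposition~\ref{pro:ABC} applies: $A\equiv 1$, the mirror map reduces to a shift along the unit class, and Corollary~\ref{cor:toric_ci_JYX} gives $J_{Y,X}(\tau)=e^{-C(\tau)/z}I_{Y,X}(\tau)$. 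Extracting the component of $J_{Y,X}(0)$ along the unit class of the untwisted sector, setting $z=1$, and replacing $Q^\beta$ by $t^{\langle\beta,{-K_X}\rangle}$ then yields a sum of the desired shape, with $e^{{-c}t}$ absorbing $C$ and $c$ fixed uniquely by the normalization $G_X=1+O(t^2)$ established in \S\ref{sec:quantum_period}.

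The main obstacle is the stacky bookkeeping, and this is precisely where the divisibility hypothesis~\eqref{eq:divisibility} does its work. The quantum period is read off from the component of $J_{Y,X}$ along the fundamental class of the \emph{untwisted} sector, since $\phi_\vol$ is a class on the coarse moduli space; after reducing to a well-formed weight system this component receives contributions only from integer degrees $d$, for which every fractional part $\langle w_i d\rangle$ vanishes and each inner product $\langle\beta,D_i\rangle=d w_i$ and $\langle\beta,\rho_j\rangle=d d_j$ is a nonnegative integer. The condition that $w_i$ divide $d_j$ guarantees that each $\cO(d_j)$ has trivial isotropy character along every twisted sector---equivalently, that $E$ is pulled back from the coarse space---so that the orbifold Quantum Lefschetz hypotheses are met and the twisting contributes honest factorials $(d d_j)!$ in the numerator rather than fractional $\Gamma$-factors. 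I expect that verifying this descent property, and checking that no twisted sector contaminates the unit-class component, will be the delicate step; once it is in place the computation collapses to the weighted-projective analog of Corollary~\ref{cor:QL} and the stated formula follows, with $c$ determined as above.
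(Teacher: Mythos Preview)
Your approach is essentially the paper's: it cites the result as Corollary~1.9 of \cite{CCLT}, whose proof runs exactly as you describe---stacky mirror theorem plus orbifold Quantum Lefschetz, then extract the untwisted-sector unit-class component. One refinement worth noting: the paper locates the role of the divisibility hypothesis~\eqref{eq:divisibility} not in preventing twisted-sector contamination or fractional $\Gamma$-factors in the formula, but in ensuring that $E$ is \emph{convex}; convexity is what guarantees both that the twisted $J$-function of \cite{CCIT}*{Corollary~5.1} admits a non-equivariant limit $J_{Y,X}$ and that this limit is related to $J_X$ via the analog of~\eqref{eq:KKP} (see \cite{CGIJJM}*{\S5} for how these steps can fail otherwise, even when the $I$-function itself looks perfectly sensible).
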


\begin{proof}
  This follows immediately from Corollary~1.9 in \cite{CCLT}.
  Corollary~1.9 as stated there is false, however, because it omits
  the divisibility hypothesis \eqref{eq:divisibility}.  This
  hypothesis ensures that the bundle $E$ is convex, and hence ensures
  both (a) that the twisted $J$-function denoted by $J^{\text{\rm
      tw}}$ in \cite{CCIT}*{Corollary~5.1} admits a non-equivariant
  limit $J_{Y,X}$, and (b) that this non-equivariant limit satisfies
  \eqref{eq:KKP}: see \cite{CGIJJM}*{\S5}.  Both (a) and (b) are used
  implicitly in the proof of \cite{CCLT}*{Corollary~1.9}.  Under the
  additional divisibility assumption \eqref{eq:divisibility}, however,
  the proof of \cite{CCLT}*{Corollary~1.9} goes through.  This proves
  the Proposition.
\end{proof}

\section{Geometric constructions}
\label{sec:blowing-things-up}

\begin{lemma}
  \label{lem:blowups}
  Let $G$ be a nonsingular algebraic variety, let $V^{n+1}$ and $W^n$
  be locally free sheaves on $G$ of ranks $n+1$ and $n$ respectively,
  and let $f\colon V\to W$ be a homomorphism of sheaves.  Denote by
  $\pi\colon \PP(V)\to G$ the projective space bundle of lines in $V$,
  so that there is a tautological exact sequence:
  \[
  0\to S \to \pi^\star V \to Q\to 0
  \]
  with $S^\star := \cO(1)$.  Recall that elements of $V^\star$, being linear functions on $V$, define canonical sections of the line bundle $\cO(1)$ on $\PP(V)$, and that the corresponding homomorphism $\pi^\star V^\star \to \cO(1)$ induces an isomorphism $V^\star \cong \pi_\star
  \cO(1)$.  The section $f\in \Hom_G (V, W)$ determines a section
  $\tilde{f}\in H^0\bigl(\PP(V), \pi^\star W \otimes \cO(1) \bigr)$ by means of
  the following canonical identifications:
  \[
  \Hom_G (V, W)=H^0(G, W\otimes V^\star)=H^0(G,W\otimes \pi_\star
  \cO(1))=H^0\bigl(\PP(V), \pi^\star W \otimes \cO(1)\bigr)
  \]
  Let $F=Z(\tilde{f})\subset \PP(V)$ be the subscheme of $\PP(V)$
  where $\tilde{f}$ vanishes. Denote by $Z\subset G$ the subscheme where $f$ drops
  rank; that is, the ideal of $Z$ is the ideal defined by the $n+1$
  minors of size $n$ of $f$. Assume (a) that $f$ has generically maximal
  rank; (b) that it drops rank in codimension 2 (this is the expected
  codimension); and (c) that $Z$ is nonsingular\footnote{The last
    assumption (c) is probably not necessary.}. Then $F$ is the blow up of $G$ along $Z$.
\end{lemma}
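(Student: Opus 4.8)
The plan is to identify $F$ with $\Bl_Z G$ by means of the universal property of blowing up. First I would unwind the definition of $\tilde{f}$: under the identification $\Hom_G(V,W)=H^0\bigl(\PP(V),\pi^\star W(1)\bigr)$ it is the image of the composite $S\hookrightarrow \pi^\star V\xrightarrow{\pi^\star f}\pi^\star W$, viewed as a section of $\pi^\star W\otimes S^\star=\pi^\star W(1)$. A point of $\PP(V)$ over $g\in G$ is a line $\ell\subset V_g$, and $\tilde{f}$ vanishes there precisely when $\ell\hookrightarrow V_g\xrightarrow{f_g}W_g$ is zero, i.e.\ when $\ell\subseteq \Ker f_g$. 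Thus $F=\{(g,\ell):\ell\subseteq \Ker f_g\}$, and I write $\phi\colon F\to G$ for the restriction of $\pi$. By assumption~(a), over $G\setminus Z$ the map $f_g$ has rank $n$, so $\Ker f_g$ is a single line and $\phi$ is an isomorphism there; this is exactly the behaviour required of $\Bl_Z G$ away from the centre.

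To apply the universal property I must check that $\phi^\star I_Z\cdot\cO_F$ is an invertible ideal sheaf, and I would verify this by a local adjugate computation. Trivialise $V$ and $W$ over an affine $U\subset G$, so that $f$ becomes an $n\times(n+1)$ matrix $A=(a_{ij})$ and $\PP(V)|_U=U\times\PP^n$ with fibre coordinates $[x_0:\cdots:x_n]$; then $F|_U$ is cut out by the bilinear equations $\sum_j a_{ij}x_j=0$, while $I_Z$ is generated by the maximal minors $m_0,\dots,m_n$. Writing $M_k=(-1)^k m_k$, Laplace expansion gives the identity $\sum_k a_{ik}M_k=0$, so the adjugate vector $M=(M_0,\dots,M_n)$ lies in $\Ker A$ at every point. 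On $F$ the tautological vector $x$ also lies in $\Ker A$; since off $Z$ both $x$ and $M$ span the line $\Ker A$, while on $Z$ the minors all vanish and $M=0$, the vectors $x$ and $M$ are proportional at every point of $F$, and the relations $x_kM_l=x_lM_k$ may be checked to lie in the ideal of $F$ directly from the equations $\sum_j a_{ij}x_j=0$. In the chart $x_0\ne 0$ these read $M_k=M_0x_k$, whence $\phi^\star I_Z\cdot\cO_F=(M_0)$ is principal, and it is invertible because $M_0$ is a non-zero-divisor (its zero locus lies over $Z$, away from which $\phi$ is dominant). The universal property then yields a unique morphism $\psi\colon F\to \Bl_Z G$ over $G$.

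It remains to prove that $\psi$ is an isomorphism, and this is where assumptions (b) and (c) enter and where I expect the real work to lie. Since $\psi$ is a morphism over $G$ that is an isomorphism over $G\setminus Z$, it is proper and birational, and $\Bl_Z G$ is smooth—hence normal—because $Z$ is nonsingular of codimension $2$. By Zariski's Main Theorem it suffices to show that $\psi$ is bijective. I would do this by a local computation near a point of $Z$: using (b) that $Z$ has the expected codimension $2$ and (c) that $Z$ is smooth, one normalises $A$ so that the corank is exactly $1$ along $Z$ and $Z=\{g_1=g_2=0\}$ for part of a regular system of parameters; the linear system $\sum_j a_{ij}x_j=0$ then displays $F$ near $Z$ as a tautological $\PP^1$-bundle over $Z$ whose local equations coincide with those of the exceptional divisor of $\Bl_Z G$, so that $\psi$ is an isomorphism on fibres. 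The main obstacle is precisely this local analysis: one must control the degeneration of the linear system where $f$ drops rank and confirm that $F$ acquires no extra singular or embedded structure there—in particular that the corank does not jump to $2$ on some locus, which would replace the $\PP^1$-fibre by a $\PP^2$ and destroy the blow-up description. The genericity encoded in (a)--(c) is exactly what excludes this and forces $\psi$ to be an isomorphism.
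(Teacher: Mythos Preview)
Your approach via the universal property of blowing up and Zariski's Main Theorem is sound in outline, but it is a genuinely different route from the paper's, which is entirely local and elementary. The paper fixes $P\in Z$, observes that smoothness of $Z$ forces some $(n-1)\times(n-1)$ minor of $A$ to be nonzero at $P$ (otherwise every differential $dm_k$ of the maximal minors, being a linear combination of $(n-1)\times(n-1)$ minors, would vanish at $P$ and $Z$ would be singular there), and then changes trivialisations so that $A$ becomes the identity in its upper-left $(n-1)\times(n-1)$ block with last row $(0,\dots,0,x,y)$. The equations $A\cdot(x_0,\dots,x_n)^t=0$ then read $x_0=\cdots=x_{n-2}=0$ together with $xx_{n-1}+yx_n=0$ in $U\times\PP^1_{x_{n-1},x_n}$, which is visibly the blow-up of $(x=y=0)$. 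No universal property, no Zariski's Main Theorem.

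Your argument has one soft spot worth tightening: the assertion that $M_0$ is a non-zero-divisor on $F$, needed for $\phi^\star I_Z\cdot\cO_F$ to be invertible, presupposes that $F$ has no component supported over $Z$, which you have not yet established at that stage. You can close this gap by first proving that $f$ has corank exactly $1$ along $Z$ (the minor argument above), so that the fibres of $\phi$ over $Z$ are $\PP^1$'s and $\phi^{-1}(Z)$ has dimension $\dim G-1$; since $F$ is cut in $\PP(V)$ by $n$ equations, every component has dimension at least $\dim G$, hence none lies over $Z$, $F$ is a local complete intersection and therefore Cohen--Macaulay, and your non-zero-divisor claim follows. But notice that this corank-and-normal-form step is precisely the local computation the paper performs directly; once you have it in hand the identification $F=\Bl_Z G$ is immediate, so your universal-property scaffolding, while not wrong, does not actually shorten the proof.
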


\begin{proof}
  The statement is local on $G$ so fix a point $P\in Z \subset G$, and
  a Zariski open neighbourhood $P\in U=\Spec A$ with trivializations
  $V|_{U}=A^{n+1}$, $W|_{U}=A^n$. The morphism $f|_{U}$ is given by a
  $n\times (n+1)$ matrix $M$ with entries in $A$. Because $Z$ is
  nonsingular, at least one of the $(n-1)\times (n-1)$ minors of $A$
  is non-zero at $P$, and then, after changing trivializations and
  shrinking $U$ if necessary, we may assume that
  \[
  M=
  \begin{pmatrix}
    1 & 0 & \dots & 0 & 0 & 0 \\
    0 & 1 & \dots & 0 & 0 & 0 \\
    \vdots & \vdots & \vdots & \vdots & \vdots & \vdots \\
    0 & 0  & \dots  & 1 & 0 & 0 \\
    0 & 0 & \dots & 0 & x & y
  \end{pmatrix}
  \]
 It is clear that the ideal generated by the $n\times n$ minors of $M$
 is the ideal generated by the two rightmost minors $x$, $y$ (and, since $Z$
 is nonsingular, $x, y$ form part of a regular system of parameters at
 $P$). Denoting by $x_0,\dots, x_n$ the dual basis of $V^\star$,
 $F|_{U}=F\cap \pi^{-1}(U)\subset \PP(V|_{U})\cong U\times \PP^{n}$ is given
 by the $n$ equations in $n+1$ variables:
\[
M\cdot
\begin{pmatrix}x_0\\\vdots \\x_n \end{pmatrix} =0
\]
The first $n-1$ equations just say $x_0=\cdots = x_{n-2}=0$, while the
last equation states that $F|_{U}$ is the variety
$(xx_{n-1}+yx_n=0)\subset U\times \PP^1_{x_{n-1},x_n}$, that is, $F$
is the blow-up of $Z\subset G$.
\end{proof}

We will need the following well-known construction.

\begin{lemma}
  \label{lem:P(E)} Let $G$ be a complex Lie group acting on a space
  $A$, $X=A\GIT G$ a geometric quotient, and $\rho \colon G \to
  GL_r(\CC)$ a complex representation.
  \begin{itemize}
  \item[(1)] $\rho$ naturally induces a vector bundle $E=E(\rho)$ on
    $X$. Explicitly, $E(\rho)=(A\times \CC^r)\GIT G$ where $G$ acts
    as
\[
g\colon (a,v) \mapsto (ga, \rho(g) v)
\]
\item[(2)] Let $F=\PP(E)$ be the bundle of $1$-dimensional subspaces
  of the vector bundle in (1). Then $F=(A \times \CC^r) \GIT (G \times
  \CC^\times)$ where $G$ acts as in (1), and $\CC^\times$ acts
  trivially on the first factor and by rescaling on the second factor.
\item[(3)] Let $F=\PP(E)$ be as in (2). The tautological line bundle
  $\cO(-1)$ on $F$ is induced as in (1) by the $1$-dimensional
  representation of $G\times \CC^\times$ that is trivial on the first
  factor and standard on the second.
  \end{itemize}
\end{lemma}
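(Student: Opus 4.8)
The plan is to treat the three parts in turn, with (1) and (2) being essentially formal and the only real subtlety living in (3).

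For part (1), I would first note that since $X = A\GIT G$ is a geometric quotient and the $G$-action on the stable locus is free (as it is in all our applications, where $G$ is a product of groups $\GL_k(\CC)$, which are special in the sense of Serre), the quotient map $A \to X$ is a Zariski-locally trivial principal $G$-bundle. The diagonal action of $G$ on $A \times \CC^r$ is then also free with the same local triviality, so $(A \times \CC^r)\GIT G$ is again a geometric quotient and the projection $[a,v] \mapsto [a]$ to $X$ is well-defined. Over a trivializing open set $U \subseteq X$ with section $s\colon U \to A$, the map $(x,v) \mapsto [s(x),v]$ trivializes $E(\rho)$ over $U$ as $U \times \CC^r$, and on overlaps the transition functions are $x \mapsto \rho\big(g(x)\big)$, where $g$ is the $G$-valued cocycle of the principal bundle. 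This exhibits $E(\rho)$ as a rank-$r$ vector bundle, the usual associated-bundle construction.

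For part (2), I would use the standard description of the projectivization of a vector bundle as the quotient of the complement of the zero section by fiberwise scaling, $\PP(E) = (E \setminus 0_E)/\Cstar$. Removing the zero section upstairs gives $E \setminus 0_E = \big(A \times (\CC^r \setminus 0)\big)\GIT G$, and since the fiber-scaling $\Cstar$ lifts to the second factor and commutes with $G$, one gets $\PP(E) = \big(A \times (\CC^r \setminus 0)\big)\GIT(G \times \Cstar)$. It then remains to check that $A \times (\CC^r \setminus 0)$ is precisely the stable locus of $A \times \CC^r$ for the $(G \times \Cstar)$-action, under a stability condition whose $\Cstar$-component is chosen so that the zero section $A \times \{0\}$ is unstable; with that choice the GIT quotient $(A \times \CC^r)\GIT(G \times \Cstar)$ agrees with $\PP(E)$.

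For part (3), I would produce an explicit isomorphism between the tautological subbundle $\cO(-1) = S$ (in the convention $S^\star = \cO(1)$ of Lemma~\ref{lem:blowups}) and the line bundle $E(\chi)$ associated by part (1) to the character $\chi$ of $G \times \Cstar$ that is trivial on the first factor and standard on the second. Concretely, I would realize the total space of $\cO(-1)$ as the quotient of $A \times (\CC^r \setminus 0) \times \CC$ by $G \times \Cstar$, where a triple $(a,v,t)$ records the line $[a,v] \in \PP(E)$ together with the point $[a,tv]_G$ lying on it, and then read off the induced action of $G \times \Cstar$ on the fiber coordinate $t$; matching this action with the one prescribed by $E(\chi)$ in part~(1) identifies the two line bundles. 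The main obstacle is exactly the bookkeeping here: one must verify that the proposed identification is genuinely $(G \times \Cstar)$-equivariant, and in doing so track the $\Cstar$-weight on the fiber coordinate with care, since this sign is what distinguishes $\cO(-1)$ from $\cO(1)$ and pins down the convention under which the standard character yields the tautological \emph{sub}bundle. Parts (1) and (2) are routine once the freeness and local triviality of $A \to X$ are in hand.
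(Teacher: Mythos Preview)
The paper does not give a proof of this lemma: it is introduced with ``We will need the following well-known construction'' and is stated without argument. Your proposal is a correct and standard treatment of the associated-bundle construction, and your emphasis in part~(3) on carefully tracking the $\Cstar$-weight on the fiber coordinate is exactly the right point to highlight, since this is the only place where a genuine sign convention must be checked.
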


We will also need to know how to compute the quantum period of a
product in terms of the quantum periods of the factors.

\begin{proposition}[The small $J$-function of a product]
  Let $X$ and $Y$ be smooth projective varieties over $\CC$.  Recall
  that there is a canonical isomorphism $H^\bullet(X \times Y;\QQ)
  \cong H^\bullet(X;\QQ) \otimes H^\bullet(Y;\QQ)$, and that $\Lambda_{X \times Y}$ is a completion of $\Lambda_X \otimes \Lambda_Y$.  Let $\tau_X \in
  H^2(X)$ and $\tau_Y \in H^2(Y)$.  Then:
  \[
  J_{X \times Y}\big(\tau_X \otimes 1 + 1 \otimes \tau_Y\big) =
  J_X(\tau_X) \otimes J_Y(\tau_Y)
  \]
\end{proposition}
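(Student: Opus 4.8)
The plan is to sidestep a direct analysis of the moduli space $(X\times Y)_{0,1,\beta}$ and instead characterise $J_{X\times Y}$ as a distinguished solution of the quantum differential equation, one that the tensor product $J_X(\tau_X)\otimes J_Y(\tau_Y)$ visibly satisfies. Write $\tau = \tau_X\otimes 1 + 1\otimes\tau_Y \in H^2(X\times Y;\QQ)$. First I would settle the formal bookkeeping: under the K\"unneth isomorphism $e^{\tau/z} = e^{\tau_X/z}\otimes e^{\tau_Y/z}$, the Novikov rings satisfy $\Lambda_{X\times Y} = \Lambda_X\otimes\Lambda_Y$ (since $H_2(X\times Y;\ZZ) = H_2(X;\ZZ)\oplus H_2(Y;\ZZ)$ and $Q^{(\beta_X,\beta_Y)} = Q^{\beta_X}Q^{\beta_Y}$), and the tensor product of the two $z^{-1}$-series (both series in the single variable $z$) lands in $H^\bullet(X\times Y;\Lambda_{X\times Y})[\![z^{-1}]\!]$. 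Both sides of the claimed identity then have the normalised form $1 + \tau z^{-1} + O(z^{-2})$ required by \eqref{eq:JXasymptotics}, so it remains to match all higher Novikov orders.

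The structural input is that the small $J$-function is the unique solution of the quantum differential equation with unit leading term: for a basis $\{\phi_a\}$ of $H^2$ with dual linear coordinates $t^a$ on $H^2$, one has $z\,\partial_{t^a} J = \phi_a \star_\tau J$, where $\star_\tau$ is the quantum product, and $J$ is the only solution of the shape $e^{\tau/z}(1 + O(Q))$; this is standard (\cite{Cox--Katz}), the uniqueness being the Novikov-adic recursion that underlies Givental-style mirror theorems. I would then invoke the product (K\"unneth) axiom for three-point genus-zero Gromov--Witten invariants \cite{Kontsevich--Manin}---clean in this case because $\overline{M}_{0,3}$ is a point, so no domain-stabilisation correction intervenes---which yields the tensor decomposition of the quantum product: for $D \in H^2(X)$ one has $(D\otimes 1)\star^{X\times Y}_\tau = (D\star^X_{\tau_X})\otimes\id$, and symmetrically for $Y$. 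Differentiating $J_X(\tau_X)\otimes J_Y(\tau_Y)$ in a coordinate $t^a$ on $H^2(X)$ touches only the first factor and gives $(\phi_a\star^X_{\tau_X}J_X)\otimes J_Y = \big((\phi_a\otimes 1)\star^{X\times Y}_\tau\big)(J_X\otimes J_Y)$; the $H^2(Y)$-directions are handled symmetrically. Hence $J_X(\tau_X)\otimes J_Y(\tau_Y)$ solves the quantum differential equation of $X\times Y$ along $H^2$, and since it also has leading term $e^{\tau/z}(1 + O(Q))$, uniqueness forces it to equal $J_{X\times Y}(\tau)$.

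I expect the real subtlety to lie in the two imported facts, and it is instructive to say where. A direct, term-by-term proof would try to match the coefficient of each $Q^{(\beta_X,\beta_Y)}$. The degenerate terms are easy: when $\beta_Y = 0$ one has $(X\times Y)_{0,1,(\beta_X,0)} \cong X_{0,1,\beta_X}\times Y$ with split virtual class and $\psi = \psi_X$, so these reproduce the $J_X\otimes 1$ part (and symmetrically the $1\otimes J_Y$ part). The genuine obstacle is the mixed terms with both $\beta_X, \beta_Y \neq 0$: here the cotangent-line class $\psi$ on $(X\times Y)_{0,1,\beta}$ is \emph{not} the pullback of $\psi_X$ or $\psi_Y$ under the stabilisation maps to $X_{0,1,\beta_X}$ and $Y_{0,1,\beta_Y}$, the two differing by boundary corrections precisely when the marked point sits on a component that one projection contracts and the other does not. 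Because of the factor $1/z(z-\psi) = \sum_k z^{-k-2}\psi^k$, a direct approach must control all of these corrections simultaneously; routing through the quantum differential equation avoids this bookkeeping entirely, at the cost of needing the uniqueness of the normalised solution---the step I would set up most carefully, using that any $\beta \neq 0$ in the Mori cone pairs nontrivially with $H^2$, so that the recursion on Novikov degree closes.
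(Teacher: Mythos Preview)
Your proposal is correct and follows essentially the same approach as the paper: characterise $J_{X\times Y}$ via the quantum differential equations (the paper points to \cite{Coates--Givental}*{equation~16}), use the product formula for three-point genus-zero Gromov--Witten invariants \citelist{\cite{Kontsevich--Manin}\cite{Behrend:products}} together with the Divisor Equation to decompose the small quantum product on $X\times Y$, and conclude by uniqueness of the normalised solution. Your write-up is considerably more detailed than the paper's three-line sketch, and your discussion of why the direct $\psi$-class comparison is awkward is a helpful gloss, but the architecture is the same.
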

\begin{proof}
  Combine:
  \begin{itemize}
  \item the differential equations
    \cite{Coates--Givental}*{equation~16} that characterize the
    $J$-function;
  \item the fact that the small quantum product $\ast_\tau$, $\tau \in
    H^2$, is uniquely determined by three-point Gromov--Witten
    invariants and the Divisor Equation;
  \item the product formula for Gromov--Witten invariants
    \citelist{\cite{Kontsevich--Manin}\cite{Behrend:products}}
    relating three-point Gromov--Witten invariants of $X \times Y$ to
    those of $X$ and of $Y$.
  \end{itemize}
\end{proof}

\begin{corollary}[The quantum period of a product]
  \label{cor:products}
  Let $X$ and $Y$ be smooth projective varieties over $\CC$.  Then:
  \[
  G_{X \times Y}(t) = G_X(t)\, G_Y(t)
  \]
 \qed
\end{corollary}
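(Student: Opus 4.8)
The plan is to deduce the quantum-period identity $G_{X\times Y}(t)=G_X(t)\,G_Y(t)$ directly from the preceding Proposition on the small $J$-function of a product, by extracting the unit component and specializing the Novikov and equivariant parameters as in the definition of the quantum period in \S\ref{sec:quantum_period}. Recall that $G_X$ is obtained from the component of $J_X(0)$ along the unit class $1\in H^\bullet(X;\QQ)$ by setting $z=1$ and $Q^\beta=t^{\langle\beta,{-K_X}\rangle}$. So the task reduces to tracking how these three operations---taking the unit component, setting $z=1$, and specializing $Q$---interact with the tensor-product formula.

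First I would specialize the Proposition to $\tau_X=\tau_Y=0$, giving $J_{X\times Y}(0)=J_X(0)\otimes J_Y(0)$ under the canonical isomorphism $H^\bullet(X\times Y;\QQ)\cong H^\bullet(X;\QQ)\otimes H^\bullet(Y;\QQ)$. The key structural observation is that the unit class on the product factors as $1_{X\times Y}=1_X\otimes 1_Y$, and, dually, the projection onto the unit component is multiplicative: the coefficient of $1_X\otimes 1_Y$ in a pure tensor $a\otimes b$ is the product of the coefficient of $1_X$ in $a$ with the coefficient of $1_Y$ in $b$. Applying this to $J_X(0)\otimes J_Y(0)$ shows that the unit component of $J_{X\times Y}(0)$ is the product of the unit components of $J_X(0)$ and $J_Y(0)$, as power series in $z^{-1}$ with Novikov coefficients.

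Next I would handle the Novikov variables. Under the Künneth identification $H_2(X\times Y;\ZZ)\cong H_2(X;\ZZ)\oplus H_2(Y;\ZZ)$, we have ${-K_{X\times Y}}={-K_X}\oplus{-K_Y}$, so for $\beta=(\beta_X,\beta_Y)$ the pairing splits as $\langle\beta,{-K_{X\times Y}}\rangle=\langle\beta_X,{-K_X}\rangle+\langle\beta_Y,{-K_Y}\rangle$. Hence the substitution $Q^\beta\mapsto t^{\langle\beta,{-K_{X\times Y}}\rangle}$ sends the product grading to the sum of the two individual gradings; this is exactly the substitution that converts a tensor product of $t$-graded series into the ordinary product of the corresponding one-variable series. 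Setting $z=1$ throughout and combining with the previous paragraph yields $G_{X\times Y}(t)=G_X(t)\,G_Y(t)$.

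The only point requiring genuine care---and thus the main obstacle---is justifying that the unit-component projection is multiplicative across the tensor product, i.e.\ that mixed contributions from $H^{>0}(X)\otimes H^{<\dim}(Y)$ cannot feed back into the $1\otimes 1$ component. This is really a statement about the grading: since $1_X$ is the unique (up to scale) degree-zero class and contributions to the $1\otimes 1$ component must have total degree zero in each factor separately, no cross terms arise. I expect this to follow cleanly from the bigrading of $H^\bullet(X\times Y;\QQ)$ by $(\deg_X,\deg_Y)$, so that the computation is essentially bookkeeping once the $J$-function factorization is in hand.
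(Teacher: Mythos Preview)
Your proposal is correct and is exactly the intended deduction: the paper marks the corollary with \qed immediately after the Proposition on the small $J$-function of a product, treating it as an immediate consequence, and your argument spells out precisely that consequence (factorize $J_{X\times Y}(0)$, project to the unit via $1_{X\times Y}=1_X\otimes 1_Y$, use ${-K_{X\times Y}}={-K_X}\oplus{-K_Y}$ for the Novikov specialization, and set $z=1$).
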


\subsection*{Notation for Grassmannians}
\label{sec:notation}

We denote by $\Gr=\Gr(r,n)$, the manifold of $r$-dimensional vector
subspaces of $\CC^n$. Notation for the universal sequence:
\[
0\to S \to \CC^n \to Q \to 0
\]
where $S$ is the rank $r$ universal bundle of subspaces and $Q$ is the
rank $n-r$ universal bundle of quotients.  If $\lambda=(\lambda_1\geq
\lambda_2\geq \dots)$ is a partition or Young diagram, we denote by
$Z_\lambda \subset \Gr$ the Schubert variety corresponding to
$\lambda$ and by $\sigma_\lambda \in H^\bullet \bigl(\Gr;\ZZ\bigr)$
its class in cohomology.  It is well-known that
$c_i(S^\star)=\sigma_{1^i}$ and $c_i(Q)=\sigma_i$ for $i=1,2,3,\dots$.

We will need:
\begin{itemize}
\item the \emph{Pieri formula}: if $\lambda$ is a partition and $k\geq 0$
  an integer then
  \[
  \sigma_\lambda \cdot \sigma_k = \sum_{\stackrel{\mu \geq
      \lambda}{\text{adds $k$ boxes no two in a column}}} \sigma_{\mu}
  \]
\item the following elementary facts for $\Gr(2,5)$:
  \begin{itemize}
  \item The Pl\"ucker embedding sends the Schubert variety $Z_2=\big\{W\mid W\cap \langle
    e_0,e_1\rangle \ne \{0\}\big\}$ to the subset of $\PP^9$ defined
    by the equations $ z_{23}=z_{24}=z_{34}=0$ and:
    \[
    \rk
    \begin{pmatrix}
      z_{02} & z_{03} & z_{04}\\
      z_{12} & z_{13} & z_{14}
    \end{pmatrix}
    <2
    \]
  \item The Pl\"ucker embedding sends the Schubert variety $Z_{1,1}=\{W
    \mid W\subset \langle e_0,e_1,e_2,e_3\rangle\}\cong \Gr(2,4)$ to
    a nonsingular quadric.
  \end{itemize}
\end{itemize}

\section{The Abelian/non-Abelian Correspondence}
\label{sec:A_nA}

Our other main tool for computing quantum periods is the
Abelian/non-Abelian correspondence of Ciocan-Fontanine--Kim--Sabbah
\cite{CFKS}.  This expresses genus-zero Gromov--Witten invariants (or
twisted Gromov--Witten invariants) of $X \GIT G$, where $G$ is a
complex reductive Lie group and $X$ is a smooth projective variety, in
terms of genus-zero Gromov--Witten invariants (or twisted
Gromov--Witten invariants) of $X\GIT T$ where $T$ is a maximal torus
in $G$.  The computations for $X \GIT T$ are typically much easier ---
the methods of \S\S\ref{sec:toric}--\ref{sec:toric_ci} often apply,
for example --- so the Abelian/non-Abelian correspondence is a
powerful tool for calculations.  Ten of the seventeen smooth Fano
3-folds of Picard rank one are toric varieties or toric complete
intersections, and thus can be treated using the methods of
\S\S\ref{sec:toric}--\ref{sec:toric_ci}; the following Theorem allows
a uniform treatment of the remaining seven cases.

\begin{theorem}
  \label{thm:rank_1_A_nA}
  Let $\Gr$ denote the Grassmannian $\Gr(r,n)$ of $r$-dimensional
  subspaces of $\CC^n$; let $S \to \Gr$ denote the universal bundle of
  subspaces; and let $E \to \Gr$ denote the vector bundle:
  \[
  E =
  \Big( \det S^\star \Big)^{\oplus a} \oplus
  \Big( \det S^\star \otimes \det S^\star \Big)^{\oplus b} \oplus
  \Big( S^\star \otimes \det S^\star \Big)^{\oplus c} \oplus
  \Big( S \otimes \det S^\star \Big)^{\oplus d} \oplus
  \Big( \textstyle \bigwedge^2 S^\star \Big)^{\oplus e}
  \]
  Let $X$ be the subvariety of $\Gr$ cut out by a generic section
  of $E$, and suppose that:
  \[
  k = a + 2 b + (r+1)c + (r-1)d + (r-1) e - n
  \]
  is strictly negative.  Consider the cohomology algebra
  $H^\bullet\big((\PP^{n-1})^r;\QQ\big)$.  Let $p_i \in
  H^2\big((\PP^{n-1})^r\big)$, $1 \leq i \leq r$, denote the first
  Chern class of $\pi_i^\star \cO(1)$ where $\pi_i \colon
  (\PP^{n-1})^r \to \PP^{n-1}$ is projection to the $i$th factor of
  the product.  Let $p_\all = p_1 + \cdots + p_r$ and, for
  $(l_1,\ldots,l_r) \in \ZZ^r$, let $|l| = l_1 + \ldots + l_r$.
  Let:
  \begin{align*}
    \Gamma_{l_1,\ldots,l_r} & =
    \Bigg(\prod_{k=1}^{|l|} (p_\all+ k)\Bigg)^a
    \Bigg(\prod_{k=1}^{2|l|} (2p_\all+ k)\Bigg)^b
    \Bigg(\prod_{j=1}^r \prod_{k=1}^{|l| + l_j} (p_\all + p_j+ k)\Bigg)^c
    \\ & \qquad \qquad \qquad
    \Bigg(\prod_{j=1}^r \prod_{k=1}^{|l| - l_j} (p_\all - p_j+ k)\Bigg)^d
    \Bigg(\prod_{i=1}^{r-1} \prod_{j=i+1}^r \prod_{k=1}^{l_i+l_j} (p_i+p_j+k)\Bigg)^e
    \\
    \intertext{and let:}
    \Omega & = \prod_{i=1}^{r-1} \prod_{j=i+1}^{r} (p_j - p_i)
  \end{align*}
  The element:
  \begin{equation}
    \label{eq:A_nA_twisted_J}
    \sum_{l_1 = 0}^\infty
    \cdots
    \sum_{l_r = 0}^\infty
    {
      (-1)^{|l|(r-1)} t^{{-k}|l|}
      \Gamma_{l_1,\ldots,l_r}
      \over
      \prod_{j=1}^{r}
      \prod_{k=1}^{k=l_r} (p_j + k)^n
    }
    \prod_{i=1}^{r-1}
    \prod_{j=i+1}^r
    \big(p_j-p_i + (l_j-l_i) \big)
  \end{equation}
  of $H^\bullet\big((\PP^{n-1})^r;\QQ\big) \otimes \QQ[\![t]\!]$ is
  divisible by $\Omega$.  Let $I_\tw(t)$ be the scalar-valued function
  obtained by dividing \eqref{eq:A_nA_twisted_J} by $\Omega$ and
  taking the component along $H^0\big((\PP^{n-1})^r;\QQ\big)$.  Then
  the quantum period $G_X$ of $X$ satisfies:
  \[
  G_X(t) = e^{\alpha t} I_\tw(t)
  \]
  where $\alpha$ is the unique rational number such that the
  right-hand side has the form $1 + O(t^2)$.
\end{theorem}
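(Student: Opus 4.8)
The plan is to apply the Abelian/non-Abelian correspondence of Ciocan-Fontanine--Kim--Sabbah to the non-Abelian quotient $\Gr(r,n) = \Hom(\CC^r,\CC^n)\GIT\GL_r(\CC)$, whose Abelianization is the toric quotient $\Hom(\CC^r,\CC^n)\GIT (\Cstar)^r = (\PP^{n-1})^r$. The vector bundle $E$ on $\Gr$ is induced by a representation $\rho$ of $\GL_r(\CC)$, and the correspondence relates the twisted $J$-function of $(\Gr, E)$ to the twisted $J$-function of $\big((\PP^{n-1})^r, \widetilde E\big)$, where $\widetilde E$ is the bundle induced by the restriction of $\rho$ to the maximal torus $(\Cstar)^r$. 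Concretely, I would first compute the twisted $I$-function on the Abelian side using the methods of \S\ref{sec:toric_ci}: the factors in $\Gamma_{l_1,\ldots,l_r}$ are exactly the Quantum Lefschetz modifications $\prod_{m=1}^{\langle\beta,\rho_i\rangle}(\rho_i + mz)$ (at $z=1$) coming from the summands $\det S^\star$, $(\det S^\star)^{\otimes 2}$, $S^\star\otimes\det S^\star$, $S\otimes\det S^\star$, and $\bigwedge^2 S^\star$, whose $T$-weights are $p_{1\cdots r}$, $2p_{1\cdots r}$, $p_{1\cdots r}+p_j$, $p_{1\cdots r}-p_j$, and $p_i+p_j$ respectively. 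The denominator $\prod_j\prod_{k=1}^{l_r}(p_j+k)^n$ is the $J$-function of $(\PP^{n-1})^r$, reflecting that each $\PP^{n-1}$ factor contributes $\prod(p_j+mz)^{-n}$.

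Next I would invoke the precise statement of \cite{CFKS}: the non-Abelian twisted $I$-function is obtained from the Abelian one by multiplying by the Weyl-antiinvariant class $\prod_{i<j}(p_j - p_i)$ (this is $\Omega$, the product of positive roots of $\GL_r$), applying the ``quantum Euler factor'' $\prod_{i<j}\big(p_j - p_i + (l_j - l_i)z\big)$ that accounts for the roots acting on the curve classes, and then dividing by $\Omega$. This is exactly the recipe encoded in \eqref{eq:A_nA_twisted_J}: the antisymmetrizing factor $\prod_{i<j}(p_j-p_i+(l_j-l_i))$ together with the sign $(-1)^{|l|(r-1)}$ produces a class that, being Weyl-antiinvariant, is divisible by $\Omega$, and the quotient descends to a genuine class on $\Gr$. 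Setting $z=1$ and reading off the $H^0$-component gives $I_\tw(t)$, which by the correspondence equals the component of the non-Abelian twisted $J$-function $J_{\Gr,X}$ along the unit class. The hypothesis $k<0$ guarantees that $X$ is Fano and, via $\langle\beta,{-K_{\Gr}}-\Lambda\rangle > 0$, that $A(\tau)\equiv 1$ in the sense of Proposition~\ref{pro:ABC}, so no mirror map in $H^2$ is needed.

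Finally, exactly as in Corollary~\ref{cor:QL}, I would convert the twisted $J$-function of the ambient $\Gr$ into the quantum period of $X$ using equation \eqref{eq:KKP}, which relates $j_\star J_X$ to $J_{\Gr,X}\cup\prod_i\rho_i$ and hence extracts $G_X$ from the unit component of $J_{\Gr,X}$. The only subtlety is the String-Equation correction coming from the degree-one part of the twisted $I$-function (the analog of $e^{-C(\tau)/z}$ in Theorem~\ref{thm:toric_ci_ql}): since here we work at $\tau=0$ and set $Q^\beta = t^{\langle\beta,-K_X\rangle}$, this correction is a single exponential $e^{\alpha t}$ with $\alpha$ determined by normalizing the series to $1 + O(t^2)$, which reproduces the claimed formula $G_X(t) = e^{\alpha t} I_\tw(t)$.

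I expect the main obstacle to be verifying that the output of the Abelian/non-Abelian correspondence as stated in \cite{CFKS} really is the antisymmetrization-then-divide-by-$\Omega$ operation applied to the Abelian twisted $I$-function in precisely the form \eqref{eq:A_nA_twisted_J}, including the correct placement of the shift $(l_j - l_i)$ inside the root factors and the overall sign $(-1)^{|l|(r-1)}$. In particular one must check that the quantum Euler-class contribution of the bundle $E_{0,1,\beta}'$ on the non-Abelian side matches the product over roots on the Abelian side, and that the Weyl-antiinvariance is exact so that the division by $\Omega$ yields a polynomial (rather than rational) class; the convexity and nefness conditions needed for the twisted $J$-function to admit a non-equivariant limit and satisfy \eqref{eq:KKP} must also be confirmed under the hypothesis $k<0$.
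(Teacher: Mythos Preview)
Your proposal is correct and follows essentially the same route as the paper: set up the Abelian/non-Abelian correspondence for $\Gr(r,n)=\CC^{rn}\GIT\GL_r$ versus $(\PP^{n-1})^r=\CC^{rn}\GIT T$, compute the Abelian twisted $J$-function via Quantum Lefschetz, pass to the non-Abelian side using \cite{CFKS}, and absorb the degree-one correction into $e^{\alpha t}$ via the String Equation. Two small points where the paper's mechanism differs from your description: the recipe in \cite{CFKS}*{Theorems~4.1.1,~6.1.2} is not ``multiply by a quantum Euler factor'' but rather apply the differential operator $\prod_{i<j}\big(z\partial_{\tau_j}-z\partial_{\tau_i}\big)$ to $J_{\be,\cV_T}(\tau)$ and then specialize---on the explicit series this produces exactly the factors $\prod_{i<j}\big(p_j-p_i+(l_j-l_i)z\big)$ you wrote; and the sign $(-1)^{|l|(r-1)}$ arises not from antisymmetrization but from the Novikov-variable identification $Q_1=\cdots=Q_r=(-1)^{r-1}q$ dictated by the correspondence.
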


\begin{proof}
  The expression \eqref{eq:A_nA_twisted_J} is divisible by $\Omega$
  because it is totally antisymmetric in $p_1,\ldots,p_r$.  We know
  \emph{a priori} that $G_X(t) = 1 + O(t^2)$, so if there exists
  $\alpha \in \QQ$ such that $G_X(t) = e^{\alpha t} I_\tw(t)$ then
  this $\alpha$ is uniquely determined by the condition $e^{\alpha t}
  I_\tw(t) = 1 + O(t^2)$.  For the rest we use the Abelian/non-Abelian
  correspondence.  Consider the situation as in \S3.1 of \cite{CFKS}
  with:
  \begin{itemize}
  \item the space that is denoted by $X$ in \cite{CFKS} set equal to $A
    = \CC^{rn}$, regarded as the space of $r \times n$ matrices;
  \item $G = \GL_r(\CC)$, acting on $A$ by left-multiplication;
  \item $T = (\Cstar)^r$, the diagonal torus in $G$;
  \item the group that is denoted by $S$ in \cite{CFKS} set equal to the trivial group;
  \item $\cV$ equal to the representation:
    \[
    \Big( \det \Vstd \Big)^{\oplus a} \oplus
    \Big( \det \Vstd \otimes \det \Vstd \Big)^{\oplus b} \oplus
    \Big( \Vstd \otimes \det \Vstd \Big)^{\oplus c} \oplus
    \Big( \Vstd^\star \otimes \det \Vstd \Big)^{\oplus d} \oplus
    \Big( \textstyle \bigwedge^2 \Vstd \Big)^{\oplus e}
    \]
    where $\Vstd$ is the standard representation of $G$.
  \end{itemize}
  Then $A \GIT G$ is the Grassmannian $\Gr = \Gr(r,n)$ and $A \GIT T$
  is $(\PP^{n-1})^r$. The Weyl group $W=S_r$ permutes the $r$ factors
  of the product $(\PP^{n-1})^r$. The representation $\cV$ induces the
  vector bundle $\cV_G = E$ over $A \GIT G = \Gr$, and the
  representation $\cV$ induces the vector bundle:
  \begin{multline*}
    \cV_T =
    \Big(\cO(1,1,\ldots,1)\Big)^{\oplus a} \oplus
    \Big(\cO(2,2,\ldots,2)\Big)^{\oplus b} \oplus
    \Big( \oplus_{j=1}^r \cO(1,1,\ldots,1) \otimes \pi_j^\star \cO(1) \Big)^{\oplus c}
    \\ \oplus
    \Big( \oplus_{j=1}^r \cO(1,1,\ldots,1) \otimes \pi_j^\star \cO(-1) \Big)^{\oplus d} \oplus
    \Big( \oplus_{i=1}^{r-1} \oplus_{j=i+1}^r \pi_i^\star \cO(1)
    \otimes \pi_j^\star \cO(1) \Big)^{\oplus e}
  \end{multline*}
  over $A \GIT T = (\PP^{n-1})^r$.

  We fix a lift of $H^\bullet(A \GIT G;\QQ)$ to $H^\bullet(A \GIT
  T,\QQ)^W$ in the sense of \cite{CFKS}*{\S3}; there are many possible
  choices for such a lift, and the precise choice made will be
  unimportant in what follows.  The lift allows us to regard
  $H^\bullet(A \GIT G;\QQ)$ as a subspace of $H^\bullet(A \GIT
  T,\QQ)^W$, which maps isomorphically to the Weyl-anti-invariant part
  $H^\bullet(A \GIT T,\QQ)^a$ of $H^\bullet(A \GIT T,\QQ)$ via:
  \[
  \xymatrix{
    H^\bullet(A \GIT T,\QQ)^W \ar[rr]^{\cup \Omega} &&
    H^\bullet(A \GIT T,\QQ)^a}
  \]
  We compute the quantum period of $X$ by computing the $J$-function
  of $\Gr = A \GIT G$ twisted \cite{Coates--Givental} by the
  Euler class and the bundle $\cV_G$, using the Abelian/non-Abelian
  correspondence \cite{CFKS}.

  We begin by computing the $J$-function of $A \GIT T$ twisted by the
  Euler class and the bundle $\cV_T$.  As in
  \S\ref{sec:quantum_lefschetz}, and as in \cite{CFKS}, consider the
  bundles $\cV_T$ and $\cV_G$ equipped with the canonical
  $\Cstar$-action that rotates fibers and acts trivially on the base.
  We will compute the twisted $J$-function $J_{\be,\cV_T}$ of $A \GIT
  T$ using the Quantum Lefschetz theorem; $J_{\be,\cV_T}$ was defined
  in equation \eqref{eq:twisted_J} above, and is the restriction to
  the locus $\tau \in H^0(A \GIT T) \oplus H^2(A \GIT T)$ of what was
  denoted by $J^{S \times \Cstar}_{\cV_T}(\tau)$ in \cite{CFKS}.  The
  toric variety $A \GIT T$ is Fano, and Theorem~\ref{thm:toric_mirror}
  gives:
  \[
  J_{A \GIT T}(\tau) =
  e^{\tau/z}
  \sum_{l_1 = 0}^\infty
  \cdots
  \sum_{l_r = 0}^\infty
  {
    Q_1^{l_1} \cdots Q_r^{l_r}
    e^{l_1 \tau_1} \cdots e^{l_r \tau_r}
    \over
    \prod_{j=1}^{r}
    \prod_{k=1}^{k=l_j} (p_j + k z)^n
  }
  \]
  where $\tau = \tau_1 p_1 + \cdots + \tau_r p_r$ and we have
  identified the group ring $\QQ[H_2(A \GIT T;\ZZ)]$ with
  $\QQ[Q_1,\ldots,Q_r]$ via the $\QQ$-linear map that sends $Q^\beta$
  to $Q_1^{\langle \beta, p_1 \rangle} \cdots Q_r^{\langle \beta,
    p_r\rangle}$.  Each line bundle summand in $\cV_T$ is nef, and the
  condition $k<0$ ensures that $c_1(A \GIT T) - c_1(\cV_T)$ is ample,
  so Theorem~\ref{thm:toric_ci_ql} gives:
  \begin{equation}
    \label{eq:JeVT_A_nA_rank_1}
    J_{\be,\cV_T}(\tau) =
    e^{c(Q_1 e^{\tau_1} + \cdots + Q_r e^{\tau_r})/z}
    e^{\tau/z}
    \sum_{l_1 = 0}^\infty
    \cdots
    \sum_{l_r = 0}^\infty
    {
      Q_1^{l_1} \cdots Q_r^{l_r}
      e^{l_1 \tau_1} \cdots e^{l_r \tau_r}
      \Gamma_{l_1,\ldots,l_r}(\lambda,z)
      \over
      \prod_{j=1}^{r}
      \prod_{k=1}^{k=l_j} (p_j + k z)^n
    }
  \end{equation}
  for some rational number $c$, where:
  \begin{multline*}
    \Gamma_{l_1,\ldots,l_r}(\lambda,z) =
    \Bigg(\prod_{k=1}^{|l|} (\lambda + p_\all+ k z)\Bigg)^a
    \Bigg(\prod_{k=1}^{2|l|} (\lambda + 2p_\all+ k z)\Bigg)^b
    \Bigg(\prod_{j=1}^r \prod_{k=1}^{|l| + l_j} (\lambda + p_\all + p_j+ k z)\Bigg)^c
    \\
    \Bigg(\prod_{j=1}^r \prod_{k=1}^{|l| - l_j} (\lambda + p_\all - p_j+ k z)\Bigg)^d
    \Bigg(\prod_{i=1}^{r-1} \prod_{j=i+1}^r \prod_{k=1}^{l_i+l_j} (\lambda + p_i+p_j+k z)\Bigg)^e
  \end{multline*}
  The prefactor $e^{c (Q_1 e^{\tau_1} + \cdots + Q_r e^{\tau_r})/z}$ in
  \eqref{eq:JeVT_A_nA_rank_1} comes from the prefactor
  $e^{-C(\tau)/z}$ in Theorem~\ref{thm:toric_ci_ql}.

  Consider now $A \GIT G$ and a point $t \in H^\bullet(A \GIT G)$.  In
  \cite{CFKS}*{\S6.1} the authors consider the lift $\tilde{J}^{S
    \times \Cstar}_{\cV_G}(t)$ of their twisted $J$-function $J^{S
    \times \Cstar}_{\cV_G}(t)$ determined by a choice of lift
  $H^\bullet(A \GIT G;\QQ) \to H^\bullet(A \GIT T,\QQ)^W$.  We
  restrict to the locus $t \in H^0(A \GIT G;\QQ) \oplus H^2(A \GIT
  G;\QQ)$, considering the lift:
  \begin{align*}
    \tilde{J}_{\be,\cV_G}(t) := \tilde{J}^{S \times \Cstar}_{\cV_G}(t)
    && t \in H^0(A \GIT G;\QQ) \oplus H^2(A \GIT G;\QQ)
  \end{align*}
  of our twisted $J$-function $J_{\be,\cV_G}$ determined by our choice
  of lift $H^\bullet(A \GIT G;\QQ) \to H^\bullet(A \GIT T,\QQ)^W$.
  Let $p$ be the ample generator for $H^2(A \GIT G; \ZZ) \cong \ZZ$
  and identify the group ring $\QQ[H_2(A \GIT G;\ZZ)]$ with $\QQ[q]$
  via the $\QQ$-linear map which sends $Q^\beta$ to $q^{\langle
    \beta,p \rangle}$.  Theorems~4.1.1 and~6.1.2 in \cite{CFKS} imply
  that:
  \[
  \tilde{J}_{\be,\cV_G}\big(\theta(t)\big) \cup \Omega = \Big[
  \textstyle
  \Big( \prod_{i=1}^{r-1} \prod_{j=i+1}^r
  \big(z {\partial \over \partial \tau_j}
  -
  z {\partial \over \partial \tau_i}
  \big) \Big) J_{\be,\cV_T}(\tau) \Big]_{\tau = t, Q_1  = \cdots = Q_r = (-1)^{r-1} q}
  \]
  for some\footnote{The map $\theta$ here is the inverse to the map denoted by $\varphi$ in~\cite{CFKS}; it is grading-preserving where
    cohomology classes have their usual degree and $q$ has degree
    ${-2}k$.  Furthermore $\theta$ is the identity map modulo $q$.
    It follows that $\theta(0) = c' q \in H^0(A \GIT G;\QQ) \otimes
    \Lambda_{A \GIT G}$ for some $c' \in \QQ$, and that $c'= 0$
    whenever $k<-1$.}  function $\theta \colon H^2(A \GIT G;\QQ) \to
  H^\bullet(A \GIT G; \Lambda_{A \GIT G})$ such that $\theta(0) = c'
  q \in H^0(A \GIT G;\QQ) \otimes \Lambda_{A \GIT G}$. Setting $t = 0$
  gives:
  \[
  \tilde{J}_{\be,\cV_G}(c'q) \cup \Omega =
  e^{\pm c r q/z}
  \sum_{l_1 = 0}^\infty
  \cdots
  \sum_{l_r = 0}^\infty
  {
    (-1)^{|l|(r-1)} q^{|l|}
    \Gamma_{l_1,\ldots,l_r}(\lambda,z)
    \over
    \prod_{j=1}^{r}
    \prod_{k=1}^{k=l_j} (p_j + k z)^n
  }
  \prod_{i=1}^{r-1} \prod_{j=i+1}^{r} \big(p_j-p_i + (l_j-l_i) z \big)
  \]
  The String Equation gives:
  \[
  \tilde{J}_{\be,\cV_G}(c'q)  = e^{c'q/z} \tilde{J}_{\be,\cV_G}(0)
  \]
  and therefore:
  \begin{equation}
    \label{eq:A_nA_almost_there}
    \tilde{J}_{\be,\cV_G}(0) \cup \Omega =
    e^{\alpha q/z}
    \sum_{l_1 = 0}^\infty
    \cdots
    \sum_{l_r = 0}^\infty
    {
      (-1)^{|l|(r-1)} q^{|l|}
      \Gamma_{l_1,\ldots,l_r}(\lambda,z)
      \over
      \prod_{j=1}^{r}
      \prod_{k=1}^{k=l_j} (p_j + k z)^n
    }
    \prod_{i=1}^{r-1} \prod_{j=i+1}^{r} \big(p_j-p_i + (l_j-l_i) z \big)
  \end{equation}
  where $\alpha = {-c}' \pm c r$.  Note that if $k<-1$ then $\alpha=
  0$, for in that case both $c$ and $c'$ are zero.  Note also that $
  \Gamma_{l_1,\ldots,l_r}(0,1)$ coincides with what was denoted $
  \Gamma_{l_1,\ldots,l_r}$ in the statement of the Theorem.

  We saw in Example~\ref{ex:ql_with_mirror_map} how to extract the
  quantum period $G_X$ from the twisted $J$-function
  $J_{\be,\cV_G}(0)$: we take the non-equivariant limit $\lambda \to
  0$, extract the component along the unit class $1 \in H^\bullet(A
  \GIT G;\QQ)$, set $z=1$, and set $Q^\beta = t^{\langle \beta, {-K_X}
    \rangle}$.  Thus we consider the right-hand side of
  \eqref{eq:A_nA_almost_there}, take the non-equivariant limit,
  extract the coefficient of $\Omega$, set $z=1$, and set $q=t^{-k}$.
  The Theorem follows.
\end{proof}

\section{Fano Manifolds of Dimension $1$ and $2$}
\label{sec:dim12}
As a warm-up exercise, and because we will need some of these results
in the three-dimensional calculation, we now compute the quantum
periods for all Fano manifolds of dimension $1$ and $2$.

\begin{example}
  \label{ex:P1}
  There is a unique Fano manifold of dimension~$1$: the projective
  line $\PP^1$.  This is the toric variety with weight data:
  \[
  \begin{array}{rr}
    1 & 1
  \end{array}
  \]
  and nef cone given by the non-negative half-line in $\RR$.
  Corollary~\ref{cor:toric_mirror} gives:
  \[
  G_{\PP^1}(t) = \sum_{d=0}^\infty
  \frac{t^{2d}}{(d!)^2}
  \]
\end{example}

\subsection*{del~Pezzo Surfaces} There are 10 deformation families of
Fano manifolds of dimension~$2$: these are the del~Pezzo surfaces.  It is
well-known that, up to deformation:
\begin{itemize}
\item there is a unique smooth Fano surface of degree~$9$, being the projective plane $\PP^2$;
\item there are two smooth Fano surfaces of degree~$8$, being the
  Hirzebruch surface $\FF_1$ and the product of projective lines
  $\PP^1 \times \PP^1$;
\item there is a unique deformation class of smooth Fano surfaces
  $S_d$ of degree~$d$, $1 \leq d \leq 7$.
\end{itemize}
Given this, it is easy to see that the del~Pezzo surfaces can be
constructed, and their quantum periods calculated, as follows.

\begin{example}
  \label{ex:P2}
  The del~Pezzo surface $\PP^2$ is the toric variety with weight data:
  \[
  \begin{array}{rrr}
    1 & 1 & 1
  \end{array}
  \]
  and nef cone equal to the non-negative half-line.
  Corollary~\ref{cor:toric_mirror} gives:
  \[
  G_{\PP^2}(t) = \sum_{d=0}^\infty
  \frac{t^{3d}}{(d!)^3}
  \]
\end{example}

\begin{example}
  \label{ex:P1xP1}
  The del~Pezzo surface $\PP^1 \times \PP^1$ is the toric variety with
  weight data:
  \[
  \begin{array}{rrrrl}
    1 & 1 & 0 & 0 & \hspace{1.5ex} L \\
    0 & 0 & 1 & 1 & \hspace{1.5ex} M \\
  \end{array}
  \]
  and nef cone equal to $\langle L, M \rangle$.  (Here and henceforth, $\langle L_1,\ldots,L_k\rangle$ denotes the cone spanned by $L_1,\ldots,L_k$.)
  Corollary~\ref{cor:toric_mirror} gives:
  \[
  G_{\PP^1 \times \PP^1}(t) = \sum_{l=0}^\infty \sum_{m=0}^\infty
  \frac{t^{2l+2m}}{(l!)^2(m!)^2}
  \]
\end{example}

\begin{example}
  \label{ex:F1}
  The del~Pezzo surface $\FF_1$ is the toric variety with weight data:
  \[
  \begin{array}{rrrrl}
    1 & 1 & -1 & 0 & \hspace{1.5ex} L \\
    0 & 0 & 1 & 1 & \hspace{1.5ex} M \\
  \end{array}
  \]
  and nef cone equal to $\langle L, M \rangle$.
  Corollary~\ref{cor:toric_mirror} gives:
  \[
  G_{\FF_1}(t) = \sum_{l=0}^\infty \sum_{m=l}^\infty
  \frac{t^{l+2m}}{(l!)^2(m-l)!m!}
  \]
\end{example}

\begin{example}
  \label{ex:S7}
  The del~Pezzo surface $S_7$ is the toric variety with weight data:
  \[
  \begin{array}{rrrrrl}
    1 & 0 &  1  & -1  & 0 &    \hspace{1.5ex} L \\
    0 & 1 &  1  &  0   &-1 &  \hspace{1.5ex} M \\
    0 & 0 & -1  &  1  & 1 &    \hspace{1.5ex} N \\
  \end{array}
  \]
  and nef cone equal to $\langle L, M, N \rangle$.
  Corollary~\ref{cor:toric_mirror} gives:
  \[
  G_{S_7}(t) = \sum_{l=0}^\infty \sum_{m=0}^\infty \sum_{n=\max(l,m)}^{l+m}
  \frac{t^{l+m+n}}{l!m!(l+m-n)!(n-l)!(n-m)!}
  \]
\end{example}

\begin{example}
  \label{ex:S6}
  The del~Pezzo surface $S_6$ is the toric variety with weight data:
  \[
  \begin{array}{rrrrrrl}
    1 & 0 & 0 &  0  & 1 & -1  &  \hspace{1.5ex} A\\
    0 & 1 & 0 &  0  & 1 & 0  &  \hspace{1.5ex} B\\
    0 & 0 & 1 &  0  & 0 &  1 &  \hspace{1.5ex} C\\
    0 & 0 & 0 &  1  & -1 &  1 &  \hspace{1.5ex} D
  \end{array}
  \]
  and nef cone equal to $\langle A+B, B+C, C+D, A+B+C,
  B+C+D\rangle$.  Corollary~\ref{cor:toric_mirror} gives:
  \[
  G_{S_6}(t) = \sum_{a=0}^\infty \sum_{b=0}^\infty \sum_{c=0}^\infty \sum_{d=\max(a-c,0)}^{a+b}
  \frac{t^{a+2b+2c+d}}{a!b!c!d!(a+b-d)!(c+d-a)!}
  \]
\end{example}

\begin{example}
  \label{ex:S5}
  The del~Pezzo surface $S_5$ is a hypersurface of bidegree $(1,2)$ in
  $\PP^1 \times \PP^2$.  The ambient space $\PP^1 \times \PP^2$ is the
  toric variety with weight data:
  \[
  \begin{array}{rrrrrl}
    1 & 1 & 0 & 0 & 0 & \hspace{1.5ex} L \\
    0 & 0 & 1 & 1 & 1 & \hspace{1.5ex} M \\
  \end{array}
  \]
  and nef cone equal to $\langle L, M \rangle$, and $S_5$ is a member
  of $|L+2M|$ on $\PP^1 \times \PP^2$.  Corollary~\ref{cor:QL} gives:
  \[
  G_{S_5}(t) = e^{-3t}
  \sum_{l=0}^\infty \sum_{m=0}^\infty
  t^{l+m} \frac{(l+2m)!}{(l!)^2(m!)^3}
  \]
\end{example}

\begin{example}
  \label{ex:S4}
  A complete intersection of type $(2,2)$ in $\PP^4$ is a del~Pezzo
  surface~$S_4$.  Proposition~\ref{pro:wps} gives:
  \[
  G_{S_4}(t) = e^{-4t}
  \sum_{d=0}^\infty
  t^d \frac{(2d)!(2d)!}{(d!)^5}
  \]
\end{example}

\begin{example}
  \label{ex:S3}
  A cubic surface in $\PP^3$ is a del~Pezzo surface~$S_3$.
  Proposition~\ref{pro:wps} gives:
  \[
  G_{S_3}(t) = e^{-6t}
  \sum_{d=0}^\infty
  t^d \frac{(3d)!}{(d!)^4}
  \]
\end{example}

\begin{example}
  \label{ex:S2}
  A quartic surface in $\PP(1,1,1,2)$ is a del~Pezzo surface~$S_2$.
  Proposition~\ref{pro:wps} gives:
  \[
  G_{S_2}(t) = e^{-12t}
  \sum_{d=0}^\infty
  t^d \frac{(4d)!}{(d!)^3(2d)!}
  \]
\end{example}

\begin{example}
  \label{ex:S1}
  A sextic surface in $\PP(1,1,2,3)$ is a del~Pezzo surface~$S_1$.
  Proposition~\ref{pro:wps} gives:
  \[
  G_{S_1}(t) = e^{-60t}
  \sum_{d=0}^\infty
  t^d \frac{(6d)!}{(d!)^2(2d)!(3d)!}
  \]
\end{example}

\section{Notation for 3-Dimensional Fano Manifolds}

We fix notation for 3-dimensional Fano manifolds as follows.
\begin{itemize}
\item $\PP^3$ denotes $3$-dimensional complex projective space;
\item $Q^3$ denotes a quadric hypersurface in $\PP^4$;
\item $V_k$ denotes the 3-dimensional Fano manifold of Picard rank~$1$, Fano index~$1$, and degree~$k$;
\item $B_k$ denotes the 3-dimensional Fano manifold of Picard rank~$1$, Fano index~$2$, and degree~$8k$;
\item $\MM{\rho}{k}$ denotes the $k$th entry in the Mori--Mukai list~\cite{MM:fanoconf} of 3-dimensional Fano manifolds of Picard rank~$\rho$, with the exception of the case $\rho=4$ where we reorder the manifolds, placing the 13th entry in Mori--Mukai's rank-4 list~\cite{MM:fanoconf}*{pages~48--49} in between the first and second elements of that list.  This reordering ensures that, for each $\rho$, the sequence $\MM{\rho}{1}$,~$\MM{\rho}{2}$,~$\MM{\rho}{3}$,\ldots~is in order of increasing degree.
\end{itemize}

\newcounter{CustomSectionCounter}
\renewcommand\thesection{\arabic{CustomSectionCounter}}

\addtocounter{CustomSectionCounter}{1}
\section{The Fano Manifold $\PP^3$}
\label{anchor:P3}

\subsection*{Name:} $\PP^3$

\subsection*{Iskovskikh Classification:} This is case 1 in \cite{Isk:2}*{Table~6.5}.

\subsection*{Construction:}
The Fano toric variety $X$ with weight data:
\[
\begin{array}{rrrrl} 
  1 & 1 & 1 & 1 & \hspace{1.5ex} L\\ 
\end{array}
\]
and $\Amp(X)$ spanned by $L$.

\subsection*{The quantum period:}  Corollary~\ref{cor:toric_mirror} yields:
\[
G_X(t) = \sum_{d =0}^\infty {t^{4d} \over (d!)^4}  
\]
and regularizing gives:
\[
\hG_X(t) = 1 + 24 t^4 + 2520 t^8 + 369600 t^{12} + \cdots
\]
\subsection*{Minkowski period sequence:} \href{http://www.grdb.co.uk/search/period3?id=1&printlevel=2}{1}


\addtocounter{CustomSectionCounter}{1}
\section{The Fano Manifold $Q^3$}
\label{anchor:Q3}

\subsection*{Name:} $Q^3$

\subsection*{Iskovskikh Classification:} This is case 2 in \cite{Isk:2}*{Table~6.5}.

\subsection*{Construction:} A divisor $X$ of degree 2 on $F=\PP^4$.

\subsection*{The quantum period:} The toric variety $F$ has weight
data:
\[
\begin{array}{rrrrrl} 
  1 & 1 & 1 & 1 & 1 & \hspace{1.5ex} L\\ 
\end{array}
\]
and $\Amp(F) = \langle L \rangle$.  We have:
\begin{itemize}
\item $F$ is a Fano variety;
\item $X\sim 2L$ is ample;
\item $-(K_F+X)\sim 3L$ is ample.
\end{itemize}
Corollary~\ref{cor:QL} yields:
\[
G_X(t) = \sum_{d = 0}^\infty t^{3d} \frac{(2d)!}{(d!)^5}
\]
and regularizing gives:
\[
\hG_X(t) = 1+12 t^3+540 t^6+33600 t^9+2425500 t^{12} + \cdots
\]

\subsection*{Minkowski period sequence:} \href{http://www.grdb.co.uk/search/period3?id=3&printlevel=2}{3}


\addtocounter{CustomSectionCounter}{1}
\section{The Fano Manifold $B_1$}
\label{anchor:B1}
\label{sec:B1}

\subsection*{Name:} $B_1$

\subsection*{Iskovskikh Classification:} This is case 3 in \cite{Isk:2}*{Table~6.5}.

\subsection*{Construction:} A sextic hypersurface $X$ in $\PP(1,1,1,2,3)$.

\subsection*{The quantum period:} Proposition~\ref{pro:wps} yields:
\[
G_X(t) = \sum_{d =0}^\infty t^{2d} \frac{(6d)!}{(d!)^3 (2d)! (3d)!}
\]
and regularizing gives:
\[
\hG_X(t) = 1+120 t^2+83160 t^4+81681600 t^6+93699005400 t^8+117386113965120 t^{10} + \cdots
\]

\subsection*{Minkowski period sequence:} None.  Note that the
anticanonical line bundle of $B_1$ is not very ample.


\addtocounter{CustomSectionCounter}{1}
\section{The Fano Manifold $B_2$}
\label{anchor:B2}
\label{sec:B2}

\subsection*{Name:} $B_2$

\subsection*{Iskovskikh Classification:} This is case 4 in \cite{Isk:2}*{Table~6.5}.

\subsection*{Construction:} A quartic hypersurface $X$ in $\PP(1,1,1,1,2)$.

\subsection*{The quantum period:} Proposition~\ref{pro:wps} yields:
\[
G_X(t) = \sum_{d = 0}^\infty t^{2d} \frac{(4d)!}{(d!)^4 (2d)!}
\]
and regularizing gives:
\[
\hG_X(t) = 1+24 t^2+2520 t^4+369600 t^6+63063000 t^8+11732745024 t^{10}+ \cdots
\]

\subsection*{Minkowski period sequence:} \href{http://www.grdb.co.uk/search/period3?id=140&printlevel=2}{140}


\addtocounter{CustomSectionCounter}{1}
\section{The Fano Manifold $B_3$}
\label{anchor:B3}

\subsection*{Name:} $B_3$

\subsection*{Iskovskikh Classification:} This is case 5 in \cite{Isk:2}*{Table~6.5}.

\subsection*{Construction:} A divisor $X$ of degree 3 on $F=\PP^4$.

\subsection*{The quantum period:} The toric variety $F$ has weight
data:
\[
\begin{array}{rrrrrl} 
  1 & 1 & 1 & 1 & 1 & \hspace{1.5ex} L\\ 
\end{array}
\]
and $\Amp(F) = \langle L \rangle$.  We have:
\begin{itemize}
\item $F$ is a Fano variety;
\item $X\sim 3L$ is ample;
\item $-(K_F+X)\sim 2L$ is ample.
\end{itemize}
Corollary~\ref{cor:QL} yields:
\[
G_X(t) = \sum_{d=0}^\infty t^{2d} \frac{(3d)!}{(d!)^5}
\]
and regularizing gives:
\[
\hG_X(t) = 1+12 t^2+540 t^4+33600 t^6+2425500 t^8+190702512 t^{10} + \cdots
\]

\subsection*{Minkowski period sequence:} \href{http://www.grdb.co.uk/search/period3?id=106&printlevel=2}{106}


\addtocounter{CustomSectionCounter}{1}
\section{The Fano Manifold $B_4$}
\label{anchor:B4}

\subsection*{Name:} $B_4$

\subsection*{Iskovskikh Classification:} This is case 6 in \cite{Isk:2}*{Table~6.5}.

\subsection*{Construction:} A codimension-2 complete intersection $X$
of type $(2L) \cap (2L)$ in the toric variety $F=\PP^5$.

\subsection*{The quantum period:} The toric variety $F$ has weight
data:
\[
\begin{array}{rrrrrrl} 
  1 & 1 & 1 & 1 & 1 & 1 & \hspace{1.5ex} L\\ 
\end{array}
\]
and $\Amp(F) = \langle L \rangle$.  We have:
\begin{itemize}
\item $F$ is a Fano variety;
\item $X$ is the complete intersection of two ample divisors on $F$
\item $-(K_F+\Lambda)\sim 2L$ is ample.
\end{itemize}
Corollary~\ref{cor:QL} yields:
\[
G_X(t) = \sum_{d = 0}^\infty t^{2d} \frac{(2d)!(2d)!}{(d!)^6}
\]
and regularizing gives:
\[
\hG_X(t) = 1+8 t^2+216 t^4+8000 t^6+343000 t^8+16003008 t^{10} + \cdots
\]

\subsection*{Minkowski period sequence:} \href{http://www.grdb.co.uk/search/period3?id=75&printlevel=2}{75}


\addtocounter{CustomSectionCounter}{1}
\section{The Fano Manifold $B_5$}
\label{anchor:B5}

\label{sec:B5}
\subsection*{Name:} $B_5$

\subsection*{Iskovskikh Classification:} This is case 7 in \cite{Isk:2}*{Table~6.5}.

\subsection*{Construction:}  A complete intersection $X$ in
$\Gr(2,5)$ cut out by a section of $\cO(1)^{\oplus 3}$, where $\cO(1)$
is the pullback of $\cO(1)$ on projective space under the Pl\"ucker
embedding.

\subsection*{The quantum period:}  The line bundle $\cO(1)$ is the ample
generator of $\Pic(\Gr(2,5))$, hence $\cO(1)$ coincides with
$\det(S^\star)$ where $S$ is the universal bundle of subspaces on
$\Gr(2,5)$.  We apply Theorem~\ref{thm:rank_1_A_nA} with $a=3$ and
$b=c=d=e=0$, obtaining:
\[
G_X(t) = \sum_{l=0}^\infty \sum_{m=0}^\infty
(-1)^{l+m}
t^{2l+2m}
\frac
{
  \big((l+m)!\big)^3
}
{
  (l!)^5 (m!)^5 
}
\big(1-5 (m-l)H_m \big)
\]
where $H_m$ is the $m$th harmonic number.  Regularizing yields:
\[
\hG_X(t) = 1+6 t^2+114 t^4+2940 t^6+87570 t^8+2835756 t^{10}+\cdots
\]

\subsection*{Minkowski period sequence:} \href{http://www.grdb.co.uk/search/period3?id=46&printlevel=2}{46}


\addtocounter{CustomSectionCounter}{1}
\section{The Fano Manifold $V_2$}
\label{anchor:V2}

\subsection*{Name:} $V_2$

\subsection*{Iskovskikh Classification:} This is case 8 in \cite{Isk:2}*{Table~6.5}.

\subsection*{Construction:} A sextic hypersurface $X$ in $\PP(1,1,1,1,3)$.

\subsection*{The quantum period:} Proposition~\ref{pro:wps} yields:
\[
G_X(t) = e^{-120t} \sum_{d = 0}^\infty t^{d} \frac{(6d)!}{(d!)^4 (3d)!}
\]
and regularizing gives:
\begin{multline*}
  \hG_X(t) = 1+68760 t^2+55200000 t^3+61054781400 t^4+71591389125120
  t^5+88808827978814400 t^6\\
  +114426010259814758400 t^7+151686694219076253783000 t^8\\+205548259807393951744128000 t^9 + \cdots
\end{multline*}

\subsection*{Minkowski period sequence:} None.  Note that the
anticanonical line bundle of $V_2$ is not very ample.


\addtocounter{CustomSectionCounter}{1}
\section{The Fano Manifold $V_4$}
\label{anchor:V4}

\subsection*{Name:} $V_4$

\subsection*{Iskovskikh Classification:} This is cases~9 and~10 in \cite{Isk:2}*{Table~6.5}.  These cases are deformation equivalent: they can both be described as 
complete intersections of type $(2,4)$ in $\PP(1,1,1,1,1,2)$.

\subsection*{Construction:} A divisor $X$ of degree 4 on $F=\PP^4$.

\subsection*{The quantum period:} The toric variety $F$ has weight
data:
\[
\begin{array}{rrrrrl} 
  1 & 1 & 1 & 1 & 1 & \hspace{1.5ex} L\\ 
\end{array}
\]
and $\Amp(F) = \langle L \rangle$.  We have:
\begin{itemize}
\item $F$ is a Fano variety;
\item $X\sim 4L$ is ample;
\item $-(K_F+X)\sim L$ is ample.
\end{itemize}
Corollary~\ref{cor:QL} yields:
\[
G_X(t) = e^{-24t}\sum_{d=0}^\infty t^{d} \frac{(4d)!}{(d!)^5}
\]
and regularizing gives:
\begin{multline*}
  \hG_X(t) = 1+1944 t^2+215808 t^3+35295192 t^4+5977566720
  t^5+1073491139520 t^6+199954313717760 t^7\\
  +38302652395770840 t^8+7497487505353251840 t^9 + \cdots
\end{multline*}

\subsection*{Minkowski period sequence:} \href{http://www.grdb.co.uk/search/period3?id=165&printlevel=2}{165}


\addtocounter{CustomSectionCounter}{1}
\section{The Fano Manifold $V_6$}
\label{anchor:V6}

\subsection*{Name:} $V_6$

\subsection*{Iskovskikh Classification:} This is case 11 in \cite{Isk:2}*{Table~6.5}.

\subsection*{Construction:} A codimension-2 complete intersection $X$
of type $(2L) \cap (3L)$ in the toric variety $F=\PP^5$.

\subsection*{The quantum period:} The toric variety $F$ has weight
data:
\[
\begin{array}{rrrrrrl} 
  1 & 1 & 1 & 1 & 1 & 1 & \hspace{1.5ex} L\\ 
\end{array}
\]
and $\Amp(F) = \langle L \rangle$.  We have:
\begin{itemize}
\item $F$ is a Fano variety;
\item $X$ is the complete intersection of two ample divisors;
\item $-(K_F+\Lambda) \sim L$ is ample.
\end{itemize}
Corollary~\ref{cor:QL} yields:
\[
G_X(t) = e^{-12t} \sum_{d=0}^\infty t^{d} \frac{(2d)!(3d)!}{(d!)^6}
\]
and regularizing gives:
\begin{multline*}
  \hG_X(t) = 1+396 t^2+17616 t^3+1217052 t^4+85220640 t^5+6349812480
  t^6+490029523200 t^7\\+38883641777820 t^8+3152020367254080 t^9 + \cdots
\end{multline*}

\subsection*{Minkowski period sequence:} \href{http://www.grdb.co.uk/search/period3?id=164&printlevel=2}{164}


\addtocounter{CustomSectionCounter}{1}
\section{The Fano Manifold $V_8$}
\label{anchor:V8}

\subsection*{Name:} $V_8$

\subsection*{Iskovskikh Classification:} This is case 12 in \cite{Isk:2}*{Table~6.5}.

\subsection*{Construction:} A codimension-3 complete intersection $X$
of type $(2L) \cap (2L) \cap (2L)$ in the toric variety $F=\PP^6$.

\subsection*{The quantum period:} The toric variety $F$ has weight
data:
\[
\begin{array}{rrrrrrrl} 
  1 & 1 & 1 & 1 & 1 & 1 & 1 & \hspace{1.5ex} L\\ 
\end{array}
\]
and $\Amp(F) = \langle L \rangle$.  We have:
\begin{itemize}
\item $F$ is a Fano variety;
\item $X$ is the complete intersection of three ample divisors;
\item $-(K_F+\Lambda)\sim L$ is ample.
\end{itemize}
Corollary~\ref{cor:QL} yields:
\[
G_X(t) = e^{-8t} \sum_{d=0}^\infty t^{d} \frac{(2d)!(2d)!(2d)!}{(d!)^7}
\]
and regularizing gives:
\begin{multline*}
  \hG_X(t) = 1+152 t^2+3840 t^3+157656 t^4+6428160 t^5+280064960
  t^6+12618762240 t^7\\+584579486680 t^8
  +27660007173120 t^9 + \cdots
\end{multline*}

\subsection*{Minkowski period sequence:} \href{http://www.grdb.co.uk/search/period3?id=163&printlevel=2}{163}


\addtocounter{CustomSectionCounter}{1}
\section{The Fano Manifold $V_{10}$}
\label{anchor:V10}

\subsection*{Name:} $V_{10}$

\subsection*{Iskovskikh Classification:} This is case 13 in \cite{Isk:2}*{Table~6.5}.

\subsection*{Construction:}  A complete intersection $X$ in $\Gr(2,5)$,
cut out by a section of $\cO(1)^{\oplus 2} \oplus \cO(2)$ where
$\cO(1)$ is the pullback of $\cO(1)$ on projective space under the
Pl\"ucker embedding.  

\subsection*{The quantum period:} We apply
Theorem~\ref{thm:rank_1_A_nA} with $a=2$, $b=1$, and $c=d=e=0$.  This
yields:
\[
G_X(t) = e^{-6t} \sum_{l=0}^\infty \sum_{m=0}^\infty
(-1)^{l+m}
t^{l+m}
\frac
{
  \big((l+m)!\big)^2 (2l+2m)!
}
{
  (l!)^5 (m!)^5 
}
\big(1-5 (m-l)H_m \big)
\]
where $H_m$ is the $m$th harmonic number.  Regularizing gives:
\begin{multline*}
  \hG_X(t) = 1+78 t^2+1320 t^3+37746 t^4+1051920 t^5+31464780
  t^6+971757360 t^7 \\
  +30859805970 t^8+1000739433120 t^9 +\cdots
\end{multline*}

\subsection*{Minkowski period sequence:} \href{http://www.grdb.co.uk/search/period3?id=160&printlevel=2}{160}


\addtocounter{CustomSectionCounter}{1}
\section{The Fano Manifold $V_{12}$}
\label{anchor:V12}

\subsection*{Name:} $V_{12}$

\subsection*{Iskovskikh Classification:} This is case 14 in \cite{Isk:2}*{Table~6.5}.

\subsection*{Construction:} A subvariety $X$ of $\Gr(2,5)$ cut out by
a generic section of $\big(S^\star \otimes \det S^\star\big) \oplus
\det S^\star$, where $S$ is the universal bundle of subspaces on
$\Gr(2,5)$.

\subsection*{A remark on the construction:} The paper of Mukai
\cite{Mukai-CSI} is devoted to this case and it is shown there that
$X$ is a complete intersection of $7$ hyperplane sections of the
($10$-dimensional) orthogonal Grassmannian $\OGr (5,10)$ in its spinor
embedding in $\PP^{15}$. This model contains $X$ as a \emph{linear}
section and, perhaps more important, is the largest hyperplane
``un-section'' of $X$. Our construction, on the other hand, is
better-suited for the fast calculation of the quantum period of $X$.

Write $V=\CC^5$; in what follows, for ease of notation, we denote by
$\cO(1)$ the line bundle $\det S^\star$ on $\Gr(2,V)=\Gr(2,5)$.  Let
$\Sigma\subset \Gr(2,V)$ be the vanishing locus of a general section
$s$ of the vector bundle $S^\star \otimes \cO(1)$. Below we sketch a general
construction of a natural linear embedding $\Sigma \subset \OGr
(5,10)$; this shows that our construction and Mukai's construction
coincide.  To compute the quantum period of $X$, however, we need
rather less.  Gromov--Witten invariants are deformation-invariant so,
since there is a unique deformation family of $V_{12}$s
\citelist{\cite{Isk:1}\cite{Isk:2}}, it suffices to show that our
construction gives a smooth member of this family.  In other words, it
suffices to prove that $\Sigma$ is a rank-$1$ Fano \mbox{4-fold} of
Fano index $2$---hence coindex $3$ in Mukai's terminology---and degree
$12$.

The Picard rank of $\Sigma$ is $1$ by Sommese's Theorem
\cite{Lazarsfeld}*{Theorem~7.1.1} and, from the exact sequence:
\[
0\to T_\Sigma\to T_{\Gr(2,5)}\big|_\Sigma \to S^\star \otimes \cO(1) \big|_\Sigma \to 0 
\]
we get that:
\[
-K_\Sigma=\Bigl(-K_{Gr(2,5)} \otimes \wedge^2 \bigl(S \otimes \cO(-1)\bigr)\Bigr)\big|_\Sigma=\cO_\Sigma (2)
\]
That is, $\Sigma$ is a Fano \mbox{4-fold} of Fano index $2$. It
remains to show that $\Sigma$ has degree $12$; this is a small
calculation in Schubert calculus:
\[
[\Sigma]= c_2(S^\star \otimes \det S^\star)=\sigma_{1,1}+2\sigma_1^2 =3\sigma_{1,1}+2\sigma_2
\]
and therefore:
\[
\deg \Sigma=[\Sigma]\sigma_1^4=\sigma_{1,1}\sigma_1^4+2\sigma_1^5=2+10=12
\]

We next sketch the promised construction of a linear embedding $\Sigma
\subset \OGr(5,10)$. The first task is to construct a rank-$5$
vector bundle on $\Sigma$---the bundle that will be the pull-back of
the tautological sub-bundle of $\OGr(5,10)$.

We claim that $\Ext^1_\Sigma (S^\star, Q)=\CC$ and take $E$ the unique
nontrivial extension. To calculate this $\Ext$ group consider the
Koszul resolution of $\cO_\Sigma$:
\[
0\to \cO(-3)\to S \otimes \cO(-1)\to\cO_{\Gr(2,V)}\to \cO_\Sigma \to 0 
\]  
Tensoring by $S\otimes Q$ and using $H^1\bigl(\Gr(2,V); S\otimes
Q\bigr)=H^2\bigl(\Gr(2,V); S\otimes Q\bigr)=\{0\}$ (Borel--Weil--Bott)
and $H^2\bigl(\Gr(2,V); S\otimes Q \otimes \cO(-3)\bigr)=H^3\bigl(\Gr(2,V);
S\otimes Q \otimes \cO(-3)\bigr)=\{0\}$ (Borel--Weil--Bott) we get:
\[
\Ext^1_\Sigma(S^\star,
Q)=H^1(\Sigma; S\otimes Q)=H^2\bigl(\Gr(2,V); S\otimes
Q\otimes S \otimes \cO(-1)\bigr)=\CC 
\]
again by Borel--Weil--Bott.

As anticipated, denote now by $E$ the unique nontrivial rank-$5$ extension:
\[
0\to Q \to E \to S^\star \to 0 
\]
The bundle $E$ fits into a natural self-dual ``diagram of $9$'':
\[
\xymatrix{
          & 0\ar[d]               & 0\ar[d]              & 0\ar[d] &  \\
0\ar[r]&S\ar[d]\ar[r]&E^\star\ar[d]\ar[r]&Q^\star\ar[d]\ar[r]&0\\  
0\ar[r]&V\ar[d]\ar[r]&W\ar[d]\ar[r]&V^\star\ar[d]\ar[r]&0\\
0\ar[r]&Q\ar[d]\ar[r]&E\ar[d]\ar[r]&S^\star\ar[r]\ar[d]&0\\
          &0 & 0 & 0 &}
\]
where $W=V\oplus V^\star$. The diagram makes it clear that $E\subset
V\oplus V^\star$ is isotropic when $V\oplus V^\star$ is equipped with
the canonical nondegenerate symmetric bilinear form. Thus $E$ induces
a map $\Sigma \to \OGr(5, V\oplus V^\star)$.
 
\subsection*{The quantum period:} We apply
Theorem~\ref{thm:rank_1_A_nA} with $a = c = 1$ and $b = d = e = 0$.
This yields:
\[
G_X(t) = e^{-5t} \sum_{l, m \geq 0}
({-t})^{l+m}\sum_{l=0}^\infty \sum_{m=0}^\infty
\frac
{
  \big(l+m)! (2l+m)! (l+2m)!
}
{
  (l!)^5 (m!)^5 
}
\big(1+(m-l)(H_{2l+m} + 2 H_{l + 2m}-5H_m) \big)
\]
where $H_k$ denotes the $k$th harmonic number.  Regularizing gives:
\begin{multline*}
  \hG_X(t) = 1+48 t^2+600 t^3+13176 t^4+276480 t^5+6259800
  t^6+146064240 t^7 \\
  +3505282200 t^8+85882130880 t^9 +\cdots
\end{multline*}

\subsection*{Minkowski period sequence:} \href{http://www.grdb.co.uk/search/period3?id=150&printlevel=2}{150}


\addtocounter{CustomSectionCounter}{1}
\section{The Fano Manifold $V_{14}$}
\label{anchor:V14}

\subsection*{Name:} $V_{14}$

\subsection*{Iskovskikh Classification:} This is case 15 in \cite{Isk:2}*{Table~6.5}.

\subsection*{Construction:} A complete intersection $X$ in $\Gr(2,6)$, cut
out by a section of $\cO(1)^{\oplus 5}$ where $\cO(1)$ is the pullback
of $\cO(1)$ on projective space under the Pl\"ucker embedding
\citelist{\cite{Mukai-CG}\cite{Gushel:83}\cite{Gushel:92}}.

\subsection*{The quantum period:} We apply
Theorem~\ref{thm:rank_1_A_nA} with $a=5$ and $b=c=d=e=0$.  This
yields:
\[
G_X(t) = e^{-4t} \sum_{l, m \geq 0}
(-1)^{l+m}
t^{l+m}
\frac
{
  \big((l+m)!\big)^5
}
{
  (l!)^6 (m!)^6
}
\big(1-6 (m-l)H_m \big)
\]
where $H_m$ is the $m$th harmonic number.  Regularizing gives:
\begin{multline*}
  \hG_X(t) = 1+32 t^2+312 t^3+5520 t^4+91680 t^5+1651640 t^6+30604560
  t^7 \\ +583436560 t^8+11352768000 t^9+\cdots
\end{multline*}

\subsection*{Minkowski period sequence:} \href{http://www.grdb.co.uk/search/period3?id=147&printlevel=2}{147}


\addtocounter{CustomSectionCounter}{1}
\section{The Fano Manifold $V_{16}$}
\label{anchor:V16}

\subsection*{Name:} $V_{16}$

\subsection*{Iskovskikh Classification:} This is case 16 in \cite{Isk:2}*{Table~6.5}.

\subsection*{Construction:} The vanishing locus $X$ of a general section
of the vector bundle:
\[
\wedge^2 S^\star \oplus (\det S^\star)^{\oplus 3}
\]
on $\Gr(3,6)$.

\subsection*{A remark on the construction:} The paper
\cite{Mukai-CSII} of Mukai is devoted to this case and it is shown
there that $X$ is a complete intersection of $3$ hyperplane sections
of the ($6$-dimensional) symplectic Grassmannian $\SpGr (3, 6)$ of
complex Lagrangian $3$-dimensional subspaces $W\subset \CC^6$ where
$\CC^6$ is equipped with the standard symplectic form $\omega\in
\wedge^2 \CC^{6\, \star}$, in the Pl\"ucker embedding inherited from
$\Gr(3, 6)$. Indeed, the natural surjection $\wedge^2 \CC^{6\, \star}
\to \wedge^2 S^\star$ induces an isomorphism:
\[
H^0\bigl(\Gr(3,6); \wedge^2 \CC^{6\, \star}\bigr) \cong
H^0\bigl(\Gr(3,6); \wedge^2 S^\star\bigr)
\]
that allows us to view $\omega$ as an element of $H^0\bigl(\Gr(3,6);
\wedge^2 S^\star\bigr)$ with zero locus $\SpGr(3,6)$.  Thus the
construction given above coincides with that given by Mukai (ibid.).

\subsection*{The quantum period:} We apply
Theorem~\ref{thm:rank_1_A_nA} with $a=3$, $b=c=d=0$, and $e=1$.  This
yields:
\[
G_X(t) = 1 + 12 t^2 + 32 t^3 + 121 t^4 + 336 t^5 + \textstyle\frac{2548}{3} t^6 +
1888 t^7 + \frac{60481}{16} t^8 + \frac{185350}{27} t^9 + \cdots
\]
Regularizing gives:
\begin{multline*}
  \hG_X(t) = 1 + 24 t^2 + 192 t^3 + 2904 t^4 + 40320 t^5 + 611520 t^6 + 9515520 t^7 + \
  152412120 t^8 + 2491104000 t^9 + \cdots
\end{multline*}

\subsection*{Minkowski period sequence:} \href{http://www.grdb.co.uk/search/period3?id=143&printlevel=2}{143}


\addtocounter{CustomSectionCounter}{1}
\section{The Fano Manifold $V_{18}$}
\label{anchor:V18}

\subsection*{Name:} $V_{18}$

\subsection*{Iskovskikh Classification:} This is case 17 in \cite{Isk:2}*{Table~6.5}.

\subsection*{Construction:} The vanishing locus $X$ of a general
section of the vector bundle:
\[
\bigl( S \otimes \det S^\star\big) \oplus \det S^{\star \,\oplus 2} 
\]
on $\Gr(5,7)$.

\subsection*{A remark on the construction:} The paper \cite{Mukai-G2}
is devoted to this case and it is shown there that $X$ is a complete
intersection of $2$ hyperplane sections of a ($5$-dimensional)
homogeneous space $\Sigma=G_2/P$ for the exceptional Lie group
$G_2$. It is not hard to argue from first principles that $\Sigma$ is
the vanishing locus of a general section of $S^\star \otimes \det
S^\star$. We sketch this here, assuming that the reader is acquainted
with basic facts about the geometry of the Lie group $G_2$. Fix a
$7$-dimensional complex vector space $V=\CC^7$ and a $3$-form
$\varphi\in \wedge^3 V^\star$ in the generic $GL_7(\CC)$-orbit; we may
take:
\[
\varphi =
dx^{124}+dx^{235}+dx^{346}+dx^{457}+dx^{561}+dx^{672}+dx^{713} 
\]
where $dx^{ijk}=dx^i\wedge dx^j\wedge dx^k$. Then:
\[
\Sigma = \bigl\{W\in \Gr(2,V)\mid \varphi (w_1,w_2, \cdot) \equiv 0 \;
\text{for all}\; w_1,w_2\in W \bigr\} 
\]
As usual denote by $0\to S \to V\to Q\to 0$ the tautological sequence
on $\Gr(2,V)$. Note that $\rk S^\star =2$, hence $\wedge^3S^\star =0$,
and therefore there is a natural homomorphism $\wedge^3 V^\star \to
Q^\star \otimes (\wedge^2 S^\star)$.  This homomorphism allows us to:
\begin{itemize}
\item view $\varphi$ as an element $s_\varphi \in
  H^0\bigl(\Gr(2,7);Q^\star\otimes (\det S^\star)  \bigr)$; and
\item identify $\Sigma$ with $Z(s_\varphi)$.
\end{itemize}
Finally, we get our construction upon identifying $\Gr(2,V)$ with $\Gr(5,V^\star)$.
 
\subsection*{The quantum period:} We apply
Theorem~\ref{thm:rank_1_A_nA} with $a=2$, $d=1$ and $b=c=e=0$.  This
yields:
\[
G_X(t) = \textstyle
1 + 9t^2 + 20t^3 + \frac{261}{4}t^4 + 153t^5 + \frac{1317}{4}t^6 + 621t^7 +
\frac{67581}{64}t^8 + \frac{351641}{216}t^9 + \cdots
\]
Regularizing gives:
\begin{multline*}
  \hG_X(t) = 1 + 18t^2 + 120t^3 + 1566t^4 + 18360t^5 + 237060t^6 +
  3129840t^7 \\ +  42576030t^8 + 590756880t^9 + \cdots
\end{multline*}

\subsection*{Minkowski period sequence:} \href{http://www.grdb.co.uk/search/period3?id=124&printlevel=2}{124}


\addtocounter{CustomSectionCounter}{1}
\section{The Fano Manifold $V_{22}$}
\label{anchor:V22}

\subsection*{Name:} $V_{22}$

\subsection*{Iskovskikh Classification:} This is case 18 in \cite{Isk:2}*{Table~6.5}.

\subsection*{Construction:} The vanishing locus $X$ of a general
section of the vector bundle:
\[
\bigl( S \otimes \det S^\star\big)^{\oplus \, 3}
\]
on $\Gr(3,7)$ (cf.~\citelist{\cite{Mukai:Trieste}\cite{Mukai:new_developments}}).

\subsection*{The quantum period:} 
We apply Theorem~\ref{thm:rank_1_A_nA} with $d=3$ and $a=b=c=e=0$.
This yields:
\[
G_X(t) = \textstyle 1 + 6t^2 + 10t^3 + \frac{53}{2}t^4 + 48t^5 +
\frac{977}{12}t^6 + 120t^7 + \frac{5117}{32}t^8 +
\frac{5210}{27}t^9 + \cdots
\]
Regularizing gives:
\[
\hG_X(t) = 1 + 12t^2 + 60t^3 + 636t^4 + 5760t^5 + 58620t^6 +
604800t^7 + 6447420t^8 + 70022400t^9 + \cdots
\]

\subsection*{Minkowski period sequence:} \href{http://www.grdb.co.uk/search/period3?id=113&printlevel=2}{113}

\addtocounter{CustomSectionCounter}{1}

\section{The Fano Manifold $\MM{2}{1}$}
\label{sec:2-1}
\label{anchor:2--1}

\subsection*{Mori--Mukai name:} 2--1 

\subsection*{Mori--Mukai construction:} The blow-up of $B_1$ (see \S\ref{sec:B1}) with
centre an elliptic curve which is the intersection of two members of $
|{-\frac{1}{2}}K_{B_1}|$.

\subsection*{Our construction:} A divisor $X$ of bidegree $(1,1)$ in
the product $\PP^1 \times B_1$.

\subsection*{The two constructions coincide:} Apply
Lemma~\ref{lem:blowups} with $V = \cO_{B_1} \oplus \cO_{B_1}$,
$W={-\frac{1}{2}}K_{B_1}$, and $f\colon V \to W$ the map given by the two
sections of ${-\frac{1}{2}}K_{B_1}$ that define the elliptic curve.

\subsection*{The quantum period:} Combining Example~\ref{ex:P1}, the
calculation in Section~\ref{sec:B1}, and Corollary~\ref{cor:products},
we have:
\[
G_{\PP^1 \times B_1}(t) = \sum_{l=0}^\infty \sum_{m =0}^\infty
t^{2l+2m} \frac{(6m)!}{(l!)^2 (m!)^3 (2m)! (3m)!}
\]
Applying Remark~\ref{rem:twisting_by_multiples_of_K} yields:
\[
G_X(t) = e^{-61t} \sum_{l=0}^\infty \sum_{m =0}^\infty
t^{l+m} \frac{(6m)!(l+m)!}{(l!)^2 (m!)^3 (2m)! (3m)!}
\]
and regularizing gives:
\begin{multline*}
  \hG_X(t) = 1+10380 t^2+2082840 t^3+650599740 t^4+199351017360
  t^5+64604751907800 t^6\\ +21521865311226000 t^7+7344504146141322300
  t^8+2554251417295177437600 t^9 + \cdots
\end{multline*}

\subsection*{Minkowski period sequence:} None.  Note that the
anticanonical line bundle of $X$ is not very ample.


\addtocounter{CustomSectionCounter}{1}
\section{The Fano Manifold $\MM{2}{2}$}
\label{sec:2-2}
\label{anchor:2--2}

\subsection*{Mori--Mukai name:} 2--2

\subsection*{Mori--Mukai construction:} A double cover of $ \PP^1
\times \PP^2$ branched along a divisor of bidegree $ (2,4)$.

\subsection*{Our construction:} A member $X$ of $|2L+4M|$ in the toric
variety $F$ with weight data:
\[
\begin{array}{rrrrrrl} 
  \multicolumn{1}{c}{x_0} & 
  \multicolumn{1}{c}{x_1} & 
  \multicolumn{1}{c}{y_0} & 
  \multicolumn{1}{c}{y_1} & 
  \multicolumn{1}{c}{y_2} & 
  \multicolumn{1}{c}{w} & \\ 
  \cmidrule{1-6}
  1 & 1 & 0 & 0 & 0 & 1  & \hspace{1.5ex} L\\ 
  0 & 0 & 1 & 1 & 1 & 2 & \hspace{1.5ex} M \\
\end{array}
\]
and $\Amp F = \langle L, L+2M \rangle$.  We have:
\begin{itemize}
\item $-K_F=3L+5M$ is ample, that is $F$ is a smooth Fano orbifold\footnote{By `smooth orbifold', we mean `smooth Deligne--Mumford stack over $\CC$'.  Excellent introductions to Deligne--Mumford stacks can be found in~\cite{Fantechi} and~\cite{Vistoli}*{Appendix}; note that in the latter reference Deligne--Mumford stacks are called `algebraic stacks'. By `smooth Fano orbifold', we mean `smooth orbifold such that the coarse moduli space is a Fano variety'.};
\item $X\sim 2L+4M$ is nef;
\item $-(K_F+X)\sim L+M$ is ample.
\end{itemize}

\subsection*{The two constructions coincide:} Consider the defining
equation of $ X$ to be $ w^2=f_{2,4}$, where $f_{2,4}$ is a
bihomogeneous polynomial of degrees $2$ in $x_0$,~$x_1$ and $4$ in
$y_0$,~$y_1$,~$y_2$.  Let $p \colon F \dashrightarrow \PP^1 \times
\PP^2$ be the rational map which sends (contravariantly) the
homogeneous co-ordinate functions $[x_0,x_1,y_0,y_1,y_2] $ on
$\PP^1_{x_0,x_1} \times \PP^2_{y_0,y_1,y_2}$ to
$[x_0,x_1,y_0,y_1,y_2]$.  The restriction of $p$ to $X$ is a morphism,
which exhibits $X$ as a double cover of $\PP^1 \times \PP^2$ branched
over the locus $(f_{2,4}=0) \subset \PP^1_{x_0,x_1} \times
\PP^2_{y_0,y_1,y_2}$.

\subsection*{Remarks on our construction:} Next we make some comments
on the geometry of $X$ and the embedding $X\subset F$ that are not
logically necessary for the computation of the quantum period: this
subsection can safely be skipped by the impatient reader.  In
particular we explain why in this case $\Amp F$ is a proper subset of
$\Amp X$.  The toric variety $F$ is defined by the requirement that
$\Amp F = \langle L, L+2M\rangle $; the unstable locus is
$(x_0=x_1=0)\cup (y_0=y_1=y_2=w=0)$ and:
\[
F=\bigl[(\CC^\times)^2\times(\CC^\times)^4/\TT^2\bigr] 
\]
Note that $F$ is itself a Fano variety---or, more precisely, a smooth
Fano orbifold---and $X$ is a nef divisor on $F$ such that ${-(K_F+X)}$
is ample, so the given model is well-adapted for computing the quantum
cohomology of $X$ via Quantum Lefschetz.  The linear system
$|L|=|x_0,x_1|$ defines a morphism $f\colon F \to \PP^1_{x_0,x_1}$
with fibre the weighted projective space $\PP(1,1,1,2)$; the
restriction $f|_{X}\colon X\to \PP^1$ is one of the two extremal
contractions of $X$. On the other hand, the linear system
$|M|=|y_0,y_1,y_2|$ is not base point free on $F$: the base locus is a
section $C$ of the morphism $f$. When restricted to $X$, however, this
linear system is free and it defines the ``other'' extremal
contraction $X \to \PP^2$. In particular, we see that $\langle L,L+2M
\rangle = \Amp F\subsetneq \Amp X = \langle L, M \rangle$. How can we
see the rest of $\Amp X$?

Let us denote by $F^\prime$ the toric variety corresponding to the
``other'' chamber, so that $\Amp F^\prime=\langle L+2M, M\rangle$ and
the unstable locus is now $(y_0=y_1=y_2=0)\cup (x_0=x_1=w=0)$. Note
that $F^\prime$ is the flip of $F$ along the curve
$C=(y_0=y_1=y_2=0)\subset F$. $X$ is a member of $|2L+4M|$, a nef
linear system on $F^\prime$, but ${-(K_{F^\prime}+X)}$ is not nef on
$F'$ and so this construction of $X$ is not well-adapted for computing
the quantum cohomology of $X$ via Quantum Lefschetz. Nevertheless,
$\Amp X = \Amp F + \Amp F^\prime$, so we need $F^\prime$ to see all of
$\Amp X$. The linear system $|y_0,y_1,y_2|$ is free on $F^\prime$ and
it defines an extremal contraction $g\colon F^\prime \to \PP^2$ with
fibre $\PP^2$; this also gives the missing extremal contraction of
$X$.

\subsection*{The quantum period:} If we assume a mirror theorem for
toric orbifolds in the form \cite{Iritani:integral}*{Conjecture~4.3}
then we can apply the Quantum Lefschetz theorem for orbifolds
\cite{CCIT}, exactly as in Proposition~\ref{pro:wps}, to obtain:
\[
G_X(t) = e^{-14t} \sum_{l=0}^\infty \sum_{m=0}^\infty
t^{l+m} \frac{(2l+4m)!}{(l!)^2 (m!)^3 (l+2m)!}
\]
Regularizing gives:
\begin{multline}
  \label{eq:regularized_quantum_period_2_2}
  \hG_X(t) = 1+470 t^2+21216 t^3+1562778 t^4+114717120 t^5+9003183140
  t^6+731280419520 t^7\\
  +61092935052730 t^8+5214279501137280 t^9+ \cdots
\end{multline}

\subsection*{Minkowski period sequence:} None.  Note that the
anticanonical line bundle of $X$ is not very ample.

\subsection*{The quantum period, alternative construction:} There is
as yet no proof of \cite{Iritani:integral}*{Conjecture~4.3} in the
literature so we give an alternative calculation of the quantum period
for $X$.  This uses a different model of $X$, as a member of $|2N|$ in
the toric variety $F$ with weight data:
\[
\begin{array}{rrrrrrrl} 
  \multicolumn{1}{c}{x_0} & 
  \multicolumn{1}{c}{x_1} & 
  \multicolumn{1}{c}{y_0} & 
  \multicolumn{1}{c}{y_1} & 
  \multicolumn{1}{c}{y_2} & 
  \multicolumn{1}{c}{w} & 
  \multicolumn{1}{c}{z} & 
  \\ 
  \cmidrule{1-7}
  1 & 1 & 0 & 0 & 0 & -1 & 0 & \hspace{1.5ex} L\\ 
  0 & 0 & 1 & 1 & 1 & -2 & 0 & \hspace{1.5ex} M \\
  0 & 0 & 0 & 0 & 0 &   1 & 1 & \hspace{1.5ex} N
\end{array}
\]
and $\Amp F = \langle L, M, N \rangle$.  The variety $F$ is the
projective bundle $\PP(\cO_{\PP^1 \times \PP^2}(-1,-2) \oplus
\cO_{\PP^1 \times \PP^2})$ over $\PP^2$.  Let $p\colon F \to \PP^1
\times \PP^2$ be the projection map, and consider the defining
equation of $X$ to be:
\[
z^2 - w^2 f_{2,4} = 0
\]
where $f_{2,4}$ is a bihomogeneous polynomial of degrees $2$ in
$x_0$,~$x_1$ and $4$ in $y_0$,~$y_1$,~$y_2$.  The restriction $p|_X
\colon X \to \PP^1 \times \PP^2$ exhibits $X$ as a double cover of
$\PP^1 \times \PP^2$ branched over the locus $(f_{2,4}=0) \subset
\PP^1_{x_0,x_1} \times \PP^2_{y_0,y_1,y_2}$. 

We now compute the quantum period of $X$.  Let $p_1$, $p_2, p_3 \in
H^\bullet(F;\ZZ)$ denote the first Chern classes of $L$, $M$, and $N$
respectively; these classes form a basis for $H^2(F;\ZZ)$.  Write
$\tau \in H^2(F;\QQ)$ as $\tau = \tau_1 p_1 + \tau_2 p_2+ \tau_3 p_3$
and identify the group ring $\QQ[H_2(F;\ZZ)]$ with the polynomial ring
$\QQ[Q_1,Q_2,Q_3]$ via the $\QQ$-linear map that sends the element
$Q^\beta \in \QQ[H_2(F;\ZZ)]$ to $Q_1^{\langle \beta,p_1\rangle}
Q_2^{\langle \beta,p_2\rangle} Q_3^{\langle \beta,p_3\rangle}$.  The
toric variety $F$ is Fano; Theorem~\ref{thm:toric_mirror} gives:
\[
J_F(\tau) = e^{\tau/z} 
\sum_{l, m, n \geq 0} 
\frac{Q_1^l Q_2^m Q_3^n e^{l \tau_1} e^{m \tau_2} e^{n \tau_3}}
{
  \prod_{k=1}^l (p_1 + k z)^2 
  \prod_{k=1}^m (p_2 + k z)^3
  \prod_{k=1}^n (p_3 + k z) 
}
\frac{\prod_{k = -\infty}^0 (p_3-p_1-2p_2 + k z)}
{\prod_{k = -\infty}^{n-l-2m} (p_3-p_1-2p_2 + k z)}
\]
and hence:
\[
I_{\be,E}(\tau) = e^{\tau/z} 
\sum_{l, m, n \geq 0} 
\frac{
  Q_1^l Q_2^m Q_3^n e^{l \tau_1} e^{m \tau_2} e^{n \tau_3}
  \prod_{k=1}^{2n} (\lambda + 2p_3 + k z)
}
{
  \prod_{k=1}^l (p_1 + k z)^2 
  \prod_{k=1}^m (p_2 + k z)^3
  \prod_{k=1}^n (p_3 + k z) 
}
\frac{\prod_{k = -\infty}^0 (p_3-p_1-2p_2 + k z)}
{\prod_{k = -\infty}^{n-l-2m} (p_3-p_1-2p_2 + k z)}
\]
We have:
\[
I_{\be,E}(\tau) = A(\tau)+ B(\tau)  z^{-1} + O(z^{-2})
\]
where:
\begin{align*}
  A(\tau) & = \sum_{n=0}^\infty Q_3^n e^{n \tau_3} \frac{(2n)!}{(n!)^2}\\
  & = (1 - 4 Q_3 e^{\tau_3})^{-1/2} \\
  B(\tau) & = \sum_{n=1}^\infty Q_1 e^{\tau_1} Q_3^n e^{n \tau_3} 
  \frac{(2n)!}{n!(n-1)!} 
  + \sum_{n=2}^\infty Q_2 e^{\tau_2} Q_3^n e^{n \tau_3} 
  \frac{(2n)!}{n!(n-2)!} \\
  & \qquad \qquad
  + \sum_{n = 0}^\infty Q_3^n e^{n \tau_3} \frac{(2n)!}{(n!)^2}
  \Big( (\lambda + 2p_3) H_{2n} - p_3 H_n - (p_3 - p_1 - 2p_2) H_n \Big)
  \\
  \intertext{and $H_m$ is the $m$th harmonic number.  In the notation of Corollary~\ref{cor:toric_ci_JYX}, we have:}
  A(\tau) &= (1 - 4 Q_3 e^{\tau_3})^{-1/2} \\
  B'(\tau) &= \sum_{n=1}^\infty Q_1 e^{\tau_1} Q_3^n e^{n \tau_3} 
  \frac{(2n)!}{n!(n-1)!} 
  + \sum_{n=2}^\infty Q_2 e^{\tau_2} Q_3^n e^{n \tau_3} 
  \frac{(2n)!}{n!(n-2)!} \\
  & \qquad \qquad
  + \sum_{n = 0}^\infty Q_3^n e^{n \tau_3} \frac{(2n)!}{(n!)^2}
  \Big(p_3 (2H_{2n} - H_n) - (p_3 - p_1 - 2p_2) H_n \Big) \\
  & = 2 Q_1 e^{\tau_1} Q_3 e^{\tau_3} (1-4 Q_3 e^{\tau_3})^{-3/2} + 
  12 Q_2 e^{\tau_2} Q_3^2 e^{2 \tau_3} (1-4 Q_3 e^{\tau_3})^{-5/2} \\
  & \qquad \qquad 
  - p_3 (1 - 4 Q_3 e^{\tau_3})^{-1/2} \log (1 - 4 Q_3 e^{\tau_3}) 
  - (p_3 - p_1 - 2p_2) 
  \sum_{n = 0}^\infty Q_3^n e^{n \tau_3} \frac{(2n)!}{(n!)^2}
  H_n
\end{align*}
Corollary~\ref{cor:toric_ci_JYX} gives:
\[
J_{Y,X}(\theta(\tau)) = (1 - 4 Q_3 e^{\tau_3})^{1/2} I_{Y,X}(\tau)
\]
where:
\begin{align*}
  & \theta(\tau) = \tau + \frac{2 Q_1 e^{\tau_1} Q_3 e^{\tau_3}}{1-4 Q_3
    e^{\tau_3}} + \frac{12 Q_2 e^{\tau_2} Q_3^2 e^{2 \tau_3}}{(1-4
    Q_3 e^{\tau_3})^2} - p_3 \log (1 - 4 Q_3 e^{\tau_3}) - (p_3 -
  p_1 - 2p_2) F \\
  & F = (1-4 Q_3 e^{\tau_3})^{1/2}
  \sum_{n = 0}^\infty Q_3^n e^{n \tau_3} \frac{(2n)!}{(n!)^2}
  H_n
\end{align*}
and:
\[
I_{Y,X}(\tau) = 
e^{\tau/z} 
\sum_{l, m, n \geq 0} 
\frac{
  Q_1^l Q_2^m Q_3^n e^{l \tau_1} e^{m \tau_2} e^{n \tau_3}
  \prod_{k=1}^{2n} (2p_3 + k z)
}
{
  \prod_{k=1}^l (p_1 + k z)^2 
  \prod_{k=1}^m (p_2 + k z)^3
  \prod_{k=1}^n (p_3 + k z) 
}
\frac{\prod_{k = -\infty}^0 (p_3-p_1-2p_2 + k z)}
{\prod_{k = -\infty}^{n-l-2m} (p_3-p_1-2p_2 + k z)}
\]

From equation~\ref{eq:KKP}, we have that:
\[
j_\star J_X\big(j^\star \theta(\tau)\big) = 2p_3 (1 - 4 Q_3 e^{\tau_3})^{1/2} I_{Y,X}(\tau)
\]
where $j \colon X \to F$ is the inclusion map and equality holds
after applying the map of coefficient rings $\Lambda_X \to
\Lambda_F$ induced by $j$.  Note that $j^\star (p_3 - p_1 -2 p_2)
=0$; this reflects the fact that $X$ is disjoint from the divisor
$w=0$.  Consider the classes $p_1' = j^\star p_1$ and $p_2' =
j^\star p_2$.  These form a basis for $H^2(X)$, and we identify the
group ring $\QQ[H_2(X;\ZZ)]$ with the polynomial ring $\QQ[q_1,q_2]$
via the $\QQ$-linear map that sends the element $Q^\beta \in
\QQ[H_2(F;\ZZ)]$ to $q_1^{\langle \beta,p'_1\rangle} q_2^{\langle
  \beta,p'_2\rangle}$.  The map $\Lambda_X \to \Lambda_F$ induced by
$j$ sends $q_1$ to $Q_1 Q_3$ and $q_2$ to $Q_2 Q_3^2$.  We have:
\[
j^\star \theta(\tau) = (\tau_1 + \tau_3) p_1' + (\tau_2  + 2 \tau_3) p_2' +
\frac{2 Q_1 e^{\tau_1} Q_3 e^{\tau_3}}{1-4 Q_3
  e^{\tau_3}} + \frac{12 Q_2 e^{\tau_2} Q_3^2 e^{2 \tau_3}}{(1-4
  Q_3 e^{\tau_3})^2} - (p_1'+2p_2')\log (1 - 4 Q_3 e^{\tau_3})
\]
and thus, from equation~\ref{eq:JX}:
\begin{multline*}
  J_X\big(j^\star \theta(\tau)\big) = 
  \exp\left(\Big(\textstyle \frac{2 Q_1 e^{\tau_1} Q_3 e^{\tau_3}}{1-4 Q_3
      e^{\tau_3}} + \frac{12 Q_2 e^{\tau_2} Q_3^2 e^{2 \tau_3}}{(1-4
      Q_3 e^{\tau_3})^2} \Big)/z\right) \times \\
  J_X\big((\tau_1 + \tau_3) p_1' + (\tau_2  + 2 \tau_3) p_2' \big)
  \Big|_{Q_1 = \frac{Q_1}{1 - 4 Q_3 e^{\tau_3}}, Q_2 = \frac{Q_2}{(1 - 4 Q_3 e^{\tau_3})^2}}
\end{multline*}
Making the inverse change of variables $Q_1 = Q_1 (1-4 Q_3
e^{\tau_3})$, $Q_2 = Q_2 (1-4 Q_3 e^{\tau_3})^2$, we see
that\footnote{The right-hand side of \eqref{eq:2-2_almost_there}
  depends on $Q_1$, $Q_2$, $Q_3$ only through the products $Q_1 Q_3$
  and $Q_1 Q_3^2$, but this is not manifest from the formula.  We
  will see it explicitly for the coefficient of $2p_3$ in
  \eqref{eq:2-2_almost_there} below.}:
\begin{equation}
  \label{eq:2-2_almost_there}
  j_\star J_X(0) = 
  e^{{-(2 Q_1 Q_3+12 Q_2 Q_3^2)}/z}
  2p_3 (1 - 4 Q_3)^{1/2} I_{Y,X}(0)
  \Big|_{Q_1 = Q_1(1 - 4 Q_3), Q_2 = Q_2(1 - 4 Q_3)^2}
\end{equation}

Recall that the quantum period $G_X$ is obtained from the component of
$J_X(0)$ along the unit class $1 \in H^\bullet(X;\QQ)$ by setting $z =
1$ and $Q^\beta = t^{\langle \beta, {-K_X} \rangle}$.  To obtain
$G_X$, therefore, we need to extract the coefficient of $2p_3$ on the
right-hand side of \eqref{eq:2-2_almost_there}, set $z=1$, and set:
\begin{align*}
  Q_1 Q_2 = t && Q_1 Q_3^2 = t
\end{align*}
(this amounts to setting $q_1 = q_2 = t$ and then applying the map of
coefficient rings $\Lambda_X \to \Lambda_F$ induced by the inclusion
$j$).  Observe that $p_3(p_3-p_1 - 2p_2) = 0$ in $H^\bullet(F)$.
Taking the coefficient of $2p_3$ on the right-hand side of
\eqref{eq:2-2_almost_there} and setting $z=1$ thus gives:
\begin{align*}
  & 
  e^{-(2 Q_1 Q_3+12 Q_2 Q_3^2)}
  \sum_{l=0}^\infty \sum_{m=0}^\infty \sum_{n=l+2m}^\infty
  Q_1^l Q_2^m Q_3^n (1-4Q_3)^{l+2m+\frac{1}{2}}
  \frac{
    (2n)!
  }
  {
    (l!)^2 (m!)^3 n! (n-l-2m)!
  } \\
  & =
  e^{-(2 Q_1 Q_3+12 Q_2 Q_3^2)}
  \sum_{l=0}^\infty \sum_{m=0}^\infty
  \frac{Q_1^l Q_2^m Q_3^{l+2m} (1-4Q_3)^{l+2m+\frac{1}{2}}}
  {(l!)^2 (m!)^3}
  \sum_{n=l+2m}^\infty
  Q_3^{n-l-2m} 
  \frac{
    (2n)!
  }
  {
    n! (n-l-2m)!
  } \\
    & =
    e^{-(2 Q_1 Q_3+12 Q_2 Q_3^2)}
  \sum_{l=0}^\infty \sum_{m=0}^\infty
  \frac{Q_1^l Q_2^m Q_3^{l+2m} (1-4Q_3)^{l+2m+\frac{1}{2}}}
  {(l!)^2 (m!)^3}
  \Big(\textstyle \frac{d}{dQ_3}\Big)^{l+2m}
  (1-4Q_3)^{-\frac{1}{2}}
  \\
  & =
    e^{-(2 Q_1 Q_3+12 Q_2 Q_3^2)}
  \sum_{l=0}^\infty \sum_{m=0}^\infty
  Q_1^l Q_2^m Q_3^{l+2m} 
  \frac{(2l+4m)!}
  {(l!)^2 (m!)^3(l+2m)!}
\end{align*}
Setting $Q_1 Q_3 = t$, $Q_1 Q_3^2 = t$ yields: 
\[
G_X(t) = e^{-14t} \sum_{l=0}^\infty \sum_{m = 0}^\infty t^{l+m} \frac{(2l+4m)!}{(l!)^2 (m!)^3 (l+2m)!}
\]
and regularizing gives \eqref{eq:regularized_quantum_period_2_2}, as before.


\addtocounter{CustomSectionCounter}{1}

\section{The Fano Manifold $\MM{2}{3}$}
\label{anchor:2--3}

\subsection*{Mori--Mukai name:} 2--3

\subsection*{Mori--Mukai construction:} The blow-up of $B_2$ with
centre an elliptic curve that is the intersection of two members of $
|{-\frac{1}{2}K_{B_2}}|$.

\subsection*{Our construction:} A divisor $X$ of bidegree $(1,1)$ in
the product $\PP^1 \times B_2$.

\subsection*{The two constructions coincide:} Apply
Lemma~\ref{lem:blowups} with $V = \cO_{B_2} \oplus \cO_{B_2}$,
$W={-\frac{1}{2}}K_{B_2}$, and $f\colon V \to W$ the map given by the two
sections of ${-\frac{1}{2}}K_{B_2}$ that define the elliptic curve.

\subsection*{The quantum period:}
Combining Example~\ref{ex:P1}, the calculation in
Section~\ref{sec:B2}, and Corollary~\ref{cor:products}, we have:
\[
G_{\PP^1 \times B_2}(t) = \sum_{l=0}^\infty \sum_{m =0}^\infty
t^{2l+2m} \frac{(4m)!}{(l!)^2 (m!)^4 (2m)!}
\]
Applying Remark~\ref{rem:twisting_by_multiples_of_K} yields:
\[
G_X(t) = e^{-13t} \sum_{l=0}^\infty \sum_{m =0}^\infty
t^{l+m} \frac{(4m)!(l+m)!}{(l!)^2 (m!)^4 (2m)!}
\]
and regularizing gives:
\begin{multline*}
  \hG_X(t) = 1 + 300t^2 + 8472t^3 + 438588t^4 + 21183120t^5 +
  1115221080t^6 + 60512230800t^7 +
  \\ 3385779824700t^8 + 193681282922400t^9 + \cdots
\end{multline*}

\subsection*{Minkowski period sequence:} None.  Note that the
anticanonical line bundle of $X$ is not very ample.


\addtocounter{CustomSectionCounter}{1}

\section{The Fano Manifold $\MM{2}{4}$}
\label{anchor:2--4}

\subsection*{Mori--Mukai name:} 2--4

\subsection*{Mori--Mukai construction:} The blow-up of $ \PP^3$ with
centre an intersection of two cubics.

\subsection*{Our construction:} A member $X$ of $|L+3M|$ in the toric
variety $F=\PP^1\times \PP^3$.

\subsection*{The two constructions coincide:} Apply
Lemma~\ref{lem:blowups} with $V = \cO_{\PP^3} \oplus \cO_{\PP^3}$, $W
= \cO_{\PP^3}(3)$, and $f\colon V \to W$ given by the two cubics that define the
centre of the blow-up.

\subsection*{The quantum period:}  The toric variety $F$ has weight
data:
\[
\begin{array}{cccccc|c}
  1 & 1 & 0 & 0 & 0 & 0 & L \\
  0 & 0 & 1 & 1 & 1 & 1 & M
\end{array}
\]
and $\Amp F = \langle L,M\rangle$.  We have:
\begin{itemize}
\item $F$ is a Fano variety;
\item $X\sim L+3M$ is ample;
\item $-(K_F+X)\sim L+M$ is ample.
\end{itemize}
Corollary~\ref{cor:QL} yields:
\[
G_X(t) = e^{-7t} \sum_{l=0}^\infty \sum_{m=0}^\infty
t^{l+m} 
\frac{(l+3m)!}{(l!)^2 (m!)^4}
\]
and regularizing gives:
\begin{multline*}
\hG_X(t) = 1 + 90 t^2 + 1518 t^3 + 46086 t^4 + 1327320 t^5 + 41383350 t^6 + 1329442380 t^7 \\
   + 43944315030 t^8 + 1483208104560 t^9 + \cdots
 \end{multline*}

\subsection*{Minkowski period sequence:} \href{http://www.grdb.co.uk/search/period3?id=161&printlevel=2}{161}


\addtocounter{CustomSectionCounter}{1}

\section{The Fano Manifold $\MM{2}{5}$}
\label{anchor:2--5}

\subsection*{Mori--Mukai name:} 2--5

\subsection*{Mori--Mukai construction:} The blow-up of $ B_3$ with
centre a plane cubic on it.

\subsection*{Our construction:} A member $X$ of $|3M|$ in the toric
variety $F$ with weight data:
\[ 
\begin{array}{rrrrrrl} 
\multicolumn{1}{c}{s_0} & 
\multicolumn{1}{c}{s_1} & 
\multicolumn{1}{c}{x} & 
\multicolumn{1}{c}{x_2} & 
\multicolumn{1}{c}{x_3} & 
\multicolumn{1}{c}{x_4} & \\ 
\cmidrule{1-6}
1 & 1 & -1 & 0 & 0 & 0 & \hspace{1.5ex} L\\ 
0 & 0 & 1 & 1 & 1 & 1 & \hspace{1.5ex} M \\
\end{array}
\]
and $\Amp F = \langle L,M\rangle$.  We have:
\begin{itemize}
\item $-K_F=L+4M$ is ample, that is $F$ is a Fano variety;
\item $X\sim 3M$ is nef;
\item $-(K_F+X)\sim L+M$ is ample.
\end{itemize}

\subsection*{The two constructions coincide:} The notation makes it
clear that $ s_0,s_1$ are sections of $ L$; $ xs_0, xs_1,x_2, x_3,x_4$
are sections of $ M$; and $F$ is a scroll over $ \PP^1$ with fibre $
\PP^3$. The morphism $F \to \PP^4 $ that sends (contravariantly) the
homogeneous co-ordinate functions $[x_0,\dots,x_4]$ to
$[xs_0,xs_1,x_2,x_3,x_4]$ is the blow-up along $x_0=x_1=0$.

\subsection*{The quantum period:} Corollary~\ref{cor:QL} yields:
\[
G_X(t) = e^{-6t} \sum_{l=0}^\infty \sum_{m=l}^\infty
t^{l+m}
\frac{(3m)!}
{(l!)^2(m-l)! (m!)^3}
\]
and regularizing gives:
\begin{multline*}
  \hG_X(t) = 1+66 t^2+816 t^3+20214 t^4+449640 t^5+11050500 t^6+
  278336520 t^7+\\ 7229175030 t^8+191680807920 t^9 + \cdots
\end{multline*}

\subsection*{Minkowski period sequence:} \href{http://www.grdb.co.uk/search/period3?id=158&printlevel=2}{158}


\addtocounter{CustomSectionCounter}{1}

\section{The Fano Manifold $\MM{2}{6}$}
\label{anchor:2--6}

\subsection*{Mori--Mukai name:} 2--6

\subsection*{Mori--Mukai construction:}
\begin{itemize}
\item[(a)] A divisor of bidegree $(2,2)$ on $\PP^2\times \PP^2$;
\item[(b)] A double cover of $W\subset \PP^2\times \PP^2$ (the divisor
  of bidegree (1,1) on $\PP^2\times \PP^2)$ whose branch locus is a
  member of $|{-K_W}|$.
\end{itemize}

\subsection*{Our construction:} A member $X$ of $|2L+2M|$ in the toric
variety $F=\PP^2\times \PP^2$.

\subsection*{The two constructions coincide:} Obvious.

\subsection*{The quantum period:} This is Example~\ref{ex:W22}.  We have:
\begin{multline*}
  \hG_X(t) = 1+44 t^2+528 t^3+11292 t^4+228000 t^5+4999040
  t^6+112654080 t^7 \\+2613620380 t^8+61885803840 t^9+ \cdots
\end{multline*}

\subsection*{Minkowski period sequence:} \href{http://www.grdb.co.uk/search/period3?id=149&printlevel=2}{149}


\addtocounter{CustomSectionCounter}{1}

\section{The Fano Manifold $\MM{2}{7}$}
\label{anchor:2--7}

\subsection*{Mori--Mukai name:} 2--7

\subsection*{Mori--Mukai construction:} The blow-up of a quadric
\mbox{3-fold} $Q\subset \PP^4$ with centre the intersection of two
members of $ |\cO_Q(2)|$.

\subsection*{Our construction:} A codimension-2 complete intersection
$X$ of type $(2M) \cap (L+2M)$ on the toric variety $F=\PP^1\times
\PP^4$.

\subsection*{The two constructions coincide:} Apply
Lemma~\ref{lem:blowups} with $V = \cO_Q \oplus \cO_Q$, $W = \cO_Q(2)$,
and $f\colon V \to W$ given by the two sections of $\cO_Q(2)$ that define
the centre of the blow-up.  This shows that $X$ is a divisor of
bidegree $(1,2)$ on $\PP^1 \times Q$, or in other words a complete
intersection of type $(2M) \cap (L+2M)$ on $\PP^1\times \PP^4$.

\subsection*{The quantum period:} 
The toric variety $F$ has weight data:
\[ 
\begin{array}{rrrrrrrl} 
1 & 1 & 0 & 0 & 0 & 0 & 0 & \hspace{1.5ex} L\\ 
0 & 0 & 1 & 1 & 1 & 1 & 1 & \hspace{1.5ex} M \\
\end{array}
\]
and $\Amp F = \langle L,M\rangle$. We have:
\begin{itemize}
\item $F$ is a Fano variety;
\item $X$ is the complete intersection of two nef divisors on $F$;
\item $-(K_F+\Lambda)\sim L+M$ is ample.
\end{itemize}
Corollary~\ref{cor:QL} yields:
\[
G_X(t) = e^{-5t} \sum_{l=0}^\infty \sum_{m=0}^\infty
t^{l+m}
\frac{(2m)!(l+2m)!}
{(l!)^2 (m!)^5}
\]
and regularizing gives:
\begin{multline*}
  \hG_X(t) = 1+36 t^2+348 t^3+6516 t^4+110880 t^5+2069820 t^6+39606000
  t^7\\
  +780530100 t^8+15697106880 t^9+ \cdots
\end{multline*}

\subsection*{Minkowski period sequence:} \href{http://www.grdb.co.uk/search/period3?id=148&printlevel=2}{148}


\addtocounter{CustomSectionCounter}{1}

\section{The Fano Manifold $\MM{2}{8}$}
\label{anchor:2--8}

\subsection*{Mori--Mukai name:} 2--8

\subsection*{Mori--Mukai construction:} 
\begin{itemize}
\item[(a)] A double cover of $B_7$ (the blow-up of $\PP^3$ at a
  point) with branch locus a member $ B$ of $ |{-K_{B_7}}|$ such that $
  B\cap D$ is nonsingular, where $ D$ is the exceptional divisor of the
  blow-up $ B_7 \to \PP^3$;
\item[(b)] A specialization of (a) where $B\cap D$ is reduced but
  singular.
\end{itemize}

\subsection*{Our construction:} A member $X$ of $|2L+2M|$ in the toric
variety $F$ with weight data:
\[
\begin{array}{rrrrrrl} 
\multicolumn{1}{c}{s_0} & 
\multicolumn{1}{c}{s_1} & 
\multicolumn{1}{c}{s_2} & 
\multicolumn{1}{c}{x} & 
\multicolumn{1}{c}{x_3} & 
\multicolumn{1}{c}{w} & \\ 
\cmidrule{1-6}
1 & 1 & 1 & -1 & 0 & 1 & \hspace{1.5ex} L\\ 
0 & 0 & 0 & 1 & 1 & 1 & \hspace{1.5ex} M \\
\end{array}
\]
and $\Amp F = \langle L,L+M\rangle$. We have:
\begin{itemize}
\item $-K_F=3(L+M)$ is nef and big but not ample, so that $F$ is \emph{not} a Fano variety;
\item $X\sim 2(L+M)$ is nef;
\item $-(K_F+X)\sim L+M$ is nef and big but not ample.
\end{itemize}

\subsection*{The two constructions coincide:} 
Consider the equation of $X$ in the form:
\[
w^2=x_3^2 a_2+x_3xb_3 + x^2c_4
\]
where $a_2$,~$b_3$, and~$c_4$ are generic homogeneous polynomials in
$s_0$,~$s_1$,~$s_2$ of degrees $2$,~$3$, and~$4$ respectively.  The
locus $(w=0) \subset F$ is a copy of $B_7$ and the branch locus meets the
exceptional divisor $D=(x=w=0) \cong \PP^2$ in a nonsingular conic.

\subsection*{Remarks on the birational geometry of $X$:}
Next we make a few comments on the geometry of $X$ and the embedding
$X\subset F$ that are not logically necessary for the computation of
the quantum period. The discussion is similar to the discussion of
2--2 in \S\ref{sec:2-2} above; it in particular shows that $X$ is a
Fano variety, which is not immediately clear from our construction.

The secondary fan manifestly has three maximal cones.  By definition
$\Amp F=\langle L,L+M\rangle$. The irrelevant ideal is $(ws_i, x_3s_i,
xs_i)$ and the unstable locus is:
\[
(s_0=s_1=s_2=0)\cup (w=x=x_3=0)
\] 
The linear system $|L|=|s_0,s_1,s_2|$ defines a morphism $f\colon F
\to \PP^2$ with fibre $\PP^2$ and $f|_{X}$ is a conic bundle (in
particular, an extremal contraction in the Mori category). 
The linear system $|L+M|=|w,x_3s_i, xs_is_j|$ gives a flopping
contraction of $\Pi=(x=x_3=0)\cong \PP^2$ with normal bundle
$\cO(-1)\oplus \cO(-2)$. Note, however, that $X\cap \Pi=\varnothing$:
this contraction maps $X$ isomorphically onto its image.

Denote by $F^\prime$ the toric variety such that $\Amp F^\prime
=\langle L+M,M\rangle$. The irrelevant ideal is:
\[
(x_3w, xw, x_3s_i, xs_i)
\]
and the unstable locus is:
\[
(x=x_3=0)\cup (s_0=s_1=s_2=w=0) 
\]
The linear system $|L+M|$ defines the flop of $F$. On the other hand,
$|M|$ defines a contraction $g\colon F\to \PP(1,1,1,1,2)$ which sends
(contravariantly) the homogeneous co-ordinate functions
$[x_0,x_1,x_2,x_3,y]$ on $\PP(1,1,1,1,2)$ to $[s_0x, s_1x, s_2x, x_3,
wx]$. The restriction $g|_{X}$ maps $X$ to the variety $Y$ with
equation
\[
y^2=x_3^2a_2(x_0,x_1,x_2)+x_3b_3(x_0,x_1,x_2) + c_4(x_0,x_1,x_2) 
\]
so $Y$ is the double cover of $\PP^3$ branched along a general quartic
surface $B$ with an ordinary node at $(0,0,0,1)$, and $g|_{X}\colon X
\to Y$ is an extremal divisorial contraction contracting $X\cap
(x=0)=(w^2=x_3^2a_2(s_0,s_1,s_2))\cong \PP^1\times \PP^1$ to the node
just mentioned.

It follows from the preceding discussion that $\Amp X =\Amp F + \Amp
F^\prime=\langle L, M\rangle$; in particular, therefore, $X$ is Fano.

Finally the chamber $\langle M, M-L \rangle$ is ``hollow'', that is,
taking the GIT quotient with respect to a stability condition from the
interior of this chamber leads to a rank 1 toric variety.

\subsection*{The quantum period:}  
Let $p_1$, $p_2 \in H^\bullet(F;\ZZ)$ denote the first Chern classes
of $L$ and $L \otimes M$ respectively; these classes form a basis for
$H^2(F;\ZZ)$.  Write $\tau \in H^2(F;\QQ)$ as $\tau = \tau_1 p_1 +
\tau_2 p_2$ and identify the group ring $\QQ[H_2(F;\ZZ)]$ with the
polynomial ring $\QQ[Q_1,Q_2]$ via the $\QQ$-linear map that sends the
element $Q^\beta \in \QQ[H_2(F;\ZZ)]$ to $Q_1^{\langle
  \beta,p_1\rangle} Q_2^{\langle \beta,p_2\rangle}$. We have:
\begin{align*}
  I_F(\tau) & = e^{\tau/z} 
  \sum_{l, m\geq 0} 
  \frac{Q_1^l Q_2^m e^{l \tau_1} e^{m \tau_2}}
  {
    \prod_{k=1}^l (p_1 + k z)^3
    \prod_{k=1}^m (p_2 + k z) 
  }
  \frac{\prod_{k = -\infty}^0 (p_2-p_1 + k z)}
  {\prod_{k = -\infty}^{m-l} (p_2-p_1 + k z)}
  \frac{\prod_{k = -\infty}^0 (p_2-2p_1 + k z)}
  {\prod_{k = -\infty}^{m-2l} (p_2-2p_1 + k z)}
  \\
  &= 1 +\tau z^{-1} + O(z^{-2})
\end{align*}
Assumptions~\ref{assumptions:toric_ci} hold and, in the notation of
Proposition~\ref{pro:ABC}, we have $A(\tau) = 1$ and $B(\tau) =
\tau$.  We now proceed exactly as in the proof of
Corollary~\ref{cor:QL}, obtaining:
\begin{multline*}
  I_{F,X}(\tau) = \\ e^{\tau/z} 
  \sum_{l, m\geq 0} 
  \frac{Q_1^l Q_2^m e^{l \tau_1} e^{m \tau_2}
    \prod_{k=1}^{2l+2m}(2p_1+2p_2+k z)}
  {
    \prod_{k=1}^l (p_1 + k z)^3
    \prod_{k=1}^m (p_2 + k z) 
  }
  \frac{\prod_{k = -\infty}^0 (p_2-p_1 + k z)}
  {\prod_{k = -\infty}^{m-l} (p_2-p_1 + k z)}
  \frac{\prod_{k = -\infty}^0 (p_2-2p_1 + k z)}
  {\prod_{k = -\infty}^{m-2l} (p_2-2p_1 + k z)}
\end{multline*}
and:
\[
G_X(t) = e^{-2t} \sum_{l=0}^\infty \sum_{m=2l}^\infty 
t^m
\frac{(2m)!}
{(l!)^3 m! (m-l)! (m-2l)!}
\]
Regularizing gives:
\begin{multline*}
\hG_X(t) = 
1+26 t^2+216 t^3+3582 t^4+54480 t^5+874700 t^6+15000720 t^7\\+256965310 t^8+4576672800 t^9 + \cdots
\end{multline*}

\subsection*{Minkowski period sequence:} \href{http://www.grdb.co.uk/search/period3?id=144&printlevel=2}{144}


\addtocounter{CustomSectionCounter}{1}

\section{The Fano Manifold $\MM{2}{9}$}
\label{anchor:2--9}

\subsection*{Mori--Mukai name:} 2--9

\subsection*{Mori--Mukai construction:} The blow-up of $\PP^3$ with
centre a curve $\Gamma$ of degree 7 and genus 5 that is an
intersection of cubics.

\subsection*{Our construction:} A codimension-2 complete intersection
$X$ of type $(L+M)\cap (2L+M)$ in the toric variety $F=\PP^3\times
\PP^2$.

\subsection*{The two constructions coincide:} The curve $\Gamma$ is
cut out by the equations:
\[ 
\rk \left( \begin{array}{ccc} l_0 & l_1 & l_2 \\ q_0 & q_1 &
    q_2 \end{array} \right) < 2
\]
where the $ l_i$ are linear forms and the $ q_j$ are quadratic
forms. Lemma~\ref{lem:blowups} implies that $X$ is the complete
intersection given by the two equations
\[ 
\begin{cases} l_0 y_0 + l_1 y_1 + l_2 y_2 = 0 \\ q_0 y_0 + q_1
  y_1 + q_2 y_2 = 0 \end{cases}
\] 
in $\PP^3 \times \PP^2$, where the first factor has co-ordinates $
x_0, x_1, x_2, x_3$ and the second factor has co-ordinates $ y_0, y_1,
y_2$.  In other words, $ X$ is a complete intersection of type $ (L+M)
\cap (2L+M)$ in $\PP^3 \times \PP^2$.

\subsection*{The quantum period:}
The toric variety $F$ has weight data:
\[ 
\begin{array}{rrrrrrrl} 
1 & 1 & 1 & 1 & 0 & 0 & 0 & \hspace{1.5ex} L\\ 
0 & 0 & 0 & 0 & 1 & 1 & 1 & \hspace{1.5ex} M \\
\end{array}
\]
and $\Amp F = \langle L,M\rangle$.  We have:
\begin{itemize}
\item $F$ is a Fano variety;
\item $X$ is the complete intersection of two ample divisors on $F$;
\item $-(K_F+\Lambda)\sim L+M$ is ample.
\end{itemize}
Corollary~\ref{cor:QL} yields:
\[
G_X(t) = e^{-3t} \sum_{l=0}^\infty \sum_{m=0}^\infty
t^{l+m}
\frac{(l+m)!(2l+m)!}
{(l!)^4 (m!)^3}
\]
and regularizing gives:
\begin{multline*}
  \hG_X(t) = 1+22 t^2+174 t^3+2514 t^4+34200 t^5+501070 t^6+7586880
  t^7\\
  +117858370 t^8+1870811040 t^9+ \cdots
\end{multline*}

\subsection*{Minkowski period sequence:} \href{http://www.grdb.co.uk/search/period3?id=139&printlevel=2}{139}


\addtocounter{CustomSectionCounter}{1}

\section{The Fano Manifold $\MM{2}{10}$}
\label{anchor:2--10}

\subsection*{Mori--Mukai name:} 2--10

\subsection*{Mori--Mukai construction:} The blow-up of $ B_4\subset
\PP^5$ with centre an elliptic curve that is an intersection of two
hyperplane sections.

\subsection*{Our construction:} A codimension-2 complete intersection
$X$ of type $(2M)\cap(2M)$ in the toric variety $F$ with weight data:
\[
\begin{array}{rrrrrrrl} 
  \multicolumn{1}{c}{s_0} & 
  \multicolumn{1}{c}{s_1} & 
  \multicolumn{1}{c}{x} & 
  \multicolumn{1}{c}{x_2} & 
  \multicolumn{1}{c}{x_3} & 
  \multicolumn{1}{c}{x_4} & 
  \multicolumn{1}{c}{x_5} & \\ 
  \cmidrule{1-7}
  1 & 1 & -1 & 0 & 0 & 0  & 0& \hspace{1.5ex} L\\ 
  0 & 0 & 1 & 1 & 1 & 1 & 1 & \hspace{1.5ex} M \\
\end{array}
\]
and $\Amp F = \langle L, M\rangle$.  We have:
\begin{itemize}
\item $-K_F=L+5M$ is ample, that is $F$ is a Fano variety;
\item $X$ is the complete intersection of two nef divisors on $F$;
\item $-(K_F+\Lambda)\sim L+M$ is ample.
\end{itemize}

\subsection*{The two constructions coincide:} The notation makes it
clear that $ s_0,s_1$ are sections of $ L$; $ xs_0, xs_1,x_2, x_3,
x_4, x_5$ are sections of $ M$; and $F$ is a scroll over $ \PP^1$ with
fibre $ \PP^4$. The morphism $ F \to \PP^4$ that sends
(contravariantly) the homogeneous co-ordinate functions $
[x_0,x_1,x_2, x_3, x_4,x_5]$ to $ [xs_0,xs_1,x_2,x_3,x_4,x_5]$ is the
blow-up along $(x_0=x_1=0) \subset \PP^4$.

\subsection*{The quantum period:}  Corollary~\ref{cor:QL} yields:
\[
G_X(t) = e^{-4t} \sum_{l = 0}^\infty \sum_{m = l}^\infty 
t^{l+m}
\frac{(2m)!(2m)!}
{(l!)^2 (m-l)! (m!)^4}
\]
and regularizing gives:
\begin{multline*}
  \hG_X(t) = 1+28 t^2+216 t^3+3516 t^4+49680 t^5+783640 t^6+12594960
  t^7\\
+208898620 t^8+3533634720 t^9+ \cdots
\end{multline*}

\subsection*{Minkowski period sequence:} \href{http://www.grdb.co.uk/search/period3?id=145&printlevel=2}{145}


\addtocounter{CustomSectionCounter}{1}

\section{The Fano Manifold $\MM{2}{11}$}
\label{anchor:2--11}

\subsection*{Mori--Mukai name:} 2--11.

\subsection*{Mori--Mukai construction:} The blow-up of $B_3\subset
\PP^4$ with centre a line on it.

\subsection*{Our construction:} A member $X$ of $|L+2M|$ in the toric
variety $F$ with weight data:
\[
\begin{array}{rrrrrrl} 
  \multicolumn{1}{c}{s_0} & 
  \multicolumn{1}{c}{s_1} & 
  \multicolumn{1}{c}{s_2} & 
  \multicolumn{1}{c}{x} & 
  \multicolumn{1}{c}{x_3} & 
  \multicolumn{1}{c}{x_4} \\ 
  \cmidrule{1-6} 
  1 & 1 & 1 & -1 & 0 & 0  & \hspace{1.5ex} L\\ 
  0 & 0 & 0 & 1 & 1 & 1 &  \hspace{1.5ex} M \\
\end{array}
\]
and $\Amp F = \langle L, M\rangle$.  We have:
\begin{itemize}
\item $-K_F=2L+3M$ is ample, that is $F$ is a Fano variety;
\item $X\sim L+2M$ is ample;
\item $-(K_F+X)\sim L+M$ is ample.
\end{itemize}

\subsection*{The two constructions coincide:} The notation makes it
clear that $ s_0,s_1,s_2$ are sections of $ L$; $ xs_0, xs_1,xs_2,
x_3, x_4$ are sections of $ M$; and $F$ is a scroll over $ \PP^2$ with
fibre $\PP^2$. The morphism $ F \to \PP^4$ that sends
(contravariantly) the homogeneous co-ordinate functions $ [x_0,x_1,
x_2, x_3, x_4]$ to $ [xs_0,xs_1,xs_2,x_3,x_4]$ is the blow-up along
the line $\ell = (x_0=x_1=x_2=0) \subset \PP^4$. We construct $X$ as
the proper transform of a general cubic $B_3\subset \PP^4$ containing
the line $\ell$. This $B_3$ has an equation of the form:
\[
x_0A+x_1B+x_2C =0
\]
where $A$, $B$, and $C$ are homogeneous quadratic polynomials in the
variables $x_0, x_1,\dots, x_4$. Thus $X$ is given in $F$ by the
equation:
\[
s_0A(s_0x,s_1x,s_2x,x_3,x_4)+s_1B(s_0x,s_1x,s_2x,x_3,x_4)+s_2C(s_0x,s_1x,s_2x,x_3,x_4)=0
\]

\subsection*{The quantum period:} Corollary~\ref{cor:QL} yields:
\[
G_X(t) = e^{-2t} \sum_{l = 0}^\infty \sum_{m = l}^\infty
t^{l+m}
\frac{(l+2m)!}
{(l!)^3 (m-l)! (m!)^2}
\]
and regularizing gives:
\begin{multline*}
  \hG_X(t) = 1+14 t^2+108 t^3+1074 t^4+13440 t^5+154760 t^6+1951320 t^7\\+24999730 t^8+325321920 t^9+ \cdots
\end{multline*}

\subsection*{Minkowski period sequence:} \href{http://www.grdb.co.uk/search/period3?id=120&printlevel=2}{120}


\addtocounter{CustomSectionCounter}{1}

\section{The Fano Manifold $\MM{2}{12}$}
\label{anchor:2--12}

\subsection*{Mori--Mukai name:} 2--12.

\subsection*{Mori--Mukai construction:} The blow-up of $\PP^3$ with
centre a curve $\Gamma$ of degree 6 and genus 3 that is an intersection of cubics.

\subsection*{Our construction:} A codimension-3 complete intersection
$X$ of type $(L+M) \cap (L+M) \cap (L+M)$ in the toric variety
$F=\PP^3\times \PP^3$.

\subsection*{The two constructions coincide:} The curve $\Gamma
\subset \PP^3_{x_0,x_1,x_3}$ is given by the condition:
\[ 
\rk \begin{pmatrix} l_{00} & l_{01} & l_{02} & l_{03} \\ l_{10} & l_{11} &
  l_{12} & l_{13} \\ l_{20} & l_{21} & l_{22} & l_{23}  \end{pmatrix}
< 3
\]
where the $ l_{ij}$ are linear forms in $x_0,\dots,x_3$.
Lemma~\ref{lem:blowups} implies that $X$ is a codimension-$3$ complete
intersection in $\PP^3_{x_0,x_1,x_2,x_3} \times
\PP^3_{y_0,y_1,y_2,y_3}$ given by the three equations:
\[
\begin{cases} l_{00} y_0 + l_{01} y_1 + l_{02} y_2 + l_{03} y_3 = 0 \\
  l_{10} y_0 + l_{11} y_1 + l_{12} y_2 + l_{13} y_3 = 0 \\l_{20} y_0 +
  l_{21} y_1 + l_{22} y_2 + l_{23} y_3 = 0  \end{cases}
\]
In other words, $ X$ is a complete intersection in $ \PP^3 \times
\PP^3$ of type $ (L+M) \cap (L+M) \cap (L+M)$.  An equivalent
description of this variety was given by Qureshi \cite{Qureshi}*{Proposition~6.4.1}.

\subsection*{The quantum period:}  The toric variety $F$ has weight
data:
\[ 
\begin{array}{rrrrrrrrl} 
1 & 1 & 1 & 1 & 0 & 0 & 0 & 0 & \hspace{1.5ex} L\\ 
0 & 0 & 0 & 0 & 1 & 1 & 1 & 1 &\hspace{1.5ex} M \\
\end{array}
\]
and $\Amp F = \langle L,M\rangle$. We have that:
\begin{itemize}
\item $F$ is a Fano variety;
\item $X$ is the complete intersection of three ample divisors on $F$;
\item $-(K_F+\Lambda)\sim L+M$ is ample.
\end{itemize}
Corollary~\ref{cor:QL} yields:
\[
G_X(t) = e^{-2t} \sum_{l=0}^\infty \sum_{m=0}^\infty
t^{l+m}
\frac{\big((l+m)!\big)^3}
{(l!)^4 (m!)^4}
\]
and regularizing gives:
\begin{multline*}
  \hG_X(t) = 1+14 t^2+72 t^3+882 t^4+8400 t^5+95180 t^6+1060080 t^7
  +12389650 t^8+146472480 t^9+ \cdots
\end{multline*}

\subsection*{Minkowski period sequence:} \href{http://www.grdb.co.uk/search/period3?id=118&printlevel=2}{118}


\addtocounter{CustomSectionCounter}{1}

\section{The Fano Manifold $\MM{2}{13}$}
\label{anchor:2--13}

\subsection*{Mori--Mukai name:} 2--13

\subsection*{Mori--Mukai construction:} The blow-up of a quadric
\mbox{3-fold} $Q\subset \PP^4$ with centre a curve $\Gamma$ of degree
6 and genus 2.

\subsection*{Our construction:} A codimension-3 complete intersection
$X$ of type $(L+M) \cap (L+M) \cap (2M)$ in the toric variety
$F=\PP^2\times \PP^4$.

\subsection*{The two constructions coincide:} Let $[s_0,s_1,y]$ be
homogeneous co-ordinates on $\PP(1,1,3)$.  We have that $\Gamma =
\PP(1,1,3) \cap Q$, where the embedding $\PP(1,1,3) \hookrightarrow
\PP^4$ sends (contravariantly) the homogeneous co-ordinate functions $
[x_0,\dots,x_4]$ to $ [s_0^3,s_0^2 s_1,s_0 s_1^2,s_1^3,y]$.  Thus
$\PP(1,1,3)\subset \PP^4$ is given by the condition:
\[ 
\rk \begin{pmatrix} x_0 & x_1 & x_2 \\ x_1 & x_2 & x_3 \end{pmatrix} <
2 \
\]
By Lemma~\ref{lem:blowups}, the blow-up $G$ of $\PP^4$ along
$\PP(1,1,3)$ is the complete intersection in $\PP^2_{y_0,\ldots,y_2}
\times \PP^4_{x_0,\ldots,x_4}$ cut out by the equations:
\[ 
\begin{cases} x_0 y_0 - x_1 y_1 + x_2 y_2 = 0 \\ x_1 y_0 - x_2 y_1 +
  x_3 y_2 = 0 \end{cases}
\]
Our Fano variety $X$ is the complete intersection of $G$ with a
quadric $q(x_0,x_1,x_2,x_3,x_4)$.  Thus $ X$ is a complete
intersection of type $ (L+M)\cap (L+M)\cap (2M)$ in $ \PP^2 \times
\PP^4$.

\subsection*{The quantum period:}
The toric variety $F$ has weight data:
\[ 
\begin{array}{rrrrrrrrl} 
1 & 1 & 1 & 0 & 0 & 0 & 0 & 0 & \hspace{1.5ex} L\\ 
0 & 0 & 0 & 1 & 1 & 1 & 1 & 1 &\hspace{1.5ex} M \\
\end{array}
\]
and $\Amp F = \langle L,M\rangle$.  We have that:
\begin{itemize}
\item $F$ is a Fano variety;
\item $X$ is the complete intersection of three nef divisors on $F$;
\item $-(K_F+\Lambda)\sim L+M$ is ample.
\end{itemize}
Corollary~\ref{cor:QL} yields:
\[
G_X(t) = e^{-3t} \sum_{l=0}^\infty \sum_{m=0}^\infty 
t^{l+m}
\frac{(2m)!\big((l+m)!\big)^2}
{(l!)^3 (m!)^5}
\]
and regularizing gives:
\begin{multline*}
  \hG_X(t) = 1+14 t^2+84 t^3+930 t^4+9720 t^5+108680 t^6+1259160 t^7+14951650 t^8+181377840 t^9+ \cdots
\end{multline*}

\subsection*{Minkowski period sequence:} \href{http://www.grdb.co.uk/search/period3?id=119&printlevel=2}{119}


\addtocounter{CustomSectionCounter}{1}

\section{The Fano Manifold $\MM{2}{14}$}
\label{anchor:2--14}

\subsection*{Mori--Mukai name:} 2--14

\subsection*{Mori--Mukai construction:} The blow-up of $B_5\subset
\PP^6$ with centre an elliptic curve that is an intersection of two
hyperplane sections.

\subsection*{Our construction:} A divisor\footnote{This is one of six
  cases of families of rank 2 Fano 3-folds (2--14, 2--17, 2--20,
  2--21, 2--22, 2--26) where the generic member is not a complete
  intersection in a toric variety. Of these, four (2--14, 2--20,
  2--22, 2--26) are blow-ups of $B_5$ along a curve: a complete
  intersection, a twisted cubic, a conic, and a line. Fano 3-folds in
  families 2--17 and 2--21 are blow-ups of a quadric 3-fold.} $X$ of
bidegree $(1,1)$ on $B_5 \times \PP^1$.

\subsection*{The two constructions coincide:} Let $[x_0,\ldots,x_6]$
be homogeneous co-ordinates on $\PP^6$, and let $F\to \PP^6$ be the
blow-up in the complete intersection $(x_0=x_1=0)$.  Our Fano variety
$X$ is the proper transform of $B_5\subset \PP^6$ under the blow-up.
Applying Lemma~\ref{lem:blowups} with $V = \cO_{\PP^6} \oplus
\cO_{\PP^6}$, $W=\cO_{\PP^6}(1)$, and $f\colon V \to W$ the map given by
$(x_0,x_1)$ shows that $F$ is the subvariety of
$\PP^6_{x_0,\dots,x_6}\times \PP^1_{y_0,y_1}$ given by the equation
$x_0y_0+x_1y_1=0$.  

\subsection*{The quantum period:} 
Combining Example~\ref{ex:P1}, the calculation in
Section~\ref{sec:B5}, and Corollary~\ref{cor:products}, we have:
\[
G_{B_5 \times \PP^1}(t) = \sum_{l=0}^\infty \sum_{m=0}^\infty \sum_{n=0}^\infty
(-1)^{l+m}
t^{2l+2m+2n}
\frac
{
  \big((l+m)!\big)^3
}
{
  (l!)^5 (m!)^5 (n!)^2 
}
\big(1-5 (m-l)H_m \big)
\]
where $H_m$ is the $m$th harmonic number.  Applying
Remark~\ref{rem:twisting_by_multiples_of_K} yields:
\[
G_X(t) = e^{-4t} \sum_{l=0}^\infty \sum_{m=0}^\infty \sum_{n=0}^\infty
(-1)^{l+m}
t^{l+m+n}
\frac
{
  (l+m+n)! \big((l+m)!\big)^3
}
{
  (l!)^5 (m!)^5 (n!)^2 
}
\big(1-5 (m-l)H_m \big)
\]
and regularizing gives:
\begin{multline*}
  \hG_X(t) = 1 + 16 t^2 + 90 t^3 + 1104 t^4 + 11460 t^5 + 133990 t^6 + 1588860 t^7 + 19463920 t^8 + 242996040 t^9+ \cdots
\end{multline*}

\subsection*{Minkowski period sequence:} \href{http://www.grdb.co.uk/search/period3?id=122&printlevel=2}{122}


\addtocounter{CustomSectionCounter}{1}

\section{The Fano Manifold $\MM{2}{15}$}
\label{anchor:2--15}

\subsection*{Mori--Mukai name:} 2--15

\subsection*{Mori--Mukai construction:} The blow-up of $ \PP^3$ with
centre the intersection of a quadric $A$ and a cubic $B$.

\subsection*{Our construction:} A member $X$ of $|2L+M|$ in the toric
variety $F$ with weight data:
\[ 
\begin{array}{rrrrrrl} 
  \multicolumn{1}{c}{s_0} & 
  \multicolumn{1}{c}{s_1} & 
  \multicolumn{1}{c}{s_2} & 
  \multicolumn{1}{c}{s_3} & 
  \multicolumn{1}{c}{x} & 
  \multicolumn{1}{c}{x_4} & \\ 
  \cmidrule{1-6}
  1 & 1 & 1 & 1 & -1 & 0  & \hspace{1.5ex} L\\ 
  0 & 0 & 0 & 0 & 1 & 1 & \hspace{1.5ex} M \\
\end{array}
\]
and $\Amp F = \langle L, M\rangle$. We have:
\begin{itemize}
\item $-K_F=3L+2M$ is ample, that is $F$ is a Fano variety;
\item $X\sim 2L+M$ is ample;
\item $-(K_F+X)\sim L+M$ is ample.
\end{itemize}

\subsection*{The two constructions coincide:} Apply
Lemma~\ref{lem:blowups} with $V = \cO_{\PP^3}(-1) \oplus \cO_{\PP^3}$,
$W=\cO_{\PP^3}(2)$, and $f\colon V \to W$ the map given by the matrix $
\begin{pmatrix}
  B & A
\end{pmatrix}
$.

\subsection*{The quantum period:}  Corollary~\ref{cor:QL} yields:
\[
G_X(t) = e^{-t} \sum_{l = 0}^\infty \sum_{m = l}^\infty
t^{l+m}
\frac{(2l+m)!}
{(l!)^4 (m-l)!m!}
\]
and regularizing gives:
\begin{multline*}
  \hG_X(t) = 1+12 t^2+36 t^3+564 t^4+3600 t^5+41700 t^6+360360 t^7+3839220 t^8+37749600 t^9+ \cdots
\end{multline*}

\subsection*{Minkowski period sequence:} \href{http://www.grdb.co.uk/search/period3?id=109&printlevel=2}{109}


\addtocounter{CustomSectionCounter}{1}

\section{The Fano Manifold $\MM{2}{16}$}
\label{anchor:2--16}

\subsection*{Mori--Mukai name:} 2--16

\subsection*{Mori--Mukai construction:} The blow-up of $B_4 \subset \PP^5$ with centre a conic on it.

\subsection*{Our construction:} A codimension-2 complete intersection
$X$ of type $(L+M)\cap(2M)$ in the toric variety $F$ with weight data:
\[
\begin{array}{rrrrrrrl} 
  \multicolumn{1}{c}{s_0} & 
  \multicolumn{1}{c}{s_1} & 
  \multicolumn{1}{c}{s_2} & 
  \multicolumn{1}{c}{x} & 
  \multicolumn{1}{c}{x_3} & 
  \multicolumn{1}{c}{x_4} & 
  \multicolumn{1}{c}{x_5} & \\ 
  \cmidrule{1-7}
  1 & 1 & 1 & -1 & 0 & 0  & 0& \hspace{1.5ex} L\\ 
  0 & 0 & 0 & 1 & 1 & 1 & 1 & \hspace{1.5ex} M \\
\end{array}
\]
and $\Amp F = \langle L, M\rangle$.  We have:
\begin{itemize}
\item $-K_F=2L+4M$ is ample, that is $F$ is a Fano variety;
\item $X$ is the complete intersection of two nef divisors on $F$;
\item $-(K_F+\Lambda)\sim L+M$ is ample.
\end{itemize}

\subsection*{The two constructions coincide:} The morphism $F \to
\PP^5$ which sends (contravariantly) the homogeneous co-ordinate
functions $[x_0,x_1,\ldots,x_5]$ to $[s_0 x, s_1 x, s_2 x, x_3, x_4,
x_5]$ blows up the plane $\Pi = (x_0 = x_1 = x_2 = 0)$.  We realise
$X$ as the complete intersection of the proper transform of a quadric
containing $\Pi$ and a generic quadric.

\subsection*{The quantum period:}  Corollary~\ref{cor:QL} yields:
\[
G_X(t) = e^{-2t} \sum_{l = 0}^\infty \sum_{m = l}^\infty 
t^{l+m}
\frac{(l+m)!(2m)!}
{(l!)^3 (m-l)! (m!)^3}
\]
and regularizing gives:
\begin{multline*}
  \hG_X(t) = 1+10 t^2+60 t^3+510 t^4+4920 t^5+47080 t^6+473760 t^7+4908190 t^8+51641520 t^9+ \cdots
\end{multline*}

\subsection*{Minkowski period sequence:} \href{http://www.grdb.co.uk/search/period3?id=104&printlevel=2}{104}


\addtocounter{CustomSectionCounter}{1}

\section{The Fano Manifold $\MM{2}{17}$}
\label{sec:2-17}
\label{anchor:2--17}

\subsection*{Mori--Mukai name:} 2--17

\subsection*{Mori--Mukai construction:} The blow-up of a quadric \mbox{3-fold}
$Q\subset \PP^4$ with centre an elliptic curve $\Gamma$ of degree 5 on it. 

\subsection*{Our construction:} The vanishing locus $X$ of a general
section of the vector bundle:
\[
\bigl(S^\star \boxtimes
\cO_{\PP^3}(1)\bigr) \oplus \bigl(\det S^\star \boxtimes \cO_{\PP^3}(1)\bigr) \oplus 
\bigl(\det S^\star \boxtimes \cO_{\PP^3}\bigr)
\]
on the key variety $F=\Gr(2,4)\times \PP^3$, where $S$ is the
universal bundle of subspaces on $\Gr(2,4)$.

\subsection*{The two constructions coincide:} First consider $\Gr(2,4)$
with tautological rank-$2$ sub-bundle $S \subset \CC^4$: it is
well-known that the vanishing locus $Z=Z(s)$ of a general section $s\in
\Gamma \bigl( \Gr(2, 4); E\bigr)$ where:
\[
E = S^\star \otimes \det S^\star
\]
is a del Pezzo surface of degree $5$. Indeed this can be shown as
follows: the adjunction formula immediately implies that
$-K_Z=-(K_X\otimes \det E)|_{Z}=\det S^\star$ is ample, that is $Z$ is
a del~Pezzo surface, and a small exercise in Schubert calculus shows
that $K_Z^2=5$.

Next we blow-up $Z\subset \Gr(2,4)$. Consider the $\PP^1$-bundle
$p\colon \PP(E^\star)\to \Gr(2,4)$ of lines in $E^\star$: under
$p^\star E\to \cO(1)$ we can identify $s\in \Gamma
\bigl(\Gr(2,4);E\bigr)=\Gamma \bigl(\PP(E^\star), \cO(1)\bigr)$ with a
section $\tilde{s}$ of $\cO(1)$ on $\PP(E^\star)$ and, by
Lemma~\ref{lem:blowups}:
\[
p\colon Y=Z(\tilde{s}) \subset \PP(E^\star)\to \Gr(2,4)
\quad
\text{blows up}
\quad
Z=Z(s)\subset \Gr(2,4)
\]
Next, identify:
\begin{itemize}
\item $\PP(E^\star)=\PP(S \otimes \det S)$ with $\PP(S)$. Write
  $V=\CC^4$ with basis $e_0,\dots, e_3$ and note that the
  tautological sequence 
\[
0\to S \to V \to Q \to 0
\]
on $\Gr(2,4)$ identifies $V^\star$ with $\Gamma \bigl(\Gr(2,4);
S^\star \bigr)$. In this notation, we can now also identify:
\[
\PP(S)=Z(\sigma) \subset \Gr(2,V)\times
\PP(V^\star)
\]
where $\sigma=e_0x_0+\cdots e_3x_3\in \Gamma \bigl(\Gr,V)\times
\PP(V^\star); S^\star \boxtimes \cO(1)\bigr)$ is a general section.
\item The line bundle $\cO(1)$ on $\PP(E^\star)$ with the line
  bundle $\det S^\star (1)$ on $\PP(S)$ and $\tilde{s}$ with a
  section that, abusing notation, we still denote by $\tilde{s}$:
\[
\tilde s\in \Gamma \bigl( \PP(S); \det S^\star (1)\bigr)
\]
\end{itemize}
Combining all of the above we identify the blow-up $Y$ of a del Pezzo
surface of degree $5$, $Z\subset \Gr(2,4)$, with the vanishing locus
of a general section $(\sigma, \tilde{s})$ of the bundle
\[
\bigl(S^\star \boxtimes \cO_{\PP^3}(1)\bigr)\oplus \bigl(\det S^\star \boxtimes \cO_{\PP^3}(1)\bigr)
\]
on $\Gr (2,4)\times \PP^3$.  It follows easily from this that our
construction and the Mori--Mukai construction coincide.

\subsection*{Abelianization:} Consider $\Gr(2,4)$ as the geometric
quotient $\CC^{8}\GIT GL_2(\CC)$ where we regard $\CC^8$ as the space
$M(2,4)$ of $2\times 4$ complex matrices and $GL_2(\CC)$ acts by
multiplication on the left. The universal bundle $S$ of subspaces on
$\Gr(2,4)$ is the bundle on $\CC^{8}\GIT \GL_2(\CC)$ determined by
$\Vstd^\star$, where $\Vstd$ is the standard representation of
$\GL_2(\CC)$. Consider the situation as in \S3.1 of \cite{CFKS} with:
\begin{itemize}
\item the space that is denoted by $X$ in \cite{CFKS} set equal to $A =
  \CC^{12}$, regarded as the space of pairs:
  \[
  \{(M,w) : \text{$M$ is a $2 \times 4$ complex matrix, $w \in \CC^4$
    is a vector}\}
  \]
\item $G = \GL_2(\CC) \times \Cstar$, acting on $A$ as:
  \[
  (g,\lambda) \colon (M, w) \mapsto (g M, \lambda w)
  \]
\item $T = (\Cstar)^3$, the diagonal subtorus in $G$;
\item the group that is denoted by $S$ in \cite{CFKS} set equal to the trivial group;
\item $\cV$ equal to the representation of $G$ given by 
\[
(\Vstd \boxtimes \Vstd)\oplus (\det \Vstd \boxtimes \Vstd) \oplus \det
\Vstd \boxtimes \Vtriv
\]
where $\Vtriv$ is the trivial $1$-dimensional representation of
$\Cstar$.
\end{itemize}
It is clear that $A \GIT G = F$, whereas $A \GIT T=\PP^3\times \PP^3
\times \PP^3$. The non-trivial element in the Weyl group $W=\ZZ/2\ZZ$
permutes the first and second factors in the product $\PP^3\times
\PP^3 \times \PP^3$. The representation $\cV$ induces the vector
bundle $\cV_G = E$ over $F$, whereas the representation $\cV$ induces
the vector bundle:
\[\cV_T = \cO(1,0,1) \oplus \cO(0,1,1)\oplus \cO(1,1,1)\oplus \cO(1,1,0)\]
over $A \GIT T$.

\subsection*{The Abelian/non-Abelian correspondence:}

Let $p_i \in H^2(A\GIT T;\QQ)$, $1 \leq i \leq 3$, denote the first
Chern class of $\pi_i^\star \cO_{\PP^3}(1)$ where $\pi_i \colon A \GIT
T \to \PP^3$ is projection to the $i$th factor of the product $A \GIT
T = \PP^3 \times \PP^3 \times \PP^3$.  Set $\Omega = (p_2-p_1)$.  We
fix a lift of $H^\bullet(A \GIT G;\QQ)$ to $H^\bullet(A \GIT T,\QQ)^W$
in the sense of \cite[\S3]{CFKS}.  As in the proof of
Theorem~\ref{thm:rank_1_A_nA} there are many possible choices for such
a lift, and the precise choice made will be unimportant in what
follows.  The lift allows us to regard $H^\bullet(A \GIT G;\QQ)$ as a
subspace of $H^\bullet(A \GIT T,\QQ)^W$, which maps isomorphically to
the Weyl-anti-invariant part $H^\bullet(A \GIT T,\QQ)^a$ of
$H^\bullet(A \GIT T,\QQ)$ via:
\[
\xymatrix{
  H^\bullet(A \GIT T,\QQ)^W \ar[rr]^{\cup \Omega} &&
  H^\bullet(A \GIT T,\QQ)^a}
\]
We compute the quantum period of $X$ by computing the $J$-function of
$F = A \GIT G$ twisted \cite{Coates--Givental} by the Euler class and
the bundle $\cV_G$, using the Abelian/non-Abelian correspondence
\cite{CFKS}.  An alternative method of calculation has been given by
Andrew Strangeway \cite{Strangeway}.

We first compute the $J$-function of $A \GIT T$ twisted by the Euler
class and the bundle $\cV_T$.  As in the proof of
Theorem~\ref{thm:rank_1_A_nA}, consider the bundles $\cV_T$ and
$\cV_G$ equipped with the canonical $\Cstar$-action that rotates
fibers and acts trivially on the base, and consider the twisted
$J$-function $J_{\be,\cV_T}$ of $A \GIT T$. $J_{\be,\cV_T}$ was
defined in equation \eqref{eq:twisted_J} above, and is the restriction
to the locus $\tau \in H^0(A \GIT T) \oplus H^2(A \GIT T)$ of what was
denoted by $J^{S \times \Cstar}_{\cV_T}(\tau)$ in \cite{CFKS}.  The
toric variety $A \GIT T = \PP^3 \times \PP^3 \times \PP^3$ is Fano,
and Theorem~\ref{thm:toric_mirror} gives:
\begin{equation}
  \label{eq:2-17_J_A_mod_T}
  J_{A \GIT T}(\tau) = 
  e^{\tau/z}
  \sum_{l_1 = 0}^\infty
  \sum_{l_2 = 0}^\infty
  \sum_{l_3 = 0}^\infty
  {
    Q_1^{l_1}  Q_2^{l_2}  Q_3^{l_3} 
    e^{l_1 \tau_1} e^{l_2 \tau_2} e^{l_3 \tau_3}
    \over
    \prod_{j=1}^{j=3} 
    \prod_{k=1}^{k=l_j} (p_j + k z)^4
  }
\end{equation}
where $\tau = \tau_1 p_1 + \tau_2 p_2 + \tau_3 p_3$ and we have
identified the group ring $\QQ[H_2(A \GIT T;\ZZ)]$ with
$\QQ[Q_1,Q_2,Q_3]$ via the $\QQ$-linear map that sends $Q^\beta$ to
$Q_1^{\langle \beta, p_1 \rangle} Q_2^{\langle \beta, p_2\rangle}
Q_3^{\langle \beta, p_3\rangle}$.  Each line bundle summand in $\cV_T$
is nef and $c_1(A \GIT T) - c_1(\cV_T)$ is ample, so
Theorem~\ref{thm:toric_ci_ql} gives:
\begin{multline}
  \label{eq:2-17_J_e_T}
  J_{\be,\cV_T}(\tau) = e^{-(Q_1 e^{\tau_1} + Q_2 e^{\tau_2} + Q_3
    e^{\tau_3})/z}
  e^{\tau/z} \\
  \sum_{l_1 = 0}^\infty \sum_{l_2 = 0}^\infty \sum_{l_3 = 0}^\infty
  { 
    Q_1^{l_1} Q_2^{l_2} Q_3^{l_3} 
    e^{l_1 \tau_1} e^{l_2 \tau_2} e^{l_3 \tau_3} 
    \Big(\prod_{1 \leq i < j \leq 3} \prod_{k=1}^{l_i+l_j} 
    (\lambda + p_i + p_j + k z)\Big) 
    \over
    \prod_{j=1}^{j=3} \prod_{k=1}^{k=l_j} (p_j + k z)^4 
  } \times
  \\
  \prod_{k=1}^{l_1+l_2+l_3} (\lambda + p_1+p_2+p_3+k z) 
\end{multline}

Consider now $F = A \GIT G = \Gr(2,4) \times \PP^3$ and a point $t \in
H^\bullet(F)$.  Let $\epsilon_1 \in H^2(F;\QQ)$ be the pullback to $F$
(under projection to the first factor) of the ample generator of
$H^2(\Gr(2,4))$, and let $\epsilon_2 \in H^2(F;\QQ)$ be the pullback
to $F$ (under projection to the second factor) of the ample generator
of $H^2(\PP^3)$.  Identify the group ring $\QQ[H_2(F;\ZZ)]$ with
$\QQ[q_1,q_2]$ via the $\QQ$-linear map which sends $Q^\beta$ to
$q_1^{\langle \beta,\epsilon_1 \rangle} q_2^{\langle \beta,\epsilon_2
  \rangle}$.  In \cite{CFKS}*{\S6.1} the authors consider the lift
$\tilde{J}^{S \times \Cstar}_{\cV_G}(t)$ of their twisted
$J$-function $J^{S \times \Cstar}_{\cV_G}(t)$ determined by a
choice of lift $H^\bullet(A \GIT G;\QQ) \to H^\bullet(A \GIT
T,\QQ)^W$.  We restrict to the locus $t \in H^0(A \GIT G;\QQ)
\oplus H^2(A \GIT G;\QQ)$, considering the lift:
\begin{align*}
  \tilde{J}_{\be,\cV_G}(t) := \tilde{J}^{S \times
    \Cstar}_{\cV_G}(t) && t \in H^0(A \GIT G;\QQ) \oplus H^2(A
  \GIT G;\QQ)
\end{align*}
of our twisted $J$-function $J_{\be,\cV_G}$ determined by our choice
of lift $H^\bullet(A \GIT G;\QQ) \to H^\bullet(A \GIT T,\QQ)^W$.
Theorems~4.1.1 and~6.1.2 in \cite{CFKS} imply that:
\[
\tilde{J}_{\be,\cV_G}\big(\theta(t)\big) \cup \Omega = \Big[ 
\big(z \textstyle {\partial \over \partial \tau_2} 
- 
z {\partial \over \partial \tau_1} 
\big) J_{\be,\cV_T}(\tau) \Big]_{\tau=t, Q_1  = Q_2 = -q_1, Q_3 = q_2}
\]
for some\footnote{As in Theorem~\ref{thm:rank_1_A_nA}, the map
  $\theta$ is grading preserving and satisfies $\theta \equiv \id$
  modulo $q_1, q_2$.  We will need only that $\theta(0) \in H^0(A
  \GIT G;\QQ) \otimes \Lambda_{A \GIT G}$, however, and we will see
  this explicitly below. \label{footnote:2-17}}  function $\theta \colon H^2(A \GIT G;\QQ)
\to H^\bullet(A \GIT G; \Lambda_{A \GIT G})$ such that $\theta(0) \in
H^0(A \GIT G;\QQ) \otimes \Lambda_{A \GIT G}$.  Setting $t = 0$
gives:
\begin{multline}
  \label{eq:2-17_J_e_G}
  \tilde{J}_{\be,\cV_G}\big(\theta(0)\big) \cup \Omega = \\
  e^{-(2q_1 + q_2)/z}
  \sum_{l_1 = 0}^\infty \sum_{l_2 = 0}^\infty \sum_{l_3 = 0}^\infty
  { 
    (-1)^{l_1+l_2} q_1^{l_1+ l_2} q_2^{l_3} 
    \Big(\prod_{1 \leq i < j \leq 3} \prod_{k=1}^{l_i+l_j} 
    (\lambda + p_i + p_j + k z)\Big) 
    \over
    \prod_{j=1}^{j=3} \prod_{k=1}^{k=l_j} (p_j + k z)^4 
  } \times
  \\
  \textstyle
  \Big(\prod_{k=1}^{l_1+l_2+l_3} (\lambda + p_1+p_2+p_3+k z) \Big)
  \big(p_2-p_1 + (l_2-l_1)z\big)
\end{multline}
The left-hand side here takes the form:
\begin{align}
  & (p_2-p_1) \Big( 1 + \theta(0) z^{-1} + O(z^{-2})\Big) \notag \\
  \intertext{whereas the right-hand side is:}
  \label{eq:2-17_asymptotics}
  & (p_2-p_1) \Big( 1 -q_1 z^{-1} + O(z^{-2})\Big) 
\end{align}
We conclude that $\theta(0) = {-q_1}$ and hence, via the String
Equation, that:
\begin{multline}
  \label{eq:2-17_almost_there}
  \tilde{J}_{\be,\cV_G}(0) \cup \Omega = \\
  e^{-(q_1 + q_2)/z}
  \sum_{l_1 = 0}^\infty \sum_{l_2 = 0}^\infty \sum_{l_3 = 0}^\infty
  { 
    (-1)^{l_1+l_2} q_1^{l_1+ l_2} q_2^{l_3} 
    \Big(\prod_{1 \leq i < j \leq 3} \prod_{k=1}^{l_i+l_j} 
    (\lambda + p_i + p_j + k z)\Big) 
    \over
    \prod_{j=1}^{j=3} \prod_{k=1}^{k=l_j} (p_j + k z)^4 
  } \times
  \\
  \textstyle
  \Big(\prod_{k=1}^{l_1+l_2+l_3} (\lambda + p_1+p_2+p_3+k z) \Big)
  \big(p_2-p_1 + (l_2-l_1)z\big)
\end{multline}

We saw in Example~\ref{ex:ql_with_mirror_map} how to extract the
quantum period $G_X$ from the twisted $J$-function $J_{\be,\cV_G}(0)$:
we take the non-equivariant limit $\lambda \to 0$, extract the
component along the unit class $1 \in H^\bullet(A \GIT G;\QQ)$, set
$z=1$, and set $Q^\beta = t^{\langle \beta, {-K_X} \rangle}$.  Thus we
consider the right-hand side of \eqref{eq:2-17_almost_there}, take the
non-equivariant limit, extract the coefficient of $\Omega$, set $z=1$,
set $q_1=t$, and set $q_2=t$.  This yields:
\begin{multline*}
  G_X(t) = e^{-2t} 
  \sum_{l_1=0}^\infty \sum_{l_2=0}^\infty \sum_{l_3=0}^\infty
  (-1)^{l_1+l_2} t^{l_1+l_2+l_3}
  \frac
  {
    (l_1+l_2)! (l_1+l_3)!(l_2+l_3)! (l_1+l_2+l_3)!
  }
  {
    (l_1!)^4 (l_2!)^4 (l_3!)^4
  }
  \times \\
  \Big(1 + (l_2-l_1)(H_{l_2+l_3} - 4H_{l_2})\Big)
\end{multline*}
where $H_k$ is the $k$th harmonic number.  Regularizing gives:
\begin{multline*}
  \hG_X(t) = 1 + 10 t^2 + 42 t^3 + 414 t^4 + 3300 t^5 + 29890 t^6 + 275940 t^7 + 2608270 t^8 + 25305000 t^9+\cdots
\end{multline*}

\subsection*{Minkowski period sequence:} \href{http://www.grdb.co.uk/search/period3?id=101&printlevel=2}{101}


\addtocounter{CustomSectionCounter}{1}

\section{The Fano Manifold $\MM{2}{18}$}
\label{anchor:2--18}

\subsection*{Mori--Mukai name:} 2--18
\label{sec:2-18}

\subsection*{Mori--Mukai construction:} A double cover of $ \PP^1
\times \PP^2$ with branch locus a divisor of bidegree $(2,2)$.

\subsection*{Our construction:} A member $X$ of $|2L+2M|$ in the toric
variety $F$ with weight data:
\[ 
\begin{array}{rrrrrrl} 
  \multicolumn{1}{c}{x_0} & 
  \multicolumn{1}{c}{x_1} & 
  \multicolumn{1}{c}{x_2} & 
  \multicolumn{1}{c}{y_0} & 
  \multicolumn{1}{c}{y_1} & 
  \multicolumn{1}{c}{w} & \\ 
  \cmidrule{1-6}
  1 & 1 & 1 & 0 & 0 & 1  & \hspace{1.5ex} L\\ 
  0 & 0 & 0 & 1 & 1 & 1 & \hspace{1.5ex} M \\
\end{array}
\]
and $\Amp F = \langle L, L+M\rangle$.  We have:
\begin{itemize}
\item $-K_F=4L+3M$ is ample, that is $F$ is a Fano variety;
\item $X\sim 2L+2M$ is nef;
\item $-(K_F+X)\sim 2L+M$ is ample.
\end{itemize}

\subsection*{The two constructions coincide:} The defining equation of
$ X$ is $ w^2=f_{2,2}(x_0,x_1,x_2;y_0,y_1)$, and so the morphism $X
\to \PP^2 \times \PP^1$ which sends the point
$[x_0:x_1:x_2:y_0:y_1:w]$ of $X$ to the point $[x_0:x_1:x_2:y_0:y_1]$
of $\PP^2 \times \PP^1$ exhibits $X$ as a double cover of $\PP^2
\times \PP^1$ branched over a divisor of bidegree $(2,2)$.

\subsection*{The quantum period:} Corollary~\ref{cor:QL} yields:
\[
G_X(t) = e^{-2t} \sum_{l=0}^\infty \sum_{m=0}^\infty 
t^{2l+m}
\frac{(2l+2m)!}
{(l!)^3 (m!)^2(l+m)!}
\]
and regularizing gives:
\[
\hG_X(t) = 1+6 t^2+48 t^3+282 t^4+2400 t^5+22020 t^6+184800 t^7+1684410 t^8+15798720 t^9+ \cdots
\]

\subsection*{Minkowski period sequence:} \href{http://www.grdb.co.uk/search/period3?id=74&printlevel=2}{74}


\addtocounter{CustomSectionCounter}{1}

\section{The Fano Manifold $\MM{2}{19}$}
\label{anchor:2--19}

\subsection*{Mori--Mukai name:} 2--19

\subsection*{Mori--Mukai construction:} The blow-up of $B_4\subset
\PP^5$ with centre a line on it.

\subsection*{Our construction:} A codimension-2 complete intersection
$X$ of type $(L+M) \cap (L+M)$ in the toric variety $F$ with weight
data:
\[ 
\begin{array}{rrrrrrrl} 
  \multicolumn{1}{c}{s_0} & 
  \multicolumn{1}{c}{s_1} & 
  \multicolumn{1}{c}{s_2} & 
  \multicolumn{1}{c}{s_3} & 
  \multicolumn{1}{c}{x} & 
  \multicolumn{1}{c}{x_4} & 
  \multicolumn{1}{c}{x_5} & \\ 
  \cmidrule{1-7}
  1 & 1 & 1 & 1 & -1 & 0 & 0 & \hspace{1.5ex} L\\ 
  0 & 0 & 0 & 0 & 1 & 1 & 1 & \hspace{1.5ex} M \\
\end{array}
\]
and  $\Amp F = \langle L, M\rangle$.  We have:
\begin{itemize}
\item $-K_F=3L+3M$ is ample, that is $F$ is a Fano variety;
\item $X$ is the complete intersection of two ample divisors on $F$;
\item $-(K_F+\Lambda)\sim L+M$ is ample.
\end{itemize}

\subsection*{The two constructions coincide:}
The morphism $F\to \PP^5$ that sends (contravariantly) the homogeneous
co-ordinate functions $[x_0,\dots,x_5]$ to $[xs_0, \dots, xs_3, x_4,
x_5]$ blows up the line $(x_0=\dots =x_3=0)$ in $\PP^5$. Now take the
proper transform of a $B_4$ containing this line.

\subsection*{The quantum period:}
Corollary~\ref{cor:QL} yields:
\[
G_X(t) = e^{-t} \sum_{l = 0}^\infty \sum_{m = l}^\infty 
t^{l+m}
\frac{(l+m)!(l+m)!}
{(l!)^4 (m-l)! (m!)^2}
\]
and regularizing gives:
\[
\hG_X(t) = 1+8 t^2+30 t^3+240 t^4+1920 t^5+13490 t^6+121800 t^7+953680 t^8+8465520 t^9+ \cdots
\]

\subsection*{Minkowski period sequence:} \href{http://www.grdb.co.uk/search/period3?id=86&printlevel=2}{86}


\addtocounter{CustomSectionCounter}{1}
\section{The Fano Manifold $\MM{2}{20}$}
\label{sec:2-20}
\label{anchor:2--20}

\subsection*{Mori--Mukai name:} 2--20

\subsection*{Mori--Mukai construction:} The blow-up of $B_5\subset
\PP^6$ with centre a twisted cubic on it.

\subsection*{Our construction:} The vanishing locus $X$ of a general
section of the vector bundle:
\[
E=\bigl(S^\star \boxtimes
\cO_{\PP^2}(1)\bigr) \oplus  \bigl(\det S^\star \boxtimes \cO_{\PP^2}\bigr)^{\oplus 3}
\]
on the key variety $F=\Gr(2,5)\times \PP^2$, where $S$ is the
universal bundle of subspaces on $\Gr(2,5)$.

\subsection*{The two constructions coincide:} Consider $\CC^5$ with
basis $e_0,\dots, e_4$.  Let $M(2,5)^\times$ denote the space of $2
\times 5$ complex matrices of full rank.  As is customary we represent
a point $W$ in $\Gr(2, \CC^5)$ by a matrix:
\[
\begin{pmatrix}
  a_0 & a_1 & a_2 & a_3 & a_4\\
  b_0 & b_1 & b_2 & b_3 & b_4
\end{pmatrix}
\in M(2,5)^\times
\]
up to the action of $GL_2(\CC)$ from the left. A basis element $e_i$
of $\CC^5$ gives a section of the rank-$2$ vector bundle $S^\star$
that evaluates as:
\[
e_i(W)=
\begin{pmatrix}
  a_i\\b_i
\end{pmatrix}
\]
Consider now the section:
\[
s=e_0x_0+e_1x_1+e_2x_2\in \Gamma \bigl(\Gr(2,5)\times \PP^2; S^\star
\boxtimes \cO(1)\bigr) 
\]
Let $Y \subset \Gr(2,5)\times \PP^2$ be the vanishing locus of $s$,
and let $p:Y \to \Gr(2,5)$ be the projection. $Y$ consists of pairs
$(W,x)\in M(2,5)^\times\times \PP^2$ such that $x=(x_0,x_1,x_2)$ is a
solution of the system:
\[
W\cdot 
\begin{pmatrix}
  x_0\\x_1\\x_2\\0\\0
\end{pmatrix}
=0
\] 
that is, $p\colon Y \to \Gr(2,5)$ blows up the locus $Z\subset
\Gr(2,5)$ consisting of those $W$ such that:
\[
\rk
\begin{pmatrix}
  a_0 & a_1 & a_2 \\
  b_0 & b_1 & b_2
\end{pmatrix}
<2
\]
In Pl\"ucker coordinates $x_{ij}=\det \begin{pmatrix} a_i & a_j\\b_i &
  b_j\end{pmatrix} $ this is the locus where $x_{01}=x_{02}=x_{12}=0$.
Thus $Z$ is the \emph{cubic scroll} defined by:
\begin{align*}
  x_{01}=x_{02}=x_{12}=0
  &&
  \text{and}
  &&
  \rk
  \begin{pmatrix}
    x_{03} & x_{13} & x_{14} \\
    x_{04} & x_{14} & x_{24} 
  \end{pmatrix}
  <2
\end{align*}
Intersecting with $3$ more hyperplane sections in the Pl\"ucker
embedding, we get the blow-up of $B_5$ along a twisted cubic.

\subsection*{Abelianization:} Consider $\Gr(2,5)$ as the geometric
quotient $\CC^{10}\GIT GL_2(\CC)$ where we regard $\CC^{10}$ as the
space $M(2,5)$ of $2\times 5$ complex matrices and $GL_2(\CC)$
acts by multiplication on the left. The universal bundle $S$ of
subspaces on $\Gr(2,5)$ is the bundle on $\CC^{10}\GIT \GL_2(\CC)$
determined by $\Vstd^\star$, where $\Vstd$ is the standard
representation of $\GL_5(\CC)$. Consider the situation as in \S3.1 of
\cite{CFKS} with:
\begin{itemize}
\item the space that is denoted by $X$ in \cite{CFKS} set equal to $A =
  \CC^{13}$, regarded as the space of pairs:
  \[
  \{(M,w) : \text{$M$ is a $2 \times 5$ complex matrix, $w \in \CC^3$
    is a vector}\}
  \]
\item $G = \GL_2(\CC) \times \Cstar$, acting on $A$ as:
  \[
  (g,\lambda) \colon (M, w) \mapsto (g M, \lambda w)
  \]
\item $T = (\Cstar)^3$, the diagonal subtorus in $G$;
\item the group that is denoted by $S$ in \cite{CFKS} set equal to the trivial group;
\item $\cV$ equal to the representation of $G$ given by:
  \[
  (\Vstd \boxtimes \Vstd)\oplus (\det \Vstd \boxtimes \Vtriv)^{\oplus 3} 
  \]
  where $\Vtriv$ is the trivial $1$-dimensional representation of
  $\Cstar$.
\end{itemize}
It is clear that $A \GIT G = F$, whereas $A \GIT T=\PP^4\times \PP^4
\times \PP^2$. The Weyl group $W=\ZZ/2\ZZ$ permutes the first and
second factors of the product $\PP^4\times \PP^4 \times \PP^2$. The
representation $\cV$ induces the vector bundle $\cV_G=E$ over $F$,
whereas the representation $\cV$ induces the vector bundle
\[
\cV_T = \cO(1,0,1) \oplus \cO(0,1,1)\oplus \cO(1,1,0)^{\oplus 3}
\]
over $A \GIT T$.

\subsection*{The Abelian/non-Abelian correspondence:} We proceed
exactly as in \S\ref{sec:2-17}, replacing:
\begin{itemize}
\item $\PP^3 \times \PP^3 \times \PP^3$ by $\PP^4 \times \PP^4 \times
  \PP^2$, throughout;
\item equation \eqref{eq:2-17_J_A_mod_T} by:
  \[
  J_{A \GIT T}(\tau) = 
  e^{\tau/z}
  \sum_{l_1 = 0}^\infty
  \sum_{l_2 = 0}^\infty
  \sum_{l_3 = 0}^\infty
  {
    Q_1^{l_1}  Q_2^{l_2}  Q_3^{l_3} 
    e^{l_1 \tau_1} e^{l_2 \tau_2} e^{l_3 \tau_3}
    \over
    \prod_{k=1}^{k=l_1} (p_1 + k z)^5
    \prod_{k=1}^{k=l_2} (p_2 + k z)^5
    \prod_{k=1}^{k=l_3} (p_3 + k z)^3
  }
  \]
\item equation \eqref{eq:2-17_J_e_T} by:
  \begin{multline*}
    J_{\be,\cV_T}(\tau) = e^{-(Q_1 e^{\tau_1} + Q_2 e^{\tau_2} + Q_3
      e^{\tau_3})/z}
    e^{\tau/z} \\
    \sum_{l_1 = 0}^\infty
    \sum_{l_2 = 0}^\infty
    \sum_{l_3 = 0}^\infty
    {
      Q_1^{l_1}  Q_2^{l_2}  Q_3^{l_3} 
      e^{l_1 \tau_1} e^{l_2 \tau_2} e^{l_3 \tau_3}
      \prod_{k=1}^{l_1+l_2} (\lambda + p_1+p_2+kz)^3
      \over
      \prod_{k=1}^{k=l_1} (p_1 + k z)^5
      \prod_{k=1}^{k=l_2} (p_2 + k z)^5
      \prod_{k=1}^{k=l_3} (p_3 + k z)^3
    } \times
    \\
    \textstyle
    \prod_{k=1}^{l_1+l_3} (\lambda + p_1+p_3+k z) 
    \prod_{k=1}^{l_2+l_3} (\lambda + p_2+p_3+k z) 
  \end{multline*}
\item $\Gr(2,4) \times \PP^3$ by $\Gr(2,5) \times \PP^2$, throughout;
\item equation \eqref{eq:2-17_J_e_G} by:
  \begin{multline*}
    \tilde{J}_{\be,\cV_G}\big(\theta(0)\big) \cup \Omega = \\
    e^{-(2q_1 + q_2)/z}
    \sum_{l_1 = 0}^\infty
    \sum_{l_2 = 0}^\infty
    \sum_{l_3 = 0}^\infty
    {
      (-1)^{l_1+l_2} q_1^{l_1+ l_2} q_2^{l_3} 
      \prod_{k=1}^{l_1+l_2} (\lambda + p_1+p_2+kz)^3
      \over
      \prod_{k=1}^{k=l_1} (p_1 + k z)^5
      \prod_{k=1}^{k=l_2} (p_2 + k z)^5
      \prod_{k=1}^{k=l_3} (p_3 + k z)^3
    } \times
    \\
    \shoveright{\textstyle
    \prod_{k=1}^{l_1+l_3} (\lambda + p_1+p_3+k z) 
    \prod_{k=1}^{l_2+l_3} (\lambda + p_2+p_3+k z) \times}
    \\
    \big(p_2-p_1 + (l_2-l_1)z\big)
  \end{multline*}
\item equation \eqref{eq:2-17_asymptotics} by:
  \[
  (p_2-p_1) \Big( 1 + O(z^{-2})\Big) 
  \]
\item the conclusion $\theta(0) = {-q_1}$ by $\theta(0) = 0$, and
  equation \eqref{eq:2-17_almost_there} by:
  \begin{multline*}
    \tilde{J}_{\be,\cV_G}(0) \cup \Omega = \\
    e^{-(2q_1+q_2)/z}
    \sum_{l_1 = 0}^\infty
    \sum_{l_2 = 0}^\infty
    \sum_{l_3 = 0}^\infty
    {
      (-1)^{l_1+l_2} q_1^{l_1+ l_2} q_2^{l_3} 
      \prod_{k=1}^{l_1+l_2} (\lambda + p_1+p_2+kz)^3
      \over
      \prod_{k=1}^{k=l_1} (p_1 + k z)^5
      \prod_{k=1}^{k=l_2} (p_2 + k z)^5
      \prod_{k=1}^{k=l_3} (p_3 + k z)^3
    } \times
    \\
    \shoveright{\textstyle
      \prod_{k=1}^{l_1+l_3} (\lambda + p_1+p_3+k z) 
      \prod_{k=1}^{l_2+l_3} (\lambda + p_2+p_3+k z) \times}
    \\
    \big(p_2-p_1 + (l_2-l_1)z\big)
  \end{multline*}
\end{itemize}
This yields:
\begin{multline*}
  G_X(t) = e^{-3t} 
  \sum_{l_1=0}^\infty \sum_{l_2=0}^\infty \sum_{l_3=0}^\infty
  (-1)^{l_1+l_2} t^{l_1+l_2+l_3}
  \frac
  {
    \big((l_1+l_2)!\big)^3 (l_1+l_3)!(l_2+l_3)! 
  }
  {
    (l_1!)^5 (l_2!)^5 (l_3!)^3
  }
  \times \\
  \Big(1 + (l_2-l_1)(H_{l_2+l_3} - 5H_{l_2})\Big)
\end{multline*}
where $H_k$ is the $k$th harmonic number.  Regularizing gives:
\begin{multline*}
  \hG_X(t) = 1 + 8 t^2 + 36 t^3 + 288 t^4 + 2220 t^5 + 18260 t^6 + 154560 t^7 + 1348480 t^8 + 11977560 t^9+\cdots
\end{multline*}

\subsection*{Minkowski period sequence:} \href{http://www.grdb.co.uk/search/period3?id=87&printlevel=2}{87}


\addtocounter{CustomSectionCounter}{1}
\section{The Fano Manifold $\MM{2}{21}$}
\label{sec:2-21}
\label{anchor:2--21}

\subsection*{Mori--Mukai name:} 2--21

\subsection*{Mori--Mukai construction:} The blow-up of a quadric
\mbox{3-fold} $Q\subset \PP^4$ with centre a rational normal curve of
degree 4 on it.

\subsection*{Our construction:} The vanishing locus $X$ of a general
section of the vector bundle:
\[
E=\bigl(S^\star \boxtimes \cO_{\PP^4}(1)\bigr)^{\oplus 2} \oplus
\bigl(\det S^\star \boxtimes \cO_{\PP^4}\bigr)
\]
on the key variety $F=\Gr(2,4)\times \PP^4$, where $S$ is the
universal bundle of subspaces on $\Gr(2,4)$.

\subsection*{The two constructions coincide:} 

Consider $\CC^4$ with basis $e_0,\dots, e_3$.  Let $M(2,4)^\times$
denote the space of $2 \times 4$ complex matrices of full rank, and
represent a point $W$ in $\Gr(2, \CC^4)$ by:
\[
W=
\begin{pmatrix}
  a_0 & a_1 & a_2 & a_3 \\
  b_0 & b_1 & b_2 & b_3
\end{pmatrix}
\in M(2,4)^\times
\]
up to the action of $\GL_2(\CC)$ from the left.  A basis element $e_i$, $0 \leq i \leq 3$, of
$\CC^4$ gives a section of the rank-$2$ vector bundle $S^\star$ that
evaluates as:
\[
e_i(W)=
\begin{pmatrix}
  a_i\\b_i
\end{pmatrix}
\]
Let $x_0,\dots, x_4$ be homogeneous coordinates on $\PP^4$, and
consider the two sections:
\begin{align*}
  s_1=e_0x_0+e_1x_1+e_2x_2+e_3x_3,
  &&
  s_2= e_0x_1+e_1x_2+e_2x_3+e_3x_4
\end{align*}
in $\Gamma \bigl(\Gr(2,4)\times \PP^4; S^\star \boxtimes
\cO(1)\bigr)$.  Let $Y \subset \Gr(2,4)\times \PP^4$ denote the locus
on which $s_1$, $s_2$ both vanish, and let $p\colon Y \to
\Gr(2,4)$, $q\colon Y \to \PP^4$ denote the projections to the two
factors of $\Gr(2,4)\times \PP^4$. The locus $Y$ consists of pairs
$(W,x)\in M(2,4)^\times\times \PP^4$ such that:
\[
W\subset \Ker 
\begin{pmatrix}
  x_0 & x_1 \\
  x_1 & x_2\\
  x_2 & x_3\\
  x_3 & x_4
\end{pmatrix}
\]
It follows that $q\colon Y \to \PP^4$ blows up the locus $Z$ given by
the condition:
\[
\rk 
\begin{pmatrix}
  x_0 & x_1 & x_2 & x_3\\
  x_1 & x_2 & x_3 & x_4
\end{pmatrix}
<2
\]
that is, the rational normal curve. Intersecting with $p^\star (H)$,
where $H\in |\det S^\star|$, gives the proper transform of a quadric
\mbox{3-fold} containing $Z$.

\subsection*{Abelianization:} Consider $\Gr(2,4)$ as the geometric
quotient $\CC^{8}\GIT GL_2(\CC)$ where we regard $\CC^{8}$ as the
space $M(2,4)\times $ of $2\times 4$ complex matrices and $GL_2(\CC)$
acts by multiplication on the left. The universal bundle $S$ of
subspaces on $\Gr(2,4)$ is the bundle on $\CC^{8}\GIT \GL_2(\CC)$
determined by $\Vstd^\star$, where $\Vstd$ is the standard
representation of $\GL_2(\CC)$. Consider the situation as in \S3.1 of
\cite{CFKS} with:
\begin{itemize}
\item the space that is denoted by $X$ in \cite{CFKS} set equal to $A =
  \CC^{13}$, regarded as the space of pairs:
  \[
  \{(M,w) : \text{$M$ is a $2 \times 4$ complex matrix, $w \in \CC^5$
    is a vector}\}
  \]
\item $G = \GL_2(\CC) \times \Cstar$, acting on $A$ as:
  \[
  (g,\lambda) \colon (M, w) \mapsto (g M, \lambda w)
  \]
\item $T = (\Cstar)^3$, the diagonal subtorus in $G$;
\item the group that is denoted by $S$ in \cite{CFKS} set equal to the trivial group;
\item $\cV$ equal to the representation of $G=GL_2(\CC)\times \CC^\times$ given by 
  \[
  (\Vstd \boxtimes \Vstd)^{\oplus 2}\oplus (\det \Vstd \boxtimes \Vtriv) 
  \]
  where $\Vtriv$ is the trivial $1$-dimensional representation of
  $\Cstar$.
\end{itemize}
It is clear that $A \GIT G = F$, whereas $A \GIT T=\PP^3\times \PP^3
\times \PP^4$. The Weyl group $W=\ZZ/2\ZZ$ permutes the first and
second factors of the product $\PP^3\times \PP^3 \times \PP^4$. The
representation $\cV$ induces the vector bundle $\cV_G = E$ over $F$,
whereas the representation $\cV$ induces the vector bundle:
\[
\cV_T = \cO(1,0,1)^{\oplus 2} \oplus \cO(0,1,1)^{\oplus 2}\oplus
\cO(1,1,0)
\]
over $A \GIT T = \PP^3\times \PP^3 \times \PP^4$.

\subsection*{The Abelian/non-Abelian correspondence:} Again we proceed
as in \S\ref{sec:2-17}, replacing:
\begin{itemize}
\item $\PP^3 \times \PP^3 \times \PP^3$ by $\PP^3 \times \PP^3 \times
  \PP^4$, throughout;
\item equation \eqref{eq:2-17_J_A_mod_T} by:
  \[
  J_{A \GIT T}(\tau) = 
  e^{\tau/z}
  \sum_{l_1 = 0}^\infty
  \sum_{l_2 = 0}^\infty
  \sum_{l_3 = 0}^\infty
  {
    Q_1^{l_1}  Q_2^{l_2}  Q_3^{l_3} 
    e^{l_1 \tau_1} e^{l_2 \tau_2} e^{l_3 \tau_3}
    \over
    \prod_{k=1}^{k=l_1} (p_1 + k z)^4
    \prod_{k=1}^{k=l_2} (p_2 + k z)^4
    \prod_{k=1}^{k=l_3} (p_3 + k z)^5
  }
  \]
\item equation \eqref{eq:2-17_J_e_T} by:
  \begin{multline*}
    J_{\be,\cV_T}(\tau) = e^{-(Q_1 e^{\tau_1} + Q_2 e^{\tau_2} + Q_3
      e^{\tau_3})/z}
    e^{\tau/z} \\
    \sum_{l_1 = 0}^\infty
    \sum_{l_2 = 0}^\infty
    \sum_{l_3 = 0}^\infty
    {
      Q_1^{l_1}  Q_2^{l_2}  Q_3^{l_3} 
      e^{l_1 \tau_1} e^{l_2 \tau_2} e^{l_3 \tau_3}
      \prod_{k=1}^{l_1+l_2} (\lambda + p_1+p_2+kz)
      \over
      \prod_{k=1}^{k=l_1} (p_1 + k z)^4
      \prod_{k=1}^{k=l_2} (p_2 + k z)^4
      \prod_{k=1}^{k=l_3} (p_3 + k z)^5
    } \times
    \\
    \textstyle
    \prod_{k=1}^{l_1+l_3} (\lambda + p_1+p_3+k z)^2
    \prod_{k=1}^{l_2+l_3} (\lambda + p_2+p_3+k z)^2
  \end{multline*}
\item $\Gr(2,4) \times \PP^3$ by $\Gr(2,4) \times \PP^4$, throughout;
\item equation \eqref{eq:2-17_J_e_G} by:
  \begin{multline*}
    \tilde{J}_{\be,\cV_G}\big(\theta(0)\big) \cup \Omega = \\
    e^{-(2q_1 + q_2)/z}
    \sum_{l_1 = 0}^\infty
    \sum_{l_2 = 0}^\infty
    \sum_{l_3 = 0}^\infty
    {
      (-1)^{l_1+l_2} q_1^{l_1+ l_2} q_2^{l_3} 
      \prod_{k=1}^{l_1+l_2} (\lambda + p_1+p_2+kz)
      \over
      \prod_{k=1}^{k=l_1} (p_1 + k z)^4
      \prod_{k=1}^{k=l_2} (p_2 + k z)^4
      \prod_{k=1}^{k=l_3} (p_3 + k z)^5
    } \times
    \\
    \shoveright{\textstyle
    \prod_{k=1}^{l_1+l_3} (\lambda + p_1+p_3+k z)^2
    \prod_{k=1}^{l_2+l_3} (\lambda + p_2+p_3+k z)^2 \times}
    \\
    \big(p_2-p_1 + (l_2-l_1)z\big)
  \end{multline*}
\item equation \eqref{eq:2-17_asymptotics} by:
  \[
  (p_2-p_1) \Big( 1 -2q_1z^{-1} +  O(z^{-2})\Big) 
  \]
\item the conclusion $\theta(0) = {-q_1}$ by $\theta(0) = {-2q_1}$, and
  equation \eqref{eq:2-17_almost_there} by:
  \begin{multline*}
    \tilde{J}_{\be,\cV_G}(0) \cup \Omega = \\
    e^{-q_2/z}
    \sum_{l_1 = 0}^\infty
    \sum_{l_2 = 0}^\infty
    \sum_{l_3 = 0}^\infty
    {
      (-1)^{l_1+l_2} q_1^{l_1+ l_2} q_2^{l_3} 
      \prod_{k=1}^{l_1+l_2} (\lambda + p_1+p_2+kz)
      \over
      \prod_{k=1}^{k=l_1} (p_1 + k z)^4
      \prod_{k=1}^{k=l_2} (p_2 + k z)^4
      \prod_{k=1}^{k=l_3} (p_3 + k z)^5
    } \times
    \\
    \shoveright{\textstyle
      \prod_{k=1}^{l_1+l_3} (\lambda + p_1+p_3+k z)^2
      \prod_{k=1}^{l_2+l_3} (\lambda + p_2+p_3+k z)^2 \times}
    \\
    \big(p_2-p_1 + (l_2-l_1)z\big)
  \end{multline*}
\end{itemize}
This yields:
\begin{multline*}
  G_X(t) = e^{-t} 
  \sum_{l_1=0}^\infty \sum_{l_2=0}^\infty \sum_{l_3=0}^\infty
  (-1)^{l_1+l_2} t^{l_1+l_2+l_3}
  \frac
  {
    (l_1+l_2)! \big((l_1+l_3)!\big)^2 \big((l_2+l_3)!\big)^2 
  }
  {
    (l_1!)^4 (l_2!)^4 (l_3!)^5
  }
  \times \\
  \Big(1 + (l_2-l_1)(2H_{l_2+l_3} - 4H_{l_2})\Big)
\end{multline*}
where $H_k$ is the $k$th harmonic number.  Regularizing gives:
\begin{multline*}
  \hG_X(t) = 1 + 8 t^2 + 24 t^3 + 240 t^4 + 1440 t^5 + 11960 t^6 + 89040 t^7 + 731920 t^8 + 5913600 t^9+\cdots
\end{multline*}

\subsection*{Minkowski period sequence:} \href{http://www.grdb.co.uk/search/period3?id=84&printlevel=2}{84}


\addtocounter{CustomSectionCounter}{1}
\section{The Fano Manifold $\MM{2}{22}$}
\label{sec:2-22}
\label{anchor:2--22}

\subsection*{Mori--Mukai name:} 2--22

\subsection*{Mori--Mukai construction:} The blow-up of $B_5\subset
\PP^6$ with centre a conic on it.

\subsection*{Our construction:} A complete intersection $X$ of type
$L\cap M \cap M \cap M$ in the flag manifold $\Fl=\Fl(1,2;\CC^5)$,
where $p\colon \Fl\to \PP^4$ and $q\colon \Fl \to \Gr=\Gr(2,5)$ are
the natural projections, $L=p^\star \cO(1)$, $M=q^\star \det S^\star$,
and $S$ is the universal bundle of subspaces on $\Gr$.

\subsection*{The two constructions coincide:} Note that $\Fl=\PP(S)$
is the projectivization of the universal bundle $S$ of subspaces on
$\Gr$. On $\Fl$ we have a natural surjection of vector bundles:
\[
q^\star S^\star \to L
\quad
\text{inducing}
\quad
H^0(\Fl, q^\star S^\star)\cong H^0(\Fl, L)
\]
Let $s\in H^0(\Fl, L)$ be a general section and $Y$ be the locus $(s=0)\subset \Fl$. It
is clear that $q\colon Y \to \Gr$ blows up $Z=(\tilde s=0) \subset \Gr$
where $\tilde s$ ``is'' $s$, now thought of as an element of $H^0(\Gr,
S^\star)$. We are done as $Z=Z_{1,1}$ maps to a quadric under the
Pl\"ucker embedding. 

\subsection*{Abelianization:} 
Consider the situation as in \S3.1 of \cite{CFKS} with:
\begin{itemize}
\item the space that is denoted by $X$ in \cite{CFKS} set equal to $A =
  \CC^{12}$, regarded as the space of pairs:
  \[
  \{(v,w) : \text{$v \in \CC^2$ is a row vector, $w$ is a $2 \times 5$
    complex matrix}\}
  \]
\item $G = \Cstar \times \GL_2(\CC)$, acting on $A$ as:
  \[
  (\lambda, g) \colon (v, w) \mapsto (\lambda v g^{-1}, gw)
  \]
\item $T = (\Cstar)^3$, the diagonal subtorus in $G$;
\item the group that is denoted by $S$ in \cite{CFKS} set equal to the trivial group;
\item $\cV$ equal to the representation of $G$ given by the direct sum
  of one copy of the standard representation of the first factor
  $\Cstar$ and three copies of the determinant of the standard
  representation of the second factor $\GL_2(\CC)$.
\end{itemize}
Then $A \GIT G$ is the flag manifold $\Fl=\Fl(1,2;\CC^5)$, whereas $A \GIT T$
is the toric variety with weight data:
\[
\begin{array}{rrrrrrrrrrrrl} 
1 & 1 & 1 & 1 & 1 & 0 & 0 & 0 & 0 & 0 & -1 & 0  & \hspace{1.5ex}  L_1\\ 
0 & 0 & 0 & 0 & 0 & 1 & 1 & 1 & 1 & 1 & 0   & -1& \hspace{1.5ex} L_2 \\
0 & 0 & 0 & 0 & 0 & 0 & 0 & 0 & 0 & 0 & 1   & 1  & \hspace{1.5ex} H
\end{array}
\]
and $\Amp =\langle L_1, L_2 , H \rangle$; that is, $A \GIT T$ is the
projective bundle $\PP(\cO(-1,0)\oplus \cO(0,-1))$ over $\PP^4\times
\PP^4$. The non-trivial element of the Weyl group $W=\ZZ/2\ZZ$
exchanges the two factors of $\PP^4\times \PP^4$. The representation
$\cV$ induces the vector bundle $\cV_G = L \oplus M^{\oplus 3}$ over
$A \GIT G = \Fl$, whereas the representation $\cV$ induces the vector
bundle $\cV_T = H \oplus (L_1+L_2)^{\oplus 3}$ over $A \GIT T$.

\subsection*{The Abelian/non-Abelian correspondence}
Let $p_1$, $p_2$, and $p_3 \in H^2(A\GIT T;\QQ)$ denote the first
Chern classes of the line bundles $L_1$, $L_2$, and $H$
respectively. We fix a lift of $H^\bullet(A \GIT G;\QQ)$ to
$H^\bullet(A \GIT T,\QQ)^W$ in the sense of \cite[\S3]{CFKS}; there
are many possible choices for such a lift, and the precise choice made
will be unimportant in what follows.  The lift allows us to regard
$H^\bullet(A \GIT G;\QQ)$ as a subspace of $H^\bullet(A \GIT
T,\QQ)^W$, which maps isomorphically to the Weyl-anti-invariant part
$H^\bullet(A \GIT T,\QQ)^a$ of $H^\bullet(A \GIT T,\QQ)$ via:
\[
\xymatrix{
  H^\bullet(A \GIT T,\QQ)^W \ar[rr]^{\cup(p_2-p_1)} &&
  H^\bullet(A \GIT T,\QQ)^a}
\]
We compute the quantum period of $X$ by computing the $J$-function of
$\Fl = A \GIT G$ twisted \cite{Coates--Givental} by the Euler class and
the bundle $\cV_G$, using the Abelian/non-Abelian correspondence
\cite{CFKS}.

Our first step is to compute the $J$-function of $A \GIT T$ twisted by
the Euler class and the bundle $\cV_T$.  As in
\S\ref{sec:quantum_lefschetz}, and as in \cite{CFKS}, consider the
bundles $\cV_T$ and $\cV_G$ equipped with the canonical
$\Cstar$-action that rotates fibers and acts trivially on the base.
We will compute the twisted $J$-function $J_{\be,\cV_T}$ of $A \GIT T$
using the Quantum Lefschetz theorem; $J_{\be,\cV_T}$ was defined in
equation \eqref{eq:twisted_J} above, and is the restriction to the
locus $\tau \in H^0(A \GIT T) \oplus H^2(A \GIT T)$ of what was
denoted by $J^{S \times \Cstar}_{\cV_T}(\tau)$ in \cite{CFKS}.  The
toric variety $A \GIT T$ is Fano, so Theorem~\ref{thm:toric_mirror}
gives:
\[
J_{A \GIT T}(\tau) = 
e^{\tau/z}
\sum_{l, m, n \geq 0}
{
  Q_1^l Q_2^m Q_3^n 
  e^{l \tau_1} e^{m \tau_2} e^{m \tau_3}
  \over
  \prod_{k=1}^{k=l} (p_1 + k z)^5
  \prod_{k=1}^{k=m} (p_2 + k z)^5
}
{
\prod_{k =-\infty}^{k=0} p_3-p_1 + k z 
\over
\prod_{k=-\infty}^{k=n-l} p_3-p_1 + k z 
}
{
\prod_{k = -\infty}^{k=0} p_3-p_2 + k z 
\over
\prod_{k=-\infty}^{k= n-m} p_3-p_2 + k z 
}
\]
where $\tau = \tau_1 p_1 + \tau_2 p_2 + \tau_3 p_3$ and we have
identified the group ring $\QQ[H_2(A \GIT T;\ZZ)]$ with
$\QQ[Q_1,Q_2,Q_3]$ via the $\QQ$-linear map that sends $Q^\beta$ to
$Q_1^{\langle \beta, p_1 \rangle} Q_2^{\langle \beta, p_2\rangle} Q_3^{\langle \beta, p_3 \rangle}$.
The line bundles $L_1$, $L_2$, and $H$ are nef, and $c_1(A \GIT T) -
c_1(\cV_T)$ is ample, so Theorem~\ref{thm:toric_ci_ql} gives:
\begin{multline*}
  J_{\be,\cV_T}(\tau) = \\
  e^{-Q_3 e^{\tau_3}/z}  e^{\tau/z}
  \sum_{l, m, n \geq 0}
  Q_1^l Q_2^m Q_3^n e^{l \tau_1} e^{m \tau_2} e^{m \tau_3}
  {
    \prod_{k=1}^{k=n} (\lambda + p_3 + k z)
    \prod_{k=1}^{k=l+m} (\lambda + p_1 + p_2 + k z)^3
    \over
    \prod_{k=1}^{k=l} (p_1 + k z)^5
    \prod_{k=1}^{k=m} (p_2 + k z)^5
  } \times \\
  {
    \prod_{k =-\infty}^{k=0} p_3-p_1 + k z 
    \over
    \prod_{k=-\infty}^{k=n-l} p_3-p_1 + k z 
  }
  {
    \prod_{k = -\infty}^{k=0} p_3-p_2 + k z 
    \over
    \prod_{k=-\infty}^{k= n-m} p_3-p_2 + k z 
  }
\end{multline*}

Consider now $F = A \GIT G = \Fl$ and a point $t \in H^\bullet(F)$.
Recall that $\Fl=\PP(S)$ is the projectivization of the universal
bundle $S$ of subspaces on $\Gr$. Let $\epsilon_1 \in H^2(F;\QQ)$ be
the pullback to $F$ (under the projection map $q\colon\Fl\to \Gr$) of
the ample generator of $H^2(\Gr)$, and let $\epsilon_2 \in H^2(F;\QQ)$
be the first Chern class of $\cO_{\PP(S)}(1)$.  Identify the group
ring $\QQ[H_2(F;\ZZ)]$ with $\QQ[q_1,q_2]$ via the $\QQ$-linear map
which sends $Q^\beta$ to $q_1^{\langle \beta,\epsilon_1 \rangle}
q_2^{\langle \beta,\epsilon_2 \rangle}$.  In \cite{CFKS}*{\S6.1} the
authors consider the lift $\tilde{J}^{S \times \Cstar}_{\cV_G}(t)$
of their twisted $J$-function $J^{S \times \Cstar}_{\cV_G}(t)$
determined by a choice of lift $H^\bullet(A \GIT G;\QQ) \to
H^\bullet(A \GIT T,\QQ)^W$.  We restrict to the locus $t \in H^0(A
\GIT G;\QQ) \oplus H^2(A \GIT G;\QQ)$, considering the lift:
\begin{align*}
  \tilde{J}_{\be,\cV_G}(t) := \tilde{J}^{S \times \Cstar}_{\cV_G}(t)
  && t \in H^0(A \GIT G;\QQ) \oplus H^2(A \GIT G;\QQ)
\end{align*}
of our twisted $J$-function $J_{\be,\cV_G}$ determined by our choice
of lift $H^\bullet(A \GIT G;\QQ) \to H^\bullet(A \GIT T,\QQ)^W$.
Theorems~4.1.1 and~6.1.2 in \cite{CFKS} imply that:
\[
\tilde{J}_{\be,\cV_G}\big(\theta(t)\big) \cup (p_2 - p_1) = \Big[ 
\textstyle 
\big(z {\partial \over \partial \tau_2} 
- 
z {\partial \over \partial \tau_1} 
\big) J_{\be,\cV_T}(\tau) \Big]_{\tau=t, Q_1 = Q_2 = -q_1, Q_3=q_2}
\]
for some\footnote{In fact the mirror map $\theta$ takes values in
  $H^0(A \GIT G;\Lambda_G) \oplus H^2(A \GIT G;\Lambda_G)$.  This
  follows from homogeneity considerations, as in the proof of
  Proposition~\ref{pro:ABC}. We will see explicitly that $\theta(0)
  \in H^0 \oplus H^2$.} function $\theta:H^2(A \GIT G;\QQ) \to
H^\bullet(A \GIT G; \Lambda_G)$. Setting $t = 0$ gives:
\begin{multline*}
  \tilde{J}_{\be,\cV_G}\big(\theta(0)\big) \cup (p_2 - p_1) = \\
  e^{-q_2/z}  
  \sum_{l, m, n \geq 0}
  (-1)^{l+m} q_1^{l+m} q_2^n 
  {
    \prod_{k=1}^{k=n} (\lambda + p_3 + k z)
    \prod_{k=1}^{k=l+m} (\lambda + p_1 + p_2 + k z)^3
    \over
    \prod_{k=1}^{k=l} (p_1 + k z)^5
    \prod_{k=1}^{k=m} (p_2 + k z)^5
  } \times \\
  {
    \prod_{k =-\infty}^{k=0} p_3-p_1 + k z 
    \over
    \prod_{k=-\infty}^{k=n-l} p_3-p_1 + k z 
  }
  {
    \prod_{k = -\infty}^{k=0} p_3-p_2 + k z 
    \over
    \prod_{k=-\infty}^{k= n-m} p_3-p_2 + k z 
  }
  \big(p_2-p_1 + (m-l) z \big)
\end{multline*}
For symmetry reasons the right-hand side here is divisible by
$p_2-p_1$; it takes the form:
\[
(p_2-p_1)\Big(1 + q_1 z^{-1}+ O(z^{-2})\Big)
\]
whereas:
\[
\tilde{J}_{\be,\cV_G}\big(\theta(0)\big) \cup (p_2 - p_1) = 
(p_2-p_1) \Big(1 + \theta(0) z^{-1} + O(z^{-2}) \Big)
\]
We conclude that $\theta(0) = q_1$ and hence, via the String
Equation, that:
\[
J_{\be,\cV_G}\big(\theta(0)\big) = e^{q_1/z} J_{\be,\cV_G}(0)
\]
Thus:
\begin{multline}
  \label{eq:almost_there}
  \tilde{J}_{\be,\cV_G}(0) \cup (p_2 - p_1) = \\
  e^{-(q_1+q_2)/z}  
  \sum_{l, m, n \geq 0}
  (-1)^{l+m} q_1^{l+m} q_2^n 
  {
    \prod_{k=1}^{k=n} (\lambda + p_3 + k z)
    \prod_{k=1}^{k=l+m} (\lambda + p_1 + p_2 + k z)^3
    \over
    \prod_{k=1}^{k=l} (p_1 + k z)^5
    \prod_{k=1}^{k=m} (p_2 + k z)^5
  } \times \\
  {
    \prod_{k =-\infty}^{k=0} p_3-p_1 + k z 
    \over
    \prod_{k=-\infty}^{k=n-l} p_3-p_1 + k z 
  }
  {
    \prod_{k = -\infty}^{k=0} p_3-p_2 + k z 
    \over
    \prod_{k=-\infty}^{k= n-m} p_3-p_2 + k z 
  }
  \big(p_2-p_1 + (m-l) z \big)
\end{multline}

We saw in Example~\ref{ex:ql_with_mirror_map} how to extract the quantum period $G_X$ from
the twisted $J$-function $J_{\be,\cV_G}(0)$: we take the
non-equivariant limit, extract the component along the unit class $1
\in H^\bullet(A \GIT G;\QQ)$, set $z=1$, and set $Q^\beta = t^{\langle
  \beta, {-K_X} \rangle}$.  Thus we consider the right-hand side of
\eqref{eq:almost_there}, take the non-equivariant limit, extract the
coefficient of $p_2-p_1$, set $z=1$, and set $q_1 = q_2 =t$,
obtaining:
\begin{multline*}
  G_X(t) = e^{-2t} \sum_{l=0}^\infty \sum_{m=0}^\infty
  \sum_{n=\max(l,m)}^\infty 
  (-1)^{l+m}
  \frac
  {
    n! ((l+m)!)^3
  }
  {
    (l!)^5 (m!)^5 (n-l)! (n-m)!
  }
  t^{l+m+n} \\
  + 
  e^{-2t} \sum_{l=0}^\infty \sum_{m=l+1}^\infty
  \sum_{n=m}^\infty 
  (-1)^{l+m}
  \frac
  {
    n! ((l+m)!)^3
    (m-l)(5H_l-5H_m+H_{n-m}-H_{n-l})
  }
  {
    (l!)^5 (m!)^5 (n-l)! (n-m)!
  }  
  t^{l+m+n} \\
  + 
  e^{-2t} \sum_{l=0}^\infty \sum_{m=0}^\infty
  \sum_{n=l}^{m-1}
  (-1)^{l+n}
  \frac
  {
    n! ((l+m)!)^3
    (m-l)(m-n-1)!
  }
  {
    (l!)^5 (m!)^5 (n-l)! 
  }  
  t^{l+m+n} 
\end{multline*}
Regularizing yields:
\[
\hG_X(t) = 1+6 t^2+24 t^3+138 t^4+1080 t^5+6540 t^6+50400 t^7+362250 t^8+2713200 t^9+\cdots
\]

\subsection*{Minkowski period sequence:} \href{http://www.grdb.co.uk/search/period3?id=69&printlevel=2}{69}


\addtocounter{CustomSectionCounter}{1}

\section{The Fano Manifold $\MM{2}{23}$}
\label{anchor:2--23}

\subsection*{Mori--Mukai name:} 2--23

\subsection*{Mori--Mukai construction:} The blow-up of a quadric
\mbox{3-fold} $Q\subset \PP^4$ with centre an intersection of $ A \in
|\cO_Q(1)|$ and $ B \in |\cO_Q(2)|$ such that:
\begin{itemize}
\item[(a)] $A$ is nonsingular;
\item[(b)] $A$ is singular.
\end{itemize}

\subsection*{Our construction:} A codimension-2 complete intersection
$X$ of type $ (L+M)\cap (2L)$ in the toric variety $F$ with weight
data:
\[ 
\begin{array}{rrrrrrrl} 
  \multicolumn{1}{c}{s_0} & 
  \multicolumn{1}{c}{s_1} & 
  \multicolumn{1}{c}{s_2} & 
  \multicolumn{1}{c}{s_3} & 
  \multicolumn{1}{c}{s_4} & 
  \multicolumn{1}{c}{x} & 
  \multicolumn{1}{c}{x_5} & \\ 
  \cmidrule{1-7}
  1 & 1 & 1 & 1 & 1 & -1 & 0 & \hspace{1.5ex} L\\ 
  0 & 0 & 0 & 0 & 0 & 1 & 1 & \hspace{1.5ex} M \\
\end{array}
\]
and  $\Amp F = \langle L, M\rangle$.  We have:
\begin{itemize}
\item $-K_F=4L+2M$ is ample, that is $F$ is a Fano variety;
\item $X$ is the intersection of two nef divisors on $F$;
\item $-(K_F+\Lambda)\sim L+M$ is ample.
\end{itemize}

\subsection*{The two constructions coincide:} Apply
Lemma~\ref{lem:blowups} with $V = \cO_Q(-1) \oplus \cO_Q$, $W =
\cO_Q(1)$, and $f \colon V \to W$ given by the matrix $
\begin{pmatrix}
  B & A
\end{pmatrix}
$.  This exhibits $X$ as a member of $|\pi^\star W(1)|$ on $\PP(V)$,
or in other words as a complete intersection of type $(L+M) \cap (2L)$
on the toric variety $F$.

\subsection*{The quantum period:}  Corollary~\ref{cor:QL} yields:
\[
G_X(t) = e^{-t} \sum_{l=0}^\infty \sum_{m=l}^\infty
t^{l+m}
\frac{(l+m)!(2l)!}
{(l!)^5 (m-l)! m!}
\]
and regularizing gives:
\[
\hG_X(t) = 1+8 t^2+12 t^3+216 t^4+720 t^5+8540 t^6+42000 t^7+410200 t^8+2503200 t^9+ \cdots
\]

\subsection*{Minkowski period sequence:} \href{http://www.grdb.co.uk/search/period3?id=78&printlevel=2}{78}


\addtocounter{CustomSectionCounter}{1}

\section{The Fano Manifold $\MM{2}{24}$}
\label{anchor:2--24}

\subsection*{Mori--Mukai name:} 2--24

\subsection*{Mori--Mukai construction:} A divisor of bidegree $(1,2)$ on $\PP^2\times \PP^2$.

\subsection*{Our construction:} A member $X$ of $|L+2M|$ in the toric
variety $F=\PP^2\times \PP^2$.

\subsection*{The two constructions coincide:} Obvious.

\subsection*{The quantum period:}
The toric variety $F$ has weight data:
\[ 
\begin{array}{rrrrrrl} 
1 & 1 & 1 & 0 & 0 & 0 & \hspace{1.5ex} L\\ 
0 & 0 & 0 & 1 & 1 & 1 & \hspace{1.5ex} M \\
\end{array}
\]
and $\Amp F = \langle L,M\rangle$. We have:
\begin{itemize}
\item $F$ is a Fano variety;
\item $X\sim L+2M$ is ample;
\item $-(K_F+X)\sim 2L+M$ is ample.
\end{itemize}

Corollary~\ref{cor:QL} yields:
\[
G_X(t) = e^{-2t} \sum_{l=0}^\infty \sum_{m=0}^\infty 
t^{2l+m}
\frac{(l+2m)!}
{(l!)^3 (m!)^3}
\]
and regularizing gives:
\begin{multline*}
  \hG_X(t) = 1+4 t^2+24 t^3+132 t^4+780 t^5+5800 t^6+40320 t^7+283780 t^8+2105880 t^9+ \cdots
\end{multline*}

\subsection*{Minkowski period sequence:} \href{http://www.grdb.co.uk/search/period3?id=44&printlevel=2}{44}


\addtocounter{CustomSectionCounter}{1}

\section{The Fano Manifold $\MM{2}{25}$}
\label{anchor:2--25}

\subsection*{Mori--Mukai name:} 2--25

\subsection*{Mori--Mukai construction:} The blow up of $\PP^3$ with
centre an elliptic curve that is an intersection of two quadrics.

\subsection*{Our construction:} A member $X$ of $|L+2M|$ in the toric
variety $F=\PP^1\times \PP^3$.

\subsection*{The two constructions coincide:} Apply
Lemma~\ref{lem:blowups} with $V = \cO_{\PP^3} \oplus \cO_{\PP^3}$,
$W=\cO_{\PP^3}(2)$, and $f \colon V \to W$ the map given by the two quadrics
that define the elliptic curve.

\subsection*{The quantum period:}
The toric variety $F$ has weight data:
\[ 
\begin{array}{rrrrrrl} 
1 & 1 & 0 & 0 & 0 & 0 & \hspace{1.5ex} L\\ 
0 & 0 & 1 & 1 & 1 & 1 & \hspace{1.5ex} M \\
\end{array}
\]
and $\Amp F = \langle L,M\rangle$.  We have:
\begin{itemize}
\item $F$ is a Fano variety;
\item $X\sim L+2M$ is ample;
\item $-(K_F+X)\sim L+2M$ is ample.
\end{itemize}

Corollary~\ref{cor:QL} yields:
\[
G_X(t) = e^{-t} \sum_{l=0}^\infty \sum_{m=0}^\infty
t^{l+2m}
\frac{(l+2m)!}
{(l!)^2 (m!)^4}
\]
and regularizing gives:
\begin{multline*}
  \hG_X(t) = 1+4 t^2+24 t^3+60 t^4+720 t^5+3640 t^6+21840 t^7+175420 t^8+1024800 t^9+ \cdots
\end{multline*}

\subsection*{Minkowski period sequence:} \href{http://www.grdb.co.uk/search/period3?id=43&printlevel=2}{43}


\addtocounter{CustomSectionCounter}{1}
\section{The Fano Manifold $\MM{2}{26}$}
\label{sec:2-26}
\label{anchor:2--26}

\subsection*{Mori--Mukai name:} 2--26

\subsection*{Mori--Mukai construction:} The blow-up of $B_5\subset
\PP^6$ with centre a line on it.

\subsection*{Our construction:} 
Let $S$ be the universal bundle of subspaces over $\Gr = \Gr(2,4)$,
and let $E$ be the rank-$3$ vector bundle $E = \CC \oplus S^\star$ on
$\Gr$.  Let $q\colon \PP(E) \to \Gr$ denote the projection.  Then $X$
is the vanishing locus of a general section of:
\[
q^\star \det S^\star \oplus \Bigl( (q^\star \det S^\star)\otimes
\cO_{\PP(E)}(1) \Bigr)^{\oplus 2}
\]
on the key variety $F=\PP(E)$.

\subsection*{The two constructions coincide:} 

Write $V=\CC^5$ with basis $e_0,\dots,e_4$, and write $\CC^4=V/\CC
e_0$. Consider $\Gr$ as the Grassmannian of two-dimensional subspaces
of this $\CC^4$.  There is an exact sequence:
\[
0\to T \to q^\star E^\star \to \cO_{\PP(E)}(1)\to 0
\]
on $F = \PP(E)$, where $T$ is a rank-$2$ vector bundle.

First we construct a morphism $p\colon F\to \Gr(2, V)=\Gr (2,
\CC^5)$. Let $U$ denote the universal bundle of subspaces on
$\Gr(2,5)$. The morphism $p$ arises, by the universal property of
$\Gr(2,\CC^5)$, from the inclusion:
\[
T \subset q^\star E^\star = \CC\oplus q^\star S \subset \CC\oplus
q^\star \CC^4=\CC e_0\oplus \CC^4=\CC^5
\]
i.e. there is a unique $p\colon F\to \Gr(2,\CC^5)$ such that
$S=p^\star U$.

Next we claim that the morphism $p\colon F \to \Gr(2,5)$ that we just
constructed is the blow-up of $\Gr(2,5)$ along the locus 
\[
Z=\{W_2\subset \CC^5\mid e_0\in W_2\}
\]
of two-dimensional vector subspaces that contain $e_0$.  Denote by
$\pi\colon \CC^5\to \CC^5/\CC e_0$ the natural projection. Indeed for
$W_2\in \Gr(2, 5)$ \emph{either}:
\begin{itemize}
\item $e_0\not \in W_2$, in which case $\pi(W_2)=V_2\subset \CC^4$ is
  a $2$-dimensional subspace and $p$ is an isomorphism above $W_2$, \emph{or}
\item $e_0 \in W_2$, in which case $\pi(W_2)$ is a $1$-dimensional
  subspace and 
\[
q(p^{-1} W_2) =\{ V_2\in \Gr(2, 4) \mid \pi(W_2)\subset V_2\}
\]
\end{itemize}

The statement follows easily from the claim just shown. Indeed, on the
one hand $Z\cong \PP^3$ and the Pl\"ucker embedding of $\Gr(2,5)$
embeds $Z$ linearly in $\PP^9$. In other words, $p\colon F \to
\Gr(2,5)$ is the blow up of $\Gr(2, 5)\subset \PP^9$ along a
$\PP^3\subset \Gr(2,5)$. On the other hand, the rational map
\[
q p^{-1} \colon \Gr(2, 5)\dasharrow \Gr(2,4)\subset \PP^5
\]
where $\Gr(2,4)\subset \PP^5$ is the Pl\"ucker embedding of
$\Gr(2,4)$, is the map corresponding to the linear system of
hyperplane sections of $\Gr(2,5)\subset \PP^9$, in \emph{its}
Pl\"ucker embedding, that contain $Z$.

In other words, let now $Y\subset \Gr(2, 4)$ be a general hyperplane
section, and $H_1, H_2\subset \Gr(2, 5)$ be two general hyperplane sections
of $\Gr(2,5)$, then
\[
p\colon q^{-1}(Y)\cap p^{-1} (H_1\cap H_2)  \to p q^{-1}(Y) \cap H_1\cap H_2
\]
is the blow-up of $B_5 =p q^{-1}(Y) \cap H_1\cap H_2\subset \Gr(2,5)$
along the line $Z\cap B_5$.

\subsection*{Abelianization:} 

Consider the situation as in \S3.1 of \cite{CFKS} with:
\begin{itemize}
\item the space that is denoted by $X$ in \cite{CFKS} set equal to $A =
  \CC^{11}$, regarded as the space of pairs:
  \[
  \{(v,w) : \text{$v$ is a $2\times 4$ complex matrix, $w\in \CC^3$ is
    a column vector} \}
  \]
\item $G = \GL_2(\CC) \times \CC^\star$, acting on $A$ as:
  \[
  (g, \lambda) \colon (v, w) \mapsto (g v , \lambda \rho(g) w )
  \]
  where $GL_2(\CC)$ acts by left multiplication on $M(2,4)$ and $\rho
  = \rho_\text{std}\oplus 0$ is the direct sum of a copy of the
  standard representation of $GL_2(\CC)$ and a copy of the trivial
  representation.
\item $T = (\Cstar)^3$, the diagonal subtorus in $G$;
\item the group that is denoted by $S$ in \cite{CFKS} set equal to the
  trivial group;
\item $\cV$ equal to the representation of $G$ given by:
\[
\psi \oplus (\chi_3\otimes \psi)^{\oplus 2}
\]
where: $\psi\colon G\to \Cstar$ is $\det \rho_\text{std}$ on the first
factor and trivial on the second factor; whereas $\chi_3\colon G \to
\Cstar$ is trivial on the first factor and the identity on the second factor.
\end{itemize}
Then $A \GIT G$ is the key variety $F=\PP(E)$ introduced above (this
follows from Lemma~\ref{lem:P(E)}), whereas $A \GIT
T$ is the toric variety with weight data:
\[
\begin{array}{rrrrrrrrrrrl} 
1 & 1 & 1 & 1 & 0 & 0 & 0 & 0 & 1   & 0 & 0  & \hspace{1.5ex} L_1\\ 
0 & 0 & 0 & 0 & 1 & 1 & 1 & 1 & 0   & 1 & 0  & \hspace{1.5ex} L_2 \\
0 & 0 & 0 & 0 & 0 & 0 & 0 & 0 & 1   & 1 & 1  & \hspace{1.5ex} L_3
\end{array}
\]
and $\Amp =\langle L_1, L_2 , L_1+L_2+L_3 \rangle$; that is, $A \GIT
T$ is the projective bundle $\PP\bigl(\cO(-1,0)\oplus \cO(0,-1) \oplus
\cO(-1,-1)\bigr)$ over $\PP^3\times \PP^3$.  The Weyl group
$W=\ZZ/2\ZZ$ exchanges the first and second factors of $\PP^3\times
\PP^3$, that is, it exchanges the first set of four co-ordinates with
the second set of four coordinates in the table giving the weight
data. The representation $\cV$ induces the vector bundle $q^\star \det
S^\star \oplus \Bigl( (q^\star \det S^\star)(1)\Bigl)^{\oplus 2}$ over
$A \GIT G = F$, whereas the representation $\cV$ induces the vector
bundle
\[
(L_1+L_2)\oplus (L_1+L_2+L_3)^{\oplus 2}
\]
on $A\GIT T$.

\subsection*{The Abelian/non-Abelian correspondence:} 
Let $p_1$, $p_2$, and $p_3 \in H^2(A\GIT T;\QQ)$ denote the first
Chern classes of the line bundles $L_1$, $L_2$, and $L_1\otimes L_2
\otimes L_3$ respectively. We fix a lift of $H^\bullet(A \GIT G;\QQ)$
to $H^\bullet(A \GIT T,\QQ)^W$ in the sense of \cite[\S3]{CFKS}; as
before there are many possible choices for such a lift, and the
precise choice made will be unimportant in what follows.  The lift
allows us to regard $H^\bullet(A \GIT G;\QQ)$ as a subspace of
$H^\bullet(A \GIT T,\QQ)^W$, which maps isomorphically to the
Weyl-anti-invariant part $H^\bullet(A \GIT T,\QQ)^a$ of $H^\bullet(A
\GIT T,\QQ)$ via:
\[
\xymatrix{
  H^\bullet(A \GIT T,\QQ)^W \ar[rr]^{\cup(p_2-p_1)} &&
  H^\bullet(A \GIT T,\QQ)^a}
\]
We compute the quantum period of $X$ by computing the $J$-function of
$\Fl = A \GIT G$ twisted \cite{Coates--Givental} by the Euler class and
the bundle $\cV_G$, using the Abelian/non-Abelian correspondence
\cite{CFKS}.

We begin by computing the $J$-function of $A \GIT T$ twisted by the
Euler class and the bundle $\cV_T$.  Consider the bundles $\cV_T$ and
$\cV_G$ equipped with the canonical $\Cstar$-action that rotates
fibers and acts trivially on the base.  We will compute the twisted
$J$-function $J_{\be,\cV_T}$ of $A \GIT T$ using the Quantum Lefschetz
theorem; $J_{\be,\cV_T}$ was defined in equation \eqref{eq:twisted_J}
above, and is the restriction to the locus $\tau \in H^0(A \GIT T)
\oplus H^2(A \GIT T)$ of what was denoted by $J^{S \times
  \Cstar}_{\cV_T}(\tau)$ in \cite{CFKS}.  The toric variety $A \GIT T$
is Fano, so Theorem~\ref{thm:toric_mirror} gives:
\begin{multline*}
  J_{A \GIT T}(\tau) = 
  e^{\tau/z}
  \sum_{l, m, n \geq 0}
  {
    Q_1^l Q_2^m Q_3^n 
    e^{l \tau_1} e^{m \tau_2} e^{m \tau_3}
    \over
    \prod_{k=1}^{k=l} (p_1 + k z)^4
    \prod_{k=1}^{k=m} (p_2 + k z)^4
  }
  {
    \prod_{k =-\infty}^{k=0} p_3-p_2 + k z 
    \over
    \prod_{k=-\infty}^{k=n-m} p_3-p_2 + k z 
  }
  \\ \times
  {
    \prod_{k = -\infty}^{k=0} p_3-p_1 + k z 
    \over
    \prod_{k=-\infty}^{k= n-l} p_3-p_1 + k z 
  }
  {
    \prod_{k = -\infty}^{k=0} p_3-p_1-p_2 + k z 
    \over
    \prod_{k=-\infty}^{k= n-l-m} p_3-p_1-p_2 + k z 
  }
\end{multline*}
where $\tau = \tau_1 p_1 + \tau_2 p_2 + \tau_3 p_3$ and we have
identified the group ring $\QQ[H_2(A \GIT T;\ZZ)]$ with
$\QQ[Q_1,Q_2,Q_3]$ via the $\QQ$-linear map that sends $Q^\beta$ to
$Q_1^{\langle \beta, p_1 \rangle} Q_2^{\langle \beta, p_2\rangle}
Q_3^{\langle \beta, p_3 \rangle}$.  The line bundles $L_1 + L_2$, and
$L_1\otimes L_2 \otimes L_3$ are nef, and $c_1(A \GIT T) - c_1(\cV_T)$
is ample, so Theorem~\ref{thm:toric_ci_ql} gives:
\begin{multline*}
  J_{\be,\cV_T}(\tau) = \\
  e^{-Q_3 e^{\tau_3}/z}  e^{\tau/z}
  \sum_{l, m, n \geq 0}
  Q_1^l Q_2^m Q_3^n e^{l \tau_1} e^{m \tau_2} e^{m \tau_3}
  {
    \prod_{k=1}^{k=l+m} (\lambda + p_1 + p_2 + k z)
    \prod_{k=1}^{k=n} (\lambda + p_3 + k z)^2
    \over
    \prod_{k=1}^{k=l} (p_1 + k z)^4
    \prod_{k=1}^{k=m} (p_2 + k z)^4
  } \times \\
    {
    \prod_{k =-\infty}^{k=0} p_3-p_2 + k z 
    \over
    \prod_{k=-\infty}^{k=n-m} p_3-p_2 + k z 
  }
  {
    \prod_{k = -\infty}^{k=0} p_3-p_1 + k z 
    \over
    \prod_{k=-\infty}^{k= n-l} p_3-p_1 + k z 
  }
  {
    \prod_{k = -\infty}^{k=0} p_3-p_1-p_2 + k z 
    \over
    \prod_{k=-\infty}^{k= n-l-m} p_3-p_1-p_2 + k z 
  }
\end{multline*}

Consider now $F = A \GIT G = \PP(E)$ and a point $t \in H^\bullet(F)$.
Let $\epsilon_1 \in H^2(F;\QQ)$ be the pullback to $F$ (under the
projection map $q\colon\PP(E)\to \Gr(2,4)$) of the ample generator of
$H^2(\Gr(2,4))$, and let $\epsilon_2 \in H^2(F;\QQ)$ be the first
Chern class of $(q^\star \det S^\star)\otimes\cO_{\PP(E)}(1)$.
Identify the group ring $\QQ[H_2(F;\ZZ)]$ with $\QQ[q_1,q_2]$ via the
$\QQ$-linear map which sends $Q^\beta$ to $q_1^{\langle
  \beta,\epsilon_1 \rangle} q_2^{\langle \beta,\epsilon_2 \rangle}$.
In \cite{CFKS}*{\S6.1} the authors consider the lift $\tilde{J}^{S
  \times \Cstar}_{\cV_G}(t)$ of their twisted $J$-function $J^{S
  \times \Cstar}_{\cV_G}(t)$ determined by a choice of lift
$H^\bullet(A \GIT G;\QQ) \to H^\bullet(A \GIT T,\QQ)^W$.  We restrict
to the locus $t \in H^0(A \GIT G;\QQ) \oplus H^2(A \GIT G;\QQ)$,
considering the lift:
\begin{align*}
  \tilde{J}_{\be,\cV_G}(t) := \tilde{J}^{S \times
    \Cstar}_{\cV_G}(t) && t \in H^0(A \GIT G;\QQ) \oplus H^2(A
  \GIT G;\QQ)
\end{align*}
of our twisted $J$-function $J_{\be,\cV_G}$ determined by our choice
of lift $H^\bullet(A \GIT G;\QQ) \to H^\bullet(A \GIT T,\QQ)^W$.
Theorems~4.1.1 and~6.1.2 in \cite{CFKS} imply that:
\[
\tilde{J}_{\be,\cV_G}\big(\theta(t)\big) \cup (p_2-p_1) = \Big[ 
\big(z \textstyle {\partial \over \partial \tau_2} 
- 
z {\partial \over \partial \tau_1} 
\big) J_{\be,\cV_T}(\tau) \Big]_{\tau = t, Q_1  = Q_2 = -q_1, Q_3 = q_2}
\]
for some\footnote{As in Theorem~\ref{thm:rank_1_A_nA} and
  footnote~\ref{footnote:2-17}, the map $\theta$ is grading
  preserving and satisfies $\theta \equiv \id$ modulo $q_1, q_2$.}  function $\theta
\colon H^2(A \GIT G;\QQ) \to H^\bullet(A \GIT G; \Lambda_{A \GIT G})$.
Setting $t = 0$ gives:
\begin{multline*}
  \tilde{J}_{\be,\cV_G}\big(\theta(0)\big) \cup (p_2-p_1) = \\
  e^{-q_2/z}  
  \sum_{l, m, n \geq 0}
  (-1)^{l+m} q_1^{l+m} q_2^n 
  {
    \prod_{k=1}^{k=l+m} (\lambda + p_1 + p_2 + k z)
    \prod_{k=1}^{k=n} (\lambda + p_3 + k z)^2
    \over
    \prod_{k=1}^{k=l} (p_1 + k z)^4
    \prod_{k=1}^{k=m} (p_2 + k z)^4
  } \times \\
  \shoveright{  {
    \prod_{k =-\infty}^{k=0} p_3-p_2 + k z 
    \over
    \prod_{k=-\infty}^{k=n-m} p_3-p_2 + k z 
  }
  {
    \prod_{k = -\infty}^{k=0} p_3-p_1 + k z 
    \over
    \prod_{k=-\infty}^{k= n-l} p_3-p_1 + k z 
  } 
  {
    \prod_{k = -\infty}^{k=0} p_3-p_1-p_2 + k z 
    \over
    \prod_{k=-\infty}^{k= n-l-m} p_3-p_1-p_2 + k z 
  } \times} \\
  \big(p_2-p_1 + (m-l)z\big)
\end{multline*}
The left-hand side here takes the form:
\begin{align*}
  & (p_2-p_1) \Big( 1 + \theta(0) z^{-1} + O(z^{-2})\Big)  \\
  \intertext{whereas the right-hand side is:}
  & (p_2-p_1) \Big( 1 + O(z^{-2})\Big) 
\end{align*}
and therefore $\theta(0) = 0$.  Thus:
\begin{multline}
  \label{eq:2-26_almost_there}
  \tilde{J}_{\be,\cV_G}(0) \cup (p_2-p_1) = \\
  e^{-q_2/z}  
  \sum_{l, m, n \geq 0}
  (-1)^{l+m} q_1^{l+m} q_2^n 
  {
    \prod_{k=1}^{k=l+m} (\lambda + p_1 + p_2 + k z)
    \prod_{k=1}^{k=n} (\lambda + p_3 + k z)^2
    \over
    \prod_{k=1}^{k=l} (p_1 + k z)^4
    \prod_{k=1}^{k=m} (p_2 + k z)^4
  } \times \\
  \shoveright{  {
    \prod_{k =-\infty}^{k=0} p_3-p_2 + k z 
    \over
    \prod_{k=-\infty}^{k=n-m} p_3-p_2 + k z 
  }
  {
    \prod_{k = -\infty}^{k=0} p_3-p_1 + k z 
    \over
    \prod_{k=-\infty}^{k= n-l} p_3-p_1 + k z 
  } 
  {
    \prod_{k = -\infty}^{k=0} p_3-p_1-p_2 + k z 
    \over
    \prod_{k=-\infty}^{k= n-l-m} p_3-p_1-p_2 + k z 
  } \times} \\
  \big(p_2-p_1 + (m-l)z\big)
\end{multline}

We saw in Example~\ref{ex:ql_with_mirror_map} how to extract the
quantum period $G_X$ from the twisted $J$-function $J_{\be,\cV_G}(0)$:
we take the non-equivariant limit $\lambda \to 0$, extract the
component along the unit class $1 \in H^\bullet(A \GIT G;\QQ)$, set
$z=1$, and set $Q^\beta = t^{\langle \beta, {-K_X} \rangle}$.  Thus we
consider the right-hand side of \eqref{eq:2-26_almost_there}, take the
non-equivariant limit, extract the coefficient of $p_2-p_1$, set
$z=1$, set $q_1=t$, and set $q_2=t$.  This yields:
\begin{multline*}
  G_X(t) = e^{-t} 
  \sum_{l=0}^\infty \sum_{m=0}^\infty \sum_{n=l+m}^\infty
  (-1)^{l+m} t^{l+m+n}
  \frac
  {
    (l+m)!(n!)^2
  }
  {
    (l!)^4 (m!)^4 (n-m)! (n-l)! (n-l-m)!
  }
  \times \\
  \Big(1 + (m-l)(H_{n-m}-4H_m)\Big)
\end{multline*}
where $H_k$ is the $k$th harmonic number.  Regularizing gives:
\begin{multline*}
  \hG_X(t) = 1 + 6 t^2 + 12 t^3 + 114 t^4 + 540 t^5 + 3480 t^6 + 22680 t^7 + 137970 t^8 + 978600 t^9+\cdots
\end{multline*}

\subsection*{Minkowski period sequence:} \href{http://www.grdb.co.uk/search/period3?id=58&printlevel=2}{58}


\addtocounter{CustomSectionCounter}{1}

\section{The Fano Manifold $\MM{2}{27}$}
\label{anchor:2--27}

\subsection*{Mori--Mukai name:} 2--27

\subsection*{Mori--Mukai construction:} The blow up of $\PP^3$ with
centre a twisted cubic.

\subsection*{Our construction:} A codimension-2 complete intersection
$X$ of type $(L+M) \cap (L+M)$ in the toric variety $F=\PP^3\times
\PP^2$.

\subsection*{The two constructions coincide:} The twisted cubic in
$\PP^3$ with co-ordinates $x_0, x_1, x_2, x_3$ is given by the
condition:
\[ 
\rk
\begin{pmatrix}
  x_0 & x_1 & x_2 \\ 
  x_1 & x_2 & x_3
\end{pmatrix} < 2
\]
Applying Lemma~\ref{lem:blowups} with $V = \cO_{\PP^3}^{\oplus 3}$, $W
= \cO_{\PP^3}(1)^{\oplus 2}$, and the map $f\colon V \to W$ given by $\begin{pmatrix}
  x_0 & x_1 & x_2 \\ 
  x_1 & x_2 & x_3
\end{pmatrix}$, we see that $X$ is cut out of $\PP(V)$ by a section of
$\pi^\star W \otimes \cO_{\PP(E)}(1)$.  In other words, $X$ is a
complete intersection in $\PP^3 \times \PP^2$ of type $ (L+M) \cap
(L+M)$.

\subsection*{The quantum period:} The toric variety $F$ has weight
data:
\[ 
\begin{array}{rrrrrrrl} 
1 & 1 & 1 & 1 & 0 & 0 & 0 & \hspace{1.5ex} L\\ 
0 & 0 & 0 & 0 & 1 & 1 & 1 &\hspace{1.5ex} M \\
\end{array}
\]
and $\Amp F = \langle L,M\rangle$.  We have that:
\begin{itemize}
\item $F$ is a Fano variety;
\item $X$ is the intersection of two ample divisors on $F$;
\item $-(K_F+\Lambda)\sim 2L+M$ is ample.
\end{itemize}

Corollary~\ref{cor:QL} yields:
\[
G_X(t) = e^{-t} \sum_{l=0}^\infty \sum_{m=0}^\infty
t^{2l+m}
\frac{(l+m)!(l+m)!}
{(l!)^4 (m!)^3}
\]
and regularizing gives:
\[
\hG_X(t) = 1+2 t^2+18 t^3+30 t^4+240 t^5+1730 t^6+5880 t^7+41230 t^8+262080 t^9+ \cdots
\]

\subsection*{Minkowski period sequence:} \href{http://www.grdb.co.uk/search/period3?id=19&printlevel=2}{19}


\addtocounter{CustomSectionCounter}{1}

\section{The Fano Manifold $\MM{2}{28}$}
\label{anchor:2--28}

\subsection*{Mori--Mukai name:} 2--28

\subsection*{Mori--Mukai construction:} The blow-up of $\PP^3$ with
centre a plane cubic.

\subsection*{Our construction:} A member $X$ of $|L+M|$ in the toric
variety $F$ with weight data:
\[ 
\begin{array}{rrrrrrl} 
  \multicolumn{1}{c}{s_0} & 
  \multicolumn{1}{c}{s_1} & 
  \multicolumn{1}{c}{s_2} & 
  \multicolumn{1}{c}{s_3} & 
  \multicolumn{1}{c}{x} & 
  \multicolumn{1}{c}{y} & \\ 
  \cmidrule{1-6}
  1 & 1 & 1 & 1 & -2 & 0 & \hspace{1.5ex} L\\ 
  0 & 0 & 0 & 0  & 1 & 1 & \hspace{1.5ex} M \\
\end{array}
\]
and $\Amp F = \langle L, M\rangle$.  We have:
\begin{itemize}
\item $-K_F=2L+2M$ is ample, that is $F$ is a Fano variety;
\item $X\sim L+M$ is ample;
\item $-(K_F+X)\sim L+M$ is ample.
\end{itemize}

\subsection*{The two constructions coincide:}
Suppose that the centre of the blow-up is defined by the simultaneous
vanishing of $A$ and $B$, where $A$ is a member of $\cO_{\PP^3}(3)$
and $B$ is a member of $\cO_{\PP^3}(1)$.  Apply
Lemma~\ref{lem:blowups} with $V = \cO_{\PP^3}(-2) \oplus \cO_{\PP^3}$,
$W=\cO_{\PP^3}(1)$, and the map $f\colon V \to W$ given by $
\begin{pmatrix}
  A & B
\end{pmatrix}
$.

\subsection*{The quantum period:}
Corollary~\ref{cor:QL} yields:
\[
G_X(t) = e^{-t} \sum_{l =0}^\infty \sum_{m = 2l}^\infty 
t^{l+m}
\frac{(l+m)!}
{(l!)^4 (m-2l)! m!}
\]
and regularizing gives:
\[
\hG_X(t) = 1+18 t^3+24 t^4+1350 t^6+3780 t^7+2520 t^8+141120 t^9+ \cdots
\]

\subsection*{Minkowski period sequence:} \href{http://www.grdb.co.uk/search/period3?id=5&printlevel=2}{5}


\addtocounter{CustomSectionCounter}{1}
\section{The Fano Manifold $\MM{2}{29}$}
\label{sec:2-29}
\label{anchor:2--29}

\subsection*{Mori--Mukai name:} 2--29

\subsection*{Mori--Mukai construction:} The blow-up of a quadric
\mbox{3-fold} $Q\subset \PP^3$ with centre a conic on it.

\subsection*{Our construction:} A member $X$ of $|2M|$ in the toric
variety $F$ with weight data:
\[ 
\begin{array}{rrrrrrl} 
  \multicolumn{1}{c}{s_0} & 
  \multicolumn{1}{c}{s_1} & 
  \multicolumn{1}{c}{x} & 
  \multicolumn{1}{c}{x_2} & 
  \multicolumn{1}{c}{x_3} & 
  \multicolumn{1}{c}{x_4} & \\ 
  \cmidrule{1-6}
  1 & 1 & -1 & 0 & 0 & 0 & \hspace{1.5ex} L\\ 
  0 & 0 & 1 & 1  & 1 & 1 & \hspace{1.5ex} M \\
\end{array}
\]
and $\Amp F = \langle L, M\rangle$.  We have:
\begin{itemize}
\item $-K_F=L+4M$ is ample, that is $F$ is a Fano variety;
\item $X\sim 2M$ is nef and big;
\item $-(K_F+X)\sim L+2M$ is ample.
\end{itemize}
 
\subsection*{The two constructions coincide:} The morphism $F \to
\PP^4$ that sends (contravariantly) the homogeneous co-ordinate
functions $[x_0,\dots,x_4]$ to $[xs_0, xs_1, x_2, x_3, x_4]$
blows up the plane $(x_0=x_1=0)$ in $\PP^4$. Thus a generic member
of $|2M|$ on $F$ is the blow-up of a quadric \mbox{3-fold} with centre
a conic on it.

\subsection*{The quantum period:}
Corollary~\ref{cor:QL} yields:
\[
G_X(t) = \sum_{l=0}^\infty \sum_{m=l}^\infty
t^{l+2m}
\frac{(2m)!}
{(l!)^2 (m-l)! (m!)^3}
\]
and regularizing gives:
\[
\hG_X(t) = 1+4 t^2+12 t^3+36 t^4+360 t^5+940 t^6+8400 t^7+38500 t^8+210000 t^9+ \cdots
\]

\subsection*{Minkowski period sequence:} \href{http://www.grdb.co.uk/search/period3?id=35&printlevel=2}{35}


\addtocounter{CustomSectionCounter}{1}

\section{The Fano Manifold $\MM{2}{30}$}
\label{anchor:2--30}

\subsection*{Mori--Mukai name:} 2--30

\subsection*{Mori--Mukai construction:} The blow-up of $\PP^3$ with centre a conic.

\subsection*{Our construction:} A member $X$ of $|L+M|$ in the toric
variety $F$ with weight data:
\[ 
\begin{array}{rrrrrrl} 
  \multicolumn{1}{c}{s_0} & 
  \multicolumn{1}{c}{s_1} & 
  \multicolumn{1}{c}{s_2} & 
  \multicolumn{1}{c}{s_3} & 
  \multicolumn{1}{c}{x} & 
  \multicolumn{1}{c}{x_4} & \\ 
  \cmidrule{1-6}
  1 & 1 & 1 & 1 & -1 & 0 & \hspace{1.5ex} L\\ 
  0 & 0 & 0 & 0  & 1 & 1 & \hspace{1.5ex} M \\
\end{array}
\]
and $\Amp F = \langle L, M\rangle$.  We have:
\begin{itemize}
\item $-K_F=3L+2M$ is ample, that is $F$ is a Fano variety;
\item $X\sim L+M$ is ample;
\item $-(K_F+X)\sim 2L+M$ is ample.
\end{itemize}

\subsection*{The two constructions coincide:} Suppose that the centre
of the blow-up is defined by the simultaneous vanishing of $A$ and
$B$, where $A$ is a member of $\cO_{\PP^3}(2)$ and $B$ is a member of
$\cO_{\PP^3}(1)$.  Apply Lemma~\ref{lem:blowups} with $V =
\cO_{\PP^3}(-1) \oplus \cO_{\PP^3}$, $W=\cO_{\PP^3}(1)$, and the map
$f\colon V \to W$ given by $
\begin{pmatrix}
  A & B
\end{pmatrix}
$.

\subsection*{The quantum period:} Corollary~\ref{cor:QL} yields:
\[
G_X(t) = e^{-t} \sum_{l=0}^\infty \sum_{m=l}^\infty
t^{2l+m}
\frac{(l+m)!}
{(l!)^4 (m-l)! m!}
\]
and regularizing gives:
\[
\hG_X(t) = 1+12 t^3+24 t^4+540 t^6+2520 t^7+2520 t^8+33600 t^9+ \cdots
\]

\subsection*{Minkowski period sequence:} \href{http://www.grdb.co.uk/search/period3?id=4&printlevel=2}{4}


\addtocounter{CustomSectionCounter}{1}

\section{The Fano Manifold $\MM{2}{31}$}
\label{anchor:2--31}

\subsection*{Mori--Mukai name:} 2--31

\subsection*{Mori--Mukai construction:} The blow-up of a quadric
\mbox{3-fold} $Q\subset \PP^4$ with centre a line on it.

\subsection*{Our construction:}
A member $X$ of $|L+M|$ in the toric variety $F$ with weight data:
\[ 
\begin{array}{rrrrrrl} 
  \multicolumn{1}{c}{s_0} & 
  \multicolumn{1}{c}{s_1} & 
  \multicolumn{1}{c}{s_2} & 
  \multicolumn{1}{c}{x} & 
  \multicolumn{1}{c}{x_3} & 
  \multicolumn{1}{c}{x_4} & \\ 
  \cmidrule{1-6}
  1 & 1 & 1 & -1 & 0 & 0 & \hspace{1.5ex} L\\ 
  0 & 0 & 0 & 1  & 1 & 1 & \hspace{1.5ex} M \\
\end{array}
\]
and $\Amp F = \langle L, M\rangle$.  We have:
\begin{itemize}
\item $-K_F=2L+3M$ is ample, that is $F$ is a Fano variety;
\item $X\sim L+M$ is ample;
\item $-(K_F+X)\sim L+2M$ is ample.
\end{itemize}

\subsection*{The two constructions coincide:} 
The morphism $F \to \PP^4$ that sends (contravariantly) the
homogeneous co-ordinate functions $[x_0,\dots,x_4]$ to $[xs_0, xs_1, x
s_2, x_3, x_4]$ blows up the line $(x_0=x_1=x_2=0)$ in $\PP^4$, and
$X$ is the proper transform of a quadric containing this line.

\subsection*{The quantum period:} Corollary~\ref{cor:QL} yields:
\[
G_X(t) = \sum_{l=0}^\infty \sum_{m=l}^\infty 
t^{l+2m}
\frac{(l+m)!}
{(l!)^3 (m-l)! (m!)^2}
\]
and regularizing gives:
\[
\hG_X(t) = 1+2 t^2+12 t^3+6 t^4+180 t^5+560 t^6+1680 t^7+16870 t^8+46200 t^9+ \cdots
\]

\subsection*{Minkowski period sequence:} \href{http://www.grdb.co.uk/search/period3?id=15&printlevel=2}{15}


\addtocounter{CustomSectionCounter}{1}

\section{The Fano Manifold $\MM{2}{32}$ (also known as $W$)}
\label{sec:W}
\label{anchor:2--32}

\subsection*{Mori--Mukai name:} 2--32

\subsection*{Mori--Mukai construction:} The divisor $W$ of bidegree $
(1,1)$ on $ \PP^2 \times \PP^2$.

\subsection*{Our construction:} A member $X$ of $|L+M|$ on the toric
variety $F=\PP^2\times\PP^2$.

\subsection*{The two constructions coincide:} Obvious.

\subsection*{The quantum period:}
The toric variety $F$ has weight data:
\[ 
\begin{array}{rrrrrrrl} 
1 & 1 & 1 & 0 & 0 & 0 & \hspace{1.5ex} L\\ 
0 & 0 & 0 & 1 & 1 & 1 & \hspace{1.5ex} M \\
\end{array}
\]
and $\Amp F = \langle L,M\rangle$.  We have that:
\begin{itemize}
\item $F$ is a Fano variety;
\item $X\sim L+M$ is ample;
\item $-(K_F+X)\sim 2L+2M$ is ample.
\end{itemize}

Corollary~\ref{cor:QL} yields:
\[
G_X(t) = \sum_{l=0}^\infty \sum_{m=0}^\infty
t^{2l+2m}
\frac{(l+m)!}
{(l!)^3 (m!)^3}
\]
and regularizing gives:
\[
\hG_X(t) = 1+4 t^2+60 t^4+1120 t^6+24220 t^8+567504 t^{10}+ \cdots
\]

\subsection*{Minkowski period sequence:} \href{http://www.grdb.co.uk/search/period3?id=24&printlevel=2}{24}


\addtocounter{CustomSectionCounter}{1}
\section{The Fano Manifold $\MM{2}{33}$}
\label{sec:2-33}
\label{anchor:2--33}

\subsection*{Mori--Mukai name:} 2--33

\subsection*{Mori--Mukai construction:} The blow-up of $\PP^3$ with
centre a line.

\subsection*{Our construction:} The toric Fano variety $X$ with weight
data:
\[ 
\begin{array}{rrrrrl} 
  \multicolumn{1}{c}{s_0} & 
  \multicolumn{1}{c}{s_1} & 
  \multicolumn{1}{c}{x} & 
  \multicolumn{1}{c}{x_2} & 
  \multicolumn{1}{c}{x_3} & \\ 
  \cmidrule{1-5}
  1 & 1 & -1 & 0 & 0 & \hspace{1.5ex} L\\ 
  0 & 0 &  1 & 1 & 1 & \hspace{1.5ex} M \\
\end{array}
\]
and $\Amp X = \langle L, M\rangle$.  

\subsection*{The two constructions coincide:} The blow-up $X
\to \PP^3$ sends (contravariantly) the homogeneous co-ordinate
functions $[x_0,x_1,x_2,x_3]$ to $[xs_0, xs_1, x_2, x_3]$.

\subsection*{The quantum period:}
Corollary~\ref{cor:toric_mirror} yields:
\[
G_X(t) = \sum_{l=0}^\infty \sum_{m=l}^\infty
\frac{t^{l+3m}}
{(l!)^2 (m-l)! (m!)^2}
\]
and regularizing gives:
\[
\hG_X(t) = 1+6 t^3+24 t^4+90 t^6+1260 t^7+2520 t^8+1680 t^9+ \cdots
\]

\subsection*{Minkowski period sequence:} \href{http://www.grdb.co.uk/search/period3?id=2&printlevel=2}{2}


\addtocounter{CustomSectionCounter}{1}

\section{The Fano Manifold $\MM{2}{34}$}
\label{anchor:2--34}

\subsection*{Mori--Mukai name:} 2--34

\subsection*{Mori--Mukai construction:} $\PP^1 \times \PP^2$

\subsection*{Our construction:} $\PP^1\times \PP^2$

\subsection*{The two constructions coincide:} Obvious.

\subsection*{The quantum period:} $X = \PP^1 \times \PP^2$ is the
toric Fano variety with weight data:
\[ 
\begin{array}{rrrrrl} 
  1 & 1 & 0 & 0 & 0 & \hspace{1.5ex} L\\ 
  0 & 0 & 1 & 1 & 1 & \hspace{1.5ex} M \\
\end{array}
\]
and $\Amp X = \langle L, M\rangle$.  Corollary~\ref{cor:toric_mirror}
yields:
\[
G_X(t) = \sum_{l=0}^\infty \sum_{m=0}^\infty 
\frac{t^{2l+3m}}
{(l!)^2 (m!)^3}
\]
and regularizing gives:
\[
\hG_X(t) = 1+2 t^2+6 t^3+6 t^4+120 t^5+110 t^6+1260 t^7+5110 t^8+11760 t^9+ \cdots
\]

\subsection*{Minkowski period sequence:} \href{http://www.grdb.co.uk/search/period3?id=10&printlevel=2}{10}


\addtocounter{CustomSectionCounter}{1}
\section{The Fano Manifold $\MM{2}{35}$ (also known as $B_7$)}
\label{sec:B7}
\label{anchor:2--35}

\subsection*{Mori--Mukai name:} 2--35

\subsection*{Mori--Mukai construction:} $ B_7$, the blow-up of $
\PP^3$ at a point; equivalently, the $\PP^1$-bundle $\PP(\cO+\cO(1))$
over $\PP^2$.

\subsection*{Our construction:} The toric Fano variety $X$ with weight
data:
\[ 
\begin{array}{rrrrrl} 
  \multicolumn{1}{c}{s_0} & 
  \multicolumn{1}{c}{s_1} & 
  \multicolumn{1}{c}{s_2} & 
  \multicolumn{1}{c}{x} & 
  \multicolumn{1}{c}{x_3} & \\ 
  \cmidrule{1-5}
  1 & 1 & 1 & -1 & 0 & \hspace{1.5ex} L\\ 
  0 & 0 &  0 & 1 & 1 & \hspace{1.5ex} M \\
\end{array}
\]
and $\Amp X = \langle L, M\rangle$. 

\subsection*{The two constructions coincide:} The blow-up $X
\to \PP^3$ sends (contravariantly) the homogeneous co-ordinate
functions $[x_0,x_1,x_2,x_3]$ to $[xs_0, xs_1, xs_2, x_3]$.

\subsection*{The quantum period:}
Corollary~\ref{cor:toric_mirror} yields:
\[
G_X(t) = \sum_{l=0}^\infty \sum_{m=l}^\infty
\frac{t^{2l+2m}}
{(l!)^3 (m-l)! m!}
\]
and regularizing gives:
\[
\hG_X(t) = 1+2 t^2+30 t^4+380 t^6+5950 t^8+101052 t^{10}+ \cdots
\]

\subsection*{Minkowski period sequence:} \href{http://www.grdb.co.uk/search/period3?id=7&printlevel=2}{7}


\addtocounter{CustomSectionCounter}{1}

\section{The Fano Manifold $\MM{2}{36}$}
\label{anchor:2--36}

\subsection*{Mori--Mukai name:} 2--36

\subsection*{Mori--Mukai construction:} The blow-up of the Veronese
cone $W_4 \subset \PP^6$ with centre the vertex; equivalently, the
$\PP^1$-bundle $ \PP(\cO \oplus \cO(2))$ over $ \PP^2$.

\subsection*{Our construction:} 
The toric Fano variety $X$ with weight data:
\[ 
\begin{array}{rrrrrl} 
  \multicolumn{1}{c}{s_0} & 
  \multicolumn{1}{c}{s_1} & 
  \multicolumn{1}{c}{s_2} & 
  \multicolumn{1}{c}{x} & 
  \multicolumn{1}{c}{y} & \\ 
  \cmidrule{1-5}
  1 & 1 & 1 & -2 & 0 & \hspace{1.5ex} L\\ 
  0 & 0 &  0 & 1 & 1 & \hspace{1.5ex} M \\
\end{array}
\]
and $\Amp F = \langle L, M\rangle$.

\subsection*{The two constructions coincide:} Obvious.

\subsection*{The quantum period:}
Corollary~\ref{cor:toric_mirror} yields:
\[
G_X(t) = \sum_{l=0}^\infty \sum_{m=2l}^\infty
\frac{t^{l+2m}}
{(l!)^3 (m-2l)! m!}
\]
and regularizing gives:
\[
\hG_X(t) = 1+2 t^2+6 t^4+60 t^5+20 t^6+840 t^7+70 t^8+7560 t^9+ \cdots
\]

\subsection*{Minkowski period sequence:} \href{http://www.grdb.co.uk/search/period3?id=6&printlevel=2}{6}


\addtocounter{CustomSectionCounter}{1}

\section{The Fano Manifold $\MM{3}{1}$}
\label{anchor:3--1}

\subsection*{Mori--Mukai name:} 3--1
\label{sec:restriction_of_nef_not_restriction_of_ample}

\subsection*{Mori--Mukai construction:} A double cover of $\PP^1\times
\PP^1 \times \PP^1$ branched along a divisor of tridegree $(2,2,2)$.

\subsection*{Our construction:} A member $X$ of $|2L+2M+2N|$ in the
toric variety $F$ with weight data:
\[
\begin{array}{rrrrrrrl} 
  \multicolumn{1}{c}{x_0} & 
  \multicolumn{1}{c}{x_1} & 
  \multicolumn{1}{c}{y_0} & 
  \multicolumn{1}{c}{y_1} & 
  \multicolumn{1}{c}{z_0} & 
  \multicolumn{1}{c}{z_1} &  
  \multicolumn{1}{c}{w} &  \\
  \cmidrule{1-7}
  1 & 1 & 0 & 0 & 0 & 0 & 1 & \hspace{1.5ex} L\\ 
  0 & 0 & 1 & 1 & 0 & 0 & 1 & \hspace{1.5ex} M \\
  0 & 0 & 0 & 0 & 1 & 1 & 1 & \hspace{1.5ex} N \\
\end{array}
\]
and $\Amp F =\langle L,M,L+M+N\rangle$.  The secondary fan for $F$ has
three maximal cones; the corresponding three toric varieties are
isomorphic. It is easy to see that $\Amp X = \langle L,M,N \rangle $.
We have:
\begin{itemize}
\item $-K_F=3(L+M+N)$ is nef and big but not ample;
\item $X\sim 2(L+M+N)$ is nef and big but not ample;
\item $-(K_F+X)\sim L+M+N$ is nef and big but not ample. 
\end{itemize}

\subsection*{The two constructions coincide:} Consider the equation
$w^2 = f(x_0,x_1,y_0,y_1,z_0,z_1)$ where $f$ is a generic polynomial
of degree $2$ in $x_0$ and $x_1$, degree $2$ in $y_0$ and $y_1$, and
degree $2$ in $z_0$, $z_1$.

\subsection*{The quantum period:}  
Let $p_1$, $p_2, p_3 \in H^\bullet(F;\ZZ)$ denote the first Chern
classes of $L$, $M$, and $L \otimes M \otimes N$ respectively; these
classes form a basis for $H^2(F;\ZZ)$.  Write $\tau \in H^2(F;\QQ)$ as
$\tau = \tau_1 p_1 + \tau_2 p_2+ \tau_3 p_3$ and identify the group
ring $\QQ[H_2(F;\ZZ)]$ with the polynomial ring $\QQ[Q_1,Q_2,Q_3]$ via
the $\QQ$-linear map that sends the element $Q^\beta \in
\QQ[H_2(F;\ZZ)]$ to $Q_1^{\langle \beta,p_1\rangle} Q_2^{\langle
  \beta,p_2\rangle} Q_3^{\langle \beta,p_3\rangle}$.  We have:
\begin{align*}
  I_F(\tau)  & = e^{\tau/z} 
  \sum_{l, m, n\geq 0} 
  \frac{
    Q_1^l Q_2^m Q_3^n e^{l \tau_1} e^{m \tau_2} e^{n \tau_3}
  }
  {
    \prod_{k=1}^l (p_1 + k z)^2
    \prod_{k=1}^m (p_2 + k z)^2
    \prod_{k=1}^n (p_3 + k z)
  }
  \frac{\prod_{k = -\infty}^0 (p_3-p_1-p_2 + k z)^2}
  {\prod_{k = -\infty}^{n-l-m} (p_3-p_1-p_2 + k z)^2}
  \\
  &=
  1 + \tau z^{-1} + O(z^{-2})
\end{align*}
Theorem~\ref{thm:toric_mirror} gives:
\[
J_F(\tau) = I_F(\tau)
\]
and hence:
\begin{multline*}
  I_{\be,E}(\tau) = 
  e^{\tau/z}  
  \sum_{l, m, n\geq 0} 
  \frac{
    Q_1^l Q_2^m Q_3^n e^{l \tau_1} e^{m \tau_2} e^{n \tau_3}
    \prod_{k=1}^{2n} (\lambda + 2p_3 + k z)
  }
  {
    \prod_{k=1}^l (p_1 + k z)^2
    \prod_{k=1}^m (p_2 + k z)^2
    \prod_{k=1}^n (p_3 + k z)
  } \\
  \times
  \frac{\prod_{k = -\infty}^0 (p_3-p_1-p_2 + k z)^2}
  {\prod_{k = -\infty}^{n-l-m} (p_3-p_1-p_2 + k z)^2}
\end{multline*}
Since:
\[
  I_{\be,E}(\tau) =  1 + \big(\tau + 2Q_3 + 2 Q_1 Q_3 + 2Q_2 Q_3\big) z^{-1} + O(z^{-2}) 
\]
applying Theorem~\ref{thm:toric_ci_ql} yields:
\[
J_{\be,E}\big(\tau + 2Q_3 + 2 Q_1 Q_3 + 2Q_2 Q_3\big) = I_{\be,E}(\tau)
\]
The String Equation now implies that:
\[
J_{\be,E}\big(\tau\big) = 
e^{-(2Q_3 + 2 Q_1 Q_3 + 2Q_2 Q_3)/z} I_{\be,E}(\tau)
\]
and taking the non-equivariant limit $\lambda \to 0$ gives:
\begin{multline*}
  J_{F,X}(\tau) = 
  e^{-(2Q_3 + 2 Q_1 Q_3 + 2Q_2 Q_3)/z}
  e^{\tau/z}  
  \sum_{l, m, n\geq 0} 
  \frac{
    Q_1^l Q_2^m Q_3^n e^{l \tau_1} e^{m \tau_2} e^{n \tau_3}
    \prod_{k=1}^{2n} (2p_3 + k z)
  }
  {
    \prod_{k=1}^l (p_1 + k z)^2
    \prod_{k=1}^m (p_2 + k z)^2
    \prod_{k=1}^n (p_3 + k z)
  } \\
  \times
  \frac{\prod_{k = -\infty}^0 (p_3-p_1-p_2 + k z)^2}
  {\prod_{k = -\infty}^{n-l-m} (p_3-p_1-p_2 + k z)^2}
\end{multline*}
We now proceed exactly as in the proof of Corollary~\ref{cor:QL},
obtaining:
\[
G_X(t) = 
e^{-6 t} 
\sum_{l=0}^\infty \sum_{m=0}^\infty \sum_{n=l+m}^\infty
t^n
\frac
{(2n)!}
{(l!)^2(m!)^2n!\big((n-l-m)!\big)^2}
\]
Regularizing gives:
\begin{multline*}
  \hG_X(t) = 1+54 t^2+672 t^3+15642 t^4+336960 t^5+7919460
  t^6+191177280 t^7\\
  +4751272890 t^8+120527514240 t^9 + \cdots
\end{multline*}

\subsection*{Minkowski period sequence:} \href{http://www.grdb.co.uk/search/period3?id=154&printlevel=2}{154}


\addtocounter{CustomSectionCounter}{1}

\section{The Fano Manifold $\MM{3}{2}$}
\label{sec:3-2}
\label{anchor:3--2}

\subsection*{Mori--Mukai name:} 3--2

\subsection*{Mori--Mukai construction:} A member of $|\L^{\otimes 2}
\otimes_{\cO_{\PP^1\times \PP^1}}\cO_{\PP^1 \times \PP^1}(2,3)|$ on
the $\PP^2$-bundle
\[
\PP\bigl(\cO_{\PP^1 \times \PP^1}\oplus\cO_{\PP^1 \times
  \PP^1}(-1,-1)^{\oplus 2}\bigr)
\] 
over $\PP^1\times \PP^1$ such that $X\cap Y$ is irreducible, where $\L$
is the tautological line bundle (that is, the fiberwise $\cO(1)$ on the $\PP^2$-bundle) and $Y$ is a member of $|\L|$.

\subsection*{Our construction:} A member $X$ of $|M+2N|$ in the toric
variety $F$ with weight data:
\[
\begin{array}{rrrrrrrl} 
  \multicolumn{1}{c}{x_0} & 
  \multicolumn{1}{c}{x_1} & 
  \multicolumn{1}{c}{y_0} & 
  \multicolumn{1}{c}{y_1} & 
  \multicolumn{1}{c}{t} & 
  \multicolumn{1}{c}{t_0} &  
  \multicolumn{1}{c}{t_1} &  \\
  \cmidrule{1-7}
  1 & 1 & 0 & 0 & -1 & 0 & 0 & \hspace{1.5ex} L\\ 
  0 & 0 & 1 & 1 & -1 & 0 & 0 & \hspace{1.5ex} M \\
  0 & 0 & 0 & 0 & 1 & 1 & 1 & \hspace{1.5ex} N \\
\end{array}
\]
and $\Amp F=\langle L,M,N\rangle$.

We have:
\begin{itemize}
\item $-K_F=L+M+3N$ is ample, that is $F$ is a Fano variety;
\item $X\sim M+2N$ is nef and big;
\item ${-(K_F+X)}\sim L+N$ is nef and big but not ample on $F$ (it is
  ample when restricted to $X$). 
\end{itemize}

\subsection*{The two constructions coincide:} Mori--Mukai use
different weight conventions to ours, so their construction exhibits
$X$ as a member of $|2L'+3M'+2N'|$ in the toric variety with weight
data:
\[
\begin{array}{rrrrrrrl} 
  1 & 1 & 0 & 0 & 0 & 1 & 1 & \hspace{1.5ex} L'\\ 
  0 & 0 & 1 & 1 & 0 & 1 & 1 & \hspace{1.5ex} M' \\
  0 & 0 & 0 & 0 & 1 & 1 & 1 & \hspace{1.5ex} N' \\
\end{array}
\]
and $\Amp F=\langle L',M',L'+M'+N'\rangle$.  Changing basis yields our construction.

\subsection*{Remarks on our construction:}
Note that the secondary fan for $F$ has three maximal cones as in
Fig.~\ref{fig:3-2}.

\begin{figure}[h!]
  \centering
  \resizebox{6cm}{!}{\input{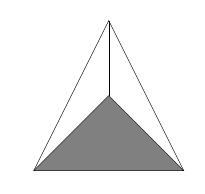_t}}
  \caption{The secondary fan for $F$ in 3--2}
  \label{fig:3-2}
\end{figure}

The following table gives more detail about the irrelevant ideal,
unstable locus, and quotient variety corresponding to each of the
maximal cones of the secondary fan.

\begin{table}[h!]
  \centering
  \begin{tabular}{lllc} \toprule
    \multicolumn{1}{c}{chamber}& 
    \multicolumn{1}{c}{irrelevant ideal}&
    \multicolumn{1}{c}{unstable locus}&
    \multicolumn{1}{c}{$\CC^7 \GIT_\omega T $}\\
    \midrule
    $\langle L,M,N\rangle$&$(x_iy_jt_k,x_iy_jt)$&$(x_0=x_1=0)\cup(y_0=y_1=0)\cup(t=t_0=t_1=0)$&$F$\\ 
    $\langle L,N,N-L-M\rangle$&$(x_it_kt,x_iy_jt)$&$(t=0)\cup(x_0=x_1=0)\cup(y_0=y_1=t_0=t_1=0)$&$G$\\
    $\langle
    M,N,N-L-M\rangle$&$(y_jt_kt,x_iy_jt)$&$(t=0)\cup(y_0=y_1=0)\cup(x_0=x_1=t_0=t_1=0)$&$G^\prime$ \\
    \bottomrule
  \end{tabular}
 \label{tab:3-2}
\end{table}

The shape of the unstable locus shows that the second and third
maximal cones are ``hollow'', that is, taking the GIT quotient with
respect to these stability conditions leads to toric
varieties of Picard rank~2. We discuss briefly the variety $G$, which is the most
relevant for understanding the geometry of $X$. Since $t\not=0$, we
can use the $M$-torus to reduce to $t=1$ and eliminate $t$. We are
left with the toric variety $G$ with weight data:
\[
\begin{array}{rrrrrrl} 
  \multicolumn{1}{c}{x_0} & 
  \multicolumn{1}{c}{x_1} & 
  \multicolumn{1}{c}{u_0} & 
  \multicolumn{1}{c}{u_1} & 
  \multicolumn{1}{c}{t_0} &  
  \multicolumn{1}{c}{t_1} &  \\
  \cmidrule{1-6}
  1 & 1 & -1 & -1 & 0 & 0 & \hspace{1.5ex} L'\\ 
  0 & 0 & 1 & 1 & 1 & 1 & \hspace{1.5ex} N' \\
\end{array}
\] 
and $\Amp G = \langle L' , N'\rangle$. The morphism $f\colon F \to G$
is given (contravariantly) by:
\[ 
[x_0,x_1,u_0,u_1,t_0,t_1] \mapsto [x_0,x_1,t y_0, t y_1, t_0, t_1]
\] 
and we have $L=f^\star L'$, $N=f^\star N'$.

The divisor that Mori--Mukai denote by $Y$ is, in our notation,
$(t=0)\cong \PP^1_{x_0,x_1}\times \PP^1_{y_0,y_1}\times
\PP^1_{t_0,t_1}$.  The complete linear system $|{-(K_F+X)}|$ defines
the morphism $f\colon F \to G$, which (a) contracts the divisor $Y$ to
$\PP^1_{x_0,x_1}\times \PP^1_{t_0,t_1}$ and (b) is an isomorphism of
$X$ to its image.  Under $f\colon F \to G$, $X$ maps isomorphically to
a member $X'$ of $|{-L'}+3N'|$ on $G$.  This makes it clear that $X$
is Fano, because ${-(K_G+X')} = L'+N'$ is ample on $G$; however
because $X'$ is not nef on $G$ this construction, economical though it
is, is useless for calculating the quantum cohomology of $X$, as the
convexity assumption on the bundle in Quantum Lefschetz is not
satisfied.

\subsection*{The quantum period:}  This is
Example~\ref{ex:ql_with_mirror_map}.  We have:
\[
\hG_X(t) = 1 + 58t^2 + 600t^3 + 13182t^4 + 247440t^5 +
5212300t^6 + 
111835920t^7 + 2480747710t^8 + 56184565920t^9 + \cdots
\]

\subsection*{Minkowski period sequence:} \href{http://www.grdb.co.uk/search/period3?id=157&printlevel=2}{157}


\addtocounter{CustomSectionCounter}{1}
\section{The Fano Manifold $\MM{3}{3}$}
\label{sec:3-3}
\label{anchor:3--3}

\subsection*{Mori--Mukai name:} 3--3

\subsection*{Mori--Mukai construction:} A divisor of tridegree
$(1,1,2)$ on $\PP^1\times \PP^1\times \PP^2$.

\subsection*{Our construction:} A member $X$ of $|L+M+2N|$ on the
toric variety $F$ with weight data:
\[ 
\begin{array}{rrrrrrrrl} 
1 & 1 & 0 & 0 & 0 & 0 & 0 & \hspace{1.5ex} L\\ 
0 & 0 & 1 & 1 & 0 & 0 & 0 & \hspace{1.5ex} M \\
0 & 0 & 0 & 0 & 1 & 1 & 1 & \hspace{1.5ex} N \\
\end{array}
\]
and $\Amp F = \langle L,M,N\rangle$.

\subsection*{The two constructions coincide:} Obvious.
\subsection*{The quantum period:}
We have that:
\begin{itemize}
\item $F$ is a Fano variety;
\item $X\sim L+M+2N$ is ample;
\item ${-(K_F+X)}\sim L+M+N$ is ample. 
\end{itemize}
Corollary~\ref{cor:QL} yields:
\[
G_X(t) = e^{-4t} \sum_{l=0}^\infty \sum_{m=0}^\infty \sum_{n=0}^\infty
t^{l+m+n}
\frac{(l+m+2n)!}
{(l!)^2 (m!)^2 (n!)^3}
\]
and regularizing gives:
\begin{multline*}
  \hG_X(t) = 1+20 t^2+132 t^3+1812 t^4+21720 t^5+289100 t^6+3927840
  t^7\\
  +54999700 t^8+785606640 t^9+ \cdots
\end{multline*}

\subsection*{Minkowski period sequence:} \href{http://www.grdb.co.uk/search/period3?id=135&printlevel=2}{135}


\addtocounter{CustomSectionCounter}{1}
\section{The Fano Manifold $\MM{3}{4}$}
\label{sec:3-4}
\label{anchor:3--4}

\subsection*{Mori--Mukai name:} 3--4

\subsection*{Mori--Mukai construction:} The blow-up of the variety $Y$
constructed in \S\ref{sec:2-18} (i.e.~number 18 on the Mori--Mukai
list of smooth Fano 3-folds of rank~2) with centre a smooth fibre of
the composition:
\[
\xymatrix{
  Y \ar[rrr]^-{\text{double cover}} &&& \PP^2 \times \PP^1 \ar[rr]^-{\text{projection}} && \PP^2
}
\]

\subsection*{Our construction:} A member $X$ of $|2N|$ on the toric
variety $F$ with weight data:
\[
\begin{array}{rrrrrrrrl} 
  \multicolumn{1}{c}{t_0} & 
  \multicolumn{1}{c}{t_1} & 
  \multicolumn{1}{c}{x} & 
  \multicolumn{1}{c}{x_2} & 
  \multicolumn{1}{c}{y_0} &  
  \multicolumn{1}{c}{y_1} &  
  \multicolumn{1}{c}{z} &  \\
  \cmidrule{1-7}
  1 & 1 & -1 & 0 & 0 & 0 & 0 & \hspace{1.5ex} L\\ 
  0 & 0 & 1 & 1 & -1 & -1 & 0 & \hspace{1.5ex} M \\
  0 & 0 & 0 & 0 & 1 & 1 & 1 & \hspace{1.5ex} N \\
\end{array}
\]
and $\Amp F=\langle L, M, N\rangle$. The secondary fan has four
maximal cones as in Fig.~\ref{fig:3-4}. We have:
\begin{itemize}
\item $-K_F=L+3N$ is nef and big but not ample;
\item $X\sim 2 N$ is nef and big but not ample;
\item $-(K_F+X)\sim L+N$ is nef and big but not ample.
\end{itemize}

\begin{figure}[h!]
  \centering
  \resizebox{6cm}{!}{\input{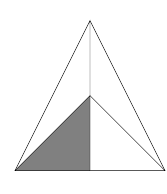_t}}
  \caption{The secondary fan for $F$ in 3--4}
  \label{fig:3-4}
\end{figure}

\subsection*{The two constructions coincide:} Recall\footnote{The
  description here differs from the weight data in \S\ref{sec:2-18} by
  a change of lattice basis and by relabelling of co-ordinates.} from
\S\ref{sec:2-18} that $Y$ is a member of $|N|$ in the toric variety
$G$ with weight data:
\[
\begin{array}{rrrrrrrl} 
  \multicolumn{1}{c}{x_0} & 
  \multicolumn{1}{c}{x_1} & 
  \multicolumn{1}{c}{x_2} & 
  \multicolumn{1}{c}{y_0} & 
  \multicolumn{1}{c}{y_1} &  
  \multicolumn{1}{c}{z} &  \\
  \cmidrule{1-6}
  1 & 1 & 1 & -1 & -1 & 0 & \hspace{1.5ex} M\\ 
  0 & 0 & 0 & 1 & 1 & 1 & \hspace{1.5ex} N \\
\end{array}
\]
and $\Amp G= \langle M, N\rangle$.  The unstable locus is
$(x_0=x_1=x_2=0)\cup (y_0=y_1=z=0)$. The linear system
$|M|=|x_0,x_1,x_2|$ manifestly defines a morphism $G\to
\PP^2_{x_0,x_1,x_2}$ with fibre $\PP^2$.  If $F$ is the blow-up of $G$
along $(x_0=x_1=0)$ then $X$ is the proper transform of $Y$.  It is
clear that $F$ is a toric variety with the weight data given above,
and that the morphism $F\to G$ is given by $x_0=xt_0$, $x_1=xt_1$.

\subsection*{The quantum period:}
Let $p_1$, $p_2, p_3 \in H^\bullet(F;\ZZ)$ denote the first Chern
classes of $L$, $M$, and $N$ respectively; these classes form a basis
for $H^2(F;\ZZ)$.  Write $\tau \in H^2(F;\QQ)$ as $\tau = \tau_1 p_1 +
\tau_2 p_2+ \tau_3 p_3$ and identify the group ring $\QQ[H_2(F;\ZZ)]$
with the polynomial ring $\QQ[Q_1,Q_2,Q_3]$ via the $\QQ$-linear map
that sends the element $Q^\beta \in \QQ[H_2(F;\ZZ)]$ to $Q_1^{\langle
  \beta,p_1\rangle} Q_2^{\langle \beta,p_2\rangle} Q_3^{\langle
  \beta,p_3\rangle}$.  We have:
\begin{multline*}
  I_F(\tau)  = e^{\tau/z} 
  \sum_{l, m, n\geq 0} 
  \frac{
    Q_1^l Q_2^m Q_3^n e^{l \tau_1} e^{m \tau_2} e^{n \tau_3}
  }
  {
    \prod_{k=1}^l (p_1 + k z)^2
    \prod_{k=1}^m (p_2 + k z)
    \prod_{k=1}^n (p_3 + k z)
  }
  \frac{\prod_{k = -\infty}^0 (p_2-p_1 + k z)}
  {\prod_{k = -\infty}^{m-l} (p_2-p_1 + k z)}
  \\
  \frac{\prod_{k = -\infty}^0 (p_3-p_2 + k z)^2}
  {\prod_{k = -\infty}^{n-m} (p_3-p_2 + k z)^2}
\end{multline*}
Since:
\[
  I_F(\tau)  = 1 + \tau z^{-1} + O(z^{-2})
\]
Theorem~\ref{thm:toric_mirror} gives:
\[
J_F(\tau) = I_F(\tau)
\]
We now proceed exactly as in the case of 3--1
(\S\ref{sec:restriction_of_nef_not_restriction_of_ample}), obtaining:
\[
G_X(t) = 
e^{-4 t} \sum_{l=0}^\infty \sum_{n=0}^\infty \sum_{m=l}^n
t^{l+n}
\frac
{(2n)!}
{(l!)^2m!n!(m-l)!\big((n-m)!\big)^2}
\]
Regularizing gives:
\begin{multline*}
  \hG_X(t) = 1+24 t^2+156 t^3+2280 t^4+27960 t^5+387060 t^6+5450760
  t^7\\ +79246440 t^8+1175608560 t^9 + \cdots
\end{multline*}

\subsection*{Minkowski period sequence:} \href{http://www.grdb.co.uk/search/period3?id=142&printlevel=2}{142}


\addtocounter{CustomSectionCounter}{1}
\section{The Fano Manifold $\MM{3}{5}$}
\label{sec:3-5}
\label{anchor:3--5}

\subsection*{Mori--Mukai name:} 3--5

\subsection*{Mori--Mukai construction:} The blow-up of $\PP^1\times
\PP^2$ with centre a curve $C$ of bidegree $(5,2)$ such that the
composition $C\hookrightarrow \PP^1\times\PP^2 \to \PP^2$ with
projection to the second factor is an embedding.

\subsection*{Our construction:} A codimension-2 complete intersection
$X$ of type $(M+N)\cap (M+N)$ in the toric variety $F$ with weight
data:
\[ 
\begin{array}{rrrrrrrrl} 
  \multicolumn{1}{c}{t_0} & 
  \multicolumn{1}{c}{t_1} & 
  \multicolumn{1}{c}{y_0} & 
  \multicolumn{1}{c}{y_1} & 
  \multicolumn{1}{c}{y_2} &  
  \multicolumn{1}{c}{x} &  
  \multicolumn{1}{c}{x_0} &  
  \multicolumn{1}{c}{x_1} &  \\
  \cmidrule{1-8}
  1 & 1 & 0 & 0 & 0 & -1 & 0 & 0 & \hspace{1.5ex} L\\ 
  0 & 0 & 1 & 1 & 1 & -1 & 0 & 0 & \hspace{1.5ex} M\\ 
  0 & 0 & 0 & 0 & 0 & 1 & 1 & 1 & \hspace{1.5ex} N\\ 
\end{array}
\]
and $\Amp F=\langle L,M,N\rangle$.  The secondary fan for $F$ is the
same as that for the toric variety in 3--2 (\S\ref{sec:3-2}) and is
shown in Fig.~\ref{fig:3-2}.  We have:
\begin{itemize}
\item $-K_F=L+2M+3N$ is ample, that is $F$ is a Fano variety;
\item $X$ is complete intersection of two nef divisors on $F$;
\item $-(K_F+\Lambda)=L+N$ is nef  and big but not ample on $F$.
\end{itemize}

\subsection*{The two constructions coincide:}
Apply Lemma~\ref{lem:blowups} with $G = \PP^1 \times \PP^2$ and:
\begin{align*}
  & V = \cO_{\PP^1 \times \PP^2}(-1,-1) \oplus \cO_{\PP^1 \times
    \PP^2} \oplus \cO_{\PP^1 \times \PP^2}\\
  & W = \cO_{\PP^1 \times \PP^2}(0,1) \oplus \cO_{\PP^1 \times
    \PP^2}(0,1) 
\end{align*}
with $f\colon V \to W$ given by the matrix:
\[
\begin{pmatrix} 
  t_0 A_2(y) & y_0 & y_1 \\
  t_1 B_2(y) & y_1 & y_2 
\end{pmatrix}
\]
where $[t_0:t_1]$ are homogeneous co-ordinates on $\PP^1$ and
$[y_0:y_1:y_2]$ are homogeneous co-ordinates on $\PP^2$.  This
exhibits $X$ as the blow-up of $\PP^1 \times \PP^2$ in the locus $Z$
defined by the condition
\[
\rk
\begin{pmatrix} 
  t_0 A_2(y) & y_0 & y_1 \\
  t_1 B_2(y) & y_1 & y_2 
\end{pmatrix}
<2
\]
and it is easy to see that $C$ is described in this way. For instance,
it is immediate that $Z$ projects isomorphically to a conic in
$\PP^2$, and that the projection to $\PP^1$ has degree $5$.

\subsection*{The quantum period:} We proceed as in
Example~\ref{ex:ql_with_mirror_map}.  Let $p_1$, $p_2, p_3 \in
H^\bullet(F;\ZZ)$ denote the first Chern classes of $L$, $M$, and $N$
respectively; these classes form a basis for $H^2(F;\ZZ)$.  Write
$\tau \in H^2(F;\QQ)$ as $\tau = \tau_1 p_1 + \tau_2 p_2+ \tau_3 p_3$
and identify the group ring $\QQ[H_2(F;\ZZ)]$ with the polynomial ring
$\QQ[Q_1,Q_2,Q_3]$ via the $\QQ$-linear map that sends the element
$Q^\beta \in \QQ[H_2(F;\ZZ)]$ to $Q_1^{\langle \beta,p_1\rangle}
Q_2^{\langle \beta,p_2\rangle} Q_3^{\langle \beta,p_3\rangle}$.
Theorem~\ref{thm:toric_mirror} gives:
\[
J_F(\tau) = e^{\tau/z} 
\sum_{l, m, n\geq 0} 
\frac{
  Q_1^l Q_2^m Q_3^n e^{l \tau_1} e^{m \tau_2} e^{n \tau_3}
}
{
  \prod_{k=1}^l (p_1 + k z)^2
  \prod_{k=1}^m (p_2 + k z)^3
  \prod_{k=1}^n (p_3 + k z)^2
}
\frac{\prod_{k = -\infty}^0 (p_3-p_1-p_2 + k z)}
{\prod_{k = -\infty}^{n-l-m} (p_3-p_1-p_2 + k z)}
\]
and hence:
\[
I_{\be,E}(\tau) = e^{\tau/z} 
\sum_{l, m, n \geq 0} 
\frac
{
  Q_1^l Q_2^m Q_3^n e^{l \tau_1} e^{m \tau_2} e^{n \tau_3}
  \prod_{k=1}^{m+n} (\lambda + p_2+p_3 + k z)^2
}
{
  \prod_{k=1}^l (p_1 + k z)^2 
  \prod_{k=1}^m (p_2 + k z)^3
  \prod_{k=1}^n (p_3 + k z)^2 
}
\frac
{
  \prod_{k = -\infty}^0 (p_3-p_2-p_1 + k z)
}
{
  \prod_{k = -\infty}^{n-l-m} (p_3-p_2-p_1 + k z)
}
\]
Note that:
\[
I_{\be,E}(0) = A+ B  z^{-1} + O(z^{-2})
\]
where:
\begin{align*}
  A & = 1\\
  B & = (Q_3 + 4 Q_2 Q_3) 1 + (p_3-p_2-p_1)\sum_{m > 0}
  {(-1)^{m-1} Q_2^m \over m} \\
  &= Q_3(1 +4 Q_2) 1 + (p_3-p_2-p_1)\log(1+Q_2)
\end{align*}
Arguing exactly as in Example~\ref{ex:ql_with_mirror_map}, we find
that:
\[
J_{\be,E}\big((p_3-p_2-p_1)\log(1+Q_2)\big) =
e^{-Q_3(1+4 Q_2)/z} I_{\be,E}(0)
\]
and:
\[
J_{\be,E}\big((p_3-p_2-p_1)\log(1+Q_2)\big)
= 
e^{(p_3-p_2-p_1)\log(1+Q_2)/z}
\Big[ J_{\be,E}(0)\Big]_{Q_1 = {Q_1 \over 1+Q_2}, Q_2 = {Q_2 \over
    1+Q_2}, Q_3 = Q_3(1+Q_2)}
\]

Hence, using the inverse mirror map \eqref{eq:inverse_mirror_map}, we have:
\begin{align*}
  J_{\be,E}(0) &= 
  \Big[e^{-(p_3-p_2-p_1)\log(1+Q_2)/z}
  J_{\be,E}\big((p_3-p_2-p_1)\log(1+Q_2)\big)\Big]_{Q_1 = {Q_1 \over 1-Q_2}, Q_2 = {Q_2 \over
      1-Q_2}, Q_3 = Q_3(1-Q_2)}  \\
  &= 
  e^{(p_3-p_2-p_1)\log(1-Q_2)/z}
  \Big[e^{-Q_3(1+4 Q_2) /z} I_{\be,E}(0)\Big]_{Q_1 = {Q_1 \over 1-Q_2}, Q_2 = {Q_2 \over
      1-Q_2}, Q_3 = Q_3(1-Q_2)} 
\end{align*}
Taking the non-equivariant limit yields:
\begin{multline*}
  J_{Y,X}(0) = e^{(p_3-p_2-p_1)\log(1-Q_2)/z} e^{-Q_3(1+3 Q_2)} \times \\
  \sum_{l, m, n \geq 0} 
  \frac
  {
    Q_1^l Q_2^m Q_3^n (1-Q_2)^{n-l-m}
    \prod_{k=1}^{m+n} (p_2+p_3 + k z)^2
  }
  {
    \prod_{k=1}^l (p_1 + k z)^2 
    \prod_{k=1}^m (p_2 + k z)^3 
    \prod_{k=1}^n (p_3 + k z)^2 
  }
  \frac
  {
    \prod_{k = -\infty}^0 (p_3-p_2-p_1 + k z)
  }
  {
    \prod_{k = -\infty}^{n-l-m} (p_3-p_2-p_1 + k z)
  }
\end{multline*}

Recall that the quantum period $G_X$ is obtained from the component of
$J_X(0)$ along the unit class $1 \in H^\bullet(X;\QQ)$ by setting $z =
1$ and $Q^\beta = t^{\langle \beta, {-K_X} \rangle}$.  In view of
equation \eqref{eq:KKP}, therefore, to obtain $G_X$ we extract the
component of $J_{Y,X}(0)$ along the unit class $1 \in
H^\bullet(Y;\QQ)$, set $z=1$, set $Q_1 = t$, set $Q_2 = 1$, and set
$Q_3 =t$.  This gives:
\[
G_X(t) = e^{-4t} \sum_{l=0}^\infty \sum_{m = 0}^\infty 
t^{2l+m} \frac{(l+2m)!(l+2m)!}{(l!)^2 (m!)^3 ((l+m)!)^2}
\]
Regularizing gives:
\begin{align*}
  \hG_X(t) = 1 + 22t^2 + 126t^3 + 1722t^4 + 18780t^5 + 236470t^6 +
  2998380t^7 + 39440170t^8 \\
  + 528743880t^9 + \cdots
\end{align*}

\subsection*{Minkowski period sequence:} \href{http://www.grdb.co.uk/search/period3?id=138&printlevel=2}{138}


\addtocounter{CustomSectionCounter}{1}
\section{The Fano Manifold $\MM{3}{6}$}
\label{sec:3-6}
\label{anchor:3--6}

\subsection*{Mori--Mukai name:} 3--6

\subsection*{Mori--Mukai construction:} The blow-up of $\PP^3$ with
centre a disjoint union of a line and an elliptic curve of degree $4$.

\subsection*{Our construction:} A member $X$ of $|2M+N|$ in the toric
variety with weight data:
\[ 
\begin{array}{rrrrrrrl} 
  \multicolumn{1}{c}{s_0} & 
  \multicolumn{1}{c}{s_1} & 
  \multicolumn{1}{c}{x} & 
  \multicolumn{1}{c}{x_2} & 
  \multicolumn{1}{c}{x_3} &  
  \multicolumn{1}{c}{y_0} &  
  \multicolumn{1}{c}{y_1} &  \\
  \cmidrule{1-7}
  1 & 1 & -1 & 0 & 0  & 0 & 0 & \hspace{1.5ex} L\\ 
  0 & 0 & 1 & 1 & 1  & 0 & 0 & \hspace{1.5ex} M\\ 
  0 & 0 & 0 & 0 & 0  & 1 & 1 & \hspace{1.5ex} N\\ 
\end{array}
\]
and $\Amp F = \langle L,M,N\rangle$. The secondary fan for $F$ has two
maximal cones as in Fig.~\ref{fig:3-6}.

\begin{figure}[h!]
  \centering
  \resizebox{6cm}{!}{\input{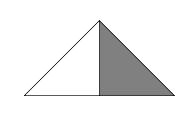_t}}
  \caption{The secondary fan for $F$ in 3--6}
  \label{fig:3-6}
\end{figure}

We have:
\begin{itemize}
\item $-K_F=L+3M+2N$ is ample, that is $F$ is a Fano variety;
\item $X\sim 2M+N$ is nef;
\item $-(K_F+X)\sim L+M+N$ is ample.
\end{itemize}

\subsection*{The two constructions coincide:} An elliptic curve
$\Gamma \subset \PP^3$ is a $(2,2)$-complete intersection in $ \PP^3$
so $X$ is constructed by applying Lemma~\ref{lem:blowups} twice.  In
more detail, the equation of $X$ has the form:
\[
y_0 A(s_0x, s_1x, x_2,x_3)+ y_1B(s_0x, s_1x, x_2,x_3) =0
\]
where $A$, $B$ are homogeneous quadratic polynomials in the variables $x_0=s_0x,
x_1=s_1x, x_2, x_3$. The obvious morphism $X\to \PP^3_{x_0,x_1,x_2,x_3}$
blows up the line $x_0=x_1=0$ and the elliptic curve $A=B=0$.

\subsection*{The quantum period:}
Corollary~\ref{cor:QL} yields:
\[
G_X(t) = e^{-3t} \sum_{l=0}^\infty \sum_{n=0}^\infty \sum_{m=l}^\infty 
t^{l+m+n}
\frac{(2m+n)!}
{(l!)^2 (m-l)! (m!)^2(n!)^2}
\]
and regularizing gives:
\[
\hG_X(t) = 1+14 t^2+66 t^3+762 t^4+6960 t^5+73490 t^6+780360 t^7+8578570 t^8+96096000 t^9+ \cdots
\]

\subsection*{Minkowski period sequence:} \href{http://www.grdb.co.uk/search/period3?id=117&printlevel=2}{117}


\addtocounter{CustomSectionCounter}{1}

\section{The Fano Manifold $\MM{3}{7}$}
\label{anchor:3--7}

\subsection*{Mori--Mukai name:} 3--7

\subsection*{Mori--Mukai construction:} The blow-up of $W\subset
\PP^2\times \PP^2$ with centre an elliptic curve which is an
intersection of two members of $|{-\frac{1}{2}}K_W|$.  Here $W$ is a
divisor of bidegree $(1,1)$ in $\PP^2 \times \PP^2$.

\subsection*{Our construction:} A complete intersection $X$ of type
$(M+N)\cap (L+M+N)$ in the toric variety $F= \PP^1 \times \PP^2
\times\PP^2$.

\subsection*{The two constructions coincide:}  Apply
Lemma~\ref{lem:blowups}.

\subsection*{The quantum period:}
The toric variety $F$ has weight data:
\[ 
\begin{array}{rrrrrrrrl} 
  1 & 1 & 0 & 0 & 0  & 0 & 0 & 0 & \hspace{1.5ex} L\\ 
  0 & 0 & 1 & 1 & 1  & 0 & 0 & 0 & \hspace{1.5ex} M\\ 
  0 & 0 & 0 & 0 & 0  & 1 & 1 & 1 & \hspace{1.5ex} N\\ 
\end{array}
\]
and $\Amp F=\langle L,M,N\rangle$.  We have that:
\begin{itemize}
\item $F$ is a Fano variety;
\item $X$ is the complete intersection of two nef divisors on $F$;
\item ${-(K_F+\Lambda)}=L+M+N$ is ample on $F$.
\end{itemize}

Corollary~\ref{cor:QL} yields:
\[
G_X(t) = e^{-3t} \sum_{l=0}^\infty \sum_{m=0}^\infty \sum_{n=0}^\infty
t^{l+m+n}
\frac{(l+m+n)!(m+n)!}
{(l!)^2 (m!)^3 (n!)^3}
\]
and regularizing gives:
\[
  \hG_X(t) = 1+10 t^2+48 t^3+438 t^4+3720 t^5+33940 t^6+320040
  t^7+3096310 t^8+30581040 t^9 + \cdots
\]

\subsection*{Minkowski period sequence:} \href{http://www.grdb.co.uk/search/period3?id=103&printlevel=2}{103}


\addtocounter{CustomSectionCounter}{1}

\section{The Fano Manifold $\MM{3}{8}$}
\label{anchor:3--8}

\subsection*{Mori--Mukai name:} 3--8

\subsection*{Mori--Mukai construction:} A member of the linear system
$|p_1^\star g^\star \cO(1) \otimes p_2^\star \cO(2) |$ on $\FF_1\times
\PP^2$ where $p_i\,(i=1,2)$ is the projection to the $i$th factor and
$g\colon \FF_1\to \PP^2$ is the blowing-up.

\subsection*{Our construction:} A member $X$ of $|M+2N|$ in the toric
variety $F$ with weight data:
\[
\begin{array}{rrrrrrrl} 
  \multicolumn{1}{c}{s_0} & 
  \multicolumn{1}{c}{s_1} & 
  \multicolumn{1}{c}{x} & 
  \multicolumn{1}{c}{x_2} & 
  \multicolumn{1}{c}{y_0} &  
  \multicolumn{1}{c}{y_1} &  
  \multicolumn{1}{c}{y_2} &  \\
  \cmidrule{1-7}
  1 & 1 & -1 & 0 & 0  & 0 & 0 & \hspace{1.5ex} L\\ 
  0 & 0 & 1 & 1 & 0  & 0 & 0 & \hspace{1.5ex} M\\ 
  0 & 0 & 0 & 0 & 1  & 1 & 1 & \hspace{1.5ex} N\\ 
\end{array}
\]
and $\Amp F = \langle L, M, N\rangle$.  The secondary fan for $F$ is
the same as that for the toric variety in 3--6 (\S\ref{sec:3-6}) and is
shown in Fig.~\ref{fig:3-6}.  We have:
\begin{itemize}
\item $-K_F=L+2M+3N$ is ample, that is $F$ is a Fano variety;
\item $X\sim M+2N$ is nef;
\item $-(K_F+X)\sim L+M+N$ is ample.
\end{itemize}

\subsection*{The two constructions coincide:} Obvious. 

\subsection*{The quantum period:}
Corollary~\ref{cor:QL} yields:
\[
G_X(t) = e^{-3t} \sum_{l=0}^\infty \sum_{m=l}^\infty \sum_{n=0}^\infty 
t^{l+m+n}
\frac{(m+2n)!}
{(l!)^2 (m-l)! (m!) (n!)^3}
\]
and regularizing gives:
\[
\hG_X(t) = 1+12 t^2+54 t^3+540 t^4+4620 t^5+43770 t^6+425880 t^7
+4256700 t^8+43462440 t^9+ \cdots
\]

\subsection*{Minkowski period sequence:} \href{http://www.grdb.co.uk/search/period3?id=112&printlevel=2}{112}


\addtocounter{CustomSectionCounter}{1}

\section{The Fano Manifold $\MM{3}{9}$}
\label{sec:3-9}
\label{anchor:3--9}

\subsection*{Mori--Mukai name:} 3--9

\subsection*{Mori--Mukai construction:} The blow-up of the cone
$W_4\subset \PP^6$ over the Veronese surface $R_4\subset \PP^5$ with
centre a disjoint union of the vertex and a quartic in $R_4\cong
\PP^2$.

\subsection*{Our construction:} A member $X$ of $|2M|$ in the toric
variety $F$ with weight data: 
\[ 
\begin{array}{rrrrrrl}  
  \multicolumn{1}{c}{s_0} & 
  \multicolumn{1}{c}{s_1} & 
  \multicolumn{1}{c}{s_2} & 
  \multicolumn{1}{c}{x} & 
  \multicolumn{1}{c}{y_0} &  
  \multicolumn{1}{c}{y_1} &  \\
  \cmidrule{1-6}
  1 & 1 & 1 & -2 & 0  & 0 &  \hspace{1.5ex} L\\ 
  0 & 0 & 0 & 1 & 1  & 1 & \hspace{1.5ex} M\\ 
\end{array}
\]
and $\Amp F =\langle L,M \rangle$.

We have that:
\begin{itemize}
\item $-K_F=L+3M$ is ample, so $F$ is a Fano variety;
\item $X\sim 2M$ is nef;
\item $-(K_F+X)\sim L+M$ is ample. 
\end{itemize}

\subsection*{The two constructions coincide:} The variety $X$ is cut
out by:
\[
y_0 y_1 +x^2 A_4 (s_0,s_1,s_2)=0
\]
where $A_4$ is a generic homogeneous polynomial of degree~4 in
$s_0$,~$s_1$,~$s_2$.  Note the morphisms $\pi \colon F\to \PP^2$ given
by the linear system $|L|$, and $f\colon F\to \PP(1,1,1,2,2)$ given
(contravariantly) by $[x_0,x_1,x_2,y_0,y_1] \mapsto
[s_0\sqrt{x},s_1\sqrt{x} , s_2\sqrt{x} , y_0, y_1]$. The exceptional
set of $f$ is the divisor $E=(x=0)= \PP^2_{s_0,s_1,s_2}\times
\PP^1_{y_0,y_1}$ that maps to $\PP^1_{y_0,y_1}\subset
\PP(1,1,1,2,2)$. Note that $E\cap X$ is \emph{two} copies of $\PP^2$,
one above $[y_0:y_1]=[1:0]$ and one above $[y_0:y_1]=[0:1]$. This
explains how $X$ has rank~$3$ when $F$ has rank~$2$.

To see that our construction coincides with the construction of
Mori--Mukai, set $W = f(X)$, note that:
\[
W=\bigl(y_0 y_1 +A_4 (x_0,x_1,x_2)=0\bigr)\subset \PP(1,1,1,2,2)
\]
and note that the morphism $f\colon X \to W$ contracts one copy of
$\PP^2$, with normal bundle $\cO(-2)$, to each of the two singular
points $W\cap \PP^1_{y_0,y_1}$. Consider the rational projection:
\[
g\colon \PP(1,1,1,2,2)\dasharrow \PP(1,1,1,2)_{x_0,x_1,x_2,y_0}
\]
which omits the homogeneous co-ordinate $y_1$. It is clear that
$g|_{W}\colon W \dasharrow \PP(1,1,1,2)$ extends to a morphism after
blowing up the singular point $[0:0:0:0:1]\in W$, and that this
morphism contracts the surface
$\bigl(y_0=A_4(x_0,x_1,x_2)=0\bigr)\subset W$ to the curve
$\bigl(y_0=A_4(x_0,x_1,x_2)=0\bigr)\subset \PP(1,1,1,2)$.

\subsection*{The quantum period:}
Corollary~\ref{cor:QL} yields:
\[
G_X(t) = e^{-2t} \sum_{l=0}^\infty \sum_{m=2l}^\infty
t^{l+m}
\frac{(2m)!}
{(l!)^3 (m-2l)! (m!)^2}
\]
and regularizing gives:
\[
\hG_X(t) = 1+2 t^2+36 t^3+198 t^4+840 t^5+9200 t^6+79800 t^7+520870 t^8+4289040 t^9+ \cdots
\]

\subsection*{Minkowski period sequence:} \href{http://www.grdb.co.uk/search/period3?id=22&printlevel=2}{22}


\addtocounter{CustomSectionCounter}{1}
\section{The Fano Manifold $\MM{3}{10}$}
\label{sec:3-10}
\label{anchor:3--10}

\subsection*{Mori--Mukai name:} 3--10

\subsection*{Mori--Mukai construction:} The blow-up of a quadric
\mbox{3-fold} $Q\subset \PP^4$ with centre a disjoint union of two
conics on it.

\subsection*{Our construction:} A member $X$ of $|2N|$ in the toric
variety $F$ with weight data:
\[
\begin{array}{rrrrrrrl} 
  \multicolumn{1}{c}{s_0} & 
  \multicolumn{1}{c}{s_1} & 
  \multicolumn{1}{c}{t_2} & 
  \multicolumn{1}{c}{t_3} & 
  \multicolumn{1}{c}{x} & 
  \multicolumn{1}{c}{y} &  
  \multicolumn{1}{c}{x_4} &  \\
  \cmidrule{1-7}
  1 & 1 & 0 & 0 & -1  & 0 & 0 & \hspace{1.5ex} L\\ 
  0 & 0 & 1 & 1 & 0 & -1 & 0 & \hspace{1.5ex} M\\ 
  0 & 0 & 0 & 0 & 1  & 1 & 1 & \hspace{1.5ex} N\\ 
\end{array}
\]
and $\Amp F=\langle L,M,N\rangle$. The secondary fan for $F$ has $4$
maximal cones as in Fig.~\ref{fig:3-10}.

\begin{figure}[h!]
  \centering
  \resizebox{4cm}{!}{\input{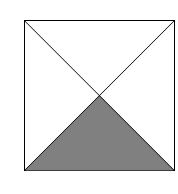_t}}
  \caption{The secondary fan for $F$ in 3--10}
  \label{fig:3-10}
\end{figure}

We have:
\begin{itemize}
\item $-K_F=L+M+3N$ is ample, so that $F$ is a Fano variety;
\item $X\sim 2N$ is nef;
\item $-(K_F+X)\sim L+M+N$ is ample.
\end{itemize}

\subsection*{The two constructions coincide:} We take $Q$ to be the
locus $ x_0 x_1 + x_2 x_3 + x_4^2 = 0$ in $
\PP^4_{x_0,x_1,x_2,x_3,x_4}$, and take the conics to be cut out of $Q$ 
by the two complete intersections $(x_0 = x_1 = 0)$ and $(x_2 = x_3 =
0)$; note that the intersection of these two planes misses $Q$.  The
morphism $F\to \PP^4$ given (contravariantly) by:
\[
[x_0:x_1:x_2:x_3:x_4] \mapsto [s_0x:s_1x:t_2y:t_3y:x_4]
\]
blows up the planes $(x_0 = x_1 = 0)$ and $(x_2 = x_3 = 0)$.  Taking
the proper transform of $Q$ yields $X$.  

\subsection*{The quantum period:}
Corollary~\ref{cor:QL} yields:
\[
G_X(t) = e^{-2t} \sum_{l=0}^\infty \sum_{m=0}^\infty \sum_{n=\max(l,m)}^\infty
t^{l+m+n}
\frac{(2n)!}
{(l!)^2 (m!)^2 n! (n-l)! (n-m)!}
\]
and regularizing gives:
\[
\hG_X(t) = 1+10 t^2+36 t^3+366 t^4+2640 t^5+23320 t^6+200760 t^7+1815310 t^8+16611840 t^9+ \cdots
\]

\subsection*{Minkowski period sequence:} \href{http://www.grdb.co.uk/search/period3?id=99&printlevel=2}{99}


\addtocounter{CustomSectionCounter}{1}

\section{The Fano Manifold $\MM{3}{11}$}
\label{anchor:3--11}

\subsection*{Mori--Mukai name:} 3--11

\subsection*{Mori--Mukai construction:} The blow--up of $B_7$ (see
\S\ref{sec:B7}) with centre an elliptic curve that is the intersection
of two members of $|{-\frac{1}{2}}K_{B_7}|$.

\subsection*{Our construction:} A member $X$ of $|L+M+N|$ in the toric
variety $F$ with weight data:
\[
\begin{array}{rrrrrrrl} 
  \multicolumn{1}{c}{s_0} & 
  \multicolumn{1}{c}{s_1} & 
  \multicolumn{1}{c}{s_2} & 
  \multicolumn{1}{c}{x} & 
  \multicolumn{1}{c}{x_3} & 
  \multicolumn{1}{c}{y_0} &  
  \multicolumn{1}{c}{y_1} &  \\
  \cmidrule{1-7}
  1 & 1 & 1 & -1 & 0  & 0 & 0 & \hspace{1.5ex} L\\ 
  0 & 0 & 0 & 1 & 1 & 0 & 0 & \hspace{1.5ex} M\\ 
  0 & 0 & 0 & 0 & 0  & 1 & 1 & \hspace{1.5ex} N\\ 
\end{array}
\]
and $\Amp F= \langle L,M,N \rangle$. In other words, $F\cong B_7\times
\PP^1$.  The secondary fan of $F$ is the same as that of the toric
variety in 3--6 (\S\ref{sec:3-6}) and is shown in Fig.~\ref{fig:3-6} .

We have:
\begin{itemize}
\item $-K_F=2L+2M+2N$ is ample, so $F$ is a Fano variety;
\item $X\sim L+M+N$ is ample;
\item $-(K_F+X)\sim L+M+N$ is ample.
\end{itemize}

\subsection*{The two constructions coincide:} Recall from
\S\ref{sec:B7} that $B_7$ is the toric variety with weight data:
\[
\begin{array}{rrrrrl} 
  \multicolumn{1}{c}{s_0} & 
  \multicolumn{1}{c}{s_1} & 
  \multicolumn{1}{c}{s_2} & 
  \multicolumn{1}{c}{x} & 
  \multicolumn{1}{c}{x_3} \\
  \cmidrule{1-5}
  1 & 1 & 1 & -1 & 0  &  \hspace{1.5ex} L\\ 
  0 & 0 & 0 & 1 & 1 &  \hspace{1.5ex} M\\ 
\end{array}
\]
and $\Amp B_7 = \langle L, M \rangle$.  Now apply
Lemma~\ref{lem:blowups} with $V = \cO_{B_7} \oplus \cO_{B_7}$, $W =
{-\frac{1}{2}}K_{B_7}$, and the map $f\colon V \to W$ given by $
\begin{pmatrix}
  A & B
\end{pmatrix}
$ where $A$, $B$ are the sections of ${-\frac{1}{2}}K_{B_7}$ that
define the centre of the blow-up.

\subsection*{The quantum period:}
Corollary~\ref{cor:QL} yields:
\[
G_X(t) = e^{-2t} \sum_{l=0}^\infty \sum_{n=0}^\infty \sum_{m=l}^\infty
t^{l+m+n}
\frac{(l+m+n)!}
{(l!)^3 (m-l)! m! (n!)^2}
\]
and regularizing gives:
\[
\hG_X(t) = 1+6 t^2+30 t^3+186 t^4+1380 t^5+10230 t^6+78540 t^7+620970 t^8+5020680 t^9+ \cdots
\]

\subsection*{Minkowski period sequence:} \href{http://www.grdb.co.uk/search/period3?id=72&printlevel=2}{72}


\addtocounter{CustomSectionCounter}{1}

\section{The Fano Manifold $\MM{3}{12}$}
\label{anchor:3--12}

\subsection*{Mori--Mukai name:} 3--12

\subsection*{Mori--Mukai construction:} The blow-up of $\PP^3$ with
centre a disjoint union of a line and a twisted cubic.

\subsection*{Our construction:} A codimension-$2$ complete
intersection $X$ of type $(M+N)\cap (M+N)$ in the toric variety $F$
with weight data:
\[
\begin{array}{rrrrrrrrl} 
  \multicolumn{1}{c}{s_0} & 
  \multicolumn{1}{c}{s_3} & 
  \multicolumn{1}{c}{x} & 
  \multicolumn{1}{c}{x_1} & 
  \multicolumn{1}{c}{x_2} & 
  \multicolumn{1}{c}{y_0} &  
  \multicolumn{1}{c}{y_1} &  
  \multicolumn{1}{c}{y_2} &  \\
  \cmidrule{1-8}
  1 & 1 & -1 & 0 & 0  & 0 & 0 & 0 & \hspace{1.5ex} L\\ 
  0 & 0 & 1 & 1 & 1 & 0 & 0 & 0 & \hspace{1.5ex} M\\ 
  0 & 0 & 0 & 0 & 0  & 1 & 1 & 1 & \hspace{1.5ex} N\\ 
\end{array}
\]
and $\Amp F =\langle L,M,N\rangle$.  The secondary fan of $F$ is the
same as that of the toric variety in 3--6 (\S\ref{sec:3-6}) and is
shown in Fig.~\ref{fig:3-6}.  We have:
\begin{itemize}
\item $-K_F=L + 3M + 3N$ is ample, so $F$ is a Fano variety;
\item $X$ is the complete intersection of two nef divisors on $F$;
\item $-(K_F+\Lambda)\sim L+M+N$ is ample.
\end{itemize}

\subsection*{The two constructions coincide:}
The twisted cubic $\Gamma$ is cut out of $ \PP^3_{x_0,\dots, x_3}$ by
the equations:
\[ 
\rk \begin{pmatrix} x_0 & x_1 &x_ 2\\ x_1
  & x_2 & x_3 \end{pmatrix} <2
\]
By Lemma~\ref{lem:blowups} the blow up of $ \PP^3$ along $ \Gamma$ is
cut out of $\PP^3_{x_0,\dots, x_3}\times \PP^2_{y_0,y_1,y_2}$ by the
equations:
\[
 \begin{pmatrix} 
   x_0 & x_1 & x_2 \\ 
   x_1 & x_2 & x_3 
 \end{pmatrix} \cdot 
 \begin{pmatrix} 
   y_0 \\ y_1\\y_2 
 \end{pmatrix} = 0
\] 
Observe that $ \Gamma$ is disjoint from the line $(x_0=x_3=0)$. We
therefore blow up $ \PP^3_{x_0,\dots, x_3}\times \PP^2_{y_0,y_1,y_2}$
along the locus $ x_0=x_3=0$, obtaining the toric variety $F$.  The
equations defining $X$ inside $F$ are:
\[
\begin{pmatrix} 
  s_0 x & x_1 & x_2 \\ 
  x_1 & x_2 & s_3 x 
\end{pmatrix}
\cdot 
\begin{pmatrix} 
  y_0 \\ y_1\\y_2 
\end{pmatrix} = 0
\]
and so $ X$ is a complete intersection of type $(M+N)\cap (M+N)$.

\subsection*{The quantum period:}
Corollary~\ref{cor:QL} yields:
\[
G_X(t) = e^{-2t} \sum_{l=0}^\infty \sum_{n=0}^\infty \sum_{m=l} ^\infty
t^{l+m+n}
\frac{(m+n)!(m+n)!}
{(l!)^2 (m-l)! (m!)^2 (n!)^3}
\]
and regularizing gives:
\[
\hG_X(t) = 1+8 t^2+30 t^3+240 t^4+1740 t^5+13130 t^6+106680 t^7+862960
t^8+7248360 t^9+ \cdots
\]

\subsection*{Minkowski period sequence:} \href{http://www.grdb.co.uk/search/period3?id=85&printlevel=2}{85}


\addtocounter{CustomSectionCounter}{1}

\section{The Fano Manifold $\MM{3}{13}$}
\label{anchor:3--13}

\subsection*{Mori--Mukai name:} 3--13

\subsection*{Mori--Mukai construction:} The blow-up of $W\subset
\PP^2\times \PP^2$ with centre a curve $C$ of bidegree $(2,2)$ on it
such that $C\hookrightarrow W \to
\PP^2\times\PP^2\overset{p_i}{\longrightarrow}\PP^2$ is an embedding
for both $i=1,2$.  Here $W$ is a divisor of bidegree $(1,1)$ in $\PP^2
\times \PP^2$ and $p_i:\PP^2 \times \PP^2 \to \PP^2$ is projection to
the $i$th factor.

\subsection*{Our construction:} A codimension-$3$ complete
intersection $X$ of type $(L+M)\cap (L+N)\cap (M+N)$ in $\PP^2\times
\PP^2 \times \PP^2$.

\subsection*{The two constructions coincide:} First choose
co-ordinates $x_0, x_1, x_2$, $y_0,y_1,y_2$ on $\PP^2\times \PP^2$
such that the curve $C$ is contained in the surface $\Sigma$ given by
the condition:
\[
\rk
\begin{pmatrix}
  x_0 & x_1 & x_2 \\
  y_0 & y_1 & y_2
\end{pmatrix}
<2
\]
Note that $\Sigma$ is just $\PP^2$ embedded diagonally in $\PP^2\times
\PP^2$. In these coordinates, $W_{1,1}=\{ f_{1,1 }(x, y)=0 \}$ where
$f_{1,1}\in \Gamma \bigl(\PP^2\times \PP^2; \cO(1,1) \bigr)$ is a
general section, and $C=\Sigma \cdot W_{1,1}$. By
Lemma~\ref{lem:blowups}, $X$ is given by the equations:
\[
\begin{cases}
  x_0z_0+x_1z_1+x_2z_2 & = 0\\
  y_0z_0+y_1z_1+y_2z_2 & = 0\\
  f_{1,1}(x,y) & = 0
\end{cases}
\]
in $\PP^2_{x_0,x_1,x_2}\times\PP^2_{y_0,y_1,y_2}\times \PP^2_{z_0,z_1,z_2}$.

\subsection*{The quantum period:} 
$F = \PP^2 \times \PP^2 \times \PP^2$ is the toric variety with weight data:
\[
\begin{array}{rrrrrrrrrl} 
  1 & 1 & 1 & 0 & 0 & 0 & 0 & 0 & 0 & \hspace{1.5ex} L\\ 
  0 & 0 & 0 & 1 & 1 & 1 & 0 & 0 & 0 & \hspace{1.5ex} M\\ 
  0 & 0 & 0 & 0 & 0 & 0 & 1 & 1 & 1 & \hspace{1.5ex} N\\ 
\end{array}
\]
and $\Amp F =\langle L,M,N\rangle$.  We have that:
\begin{itemize}
\item $F$ is a Fano variety;
\item $X$ is the complete intersection of three nef divisors on $F$;
\item $-(K_F+\Lambda)\sim L+M+N$ is ample.
\end{itemize}
Corollary~\ref{cor:QL} yields:
\[
G_X(t) = e^{-3t} \sum_{l=0}^\infty \sum_{m=0}^\infty \sum_{n=0} ^\infty
t^{l+m+n}
\frac{(l+m)!(l+n)!(m+n)!}
{(l!)^3 (m!)^3 (n!)^3}
\]
and regularizing gives:
\[
\hG_X(t) = 1 + 6 t^2 + 24 t^3 + 162 t^4 + 1080 t^5 + 7620 t^6 +
55440 t^7 + 415170 t^8 + 3166800 t^9 + \cdots
\]

\subsection*{Minkowski period sequence:} \href{http://www.grdb.co.uk/search/period3?id=70&printlevel=2}{70}


\addtocounter{CustomSectionCounter}{1}

\section{The Fano Manifold $\MM{3}{14}$}
\label{sec:3-14}
\label{anchor:3--14}

\subsection*{Mori--Mukai name:} 3--14

\subsection*{Mori--Mukai construction:} The blow-up of $\PP^3$ with
centre a union of a cubic in a plane $S$ and a point not in $S$.
 
\subsection*{Our construction:} A member $X$ of $|M+N|$ in the toric
variety $F$ with weight data:
\[ 
\begin{array}{rrrrrrrl}  
  \multicolumn{1}{c}{s_0} & 
  \multicolumn{1}{c}{s_1} & 
  \multicolumn{1}{c}{s_2} & 
  \multicolumn{1}{c}{x} & 
  \multicolumn{1}{c}{x_3} &  
  \multicolumn{1}{c}{u} &  
  \multicolumn{1}{c}{v} &  \\
  \cmidrule{1-7}
  1 & 1 & 1 & -1 & 0  & -2 & 0 & \hspace{1.5ex} L\\ 
  0 & 0 & 0 &  1 & 1  &  0 &  0 & \hspace{1.5ex} M\\ 
  0 & 0 & 0 &  0 & 0  &  1 & 1 & \hspace{1.5ex} N\\ 
\end{array}
\]
and $\Amp F =\langle L,M, N \rangle$. The secondary fan of $F$ is shown
in~\ref{fig:3-14}.

\begin{figure}[h!]
  \centering
  \resizebox{6cm}{!}{\input{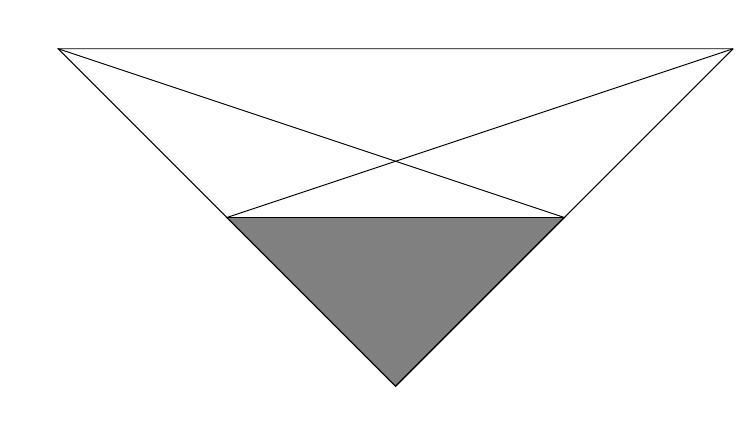_t}}
  \caption{The secondary fan for $F$ in 3--14}
  \label{fig:3-14}
\end{figure}

We have:
\begin{itemize}
\item $-K_F=2M+2N$ is nef and big but not ample;
\item $X\sim M+N$ is nef and big but not ample;
\item $-(K_F+X)\sim M+N$ is nef and big but not ample. 
\end{itemize}

\subsection*{The two constructions coincide:} The variety $X$ is cut
out by:
\[
vx_3 + u x A_3 (s_0,s_1,s_2)=0
\]
Note the obvious morphism $\pi \colon F \to B_7$ with fibre
$\PP^1_{u,v}$, where $B_7$ is the toric variety with weight data:
\[ 
\begin{array}{rrrrrl}
  \multicolumn{1}{c}{s_0} & 
  \multicolumn{1}{c}{s_1} & 
  \multicolumn{1}{c}{s_2} & 
  \multicolumn{1}{c}{x} & 
  \multicolumn{1}{c}{x_3} \\
 \cmidrule{1-5}
  1 & 1 & 1 & -1 & 0 & \hspace{1.5ex} L\\ 
  0 & 0 & 0 &  1 & 1 & \hspace{1.5ex} M
 \end{array}
\]
and $\Amp B_7 = \langle L, M \rangle$.  (The weight data and co-ordinates for $B_7 $ here are exactly as in \S\ref{sec:B7}.)  The birational morphism $B_7\to
\PP^3$ given (contravariantly) by $[x_0,\dots ,x_3]\mapsto
[s_0x,s_1x,s_2x,x_3]$ identifies $B_7$ with the blow-up of $\PP^3$ at
the point $[0:0:0:1]$. The equation defining $X$ is of degree~$1$ in
$\PP^1_{u,v}$: it follows that the morphism $\pi|_{X} \colon X \to
B_7$ is birational and blows up the locus\footnote{Note that, with our
  choice of stability condition for $F$, $(x_3=x=0)\subset \CC^7$ is
  part of the unstable locus.}  $(x_3=A_3(s_0,s_1,s_2)=0)\subset B_7$.

\subsection*{The quantum period:}
Let $p_1$, $p_2, p_3 \in H^\bullet(F;\ZZ)$ denote the first Chern
classes of $L$, $M$, and $N$ respectively; these classes form a basis
for $H^2(F;\ZZ)$.  Write $\tau \in H^2(F;\QQ)$ as $\tau = \tau_1 p_1 +
\tau_2 p_2+ \tau_3 p_3$ and identify the group ring $\QQ[H_2(F;\ZZ)]$
with the polynomial ring $\QQ[Q_1,Q_2,Q_3]$ via the $\QQ$-linear map
that sends the element $Q^\beta \in \QQ[H_2(F;\ZZ)]$ to $Q_1^{\langle
  \beta,p_1\rangle} Q_2^{\langle \beta,p_2\rangle} Q_3^{\langle
  \beta,p_3\rangle}$.  We have:
\begin{multline*}
  I_F(\tau)  = e^{\tau/z} 
  \sum_{l, m, n\geq 0} 
  \frac{
    Q_1^l Q_2^m Q_3^n e^{l \tau_1} e^{m \tau_2} e^{n \tau_3}
  }
  {
    \prod_{k=1}^l (p_1 + k z)^3
    \prod_{k=1}^m (p_2 + k z)
    \prod_{k=1}^n (p_3 + k z)
  }
  \frac{\prod_{k = -\infty}^0 (p_2-p_1 + k z)}
  {\prod_{k = -\infty}^{m-l} (p_2-p_1 + k z)}
  \\
  \frac{\prod_{k = -\infty}^0 (p_3-2p_1 + k z)^2}
  {\prod_{k = -\infty}^{n-2l} (p_3-2p_1 + k z)^2}
\end{multline*}
Since:
\[
  I_F(\tau)  = 1 + \tau z^{-1} + O(z^{-2})
\]
Theorem~\ref{thm:toric_mirror} gives:
\[
J_F(\tau) = I_F(\tau)
\]
We now proceed exactly as in the case of 3--1
(\S\ref{sec:restriction_of_nef_not_restriction_of_ample}), obtaining:
\[
G_X(t) = 
e^{-2 t} \sum_{l=0}^\infty \sum_{m=l}^\infty \sum_{n=2l}^\infty
t^{m+n}
\frac
{(m+n)!}
{(l!)^3m!n!(m-l)!(n-2l)!}
\]
Regularizing gives:
\[
\hG_X(t) = 1+2 t^2+18 t^3+102 t^4+420 t^5+2810 t^6+21000 t^7+129430
t^8+813960 t^9 + \cdots
\]

\subsection*{Minkowski period sequence:} \href{http://www.grdb.co.uk/search/period3?id=21&printlevel=2}{21}


\addtocounter{CustomSectionCounter}{1}

\section{The Fano Manifold $\MM{3}{15}$}
\label{anchor:3--15}

\subsection*{Mori--Mukai name:} 3--15

\subsection*{Mori--Mukai construction:} The blow-up of a quadric
\mbox{3-fold} $Q\subset \PP^4$ with centre a disjoint union of a line
and a conic on it.

\subsection*{Our construction:} A member $X$ of $|L+N|$ in a toric
variety $F$ with weight data:
\[
\begin{array}{rrrrrrrl} 
  \multicolumn{1}{c}{s_0} & 
  \multicolumn{1}{c}{s_1} & 
  \multicolumn{1}{c}{s_2} & 
  \multicolumn{1}{c}{t_3} & 
  \multicolumn{1}{c}{t_4} &  
  \multicolumn{1}{c}{y} &  
  \multicolumn{1}{c}{z} &  \\
  \cmidrule{1-7}
  1 & 1 & 1 & 0 & 0  & -1 & 0 & \hspace{1.5ex} L\\ 
  0 & 0 & 0 & 1 & 1 & 0 & -1 & \hspace{1.5ex} M\\ 
  0 & 0 & 0 & 0 & 0  & 1 & 1 & \hspace{1.5ex} N\\ 
\end{array}
\]
and $\Amp F =\langle L,M,N\rangle$. The secondary fan for $F$ is the
same as that for the toric variety in 3--10 (\S\ref{sec:3-10}) and is
shown in Fig.~\ref{fig:3-10}.  We have:
\begin{itemize}
\item $-K_F=2L+M+2N$ is ample, that is $F$ is a Fano variety;
\item $X\sim N+L$ is nef;
\item $-(K_F+X)\sim L+M+N$ is ample on $F$.
\end{itemize}

\subsection*{The two constructions coincide:} The morphism $F\to
\PP^4$ given (contravariantly) by:
\[
[x_0,x_1,x_2,x_3,x_4] \mapsto [s_0y, s_1y, s_2y, t_3z, t_4z]
\]
is the blow-up of $\PP^2$ along the disjoint union of the line
$(x_0=x_1=x_2=0)$ and the plane $(x_3=x_4=0)$. $X$ is the proper
transform of the (nonsingular) quadric defined by the equation:
\[
x_0^2+x_1x_3+x_2x_4=0
\]
Note that this quadric contains the line $x_0=x_1=x_2=0$.

\subsection*{The quantum period:}
Corollary~\ref{cor:QL} yields:
\[
G_X(t) = e^{-t} \sum_{l=0}^\infty \sum_{m=0}^\infty \sum_{n=\max(l,m)}^\infty
t^{l+m+n}
\frac{(l+n)!}
{(l!)^3 (m!)^2 (n-l)! (n-m)!}
\]
and regularizing gives:
\[
\hG_X(t) = 1+6 t^2+18 t^3+138 t^4+780 t^5+5370 t^6+36120 t^7+253050 t^8+1811880 t^9+ \cdots
\]

\subsection*{Minkowski period sequence:} \href{http://www.grdb.co.uk/search/period3?id=67&printlevel=2}{67}


\addtocounter{CustomSectionCounter}{1}

\section{The Fano Manifold $\MM{3}{16}$}
\label{anchor:3--16}

\subsection*{Mori--Mukai name:} 3--16

\subsection*{Mori--Mukai construction:} The blow-up of $B_7$ (see
\S\ref{sec:B7}) with centre the strict transform of a twisted cubic
passing through the centre of the blow-up $B_7\to \PP^3$.

\subsection*{Our construction:} A complete intersection $X$ of type $N
\cap N$ in the toric variety $F$ with weight data:
\[
\begin{array}{rrrrrrrrl} 
  \multicolumn{1}{c}{s_1} & 
  \multicolumn{1}{c}{s_2} & 
  \multicolumn{1}{c}{s_3} & 
  \multicolumn{1}{c}{x} & 
  \multicolumn{1}{c}{x_0} & 
  \multicolumn{1}{c}{y_0} &  
  \multicolumn{1}{c}{y_1} &  
  \multicolumn{1}{c}{y_2} &  \\
  \cmidrule{1-8}
  1 & 1 & 1 & -1 & 0  & 0 & -1 & -1 & \hspace{1.5ex} L\\ 
  0 & 0 & 0 & 1 & 1 & -1 & 0 & 0 & \hspace{1.5ex} M\\ 
  0 & 0 & 0 & 0 & 0  & 1 & 1 & 1 & \hspace{1.5ex} N\\ 
\end{array}
\]
and $\Amp F = \langle L, M, N \rangle$. The secondary fan for $F$ is
shown schematically in Fig.~\ref{fig:3-16}. We have:
\begin{itemize}
\item $-K_F=M+3N$ is nef and big but not ample;
\item $X$ is the complete intersection of two nef divisors on $F$;
\item $-(K_F+X)\sim M+N$ is nef and big but not ample on $F$.
\end{itemize}

\begin{figure}[h!]
  \centering
  \resizebox{6cm}{!}{\input{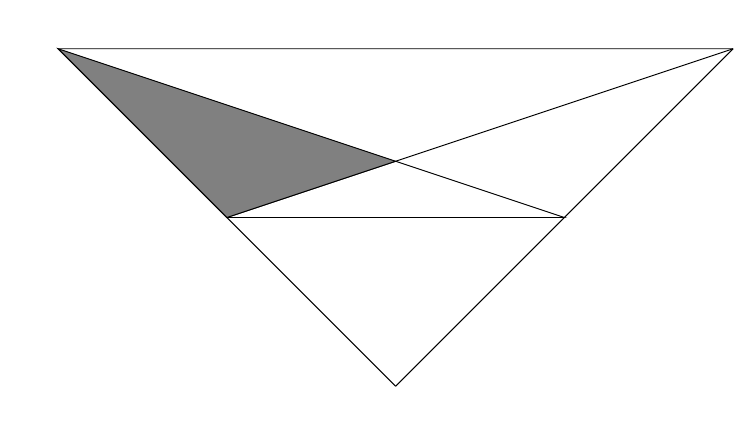_t}}
  \caption{The secondary fan for $F$ in 3--16}
  \label{fig:3-16}
\end{figure}

\subsection*{The two constructions coincide:} Consider the rational
normal curve:
\[
\Gamma =
\Bigg\{\rk
\begin{pmatrix}
  x_0 & x_1 & x_2\\
  x_1 & x_2 & x_3
\end{pmatrix}
<2\Bigg\}
\]
in $\PP^3_{x_0,x_1,x_2,x_3}$ and note that $P = [1:0:0:0]$ lies on
$\Gamma$. Recall from \S\ref{sec:B7} that $B_7$ is the toric variety
with weight data:
\[
\begin{array}{rrrrrl} 
  \multicolumn{1}{c}{s_1} & 
  \multicolumn{1}{c}{s_2} & 
  \multicolumn{1}{c}{s_3} & 
  \multicolumn{1}{c}{x} & 
  \multicolumn{1}{c}{x_0} \\
  \cmidrule{1-5}
  1  & 1   & 1   &-1 & 0 & \hspace{1.5ex} L\\ 
  0  & 0   & 0   & 1 & 1 & \hspace{1.5ex} M
\end{array} 
\]
and $\Amp B_7 = \langle L, M \rangle$, and that the blow-up morphism
$B_7 \to \PP^3$ is given (contravariantly) by $[x_0, x_1,x_2,x_3]
\mapsto [x_0, s_1 x, s_2 x, s_3 x]$. The proper transform of the curve
$\Gamma$ is the curve $\Gamma^\prime$ defined by the condition:
\[
\rk
\begin{pmatrix}
  x_0   & s_1 & s_2\\
  x s_1 & s_2 & s_3
\end{pmatrix}
<2 
\]
Now apply Lemma~\ref{lem:blowups} with $G = B_7$, $V = M^{-1} \oplus
L^{-1} \oplus L^{-1}$, $W = \cO_G \oplus \cO_G$, and the map $f\colon V \to
W$ given by the matrix:
\[
\begin{pmatrix}
  x_0   & s_1 & s_2\\
  x s_1 & s_2 & s_3
\end{pmatrix}
\]

\subsection*{The quantum period:}
Let $p_1$, $p_2, p_3 \in H^\bullet(F;\ZZ)$ denote the first Chern
classes of $L$, $M$, and $N$ respectively; these classes form a basis
for $H^2(F;\ZZ)$.  Write $\tau \in H^2(F;\QQ)$ as $\tau = \tau_1 p_1 +
\tau_2 p_2+ \tau_3 p_3$ and identify the group ring $\QQ[H_2(F;\ZZ)]$
with the polynomial ring $\QQ[Q_1,Q_2,Q_3]$ via the $\QQ$-linear map
that sends the element $Q^\beta \in \QQ[H_2(F;\ZZ)]$ to $Q_1^{\langle
  \beta,p_1\rangle} Q_2^{\langle \beta,p_2\rangle} Q_3^{\langle
  \beta,p_3\rangle}$.  We have:
\begin{multline*}
  I_F(\tau)  = e^{\tau/z} 
  \sum_{l, m, n\geq 0} 
  \frac{
    Q_1^l Q_2^m Q_3^n e^{l \tau_1} e^{m \tau_2} e^{n \tau_3}
  }
  {
    \prod_{k=1}^l (p_1 + k z)^3
    \prod_{k=1}^m (p_2 + k z)
  }
  \frac{\prod_{k = -\infty}^0 (p_2-p_1 + k z)}
  {\prod_{k = -\infty}^{m-l} (p_2-p_1 + k z)}
  \\ \times
  \frac{\prod_{k = -\infty}^0 (p_3-p_2 + k z)}
  {\prod_{k = -\infty}^{n-m} (p_3-p_2 + k z)}
  \frac{\prod_{k = -\infty}^0 (p_3-p_1 + k z)^2}
  {\prod_{k = -\infty}^{n-l} (p_3-p_1 + k z)^2}
\end{multline*}
and, since $I_F(\tau) = 1 + \tau z^{-1} + O(z^{-2})$,
Theorem~\ref{thm:toric_mirror} gives:
\[
J_F(\tau) = I_F(\tau)
\]
We now proceed exactly as in the case of 3--1
(\S\ref{sec:restriction_of_nef_not_restriction_of_ample}), obtaining:
\[
G_X(t) = 
e^{-t} \sum_{l=0}^\infty \sum_{n=l}^\infty \sum_{m=l}^n
t^{m+n}
\frac
{n! n!}
{(l!)^3 m! (m-l)! (n-m)! \big((n-l)!\big)^2}
\]
Regularizing gives:
\[
\hG_X(t) = 1+4 t^2+18 t^3+84 t^4+540 t^5+3190 t^6+20160 t^7+130900 t^8+859320 t^9 + \cdots
\]

\subsection*{Minkowski period sequence:} \href{http://www.grdb.co.uk/search/period3?id=42&printlevel=2}{42}


\addtocounter{CustomSectionCounter}{1}

\section{The Fano Manifold $\MM{3}{17}$}
\label{anchor:3--17}

\subsection*{Mori--Mukai name:} 3--17

\subsection*{Mori--Mukai construction:} A nonsingular divisor of tridegree
$(1,1,1)$ on $\PP^1\times \PP^1 \times \PP^2$.

\subsection*{Our construction:} A member $X$ of $|L+M+N|$ on the toric
variety $F$ with weight data:
\[ 
\begin{array}{rrrrrrrrl} 
  1 & 1 & 0 & 0 & 0 & 0 & 0 & \hspace{1.5ex} L\\ 
  0 & 0 & 1 & 1 & 0 & 0 & 0 & \hspace{1.5ex} M\\ 
  0 & 0 & 0 & 0 & 1 & 1 & 1 & \hspace{1.5ex} N 
\end{array}
\]
and $\Amp F = \langle L, M, N \rangle$.

\subsection*{The two constructions coincide:} Obvious.

\subsection*{The quantum period:} 
Corollary~\ref{cor:QL} yields:
\[
G_X(t) = e^{-2t} \sum_{l=0}^\infty \sum_{m=0}^\infty \sum_{n=0}^\infty 
t^{l+m+2n}
\frac{(l+m+n)!}
{(l!)^2 (m!)^2 (n!)^3}
\]
and regularizing gives:
\[
\hG_X(t) = 1+4 t^2+12 t^3+84 t^4+360 t^5+2380 t^6+13440 t^7+83860
t^8+512400 t^9 + \cdots
\]

\subsection*{Minkowski period sequence:} \href{http://www.grdb.co.uk/search/period3?id=39&printlevel=2}{39}


\addtocounter{CustomSectionCounter}{1}

\section{The Fano Manifold $\MM{3}{18}$}
\label{anchor:3--18}

\subsection*{Mori--Mukai name:} 3--18

\subsection*{Mori--Mukai construction:} The blow-up of $\PP^3$ with
centre the disjoint union of a line and a conic.

\subsection*{Our construction:} A member $X$ of $|M+N|$ on the toric
variety $F$ with weight data:
\[
\begin{array}{rrrrrrrl} 
  \multicolumn{1}{c}{s_0} & 
  \multicolumn{1}{c}{s_1} & 
  \multicolumn{1}{c}{x} & 
  \multicolumn{1}{c}{x_2} & 
  \multicolumn{1}{c}{x_3} & 
  \multicolumn{1}{c}{y_0} &  
  \multicolumn{1}{c}{y_1} &  \\
  \cmidrule{1-7}
  1 & 1 & -1 & 0 & 0 &   0 & 0 & \hspace{1.5ex} L\\ 
  0 & 0 &   1 & 1 & 1 & -1 & 0 & \hspace{1.5ex} M\\ 
  0 & 0 &   0 & 0 & 0 &   1 & 1 & \hspace{1.5ex} N\\ 
\end{array}
\]
and $\Amp F = \langle L, M, N\rangle$. The secondary fan of $F$ is the
same as that of the toric variety in 3--4 (\S\ref{sec:3-4}) and it is
shown schematically in Fig.~\ref{fig:3-4}.  We have:
\begin{itemize}
\item $-K_F=L+2M+2N$ ample, that is $F$ is a Fano variety;
\item $X\sim M+N$ is nef;
\item $-(K_F+X)\sim L+ M+N$ is ample.
\end{itemize}

\subsection*{The two constructions coincide:} We construct $X$, for
example, as the blow-up of $\PP^3_{x_0,x_1,x_2,x_3}$ along the
(disjoint) union of the line $(x_0=x_1=0)$ and the conic
$(x_0x_1+x_2^2=x_3=0)$. Thus $X$ is given in $F$ by the equation:
\[
y_0(s_0s_1x^2+x_2^2)+y_1x_3=0 
\]
where the morphism $F\to \PP^3$ is given (contravariantly) by:
\[
[x_0,x_1,x_2,x_3]\mapsto [s_0x,s_1x,x_2,x_3] 
\]

\subsection*{The quantum period:}
Corollary~\ref{cor:QL} yields:
\[
G_X(t) = e^{-t} \sum_{l=0}^\infty \sum_{m=l}^\infty \sum_{n=m}^\infty
t^{l+m+n}
\frac{(m+n)!}
{(l!)^2 (m-l)! (m!)^2 (n-m)! n!}
\]
and regularizing gives:
\[
\hG_X(t) = 1+4 t^2+18 t^3+60 t^4+480 t^5+2470 t^6+14280 t^7+94780 t^8+564480 t^9 + \cdots
\]

\subsection*{Minkowski period sequence:} \href{http://www.grdb.co.uk/search/period3?id=41&printlevel=2}{41}


\addtocounter{CustomSectionCounter}{1}

\section{The Fano Manifold $\MM{3}{19}$}
\label{sec:3-19}
\label{anchor:3--19}

\subsection*{Mori--Mukai name:} 3--19

\subsection*{Mori--Mukai construction:} The blow-up of a quadric
\mbox{3-fold} $Q\subset \PP^4$ with centre two points $P_1$ and $P_2$
on it which are not collinear.

\subsection*{Our construction:} A member $X$ of $|2M|$ in the rank 2
toric variety $F$ with weight data:
\[ 
\begin{array}{rrrrrrl} 
  \multicolumn{1}{c}{s_0} & 
  \multicolumn{1}{c}{s_1} & 
  \multicolumn{1}{c}{s_2} & 
  \multicolumn{1}{c}{x} & 
  \multicolumn{1}{c}{x_3} & 
  \multicolumn{1}{c}{x_4} & \\ 
  \cmidrule{1-6}
  1 & 1 & 1 & -1 & 0 & 0  & \hspace{1.5ex} L\\ 
  0 & 0 & 0 &   1 & 1 & 1 & \hspace{1.5ex} M \\
\end{array}
\]
and $\Amp(F) = \langle L, M \rangle$.  We have: 
\begin{itemize}
\item $-K_F=2L+3M$ is ample, that is $F$ is a Fano variety;
\item $X\sim 2M$ is nef;
\item $-(K_F+X)\sim 2L+M$ is ample.
\end{itemize}

\subsection*{The two constructions coincide:} The variety $F$ is
manifestly the blow-up of $\PP^4_{x_0,x_1,x_2,x_3,x_4}$ along the line
$(x_0=x_1=x_2=0)$, and $X$ is the strict transform of a general
quadric.

\subsection*{The quantum period:}
Corollary~\ref{cor:QL} yields:
\[
G_X(t) = e^{-2t} \sum_{l=0}^\infty \sum_{m=l}^\infty
t^{2l+m}
\frac{(2m)!}
{(l!)^3 (m-l)! (m!)^2 }
\]
and regularizing gives:  
\[
\hG_X(t) = 1 + 2 t^2 + 12 t^3 + 54 t^4 + 240 t^5 + 1280 t^6 + 7560 t^7 + 
 42070 t^8 + 235200 t^9 + \cdots
\]

\subsection*{Minkowski period sequence:} \href{http://www.grdb.co.uk/search/period3?id=18&printlevel=2}{18}


\addtocounter{CustomSectionCounter}{1}

\section{The Fano Manifold $\MM{3}{20}$}
\label{sec:3-20}
\label{anchor:3--20}

\subsection*{Mori--Mukai name:} 3--20

\subsection*{Mori--Mukai construction:} The blow-up of a quadric
\mbox{3-fold} $Q\subset \PP^4$ with centre two disjoint lines on it.

\subsection*{Our construction:} A member $X$ of $|L+M|$ in the toric
variety $F$ with weight data:
\[
\begin{array}{rrrrrrrl} 
  \multicolumn{1}{c}{s_0} & 
  \multicolumn{1}{c}{s_1} & 
  \multicolumn{1}{c}{t_2} & 
  \multicolumn{1}{c}{t_3} & 
  \multicolumn{1}{c}{u_4} & 
  \multicolumn{1}{c}{x} &  
  \multicolumn{1}{c}{y} &  \\
  \cmidrule{1-7}
  1 & 1 & 0 & 0 & 1 &   -1 & 0 & \hspace{1.5ex} L\\ 
  0 & 0 & 1 & 1 & 1 & 0 & -1 & \hspace{1.5ex} M\\ 
  0 & 0 & 0 & 0 & -1 &   1 & 1 & \hspace{1.5ex} N\\ 
\end{array}
\]
and $\Amp F = \langle L, M, N\rangle$. The secondary fan of $F$ is the
same as that for $F$ in 3--16; it is shown schematically in Fig.~\ref{fig:3-20}.

\begin{figure}[h!]
  \centering
  \resizebox{6cm}{!}{\input{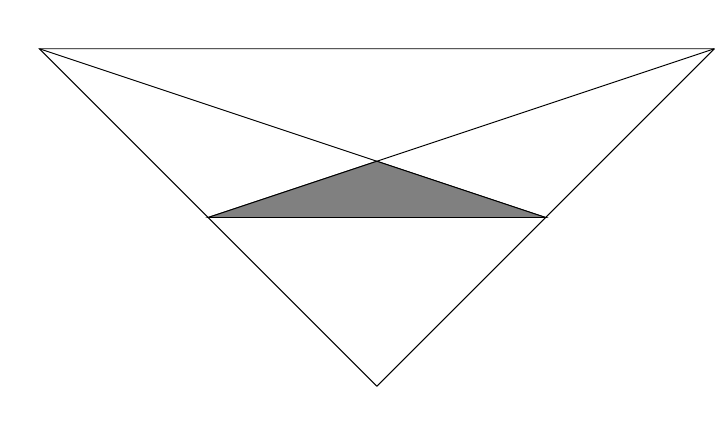_t}}
  \caption{The secondary fan for $F$ in 3--20}
  \label{fig:3-20}
\end{figure}

We have:
\begin{itemize}
\item $-K_F=2L+2M+N$ is ample, that is $F$ is a Fano variety;
\item $X\sim L+M$ is nef;
\item $-(K_F+X)\sim L+ M+N$ is ample.
\end{itemize}

\subsection*{The two constructions coincide:} We blow up the disjoint
union of the line $(x_2=x_3=x_4=0)$ and the line $(x_0=x_1=x_4=0)$ in
$\PP^4_{x_0,x_1,x_2,x_3,x_4}$ and take $X$ to be the proper transform
of the quadric $x_0x_3+x_1x_2+x_4^2=0$ constructed to contain the two
lines. The morphism $F\to \PP^4$ is given (contravariantly) by:
\[
[x_0,x_1,x_2,x_3,x_4]\mapsto [s_0x,s_1x,t_2y,t_3y,u_4xy] 
\]

\subsection*{The quantum period:}
Corollary~\ref{cor:QL} yields:
\[
G_X(t) = \sum_{l=0}^\infty \sum_{m=0}^\infty \sum_{n=\max(l,m)}^{l+m}
t^{l+m+n}
\frac{(l+m)!}
{(l!)^2 (m!)^2 (l+m-n)! (n-l)! (n-m)! }
\]
and regularizing gives:  
\[
\hG_X(t) = 1+4 t^2+12 t^3+60 t^4+360 t^5+1660 t^6+10920 t^7+57820 t^8+361200 t^9 + \cdots
\]

\subsection*{Minkowski period sequence:} \href{http://www.grdb.co.uk/search/period3?id=38&printlevel=2}{38}


\addtocounter{CustomSectionCounter}{1}

\section{The Fano Manifold $\MM{3}{21}$}
\label{anchor:3--21}

\subsection*{Mori--Mukai name:} 3--21

\subsection*{Mori--Mukai construction:} The blow-up of $\PP^1\times
\PP^2$ with centre a curve of bidegree $(2,1)$.

\subsection*{Our construction:} A member $X$ of $|M+N|$ on the toric
variety $F$ with weight data:
\[
\begin{array}{rrrrrrrl} 
  \multicolumn{1}{c}{x_0} & 
  \multicolumn{1}{c}{x_1} & 
  \multicolumn{1}{c}{y_0} & 
  \multicolumn{1}{c}{y_1} & 
  \multicolumn{1}{c}{y_2} & 
  \multicolumn{1}{c}{s} &  
  \multicolumn{1}{c}{t} &  \\
  \cmidrule{1-7}
  1 & 1 & 0 & 0 & 0 & 0 & -1 & \hspace{1.5ex} L\\ 
  0 & 0 & 1 & 1 & 1 & 0 & -1 & \hspace{1.5ex} M\\ 
  0 & 0 & 0 & 0 & 0 & 1 & 1 & \hspace{1.5ex} N\\ 
\end{array}
\]
and $\Amp F = \langle L, M, N\rangle$. The secondary fan of $F$ is the
same as that of the toric variety in 3--2 (\S\ref{sec:3-2}) and it is
shown schematically in Fig.~\ref{fig:3-2}.  We have:
\begin{itemize}
\item $-K_F=L+2M+2N$ is ample, that is $F$ is a Fano variety;
\item $X\sim M+N$ is nef;
\item $-(K_F+X)\sim L+ M+N$ is ample.
\end{itemize}

\subsection*{The two constructions coincide:} A complete intersection
of type $(0,1) \cap (1,2)$ on $\PP^1 \times \PP^2$ is a curve of
bidegree (2,1).  Apply Lemma~\ref{lem:blowups} with $G =
\PP^1_{x_0,x_1} \times \PP^2_{y_0,y_1,y_2}$, $V = \cO_{\PP^1 \times
  \PP^2}\oplus \cO_{\PP^1 \times \PP^2}(-1,-1)$, $W = \cO_{\PP^1 \times
  \PP^2}(0,1)$, and $f\colon V \to W$ given by the matrix $
\begin{pmatrix}
  y_0 & x_0q_0+x_1q_1
\end{pmatrix}$ where $q_0$, $q_1$ are homogeneous quadratic
polynomials in $y_0,y_1,y_2$.

\subsection*{The quantum period:}
Corollary~\ref{cor:QL} yields:
\[
G_X(t) = e^{-t} \sum_{l=0}^\infty \sum_{m=0}^\infty \sum_{n=l+m}^\infty
t^{l+m+n}
\frac{(m+n)!}
{(l!)^2 (m!)^3 n! (n-l-m)! }
\]
and regularizing gives:  
\[
\hG_X(t) = 1+6 t^2+6 t^3+114 t^4+240 t^5+3030 t^6+9660 t^7+95970 t^8+394800 t^9 + \cdots
\]

\subsection*{Minkowski period sequence:} \href{http://www.grdb.co.uk/search/period3?id=49&printlevel=2}{49}


\addtocounter{CustomSectionCounter}{1}

\section{The Fano Manifold $\MM{3}{22}$}
\label{anchor:3--22}

\subsection*{Mori--Mukai name:} 3--22

\subsection*{Mori--Mukai construction:} The blow-up of $\PP^1\times
\PP^2$ with centre a conic in $t\times \PP^2$ ($t\in \PP^1$). 

\subsection*{Our construction:} A member $X$ of $|N|$ on the toric
variety $F$ with weight data:
\[
\begin{array}{rrrrrrrl} 
  \multicolumn{1}{c}{x_0} & 
  \multicolumn{1}{c}{x_1} & 
  \multicolumn{1}{c}{y_0} & 
  \multicolumn{1}{c}{y_1} & 
  \multicolumn{1}{c}{y_2} & 
  \multicolumn{1}{c}{s} &  
  \multicolumn{1}{c}{t} &  \\
  \cmidrule{1-7}
  1 & 1 & 0 & 0 & 0 & -1 & 0 & \hspace{1.5ex} L\\ 
  0 & 0 & 1 & 1 & 1 & 0 & -2 & \hspace{1.5ex} M\\ 
  0 & 0 & 0 & 0 & 0 & 1 & 1 & \hspace{1.5ex} N\\ 
\end{array}
\]
and $\Amp F = \langle L, M, N\rangle$. The secondary fan of $F$ is
similar to that of the toric variety in 3--10 (\S\ref{sec:3-10}).  We
have:
\begin{itemize}
\item $-K_F=L+M+2N$ is ample, that is $F$ is a Fano variety;
\item $X\sim N$ is nef;
\item $-(K_F+X)\sim L+ M+N$ is ample.
\end{itemize}
\subsection*{The two constructions coincide:} Apply
Lemma~\ref{lem:blowups} with $G = \PP^1_{x_0,x_1} \times
\PP^2_{y_0,y_1,y_2}$, $V = \cO_{\PP^1 \times \PP^2}(-1,0)\oplus
\cO_{\PP^1 \times \PP^2}(0,-2)$, $W = \cO_{\PP^1 \times \PP^2}$, and
$f\colon V \to W$ given by the matrix $
\begin{pmatrix}
  x_0-t x_1 & y_0y_2-y_1^2
\end{pmatrix}$.

\subsection*{The quantum period:}
Corollary~\ref{cor:QL} yields:
\[
G_X(t) = e^{-t} \sum_{l=0}^\infty \sum_{m=0}^\infty \sum_{n=\max(l,2m)}^\infty
t^{l+m+n}
\frac{n!}
{(l!)^2 (m!)^3 (n-l)! (n-2m)! }
\]
and regularizing gives:  
\[
\hG_X(t) = 1+2 t^2+6 t^3+54 t^4+180 t^5+830 t^6+4620 t^7+26950 t^8+140280 t^9+ \cdots
\]

\subsection*{Minkowski period sequence:} \href{http://www.grdb.co.uk/search/period3?id=13&printlevel=2}{13}


\addtocounter{CustomSectionCounter}{1}

\section{The Fano Manifold $\MM{3}{23}$}
\label{anchor:3--23}

\subsection*{Mori--Mukai name:} 3--23

\subsection*{Mori--Mukai construction:} The blow-up of $B_7$ (see
\S\ref{sec:B7}) with centre a conic passing through the centre of the
blow-up $B_7\to \PP^3$.

\subsection*{Our construction:} A member $X$ of $|L+N|$ in the toric
variety $F$ with weight data:
\[
\begin{array}{rrrrrrrl} 
  \multicolumn{1}{c}{s_1} & 
  \multicolumn{1}{c}{s_2} & 
  \multicolumn{1}{c}{s_3} & 
  \multicolumn{1}{c}{x} & 
  \multicolumn{1}{c}{x_0} & 
  \multicolumn{1}{c}{u} &  
  \multicolumn{1}{c}{v} &  \\
  \cmidrule{1-7}
  1 & 1 & 1 & -1 & 0 & 0 & 0 & \hspace{1.5ex} L\\ 
  0 & 0 & 0 & 1 & 1 & -1 & 0 & \hspace{1.5ex} M\\ 
  0 & 0 & 0 & 0 & 0 & 1 & 1 & \hspace{1.5ex} N\\ 
\end{array}
\]
and $\Amp F = \langle L, M, N\rangle$. The secondary fan for $F$ is
the same as that of the toric variety in 3--4 (\S\ref{sec:3-4}) and is
shown in Fig.~\ref{fig:3-4}.

We have:
\begin{itemize}
\item $-K_F=2L+M+2N$ is ample, that is $F$ is a Fano variety;
\item $X\sim L+N$ is nef;
\item $-(K_F+X)\sim L+M+N$ is ample.
\end{itemize}

\subsection*{The two constructions coincide:} Consider the conic
$\Gamma$ given by $(x_3=x_0x_1+x_2^2=0)$ in $\PP^3_{x_0, \dots, x_3}$,
and note that $P = [1:0:0:0]$ lies on $\Gamma$. Recall from
\S\ref{sec:B7} that $B_7$ is the toric variety with weight data:
\[
\begin{array}{rrrrrl} 
  \multicolumn{1}{c}{s_1} & 
  \multicolumn{1}{c}{s_2} & 
  \multicolumn{1}{c}{s_3} & 
  \multicolumn{1}{c}{x} & 
  \multicolumn{1}{c}{x_0} \\
  \cmidrule{1-5}
  1  & 1   & 1   &-1 & 0 & \hspace{1.5ex} L\\ 
  0  & 0   & 0   & 1 & 1 & \hspace{1.5ex} M
\end{array} 
\]
and $\Amp B_7 = \langle L, M \rangle$, and that the blow-up morphism
$B_7 \to \PP^3$ is given (contravariantly) by $[x_0, x_1,x_2,x_3]
\mapsto [x_0, s_1 x, s_2 x, s_3 x]$.  The proper transform of the
curve $\Gamma$ is the curve $\Gamma^\prime$ defined by the equations:
\[
s_3=x_0s_1+xs_2^2=0
\]
Now apply Lemma~\ref{lem:blowups} with $G = B_7$, $V = M^{-1} \oplus
\cO_G$, $W = L$, and the map $f\colon V \to W$ given by the matrix $
\begin{pmatrix}
  x_0s_1+xs_2^2 & s_3
\end{pmatrix}
$.

\subsection*{The quantum period:}
Corollary~\ref{cor:QL} yields:
\[
G_X(t) = e^{-t} \sum_{l=0}^\infty \sum_{m=l}^\infty \sum_{n=m}^\infty
t^{l+m+n}
\frac{(l+n)!}
{(l!)^3 (m-l)!m! (n-m)! n! }
\]
and regularizing gives:  
\[
\hG_X(t) = 1+2 t^2+12 t^3+30 t^4+180 t^5+920 t^6+4200 t^7+22750 t^8+121800 t^9+ \cdots
\]

\subsection*{Minkowski period sequence:} \href{http://www.grdb.co.uk/search/period3?id=17&printlevel=2}{17}


\addtocounter{CustomSectionCounter}{1}

\section{The Fano Manifold $\MM{3}{24}$}
\label{anchor:3--24}

\subsection*{Mori--Mukai name:} 3--24

\subsection*{Mori--Mukai construction:} The fibre product
$W\times_{\PP^2} \FF_1$, where $W\to \PP^2$ is a $\PP^1$-bundle and
$p\colon \FF_1\to \PP^2$ is the blow-up.  Here $W$ (see \S\ref{sec:W})
is a divisor of bidegree $(1,1)$ on $\PP^2 \times \PP^2$.

\subsection*{Our construction:} A member $X$ of $|M+N|$ on the toric
variety $\FF_1\times \PP^2$, where $M$ is the line bundle $p^\star
\cO(1)$ on $\FF_1$, and $N=\cO(1)$.  In other words, $X$ is a member
of $|M+N|$ on the toric variety $F$ with weight data:
\[
\begin{array}{rrrrrrrl} 
  \multicolumn{1}{c}{s_0} & 
  \multicolumn{1}{c}{s_1} & 
  \multicolumn{1}{c}{x} & 
  \multicolumn{1}{c}{x_2} & 
  \multicolumn{1}{c}{y_0} & 
  \multicolumn{1}{c}{y_1} &  
  \multicolumn{1}{c}{y_2} &  \\
  \cmidrule{1-7}
  1 & 1 & -1 & 0 & 0 & 0 & 0 & \hspace{1.5ex} L\\ 
  0 & 0 & 1 & 1 & 0 & 0 & 0 & \hspace{1.5ex} M\\ 
  0 & 0 & 0 & 0 & 1 & 1 & 1 & \hspace{1.5ex} N\\ 
\end{array}
\]
and $\Amp F = \langle L, M, N\rangle$.  We have:
\begin{itemize}
\item $-K_F=L+2M+3N$ is ample, that is $F$ is a Fano variety;
\item $X\sim M+N$ is nef;
\item $-(K_F+X)\sim L+M+2N$ is ample.
\end{itemize}
 
\subsection*{The two constructions coincide:} First we show that $X$
is the blow up of $\PP^1\times \PP^2$ along a curve of bidegree
$(1,1)$.  To see this, note first that $X$ is cut out of
$\PP^2_{x_0,x_1,x_2}\times \PP^2_{y_0,y_1,y_2}\times \PP^1_{s_0,s_1}$
by the equations:
\[
\begin{cases}
  y_0x_0+y_1x_1+y_2x_2=0\\
 s_0x_0+s_1x_1=0
\end{cases}
\]
The first equation here cuts $W$ out of $\PP^2_{x_0,x_1,x_2}\times
\PP^2_{y_0,y_1,y_2}$; the second equation cuts $\FF_1$ out of
$\PP^2_{x_0,x_1,x_2}\times \PP^1_{s_0,s_1} $, as it is the equation
defining the blow-up of $\PP^2$ at the point $[0:0:1]$.  We now
exhibit $X$ as the blow-up of a curve in $\PP^2_{y_0,y_1,y_2}\times
\PP^1_{s_0,s_1}$. The projection to $\PP^2_{y_0,y_1,y_2}\times
\PP^1_{s_0,s_1}$ is an isomorphism away from the locus where the
matrix
\[
\begin{pmatrix}
  y_0 & y_1 & y_2 \\
  s_0 & s_1 & 0
\end{pmatrix}
\]
drops rank.  This locus is:
\[
\begin{cases}
  y_2=0\\
  y_0s_1-y_1s_0=0
\end{cases}
\]
i.e. a curve in of bidegree $(1,1)$ as claimed.  We can further
simplify things by writing $X$ as a hypersurface in $\FF_1\times
\PP^2$: the two equations defining $X$ (given above) reduce to the
single equation:
\[
s_0xy_0+s_1xy_1+x_2y_2=0 
\]
in $\FF^1 \times\PP^2$.

\subsection*{The quantum period:}
Corollary~\ref{cor:QL} yields:
\[
G_X(t) = e^{-t} \sum_{l=0}^\infty \sum_{m=l}^\infty \sum_{n=0}^\infty
t^{l+m+2n}
\frac{(m+n)!}
{(l!)^2 (m-l)!m! (n!)^3 }
\]
and regularizing gives:  
\[
\hG_X(t) = 1+4 t^2+6 t^3+60 t^4+180 t^5+1210 t^6+5460 t^7+30940 t^8+165480 t^9+ \cdots
\]

\subsection*{Minkowski period sequence:} \href{http://www.grdb.co.uk/search/period3?id=31&printlevel=2}{31}


\addtocounter{CustomSectionCounter}{1}
\section{The Fano Manifold $\MM{3}{25}$}
\label{sec:3-25}
\label{anchor:3--25}

\subsection*{Mori--Mukai name:} 3--25

\subsection*{Mori--Mukai construction:} The blow-up of $\PP^3$ with
centre two disjoint lines; equivalently\footnote{Note that Mori--Mukai
  use different weight conventions for projective bundles than we
  do.}, $\PP\bigl(\cO(1,0)\oplus \cO(0,1) \bigr)$ over $\PP^1\times
\PP^1$.

\subsection*{Our construction:} The toric variety $X$
with weight data:
\[
\begin{array}{rrrrrrl} 
  \multicolumn{1}{c}{s_0} & 
  \multicolumn{1}{c}{s_1} & 
  \multicolumn{1}{c}{t_2} & 
  \multicolumn{1}{c}{t_3} & 
  \multicolumn{1}{c}{x} & 
  \multicolumn{1}{c}{y} &  \\
  \cmidrule{1-6}
  1 & 1 & 0 & 0 & -1 & 0 & \hspace{1.5ex} L\\ 
  0 & 0 & 1 & 1 & 0 & -1 & \hspace{1.5ex} M\\ 
  0 & 0 & 0 & 0 & 1 & 1 & \hspace{1.5ex} N\\ 
\end{array}
\]
and $\Amp X = \langle L, M, N\rangle$.  

\subsection*{The two constructions coincide:} 
The morphism $X \to \PP^3$ that sends (contravariantly) the
homogeneous co-ordinate functions $[x_0,x_1,x_2,x_3]$ to
$[s_0x,s_1x,t_2y,t_3y]$ manifestly blows up the union of the line
$(x_0=x_1=0)$ and the line $(x_2=x_3=0)$.  These lines are disjoint.

\subsection*{The quantum period:}
Corollary~\ref{cor:toric_mirror} yields:
\[
G_X(t) = \sum_{l=0}^\infty \sum_{m=0}^\infty \sum_{n=\max(l,m)}^\infty
\frac{t^{l+m+2n}}
{(l!)^2 (m!)^2 (n-l)! (n-m)!}
\]
and regularizing gives:
\[
\hG_X(t) = 1+2 t^2+12 t^3+30 t^4+120 t^5+920 t^6+3360 t^7+16030 t^8+99120 t^9+ \cdots
\]

\subsection*{Minkowski period sequence:} \href{http://www.grdb.co.uk/search/period3?id=16&printlevel=2}{16}


\addtocounter{CustomSectionCounter}{1}

\section{The Fano Manifold $\MM{3}{26}$}
\label{anchor:3--26}

\subsection*{Mori--Mukai name:} 3--26

\subsection*{Mori--Mukai construction:} The blow-up of $\PP^3$ with
centre a disjoint union of a point and a line.

\subsection*{Our construction:} The toric variety $X$
with weight data:
\[
\begin{array}{rrrrrrl} 
  \multicolumn{1}{c}{s_0} & 
  \multicolumn{1}{c}{s_1} & 
  \multicolumn{1}{c}{t_2} & 
  \multicolumn{1}{c}{u_3} & 
  \multicolumn{1}{c}{x} & 
  \multicolumn{1}{c}{y} &  \\
  \cmidrule{1-6}
  1 & 1 & 0 & 1 & -1 & 0 & \hspace{1.5ex} L\\ 
  0 & 0 & 1 & 1 & 0 & -1 & \hspace{1.5ex} M\\ 
  0 & 0 & 0 & -1 & 1 & 1 & \hspace{1.5ex} N\\ 
\end{array}
\]
and $\Amp X = \langle L, M, N\rangle$. The secondary fan of $X$ is the
same as that of the toric variety in 3--20 (\S\ref{sec:3-20}) and it
is shown in Fig.~\ref{fig:3-20}.

\subsection*{The two constructions coincide:} The morphism to $\PP^3$
is given by the complete linear system $|N|$ on $X$; it sends
(contravariantly) the homogeneous co-ordinates $[x_0,x_1,x_2,x_3]$ to
$[s_0x,s_1x,t_2y,u_3xy]$.  The divisor $(x=0)\subset X$ contracts to
the point $[0:0:1:0]\in \PP^3$, and the divisor $(y=0)\subset X$ contracts
to the line $(x_2=x_3=0)\subset \PP^3$.
\subsection*{The quantum period:}
Corollary~\ref{cor:toric_mirror} yields:
\[
G_X(t) = \sum_{l=0}^\infty \sum_{m=0}^\infty \sum_{n = \max(l,m)}^{l+m}
\frac{t^{2l+m+n}}
{(l!)^2 m! (l+m-n)! (n-l)! (n-m)!}
\]
and regularizing gives:
\[
\hG_X(t) = 1+2 t^2+6 t^3+30 t^4+120 t^5+470 t^6+2520 t^7+10990
t^8+57120 t^9 + \cdots
\]

\subsection*{Minkowski period sequence:} \href{http://www.grdb.co.uk/search/period3?id=12&printlevel=2}{12}


\addtocounter{CustomSectionCounter}{1}

\section{The Fano Manifold $\MM{3}{27}$}
\label{anchor:3--27}

\subsection*{Mori--Mukai name:} 3--27

\subsection*{Mori--Mukai construction:} $\PP^1\times \PP^1 \times \PP^1$.

\subsection*{Our construction:} 
The toric variety $X$ with weight data:
\[
\begin{array}{rrrrrrl}
  1 & 1 & 0 & 0 & 0 & 0 & \hspace{1.5ex} L\\ 
  0 & 0 & 1 & 1 & 0 & 0 & \hspace{1.5ex} M\\ 
  0 & 0 & 0 & 0 & 1 & 1 & \hspace{1.5ex} N\\ 
\end{array}
\]
and $\Amp(X) = \langle L, M, N \rangle$.

\subsection*{The two constructions coincide:}
Obvious.

\subsection*{The quantum period:} 
Corollary~\ref{cor:toric_mirror} yields:
\[
G_X(t) = \sum_{l=0}^\infty \sum_{m=0}^\infty \sum_{n=0}^\infty 
\frac{t^{2l+2m+2n}}
{(l!)^2 (m!)^2 (n!)^2}
\]
and regularizing gives:
\[
\hG_X(t) = 1+6 t^2+90 t^4+1860 t^6+44730 t^8+1172556 t^{10} + \cdots
\]

\subsection*{Minkowski period sequence:} \href{http://www.grdb.co.uk/search/period3?id=45&printlevel=2}{45}


\addtocounter{CustomSectionCounter}{1}

\section{The Fano Manifold $\MM{3}{28}$}
\label{anchor:3--28}

\subsection*{Mori--Mukai name:} 3--28

\subsection*{Mori--Mukai construction:} $\PP^1\times \FF_1$.

\subsection*{Our construction:} The toric variety $X$ with weight
data:
\[
\begin{array}{rrrrrrl}
  1 & 1 & 0 & 0 & 0 & 0 & \hspace{1.5ex} L\\ 
  0 & 0 & 1 & 1 & -1 & 0 & \hspace{1.5ex} M\\ 
  0 & 0 & 0 & 0 & 1 & 1 & \hspace{1.5ex} N\\ 
\end{array}
\]
and $\Amp(X) = \langle L, M, N \rangle$.

\subsection*{The two constructions coincide:} Obvious.

\subsection*{The quantum period:}
Corollary~\ref{cor:toric_mirror} yields:
\[
G_X(t) = \sum_{l=0}^\infty \sum_{m=0}^\infty \sum_{n=m}^\infty
\frac{t^{2l+m+2n}}
{(l!)^2 (m!)^2 (n-m)!n!}
\]
and regularizing gives:
\[
\hG_X(t) = 1+4 t^2+6 t^3+36 t^4+180 t^5+490 t^6+4200 t^7+11620 t^8+89880 t^9 + \cdots
\]

\subsection*{Minkowski period sequence:} \href{http://www.grdb.co.uk/search/period3?id=28&printlevel=2}{28}


\addtocounter{CustomSectionCounter}{1}

\section{The Fano Manifold $\MM{3}{29}$}
\label{anchor:3--29}

\subsection*{Mori--Mukai name:} 3--29

\subsection*{Mori--Mukai construction:} The blow-up of $B_7$ (see
\S\ref{sec:B7}) with centre a line on the exceptional divisor $D\cong
\PP^2$ of the blow-up $B_7\to \PP^3$.

\subsection*{Our construction:} The toric variety $X$ with weight
data:
\[
\begin{array}{rrrrrrl} 
  \multicolumn{1}{c}{x_0} & 
  \multicolumn{1}{c}{s_1} & 
  \multicolumn{1}{c}{s_2} & 
  \multicolumn{1}{c}{t_3} & 
  \multicolumn{1}{c}{x} & 
  \multicolumn{1}{c}{y} &  \\
  \cmidrule{1-6}
  1 & 0 & 0 & -1 & 0 & 1 & \hspace{1.5ex} L\\ 
  0 & 1 & 1 & 0 & -2 & 1 & \hspace{1.5ex} M\\ 
  0 & 0 & 0 & 1 & 1 & -1 & \hspace{1.5ex} N\\ 
\end{array}
\]
and $\Amp X = \langle L, M, N\rangle$. The secondary fan of $X$ is
shown schematically in Fig.~\ref{fig:3-29}.

\begin{figure}[h!]
  \centering
  \resizebox{6cm}{!}{\input{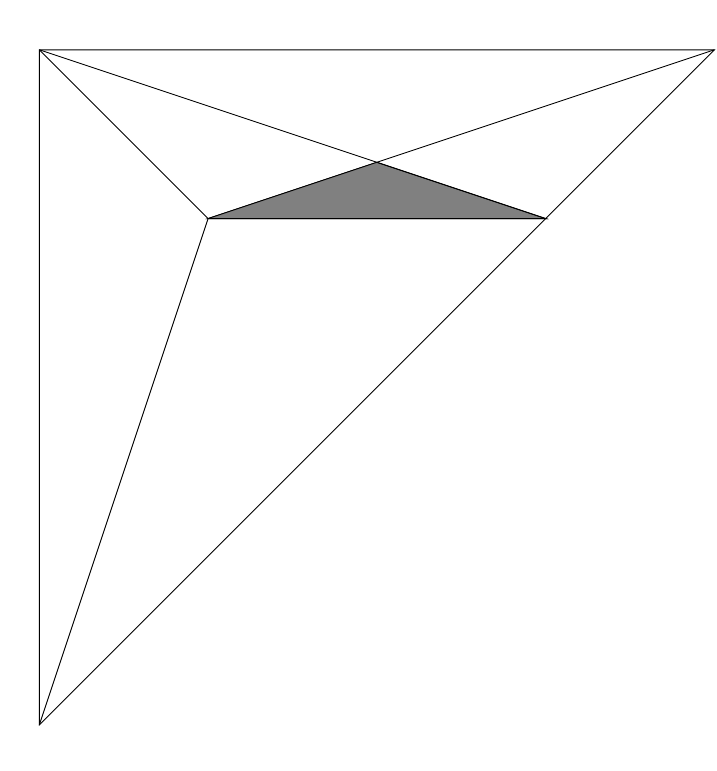_t}}
  \caption{The secondary fan for $X$ in 3--29}
  \label{fig:3-29}
\end{figure}

\subsection*{The two constructions coincide:} The morphism $X \to \PP^3$
sends (contravariantly) the homogeneous co-ordinate functions
$[x_0,x_1,x_2,x_3]$ to $[x_0,s_1xy,s_2xy,t_3xy^2]$.

\subsection*{The quantum period:}
Corollary~\ref{cor:toric_mirror} yields:
\[
G_X(t) = \sum_{l=0}^\infty \sum_{m=0}^\infty \sum_{n =\max(l,2m)}^{l+m} 
\frac{t^{l+m+n}}
{l! (m!)^2 (n-l)! (n-2m)!(l+m-n)!}
\]
and regularizing gives:
\[
\hG_X(t) = 1+2 t^2+30 t^4+60 t^5+380 t^6+840 t^7+5950 t^8+22680 t^9+ \cdots
\]

\subsection*{Minkowski period sequence:} \href{http://www.grdb.co.uk/search/period3?id=8&printlevel=2}{8}


\addtocounter{CustomSectionCounter}{1}

\section{The Fano Manifold $\MM{3}{30}$}
\label{anchor:3--30}

\subsection*{Mori--Mukai name:} 3--30

\subsection*{Mori--Mukai construction:} The blow-up of $B_7$ (see
\S\ref{sec:B7}) with centre the strict transform of a line passing
through the centre of the blow-up $B_7\to \PP^3$.

\subsection*{Our construction:} The toric variety $X$ with weight data:
\[
\begin{array}{rrrrrrl} 
  \multicolumn{1}{c}{t_0} & 
  \multicolumn{1}{c}{t_1} & 
  \multicolumn{1}{c}{x} & 
  \multicolumn{1}{c}{s_2} & 
  \multicolumn{1}{c}{y} & 
  \multicolumn{1}{c}{x_3} &  \\
  \cmidrule{1-6}
  1 & 1 & -1 & 0 & 0 & 0 & \hspace{1.5ex} L\\ 
  0 & 0 & 1 & 1 & -1 & 0 & \hspace{1.5ex} M\\ 
  0 & 0 & 0 & 0 & 1 & 1 & \hspace{1.5ex} N\\ 
\end{array}
\]

\subsection*{The two constructions coincide:} The morphism $X \to
\PP^3$ sends (contravariantly) the homogeneous co-ordinate functions
$[x_0,x_1,x_2,x_3]$ to $[t_0xy,t_1xy,s_2y,x_3]$.

\subsection*{The quantum period:}
Corollary~\ref{cor:toric_mirror} yields:
\[
G_X(t) = \sum_{l=0}^\infty \sum_{m=l}^\infty \sum_{n=m}^\infty
\frac{t^{l+m+2n}}
{(l!)^2 (m-l)! m! (n-m)!n!}
\]
and regularizing gives:
\[
\hG_X(t) = 1+2 t^2+6 t^3+30 t^4+60 t^5+470 t^6+1680 t^7+7630 t^8+34440
t^9 + \cdots
\]

\subsection*{Minkowski period sequence:} \href{http://www.grdb.co.uk/search/period3?id=11&printlevel=2}{11}


\addtocounter{CustomSectionCounter}{1}

\section{The Fano Manifold $\MM{3}{31}$}
\label{anchor:3--31}

\subsection*{Mori--Mukai name:} 3--31

\subsection*{Mori--Mukai construction:} The blow-up of the cone over a
nonsingular quadric surface in $\PP^3$ with centre the vertex;
equivalently, the $\PP^1$-bundle $\PP\bigl(\cO\oplus \cO(1,1)\bigr)$
over $\PP^1\times \PP^1$.

\subsection*{Our construction:} The toric variety $X$ with weight
data:
\[
\begin{array}{rrrrrrl} 
  \multicolumn{1}{c}{s_0} & 
  \multicolumn{1}{c}{s_1} & 
  \multicolumn{1}{c}{t_0} & 
  \multicolumn{1}{c}{t_1} & 
  \multicolumn{1}{c}{x} & 
  \multicolumn{1}{c}{y} &  \\
  \cmidrule{1-6}
  1 & 1 & 0 & 0 & -1 & 0 & \hspace{1.5ex} L\\ 
  0 & 0 & 1 & 1 & -1 & 0 & \hspace{1.5ex} M\\ 
  0 & 0 & 0 & 0 & 1 & 1 & \hspace{1.5ex} N\\ 
\end{array}
\]
and $\Amp(X) = \langle L, M, N \rangle$.

\subsection*{The two constructions coincide:} Obvious.

\subsection*{The quantum period:}
Corollary~\ref{cor:toric_mirror} yields:
\[
G_X(t) = \sum_{l=0}^\infty \sum_{m=0}^\infty \sum_{n=l+m}^\infty
\frac{t^{l+m+2n}}
{(l!)^2 (m!)^2 (n-l-m)!n!}
\]
and regularizing gives:
\[
\hG_X(t) = 1+2 t^2+12 t^3+6 t^4+120 t^5+560 t^6+840 t^7+10150 t^8+38640 t^9+ \cdots
\]

\subsection*{Minkowski period sequence:} \href{http://www.grdb.co.uk/search/period3?id=14&printlevel=2}{14}


\addtocounter{CustomSectionCounter}{1}

\section{The Fano Manifold $\MM{4}{1}$}
\label{anchor:4--1}

\subsection*{Mori--Mukai name:} 4--1

\subsection*{Mori--Mukai construction:} A divisor of multidegree
$(1,1,1,1)$ in $\PP^1 \times \PP^1 \times \PP^1 \times \PP^1$.

\subsection*{Our construction:} A member $X$ of $|A+B+C+D|$ in the toric
variety $F$ with weight data:
\[ 
\begin{array}{rrrrrrrrl} 
  1 & 1 & 0 & 0 & 0 & 0 & 0 & 0 & \hspace{1.5ex} A\\ 
  0 & 0 & 1 & 1 & 0 & 0 & 0 & 0 & \hspace{1.5ex} B\\ 
  0 & 0 & 0 & 0 & 1 & 1 & 0 & 0 & \hspace{1.5ex} C\\ 
  0 & 0 & 0 & 0 & 0 & 0 & 1 & 1 & \hspace{1.5ex} D\\ 
\end{array}
\]
and $\Amp(X) = \langle A, B, C, D \rangle$.

\subsection*{The two constructions coincide:} Obvious.

\subsection*{The quantum period:}
Corollary~\ref{cor:QL} yields:
\[
G_X(t) = e^{-4t} \sum_{a=0}^\infty \sum_{b=0}^\infty \sum_{c=0}^\infty \sum_{d=0}^\infty 
t^{a+b+c+d}
\frac{(a+b+c+d)!}
{(a!)^2 (b!)^2 (c!)^2 (d!)^2}
\]
and regularizing gives:
\[
\hG_X(t) = 1+12 t^2+48 t^3+540 t^4+4320 t^5+42240 t^6+403200
t^7+4038300 t^8+40958400 t^9 + \cdots
\]

\subsection*{Minkowski period sequence:} \href{http://www.grdb.co.uk/search/period3?id=111&printlevel=2}{111}


\addtocounter{CustomSectionCounter}{1}
\section{The Fano Manifold $\MM{4}{2}$}
\label{sec:4-2}
\label{anchor:4--2}

\subsection*{Mori--Mukai name:} 4--2 \footnote{Mori and Mukai
  initially missed this variety
  \citelist{\cite{MM:Manuscripta}\cite{MM:fanoconf}}. We put it where
  it belongs in their scheme.}

\subsection*{Mori--Mukai construction:} The blow-up of $\PP^1\times
\PP^1 \times \PP^1$ with centre a curve  of tridegree $(1,1,3)$.  

\subsection*{Our construction:} A member $X$ of $|B+C+D|$ in the toric
variety $F$ with weight data:
\[ 
\begin{array}{rrrrrrrrl} 
  \multicolumn{1}{c}{x_0} & 
  \multicolumn{1}{c}{x_1} & 
  \multicolumn{1}{c}{y_0} & 
  \multicolumn{1}{c}{y_1} & 
  \multicolumn{1}{c}{z_0} &  
  \multicolumn{1}{c}{z_1} &  
  \multicolumn{1}{c}{u} &  
  \multicolumn{1}{c}{v} &  \\
  \cmidrule{1-8}
1 & 1 &  0  &  0  & 0 &  0 & -1 &  0 &  \hspace{1.5ex} A \\ 
0 & 0 &  1  &  1  & 0 &  0 & -1 &  0 &  \hspace{1.5ex} B \\ 
0 & 0 &  0  &  0  & 1 &  1 &  0  &  0 & \hspace{1.5ex} C \\
0 & 0 &  0  &  0  & 0 &  0 &  1  &  1 & \hspace{1.5ex} D 
\end{array}
\]
and $\Amp F = \langle A, B, C, D \rangle$.  We have:
\begin{itemize}
\item $-K_F=A+B+2C+2D$ is ample, that is $F$ is a Fano variety;
\item $X\sim B+C+D$ is nef;
\item $-(K_F+X)\sim A+C+D$ is nef and big but not ample.
\end{itemize}

\subsection*{The two constructions coincide:} The curve is a complete
intersection of type $(1,2,1)\cap (0,1,1)$ in $\PP^1 \times \PP^1
\times \PP^1$, so $X$ is constructed by
applying Lemma~\ref{lem:blowups} with 
\begin{align*}
  & G = \PP^1 \times \PP^1 \times \PP^1 \\
  & V = \cO_{\PP^1 \times \PP^1 \times \PP^1}(-1,-1,0) \oplus
  \cO_{\PP^1 \times \PP^1 \times \PP^1} \\
  & W = \cO_{\PP^1 \times \PP^1 \times \PP^1}(0,1,1)
\end{align*}
and $f\colon V \to W$ given by the matrix $
\begin{pmatrix}
  A & B
\end{pmatrix}
$ where $A \in \Gamma\big(\PP^1 \times \PP^1 \times \PP^1; \cO_{\PP^1
  \times \PP^1 \times \PP^1}(1,2,1)\big)$ and $B \in \Gamma\big(\PP^1
\times \PP^1 \times \PP^1; \cO_{\PP^1 \times \PP^1 \times
  \PP^1}(0,1,1)\big)$ are the sections that define the centre of the
blow-up.

\subsection*{The quantum period:}
Let $p_1$, $p_2$, $p_3$, $p_4 \in H^\bullet(F;\ZZ)$ denote the first
Chern classes of $A$, $B$, $C$, and $D$ respectively; these classes
form a basis for $H^2(F;\ZZ)$.  Write $\tau \in H^2(F;\QQ)$ as $\tau =
\tau_1 p_1 + \tau_2 p_2+ \tau_3 p_3+ \tau_4 p_4$ and identify the
group ring $\QQ[H_2(F;\ZZ)]$ with the polynomial ring
$\QQ[Q_1,Q_2,Q_3,Q_4]$ via the $\QQ$-linear map that sends the element
$Q^\beta \in \QQ[H_2(F;\ZZ)]$ to $Q_1^{\langle \beta,p_1\rangle}
Q_2^{\langle \beta,p_2\rangle} Q_3^{\langle \beta,p_3\rangle}
Q_4^{\langle \beta,p_4\rangle}$.  Theorem~\ref{thm:toric_mirror}
gives:
\begin{multline*}
  J_F(\tau) = e^{\tau/z} 
  \sum_{a,b,c,d \geq 0}
  \frac{
    Q_1^a Q_2^b Q_3^c Q_4^d 
    e^{a \tau_1} e^{b \tau_2} e^{c \tau_3} e^{d \tau_4}
  }
  {
    \prod_{k=1}^a (p_1 + k z)^2
    \prod_{k=1}^b (p_2 + k z)^2
    \prod_{k=1}^c (p_3 + k z)^2
    \prod_{k=1}^d (p_4 + k z)
  } \\
  \times
  \frac{\prod_{k = -\infty}^0 (p_4-p_1-p_2 + k z)}
  {\prod_{k = -\infty}^{d-a-b} (p_4-p_1-p_2 + k z)}
\end{multline*}
and hence:
\begin{multline*}
    I_{\be,E}(\tau) = e^{\tau/z} 
  \sum_{a,b,c,d \geq 0}
  \frac{
    Q_1^a Q_2^b Q_3^c Q_4^d 
    e^{a \tau_1} e^{b \tau_2} e^{c \tau_3} e^{d \tau_4}
    \prod_{k=1}^{b+c+d} (\lambda + p_2+p_3+p_4 + k z)
  }
  {
    \prod_{k=1}^a (p_1 + k z)^2
    \prod_{k=1}^b (p_2 + k z)^2
    \prod_{k=1}^c (p_3 + k z)^2
    \prod_{k=1}^d (p_4 + k z)
  } \\
  \times
  \frac{\prod_{k = -\infty}^0 (p_4-p_1-p_2 + k z)}
  {\prod_{k = -\infty}^{d-a-b} (p_4-p_1-p_2 + k z)}
\end{multline*}
Note that, much as in Example~\ref{ex:ql_with_mirror_map}, we have:
\[
I_{\be,E}(0) = 1 + \Big((Q_3 + Q_4 + 2 Q_3Q_4)1 + (p_4 - p_1 - p_2)
\log(1+Q_2)\Big) z^{-1} + O(z^{-2})
\]
Arguing exactly as in Example~\ref{ex:ql_with_mirror_map}, we find
that:
\[
J_{\be,E}\big((p_4-p_2-p_1)\log(1+Q_2)\big) =
e^{-(Q_3+Q_4+2 Q_2 Q_4)/z} I_{\be,E}(0)
\]
and:
\begin{multline*}
  J_{\be,E}\big((p_3-p_2-p_1)\log(1+Q_2)\big)
  =  \\
  e^{(p_4-p_2-p_1)\log(1+Q_2)/z}
  \Big[ J_{\be,E}(0)\Big]_{Q_1 = {Q_1 \over 1+Q_2}, Q_2 = {Q_2 \over
      1+Q_2}, Q_3=Q_3, Q_4 = Q_4(1+Q_2)}
\end{multline*}

Hence, using the inverse mirror map:
\begin{align*}
  Q_1 = {Q_1 \over 1-Q_2} && 
  Q_2 = {Q_2 \over 1-Q_2} && 
  Q_3 = Q_3 &&
  Q_4 = Q_4(1-Q_2)
\end{align*}
we have that $J_{\be,E}(0)$ is equal to:
\begin{multline*}
  \Big[e^{-(p_4-p_2-p_1)\log(1+Q_2)/z}
  J_{\be,E}\big((p_4-p_2-p_1)\log(1+Q_2)\big)\Big]_{Q_1 = {Q_1 \over 1-Q_2}, Q_2 = {Q_2 \over
      1-Q_2}, Q_3 = Q_3, Q_4=Q_4(1-Q_2)}  \\
  = 
  e^{(p_4-p_2-p_1)\log(1-Q_2)/z}
  \Big[e^{-(Q_3+Q_4+2 Q_2 Q_4)/z} I_{\be,E}(0)\Big]_{Q_1 = {Q_1 \over 1-Q_2}, Q_2 = {Q_2 \over
      1-Q_2}, Q_3 = Q_3, Q_4=Q_4(1-Q_2)} 
\end{multline*}
Taking the non-equivariant limit yields:
\begin{multline*}
  J_{Y,X}(0) = e^{(p_4-p_2-p_1)\log(1-Q_2)/z} e^{-(Q_3+Q_4+Q_2 Q_4)} \times \\
  \sum_{a,b,c,d \geq 0}
  \frac{
    Q_1^a Q_2^b Q_3^c Q_4^d (1-Q_2)^{d-a-b}
    \prod_{k=1}^{b+c+d} (p_2+p_3+p_4 + k z)
  }
  {
    \prod_{k=1}^a (p_1 + k z)^2
    \prod_{k=1}^b (p_2 + k z)^2
    \prod_{k=1}^c (p_3 + k z)^2
    \prod_{k=1}^d (p_4 + k z)
  } \\
  \times
  \frac{\prod_{k = -\infty}^0 (p_4-p_1-p_2 + k z)}
  {\prod_{k = -\infty}^{d-a-b} (p_4-p_1-p_2 + k z)}
\end{multline*}

We saw in Example~\ref{ex:ql_with_mirror_map} how to obtain the
quantum period $G_X$ from $J_{Y,X}(0)$: we extract the component along
the unit class $1 \in H^\bullet(Y;\QQ)$, set $z=1$, and set $Q^\beta =
t^{\langle \beta, {-K_X}\rangle}$ (i.e.~set $Q_1 = t$, $Q_2 = 1$, $Q_3
= t$, and $Q_4 =t$).  This yields:
\[
G_X(t) = e^{-3t} \sum_{a=0}^\infty \sum_{b = 0}^\infty \sum_{c = 0}^\infty 
t^{2a+b+c} \frac{(a+2b+c)!}{(a!)^2 (b!)^2 (c!)^2 (a+b)!}
\]
Regularizing gives:
\[
\hG_X(t) = 1+12 t^2+42 t^3+468 t^4+3360 t^5+31350 t^6+275940 t^7+2599380 t^8+24566640 t^9 + \cdots
\]

\subsection*{Minkowski period sequence:} \href{http://www.grdb.co.uk/search/period3?id=110&printlevel=2}{110}


\addtocounter{CustomSectionCounter}{1}

\section{The Fano Manifold $\MM{4}{3}$}
\label{sec:4-3}
\label{anchor:4--3}

\subsection*{Mori--Mukai name:} 4--3

\subsection*{Mori--Mukai construction:} The blow-up of the cone $Y$ over a
smooth quadric surface $S$ in $\PP^3$ with centre the disjoint union of the vertex and an elliptic curve on $S$.

\subsection*{Our construction:} A member $X$ of $|2N|$ in the toric
variety with weight data: 
\[ 
\begin{array}{rrrrrrrl}  
  \multicolumn{1}{c}{s_0} & 
  \multicolumn{1}{c}{s_1} & 
  \multicolumn{1}{c}{t_0} & 
  \multicolumn{1}{c}{t_1} & 
  \multicolumn{1}{c}{x} & 
  \multicolumn{1}{c}{y_0} &  
  \multicolumn{1}{c}{y_1} &  \\
  \cmidrule{1-7}
  1 & 1 & 0 & 0 & -1 & 0  & 0 &  \hspace{1.5ex} L\\ 
  0 & 0 & 1 & 1 & -1 & 0  & 0 &  \hspace{1.5ex} M\\ 
  0 & 0 & 0 & 0 & 1   & 1  & 1 & \hspace{1.5ex} N\\ 
\end{array}
\]
and $\Amp F =\langle L,M,N \rangle$. The toric variety $F$ is the same
as for 3--3 (\S\ref{sec:3-3}) and the secondary fan for $F$ is shown
in Fig.~\ref{fig:3-2}.

We have that:
\begin{itemize}
\item $-K_F=L+M+3N$ is ample, so $F$ is a Fano variety;
\item $X\sim 2N$ is nef;
\item $-(K_F+X)\sim L+M+N$ is ample. 
\end{itemize}

\subsection*{The two constructions coincide:} The variety $X$ is cut
out by:
\[
y_0 y_1 +x^2 A_{2,2} (s_0,s_1;t_0,t_1)=0
\]
where $A_{2,2}$ is a generic bihomogeneous polynomial of degrees~$2$
in $s_0$,~$s_1$ and $2$ in $t_0$,~$t_1$.  Note the obvious morphism
$\pi \colon F\to \PP^1_{s_0,s_1}\times \PP^1_{t_0,t_1}$, and the
morphism $f\colon F\to G$ to the double cone $G\subset \PP^5$ over
$\PP^1\times \PP^1$ given (contravariantly) by
$[y_0,y_1,y_2,y_3,y_4,y_5]\mapsto [y_0,y_1,s_0t_0x,s_0t_1x, s_1t_0
x,s_1t_1x]$. The exceptional set of $f$ is the divisor $E=(x=0)=
\PP^1_{s_0,s_1}\times \PP^1_{t_0,t_1} \times \PP^1_{y_0,y_1}$ that
maps to $\PP^1_{y_0,y_1}\subset G$.  Note that $E\cap X$ is \emph{two}
copies of $\PP^1_{s_0,s_1}\times \PP^1_{t_0,t_1}$, one above
$[y_0:y_1]=[1:0]$ and one above $[y_0:y_1]=[0:1]$. This explains how $X$ has
rank~$4$ when $F$ has rank~$3$.

To see that our construction coincides with the construction of
Mori--Mukai, set $W=f(X)$, note that:
\[
W=\bigl(y_0 y_1 +\tilde{A}_2 (y_2,y_3,y_4,y_5)=0\bigr)\subset G
\]
for some degree~$2$ homogeneous polynomial $\tilde{A}_2$, and note
that the morphism $f\colon X \to W$ contracts one copy of
$\PP^1_{s_0,s_1}\times \PP^1_{t_0,t_1}$, with normal bundle
$\cO(-1,-1)$, to each of the two singular points $W\cap
\PP^1_{y_0,y_1}$. Consider next the rational projection $g\colon G
\dasharrow \PP^4_{y_1,\dots ,y_5}$ which omits the homogeneous
co-ordinate $y_0$. It is clear that $g|_{W}\colon W \dasharrow \PP^4$
is birational onto its image $Y$ (the cone over $\PP^1\times \PP^1$),
that it extends to a morphism after blowing up the singular point
$[1:0:0:0:0:0]\in W$, and that this morphism contracts the surface
$\bigl(y_1=\tilde{A}_2(y_2,y_3,y_4,y_5)=0\bigr)\subset W$ to the
elliptic curve $\bigl(y_1=\tilde{A}_2(y_2,y_3,y_4,y_5)=0\bigr)\subset
Y$.

\subsection*{The quantum period:} 
Corollary~\ref{cor:QL} yields:
\[
G_X(t) = e^{-2t} \sum_{l=0}^\infty \sum_{m=0}^\infty \sum_{n=l+m}^\infty
t^{l+m+n}
\frac{(2n)!}
{(l!)^2 (m!)^2 (n-l-m)! (n!)^2}
\]
and regularizing gives:
\[
\hG_X(t) = 1 + 10 t^2 + 24 t^3 + 318 t^4 + 1680 t^5 + 16300 t^6 + 115920 t^7 + 1040830 t^8 + 8403360 t^9 + \cdots
\]

\subsection*{Minkowski period sequence:} \href{http://www.grdb.co.uk/search/period3?id=88&printlevel=2}{88}


\addtocounter{CustomSectionCounter}{1}

\section{The Fano Manifold $\MM{4}{4}$}
\label{anchor:4--4}

\subsection*{Mori--Mukai name:} 4--4

\subsection*{Mori--Mukai construction:} The blow-up of $\PP^1\times
\PP^1\times \PP^1$ with centre a curve $\Gamma$ of tridegree
$(1,1,2)$.

\subsection*{Our construction:} A member $X$ of $|A+B+D|$ in the toric
variety $F$ with weight data:
\[ 
\begin{array}{rrrrrrrrl} 
  \multicolumn{1}{c}{x_0} & 
  \multicolumn{1}{c}{x_1} & 
  \multicolumn{1}{c}{y_0} & 
  \multicolumn{1}{c}{y_1} & 
  \multicolumn{1}{c}{z_0} &  
  \multicolumn{1}{c}{z_1} &  
  \multicolumn{1}{c}{u} &  
  \multicolumn{1}{c}{v} &  \\
  \cmidrule{1-8}
1 & 1 &  0  &  0  & 0 & 0 & 0 &  0 &  \hspace{1.5ex} A \\ 
0 & 0 &  1  &  1  & 0 & 0 & 0 &  0 &  \hspace{1.5ex} B \\ 
0 & 0 &  0  &  0  & 1 & 1 &  -1  &  0 & \hspace{1.5ex} C \\
0 & 0 &  0  &  0  & 0 & 0 &  1  &  1 & \hspace{1.5ex} D 
\end{array}
\]
and $\Amp F = \langle A,B,C,D\rangle$.  We have:
\begin{itemize}
\item $-K_F=2A+2B+C+2D$ is ample, that is $F$ is a Fano variety;
\item $X\sim A+B+D$ is nef;
\item $-(K_F+X)\sim A+B+C+D$ is ample.
\end{itemize}

\subsection*{The two constructions coincide:} We can take $\Gamma
\subset \PP^1_{x_0,x_1}\times \PP^1_{y_0,y_1}\times \PP^1_{z_0,z_1}$
to be parameterised as
\[
[x_0: x_1:y_0: y_1:z_0: z_1]\mapsto [s_0:s_1:s_0:s_1:s_0^2:s_1^2] 
\]
thus $\Gamma$ is the complete intersection in $\PP^1\times \PP^1\times
\PP^1$ given by the equations
$x_0y_1-x_1y_0=z_1x_0y_0-z_0x_1y_1=0$.  Now apply
Lemma~\ref{lem:blowups} with:
\begin{align*}
  & G = \PP^1 \times \PP^1 \times \PP^1 \\
  & V = \cO_{\PP^1 \times \PP^1 \times \PP^1}(0,0,-1) \oplus
  \cO_{\PP^1 \times \PP^1 \times \PP^1} \\
  & W = \cO_{\PP^1 \times \PP^1 \times \PP^1}(1,1,0)
\end{align*}
and $f\colon V \to W$ given by the matrix $
\begin{pmatrix}
  z_1x_0y_0-z_0x_1y_1 & x_0y_1-x_1y_0
\end{pmatrix}
$.

\subsection*{The quantum period:}
Corollary~\ref{cor:QL} yields:
\[
G_X(t) = e^{-3t} \sum_{a=0}^\infty \sum_{b=0}^\infty \sum_{c=0}^\infty \sum_{d=c}^\infty 
t^{a+b+c+d}
\frac{(a+b+d)!}
{(a!)^2 (b!)^2 (c!)^2 (d-c)! d!}
\]
and regularizing gives:
\[
\hG_X(t) = 1+8 t^2+24 t^3+216 t^4+1320 t^5+10160 t^6+74760 t^7+584920 t^8+4598160 t^9 + \cdots
\]

\subsection*{Minkowski period sequence:} \href{http://www.grdb.co.uk/search/period3?id=83&printlevel=2}{83}


\addtocounter{CustomSectionCounter}{1}

\section{The Fano Manifold $\MM{4}{5}$}
\label{anchor:4--5}

\subsection*{Mori--Mukai name:} 4--5

\subsection*{Mori--Mukai construction:} The blow-up of $Y_{3-19}$,
which is the blow-up of a quadric \mbox{3-fold} $Q\subset \PP^4$ with
centre two points $P_1$ and $P_2$ on it which are not collinear (see
\S\ref{sec:3-19}), with centre the strict transform of a conic
containing $P_1$ and $P_2$.

\subsection*{Our construction:} A member $X$ of 
$|2N|$ in the toric variety $F$ with weight data:
\[ 
\begin{array}{rrrrrrrrl} 
  \multicolumn{1}{c}{s_0} & 
  \multicolumn{1}{c}{s_1} & 
  \multicolumn{1}{c}{x} & 
  \multicolumn{1}{c}{x_2} & 
  \multicolumn{1}{c}{y} &  
  \multicolumn{1}{c}{x_3} &  
  \multicolumn{1}{c}{y_4} &  \\
  \cmidrule{1-7}
1 & 1 & -1  &  0  & 0 &  0  &  0 &  \hspace{1.5ex} L \\ 
0 & 0 &  1  &  1  &-1 & 0  &  0 &  \hspace{1.5ex} M \\ 
0 & 0 &  0  &  0  & 1 &  1 &  1 &   \hspace{1.5ex} N
\end{array}
\]
and $\Amp F = \langle L,M,N\rangle$. We have:
\begin{itemize}
\item $-K_F=L+M+3N$ is ample, that is $F$ is a Fano variety;
\item $X\sim 2N$ is nef;
\item $-(K_F+X)\sim L+M+N$ is ample.
\end{itemize}
\subsection*{The two constructions coincide:} The complete linear
system $|N|$ defines a morphism $F\to \PP^4$ which sends
(contravariantly) the homogeneous co-ordinate functions
$[x_0,x_1,x_2,x_3,x_4]$ to:
\[
[s_0xy,s_1xy, x_2y,x_3,x_4]
\]
This morphism identifies $F$ with the blow-up of the line
$(x_2=x_3=x_4=0)\subset \PP^4$ followed by the blow up of the proper
transform of the plane $(x_3=x_4=0)$. The variety $X$ is the strict
transform of a general quadric in $\PP^4$: in other words, $X$ is a
general member of the linear system $|2N|$ on $F$.

\subsection*{Remark:} Note that $X$ has rank~$4$ even though the
ambient space $F$ has rank~$3$; there is no contradiction here because
$2N$ is not ample on $F$.

\subsection*{The quantum period:}
Corollary~\ref{cor:QL} yields:
\[
G_X(t) = e^{-2t} \sum_{l=0}^\infty \sum_{m=l}^\infty \sum_{n=m}^\infty
t^{l+m+n}
\frac{(2n)!}
{(l!)^2 (m-l)!m!(n-m)!(n!)^2}
\]
and regularizing gives:
\[
\hG_X(t) = 1+6 t^2+24 t^3+138 t^4+960 t^5+6180 t^6+43680 t^7+311850 t^8+2274720 t^9 + \cdots
\]

\subsection*{Minkowski period sequence:} \href{http://www.grdb.co.uk/search/period3?id=68&printlevel=2}{68}


\addtocounter{CustomSectionCounter}{1}

\section{The Fano Manifold $\MM{4}{6}$}
\label{anchor:4--6}

\subsection*{Mori--Mukai name:} 4--6 

\subsection*{Mori--Mukai construction:} The blow-up of $\PP^2\times
\PP^1$ with centre two disjoint curves, one of bidegree $(1,2)$ and the other of bidegree $(0,1)$.
 
\subsection*{Our construction:} A member $X$ of $|C+D|$ in the toric
variety with weight data: 
\[ 
\begin{array}{rrrrrrrrl}  
  \multicolumn{1}{c}{s_0} & 
  \multicolumn{1}{c}{s_1} & 
  \multicolumn{1}{c}{t_0} &
   \multicolumn{1}{c}{t_1} &
  \multicolumn{1}{c}{x} & 
  \multicolumn{1}{c}{x_2} &  
  \multicolumn{1}{c}{u} &  
  \multicolumn{1}{c}{v} &  \\
  \cmidrule{1-8}
  1 & 1 & 0 & 0 & -1 & 0  & -1 & 0 & \hspace{1.5ex} A\\
  0 & 0 & 1 & 1 &  0 & 0  & -1 & 0 & \hspace{1.5ex} B\\ 
  0 & 0 & 0 & 0 & 1  & 1 &  0 &  0  & \hspace{1.5ex} C\\ 
  0 & 0 & 0 &  0 & 0 & 0   &  1 & 1 & \hspace{1.5ex} D\\ 
\end{array}
\]
and $\Amp F =\langle A,B, C,D \rangle$. 

We have
\begin{itemize}
\item $-K_F=B+2C+2D$ is nef and big but not ample;
\item $X\sim C+D$ is nef and big but not ample;
\item $-(K_F+X)\sim B+C+D$ is nef and big but not ample. 
\end{itemize}

\subsection*{The two constructions coincide:} The variety $X$ is cut
out by:
\[
vx_2 + u x A_{2,1} (s_0,s_1;t_0,t_1)=0
\]
Note the obvious morphism $\pi \colon F \to G$ with fibre $\PP^1_{u,v}$, where $G$ is the
toric variety with weight data:
\[ 
\begin{array}{rrrrrrl}  
  \multicolumn{1}{c}{s_0} & 
  \multicolumn{1}{c}{s_1} & 
  \multicolumn{1}{c}{t_0} &
   \multicolumn{1}{c}{t_1} &
  \multicolumn{1}{c}{x} & 
  \multicolumn{1}{c}{x_2} \\
  \cmidrule{1-6}
  1 & 1 & 0 & 0 & -1 & 0  &  \hspace{1.5ex} A\\
  0 & 0 & 1 & 1 &  0 & 0  &  \hspace{1.5ex} B\\ 
  0 & 0 & 0 & 0 & 1  & 1 & \hspace{1.5ex} C\\ 
\end{array}
\]
and $\Amp G =\langle A,B, C \rangle$.  The birational morphism $G\to
\PP^2_{x_0,x_1,x_2}\times\PP^1_{t_0,t_1}$ given (contravariantly) by
$[x_0,x_1,x_2,t_0,t_1]\mapsto [s_0x,s_1x,x_2,t_0,t_1]$ identifies $G$
with the blow-up of the curve $\{[0:0:1]\}\times \PP^1\subset
\PP^2\times \PP^1$; this curve has bidegree $(0,1)$. The equation
defining $X$ has degree~$1$ in $\PP^1_{u,v}$: it follows that the
morphism $\pi|_{X} \colon X \to G$ is birational and blows up the
locus\footnote{Note that, with our choice of stability condition for
  $F$, $(x_2=x=0)\subset \CC^8$ is part of the unstable locus.}
$(x_2=A_{2,1}(s_0,s_1;t_0,t_1)=0)\subset G$.

\subsection*{The quantum period:}
Let $p_1$, $p_2, p_3, p_4 \in H^\bullet(F;\ZZ)$ denote the first Chern
classes of $A$, $B$, $C$, and $D$ respectively; these classes form a
basis for $H^2(F;\ZZ)$.  Write $\tau \in H^2(F;\QQ)$ as $\tau = \tau_1
p_1 + \tau_2 p_2+ \tau_3 p_3 + \tau_4 p_4$ and identify the group ring
$\QQ[H_2(F;\ZZ)]$ with the polynomial ring $\QQ[Q_1,Q_2,Q_3,Q_4]$ via the
$\QQ$-linear map that sends the element $Q^\beta \in \QQ[H_2(F;\ZZ)]$
to $Q_1^{\langle \beta,p_1\rangle} Q_2^{\langle \beta,p_2\rangle}
Q_3^{\langle \beta,p_3\rangle} Q_4^{\langle \beta,p_4\rangle}$.  We have:
\begin{multline*}
  I_F(\tau)  = e^{\tau/z} 
  \sum_{a,b,c,d \geq 0}
  \frac{
    Q_1^a Q_2^b Q_3^c Q_4^d
    e^{a \tau_1} e^{b \tau_2} e^{c \tau_3} e^{d \tau_4}
  }
  {
    \prod_{k=1}^a (p_1 + k z)^2
    \prod_{k=1}^b (p_2 + k z)^2
    \prod_{k=1}^c (p_3 + k z)
    \prod_{k=1}^d (p_4 + k z)
  }
  \\
  \frac
  {
    \prod_{k = -\infty}^0 (p_3-p_1 + k z)
  }
  {
    \prod_{k = -\infty}^{c-a} (p_3-p_1 + k z)
  }
  \frac
  {
    \prod_{k = -\infty}^0 (p_4-p_1-p_2 + k z)
  }
  {
    \prod_{k = -\infty}^{d-a-b} (p_4-p_1-p_2 + k z)
  }
\end{multline*}
Since:
\[
  I_F(\tau)  = 1 + \tau z^{-1} + O(z^{-2})
\]
Theorem~\ref{thm:toric_mirror} gives:
\[
J_F(\tau) = I_F(\tau)
\]
We now proceed exactly as in the case of 3--1
(\S\ref{sec:restriction_of_nef_not_restriction_of_ample}), obtaining:
\[
G_X(t) = 
e^{-2 t} 
\sum_{a=0}^\infty \sum_{b=0}^\infty \sum_{c=a}^\infty \sum_{d=a+b}^\infty 
t^{b+c+d}
\frac
{(c+d)!}
{(a!)^2 (b!)^2 c! d! (c-a)! (d-a-b)!}
\]
Regularizing gives:
\[
\hG_X(t) = 1+8 t^2+18 t^3+192 t^4+960 t^5+7550 t^6+49980 t^7+374080 t^8+2741760 t^9 + \cdots
\]

\subsection*{Minkowski period sequence:} \href{http://www.grdb.co.uk/search/period3?id=81&printlevel=2}{81}


\addtocounter{CustomSectionCounter}{1}

\section{The Fano Manifold $\MM{4}{7}$}
\label{anchor:4--7}

\subsection*{Mori--Mukai name:} 4--7

\subsection*{Mori--Mukai construction:} the blow-up of $\PP^1\times
\PP^1 \times \PP^1$ with centre the curve of tridegree $(1,1,1)$.

\subsection*{Our construction:} A codimension-2 complete intersection
$X$ of type $D \cap D$ in the toric variety $F$ with weight data:
\[ 
\begin{array}{rrrrrrrrrl} 
  \multicolumn{1}{c}{x_0} & 
  \multicolumn{1}{c}{x_1} & 
  \multicolumn{1}{c}{y_0} & 
  \multicolumn{1}{c}{y_1} & 
  \multicolumn{1}{c}{z_0} &  
  \multicolumn{1}{c}{z_1} &  
  \multicolumn{1}{c}{u_0} &  
  \multicolumn{1}{c}{u_1} &  
  \multicolumn{1}{c}{u_2} &  \\
  \cmidrule{1-9}
1 & 1 & 0  &  0  & 0 & 0 & -1 &  0 & 0 &  \hspace{1.5ex} A \\ 
0 & 0 &  1  &  1  & 0 & 0 & 0 &  -1 &  0 &\hspace{1.5ex} B \\ 
0 & 0 &  0  &  0  & 1 & 1 &  0  &  0 & -1 & \hspace{1.5ex} C \\
0 & 0 &  0  &  0  & 0 & 0 &  1  &  1 & 1 & \hspace{1.5ex} D 
\end{array}
\]
and $\Amp F =\langle A, B, C, D \rangle$.  We have:
\begin{itemize}
\item $-K_F=A+B+C+3D$ is ample, that is $F$ is a Fano variety;
\item $X$ is complete intersection of two nef divisors on $F$.
\item $-(K_F+\Lambda)\sim A+B+C+D$ is ample.
\end{itemize}

\subsection*{The two constructions coincide:} Without loss of
generality, the curve to be blown up is defined in
$\PP^1_{x_0,x_1}\times \PP^1_{y_0,y_1}\times \PP^1_{z_0,z_1}$ by the
condition:
\[
\rk
\begin{pmatrix}
  x_0 & y_0 & z_0 \\  x_1 & y_1 & z_1 \\
\end{pmatrix}
<2 
\]
Now apply Lemma~\ref{lem:blowups} with:
\begin{align*}
  & G = \PP^1 \times \PP^1 \times \PP^1 \\
  & V = \cO_{\PP^1 \times \PP^1 \times \PP^1}(-1,0,0) \oplus 
  \cO_{\PP^1 \times \PP^1 \times \PP^1}(0,-1,0) \oplus 
  \cO_{\PP^1 \times \PP^1 \times \PP^1}(0,0,-1) \\
  & W = \cO_{\PP^1 \times \PP^1 \times \PP^1}\oplus
  \cO_{\PP^1 \times \PP^1 \times \PP^1}
\end{align*}
and the map $f\colon V \to W$ given by the matrix:
\[
\begin{pmatrix}
  x_0 & y_0 & z_0 \\  x_1 & y_1 & z_1 \\
\end{pmatrix}
\]

\subsection*{The quantum period:}
Corollary~\ref{cor:QL} yields:
\[
G_X(t) = e^{-t} \sum_{a=0}^\infty \sum_{b=0}^\infty \sum_{c=0}^\infty \sum_{d=\max(a,b,c)}^\infty 
t^{a+b+c+d}
\frac{(d!)^2}
{(a!)^2 (b!)^2 (c!)^2 (d-a)! (d-b)! (d-c)!}
\]
and regularizing gives:
\[
\hG_X(t) = 1+6 t^2+18 t^3+114 t^4+720 t^5+4290 t^6+28980 t^7+193410 t^8+1320480 t^9+ \cdots
\]

\subsection*{Minkowski period sequence:} \href{http://www.grdb.co.uk/search/period3?id=65&printlevel=2}{65}


\addtocounter{CustomSectionCounter}{1}

\section{The Fano Manifold $\MM{4}{8}$}
\label{anchor:4--8}

\subsection*{Mori--Mukai name:} 4--8

\subsection*{Mori--Mukai construction:} The blow-up of $W$ (a divisor
of bidegree $(1,1)$ in $\PP^2\times \PP^2$; see \S\ref{sec:W}) with
centre two disjoint curves on it, of bi-degree $(0,1)$ and $(1,0)$.

\subsection*{Our construction:} A member $X$ of $|B+D|$ in the toric
variety $F$ with weight data:
\[ 
\begin{array}{rrrrrrrrrl} 
  \multicolumn{1}{c}{s_0} & 
  \multicolumn{1}{c}{s_1} & 
  \multicolumn{1}{c}{x} & 
  \multicolumn{1}{c}{x_2} & 
  \multicolumn{1}{c}{t_0} &   
  \multicolumn{1}{c}{t_1} &  
  \multicolumn{1}{c}{y} &  
 \multicolumn{1}{c}{y_2} &  \\
  \cmidrule{1-8}
1 & 1 & -1  &  0  & 0 & 0  &  0&    0 &  \hspace{1.5ex} A \\ 
0 & 0 &  1  &  1  & 0 &  0 &   0 &   0 & \hspace{1.5ex} B \\ 
0 & 0 &  0  &  0  & 1 &  1 & -1 &   0 & \hspace{1.5ex} C \\
0 & 0 &  0  &  0  & 0 &  0 &  1  &  1 & \hspace{1.5ex} D 
\end{array}
\]
and $\Amp F =\langle A, B, C, D \rangle$.  We have:
\begin{itemize}
\item $-K_F=A+2B+C+2D$ is ample, that is $F$ is a Fano variety;
\item $X\sim B+D$ is nef.
\item $-(K_F+X)\sim A+B+C+D$ is ample.
\end{itemize}

\subsection*{The two constructions coincide:} We take $W$ to be the
divisor:
\[
W= (x_0y_0+x_1y_1+x_2y_2=0)\subset \PP^2_{x_0,x_1,x_2}\times
\PP^2_{y_0,y_1,y_2} 
\] 
It is clear that the morphism $f\colon F \to \PP^2\times \PP^2$ which
sends (contravariantly) $[x_0,x_1,x_2,y_0,y_1,y_2]$ to
$[s_0x,s_1x,x_2,t_0y,t_1y,y_2]$ blows up the disjoint union of
$(x_0=x_1=0)$ and $(y_0=y_1=0)$ in $\PP^2\times \PP^2$.  This morphism induces the
required blow-up of $W$.

\subsection*{The quantum period:}  
Corollary~\ref{cor:QL} yields:
\[
G_X(t) = e^{-2t} \sum_{a=0}^\infty \sum_{b=a}^\infty \sum_{c=0}^\infty \sum_{d=c}^\infty
t^{a+b+c+d}
\frac{(b+d)!}
{(a!)^2 (b-a)!b! (c!)^2 (d-c)!d!}
\]
and regularizing gives:
\[
\hG_X(t) = 1 + 6 t^2 + 12 t^3 + 114 t^4 + 480 t^5 + 3480 t^6 + 19320 t^7 + 131250 t^8 + 
    819840 t^9+ \cdots
\]

\subsection*{Minkowski period sequence:} \href{http://www.grdb.co.uk/search/period3?id=57&printlevel=2}{57}


\addtocounter{CustomSectionCounter}{1}

\section{The Fano Manifold $\MM{4}{9}$}
\label{anchor:4--9}

\subsection*{Mori--Mukai name:} 4--9

\subsection*{Mori--Mukai construction:} the blow-up of $\PP^1\times
\PP^1\times \PP^1$ with centre a curve of tridegree $(0,1,1$).

\subsection*{Our construction:} A member $X$ of $|D|$ in the toric
variety $F$ with weight data:
\[
\begin{array}{rrrrrrrrrl} 
  \multicolumn{1}{c}{x_0} & 
  \multicolumn{1}{c}{x_1} & 
  \multicolumn{1}{c}{y_0} & 
  \multicolumn{1}{c}{y_1} & 
  \multicolumn{1}{c}{z_0} &   
  \multicolumn{1}{c}{z_1} &  
  \multicolumn{1}{c}{u} &  
 \multicolumn{1}{c}{v} &  \\
  \cmidrule{1-8}
  1 & 1 &  0  &  0  & 0 & 0  &  0&    -1 &  \hspace{1.5ex} A \\ 
  0 & 0 &  1  &  1  & 0 &  0 &   0 &  -1 & \hspace{1.5ex} B \\ 
  0 & 0 &  0  &  0  & 1 &  1 & -1 &   0 & \hspace{1.5ex} C \\
  0 & 0 &  0  &  0  & 0 &  0 &  1  &  1 & \hspace{1.5ex} D 
\end{array} 
\]
and $\Amp F =\langle A, B, C, D \rangle$.  We have:
\begin{itemize}
\item $-K_F=A+B+C+2D$ is ample, that is $F$ is a Fano variety;
\item $X\sim D$ is nef.
\item $-(K_F+X)\sim A+B+C+D$ is ample.
\end{itemize}

\subsection*{The two constructions coincide:}  The curve to be blown
up is the complete intersection:
\[
(z_0=x_0y_0+x_1y_1=0) \subset \PP^1_{x_0,x_1}\times
\PP^1_{y_0,y_1}\times \PP^1_{z_0,z_1} \, .
\]
We apply Lemma~\ref{lem:blowups} with:
\begin{align*}
  & G = \PP^1 \times \PP^1 \times \PP^1 \\
  & V = \cO_{\PP^1 \times \PP^1 \times \PP^1}(0,0,-1) \oplus 
  \cO_{\PP^1 \times \PP^1 \times \PP^1}(-1,-1,0) \\
  & W = \cO_{\PP^1 \times \PP^1 \times \PP^1}
\end{align*}
and the map $f\colon V \to W$ given by the matrix $
\begin{pmatrix}
  z_0 & x_0y_0+x_1y_1
\end{pmatrix}
$.

\subsection*{The quantum period:}
Corollary~\ref{cor:QL} yields:
\[
G_X(t) = e^{-t}  \sum_{a=0}^\infty \sum_{b=0}^\infty \sum_{c=0}^\infty \sum_{d=\max(a+b,c)}^\infty
t^{a+b+c+d}
\frac{d!}
{(a!)^2 (b!)^2 (c!)^2 (d-c)! (d-a-b)!}
\]
and regularizing gives:
\[
\hG_X(t) = 1+6 t^2+12 t^3+90 t^4+480 t^5+2400 t^6+16800 t^7+88410 t^8+608160 t^9+ \cdots
\]

\subsection*{Minkowski period sequence:} \href{http://www.grdb.co.uk/search/period3?id=54&printlevel=2}{54}


\addtocounter{CustomSectionCounter}{1}

\section{The Fano Manifold $\MM{4}{10}$}
\label{anchor:4--10}

\subsection*{Mori--Mukai name:} 4--10

\subsection*{Mori--Mukai construction:} The blow-up of $Y_{3-25}$,
which is the blow-up of $\PP^3$ with centre two disjoint lines (see
\S\ref{sec:3-25}), with centre an exceptional line of the blow-up
$Y\to \PP^3$.

\subsection*{Our construction:} The toric variety $X$ with weight data:
\[
\begin{array}{rrrrrrrrl} 
  \multicolumn{1}{c}{s_0} & 
  \multicolumn{1}{c}{s_1} & 
  \multicolumn{1}{c}{t_2} & 
  \multicolumn{1}{c}{t_3} & 
  \multicolumn{1}{c}{x} &   
  \multicolumn{1}{c}{y} &  
  \multicolumn{1}{c}{z} &  \\
  \cmidrule{1-7}
  1 & 1 &  0  & 0  & -1 & 0  &  0&     \hspace{1.5ex} A \\ 
  0 & 0 &  1  &  1  & 0 & -1 &  0&     \hspace{1.5ex} B \\ 
  0 & 0 &  0  & -1  & 0 &  1 &   1&    \hspace{1.5ex} C \\
  0 & 0 &  0  &  1  & 1 &  0  &  -1 &   \hspace{1.5ex} D 
\end{array} 
\]
and $\Amp X =\langle A, B, C, D \rangle$.

\subsection*{The two constructions coincide:} The morphism $X\mapsto
\PP^3$ is given by the complete linear system $|C|$.  It sends
(contravariantly) the homogeneous co-ordinate functions
$[x_0,x_1,x_2,x_3]$ to $[s_0xz, s_1xz, t_2 y, t_3yz]$. The morphism
blows up first the lines $(x_0=x_1=0)$ (the image of the divisor $x=0$
in $X$) and $(x_2=x_3=0)$ (the image of the divisor $y=0$ in $X$), and
then the fibre over the point $[0:0:1:0]$ (the image of the divisor
$z=0$ in $X$).

\subsection*{The quantum period:}
Corollary~\ref{cor:toric_mirror} yields:
\[
G_X(t) = \sum_{a=0}^\infty \sum_{b= 0}^\infty \sum_{d=a}^\infty \sum_{c=\max(b,d)}^{b+d} 
\frac{t^{a+b+c+d}}
{(a!)^2 b! (b-c+d)! (d-a)! (c-b)! (c-d)!}
\]
and regularizing gives:
\[
\hG_X(t) = 1+4 t^2+12 t^3+60 t^4+300 t^5+1660 t^6+8820 t^7+51100 t^8+293160 t^9+ \cdots
\]

\subsection*{Minkowski period sequence:} \href{http://www.grdb.co.uk/search/period3?id=37&printlevel=2}{37}


\addtocounter{CustomSectionCounter}{1}

\section{The Fano Manifold $\MM{4}{11}$}
\label{anchor:4--11}

\subsection*{Mori--Mukai name:} 4--11

\subsection*{Mori--Mukai construction:} $S_7\times \PP^1$.

\subsection*{Our construction:} $S_7\times \PP^1$.

\subsection*{The two constructions coincide:}  Obvious.

\subsection*{The quantum period:}
Combining Corollary~\ref{cor:products} with Example~\ref{ex:P1} and
Example~\ref{ex:S7} yields:
\[
G_X(t) = \sum_{a \geq 0} \sum_{b \geq 0} \sum_{c=\max(a,b)}^{a+b}
\sum_{d \geq 0} 
\frac{t^{a+b+c+2d}}
{a! b! (a+b-c)! (c-a)! (c-b)! (d!)^2}
\]
Regularizing gives:
\[
\hG_X(t) = 1+6 t^2+6 t^3+90 t^4+240 t^5+1950 t^6+8400 t^7+53130 t^8+288960 t^9+ \cdots
\]

\subsection*{Minkowski period sequence:} \href{http://www.grdb.co.uk/search/period3?id=48&printlevel=2}{48}


\addtocounter{CustomSectionCounter}{1}

\section{The Fano Manifold $\MM{4}{12}$}
\label{anchor:4--12}

\subsection*{Mori--Mukai name:} 4--12

\subsection*{Mori--Mukai construction:} The blow-up of $\PP^1\times
\FF_1$ with centre $t\times e$, where $t\in \PP^1$ and $e$ is the
exceptional curve on $\FF_1$.

\subsection*{Our construction:} The toric variety $X$ with weight data:
\[
\begin{array}{rrrrrrrrl} 
  \multicolumn{1}{c}{y_0} & 
  \multicolumn{1}{c}{y_1^\prime} & 
  \multicolumn{1}{c}{s_0} & 
  \multicolumn{1}{c}{s_1} & 
  \multicolumn{1}{c}{x^\prime} &   
  \multicolumn{1}{c}{x_2} &  
  \multicolumn{1}{c}{w} &  \\
  \cmidrule{1-7}
  1 & 0 &  0  &  0  & -1 & 0 &   1& \hspace{1.5ex}    A \\ 
  0 & 0 &  1  &  1   & -1 & 0 & 0 & \hspace{1.5ex}   B \\ 
  0 &-1&  0  &  0   & 0 &  1 &  1 & \hspace{1.5ex}   C \\
  0 & 1 &  0  &  0  & 1&  0 &  -1  & \hspace{1.5ex}   D 
\end{array} 
\]
and $\Amp X =\langle A,B,C,D  \rangle$.

\subsection*{The two constructions coincide:} Let $[y_0:y_1]$ be
homogeneous co-ordinates on $\PP^1$, and recall that $\FF_1$ is the
toric variety with weight data:
\[
\begin{array}{rrrrl}
  \multicolumn{1}{c}{s_0} & 
  \multicolumn{1}{c}{s_1} & 
  \multicolumn{1}{c}{x} & 
  \multicolumn{1}{c}{x_2} \\
  \cmidrule{1-4}
  1 & 1 & -1 & 0 & \hspace{1.5ex} L \\
  0 & 0 & 1 & 1 & \hspace{1.5ex} M \\
\end{array}
\]
The morphism $X \to \PP^1\times \FF_1$ is given (contravariantly) by:
\[
[y_0,y_1,s_0,s_1,x,x_2]\mapsto [y_0, y_1^\prime w, s_0,s_1,x^\prime w,
x_2] 
\]

\subsection*{The quantum period:}
Corollary~\ref{cor:toric_mirror} yields:
\[
G_X(t) = \sum_{a=0}^\infty \sum_{b=0}^\infty \sum_{c=0}^\infty 
\sum_{d=\max(a+b,c)}^{a+c}
\frac{t^{a+b+c+d}}
{a! (d-c)! (b!)^2 (d-a-b)! c! (a+c-d)! }
\]
and regularizing gives:
\[
\hG_X(t) = 1+4 t^2+12 t^3+36 t^4+300 t^5+940 t^6+6300 t^7+31780 t^8+157080 t^9+ \cdots
\]

\subsection*{Minkowski period sequence:} \href{http://www.grdb.co.uk/search/period3?id=34&printlevel=2}{34}


\addtocounter{CustomSectionCounter}{1}

\section{The Fano Manifold $\MM{4}{13}$}
\label{anchor:4--13}

\subsection*{Mori--Mukai name:} 4--13

\subsection*{Mori--Mukai construction:} The blow-up of $Y_{2-33}$,
which is the blow-up of $\PP^3$ with centre a line (see
\S\ref{sec:2-33}), with centre two exceptional lines of the blow-up
$Y\to \PP^3$.

\subsection*{Our construction:} The toric variety $X$ with weight data:
\[
\begin{array}{rrrrrrrrl} 
  \multicolumn{1}{c}{s_0} & 
  \multicolumn{1}{c}{s_1} & 
  \multicolumn{1}{c}{x} & 
  \multicolumn{1}{c}{y_2} & 
  \multicolumn{1}{c}{y_3} &   
  \multicolumn{1}{c}{u} &  
  \multicolumn{1}{c}{v} &  \\
  \cmidrule{1-7}
  1 & 1 & -1 &  0  & 0 & 0 & 0 &   \hspace{1.5ex}  A \\ 
  0 & 0 & -1 &  0  & 0 & 1 & 1 &  \hspace{1.5ex}  B \\ 
  0 & 0 &  1  &  1  & 0 &-1& 0 &  \hspace{1.5ex}  C \\
  0 & 0 &  1  &  0  & 1 & 0 &-1&  \hspace{1.5ex}  D 
\end{array} 
\]
and $\Amp X =\langle A,B,C,D  \rangle$.

\subsection*{The two constructions coincide:} Recall from
\S\ref{sec:2-33} that $Y_{2-33}$ is the toric variety with weight
data:
\[
\begin{array}{rrrrr}
  \multicolumn{1}{c}{s_0} & 
  \multicolumn{1}{c}{s_1} & 
  \multicolumn{1}{c}{x} & 
  \multicolumn{1}{c}{x_2} & 
  \multicolumn{1}{c}{x_3} \\
  \midrule
 1 & 1 & -1 & 0 & 0 \\
 0 & 0 & 1   & 1 & 1  
\end{array}
\]
and that the morphism $Y_{2-33}\to \PP^3$ sends (contravariantly) the
homogeneous co-ordinate functions $[x_0,x_1,x_2,x_3]$ on $\PP^3$ to
$[s_0x, s_1x, x_2, x_3]$. The blow-up $X\to Y_{2-33}$ is given (again
contravariantly) by $[s_0,s_1,x,x_2,x_3] \mapsto [s_0,s_1,uvx,ux_2,vx_3]$.

\subsection*{The quantum period:}
Corollary~\ref{cor:toric_mirror} yields:
\[
G_X(t) = \sum_{a=0}^\infty \sum_{b=0}^\infty \sum_{c=0}^b \sum_{d=\max(0,a+b-c)}^{b}
\frac{t^{a+b+c+d}}
{(a!)^2 (c+d-a-b)!c!d! (b-c)! (b-d)! }
\]
and regularizing gives:
\[
\hG_X(t) = 1+4 t^2+6 t^3+60 t^4+120 t^5+1210 t^6+3360 t^7+27580 t^8+97440 t^9+ \cdots
\]

\subsection*{Minkowski period sequence:} \href{http://www.grdb.co.uk/search/period3?id=29&printlevel=2}{29}


\addtocounter{CustomSectionCounter}{1}

\section{The Fano Manifold $\MM{5}{1}$}
\label{anchor:5--1}

\subsection*{Mori--Mukai name:} 5--1

\subsection*{Mori--Mukai construction:} The blow-up of $Y_{2-29}$,
which is the blow-up of a quadric \mbox{3-fold} $Q\subset \PP^3$ with
centre a conic on it (see \S\ref{sec:2-29}), with centre three
exceptional lines of the blow-up $Y\to Q$.

\subsection*{Our construction:} A member $X$ of $|2A+2B+C+D+E|$ in the toric
variety $F$ with weight data:
\[
\begin{array}{rrrrrrrrrl} 
  \multicolumn{1}{c}{z_0} & 
  \multicolumn{1}{c}{z_1} & 
  \multicolumn{1}{c}{z_2} & 
  \multicolumn{1}{c}{s_3} & 
  \multicolumn{1}{c}{s_4} &  
  \multicolumn{1}{c}{x} &  
  \multicolumn{1}{c}{t_{12}} &  
  \multicolumn{1}{c}{t_{02}} &  
  \multicolumn{1}{c}{t_{01}} &  \\
  \cmidrule{1-9}
  1 & 1 &  1  &  1  & 1 & 0  &  0&   0 &  0 & \hspace{1.5ex} A  \\ 
  1 & 1 &  1  &  0  & 0 & 1  &  0 &  0 &  0 & \hspace{1.5ex} B  \\ 
  1 & 0 &  0  &  0  & 0 & 0  & 1 &   0 &  0 & \hspace{1.5ex} C \\
  0 & 1 &  0  &  0  & 0 &  0 & 0  &  1 &  0 & \hspace{1.5ex} D \\
  0  & 0 & 1  &  0  & 0 &  0 & 0  &  0 &  1 & \hspace{1.5ex} E 
\end{array} 
\]
and $\Amp F =\langle A, A+B+D+E, A+B+C+E, A+B+C+D, A+B+C+D+E,
2A+2B+C+D+E\rangle$.  We have:
\begin{itemize}
\item $-K_F=5A+4B+2C+2D+2E=2(2A+2B+C+D+E)+(A)$ is nef and big but
  not ample;
\item $X\sim 2A+2B+C+D+E$ is nef.
\item $-(K_F+X)\sim 3A+2B+C+D+E$ is nef and big but not ample.
\end{itemize}

\subsection*{The two constructions coincide:} There is a
morphism\footnote{The class $-K_F$ belongs to $7$ simplicial cones and
  a non-simplicial cone (the one that we chose to be $\Amp F$). It
  turns out that the class $2A+2B+C+D+E$ also belongs to all of these
  cones. However, only one of these cones contains $A+B+C+D+E$: this
  is the cone that we chose to be $\Amp F$.} $F\to \PP^4$ given by the
complete linear system $|A+B+C+D+E|$; it sends (contravariantly) the
homogeneous co-ordinate functions $[x_0,x_1,x_2,x_3,x_4]$ on $\PP^4$
to
$[z_0t_{02}t_{01},z_1t_{12}t_{01},z_2t_{12}t_{02},s_3xt_{12}t_{02}t_{01},s_3xt_{12}t_{02}t_{01}]$.
This morphism can be factorized by first blowing up the plane
$\Pi=(x_3=x_4=0)\subset \PP^4$, and subsequently blowing up the three
fibres over the co-ordinate points $P_0=[1:0:0:0:0]$,
$P_1=[0:1:0:0:0]$ and $P_2=[0:0:1:0:0]$ in $\Pi$. Thus we can take $X$
to be the proper transform of any quadric $Q\subset \PP^4$ containing
the three points $P_0$, $P_1$, $P_2$ but not containing the plane
$\Pi$, for instance the quadric given by the equation:
\[
x_0x_1+x_1x_2+x_2x_0+x_3^2+x_4^2=0
\]

\subsection*{The quantum period:}
Corollary~\ref{cor:QL} yields:
\[
G_X(t) = e^{-3t} \sum_{a=0}^\infty \sum_{b=0}^\infty \sum_{c=0}^\infty \sum_{d=0}^\infty \sum_{e=0}^\infty
t^{3a+2b+c+d+e}
\frac{(2a+2b+c+d+e)!}
{(a+b+c)! (a+b+d)! (a+b+e)! (a!)^2 b! c! d! e!}
\]
and regularizing gives:
\[
\hG_X(t) = 1 + 10 t^2 + 42 t^3 + 342 t^4 + 2640 t^5 + 21250 t^6 + 180600 t^7 + 1562470 t^8 
    + 13851600 t^9 + \cdots
\]

\subsection*{Minkowski period sequence:} \href{http://www.grdb.co.uk/search/period3?id=100&printlevel=2}{100}


\addtocounter{CustomSectionCounter}{1}

\section{The Fano Manifold $\MM{5}{2}$}
\label{anchor:5--2}

\subsection*{Mori--Mukai name:} 5--2 

\subsection*{Mori--Mukai construction:} The blow-up of $Y_{3-25}$,
which is the blow-up of $\PP^3$ with centre two disjoint lines (see
\S\ref{sec:3-25}), with centre two exceptional lines $\ell$,
$\ell^\prime$ of the blow-up $f \colon Y\to \PP^3$ such that $\ell$
and $\ell^\prime$ lie on the same irreducible component of the
exceptional set of $f$.

\subsection*{Our construction:} The toric variety $X$ with weight data:
\[
\begin{array}{rrrrrrrrl} 
  \multicolumn{1}{c}{s_0} & 
  \multicolumn{1}{c}{s_1} & 
  \multicolumn{1}{c}{t_2} & 
  \multicolumn{1}{c}{t_3} & 
  \multicolumn{1}{c}{x} &  
 \multicolumn{1}{c}{y} &  
  \multicolumn{1}{c}{u} &  
  \multicolumn{1}{c}{v} &  \\
  \cmidrule{1-8}
  1 & 1 &  0  &  0  &-1 & 0  &  0&   0 & \hspace{1.5ex} A  \\ 
  0 & 0 &  1  &  1  & 0 & -1 &  0 &  0 &   \hspace{1.5ex} B  \\ 
  0 & 0 &  0  &  1  & 1 & 0   & -1 &   0& \hspace{1.5ex} C \\
  0 & 0 &  1  &  0  & 1 & 0   &  0  &  -1 & \hspace{1.5ex} D \\
  0  & 0 &-1  & -1&-1 & 1 & 1   &  1&  \hspace{1.5ex} E 
\end{array} 
\]
and $\Amp X =\langle A, B, C, D, E, B+C+D-E \rangle$.

\subsection*{The two constructions coincide:} Consider the morphism
$f\colon X\to \PP^3$ given by the complete linear system $E$.  The
morphism $f$ sends (contravariantly) the homogeneous co-ordinate
functions $[x_0,x_1,x_2,x_3]$ on $\PP^3$ to
$[s_0xuv,s_1xuv,t_2yv,t_3yu]$; it contracts:
\begin{itemize}
\item the divisors $(x=0)$ and $(y=0)$ to the lines $x_0=x_1=0$ and
  $x_2=x_3=0$, and
\item the divisors $(u=0)$ and $(v=0)$ to the points $P_0=[0:0:0:1]$ and $P_1=[0:0:1:0]$.
\end{itemize}

\subsection*{The quantum period:}
Corollary~\ref{cor:QL} yields:
\begin{multline*}
  G_X(t) = \sum_{a=0}^\infty \sum_{b=0}^\infty \sum_{c=0}^\infty
  \sum_{d=0}^\infty 
  \\ 
  \sum_{e=\max(b,c,d)}^{\min(b+c,b+d,c+d-a)}
  \frac{t^{a+b+c+d}}
  {(a!)^2(b+d-e)! (b+c-e)! (c+d-a-e)! (e-b)! (e-c)! (e-d)!}
\end{multline*}
and regularizing gives:
\[
\hG_X(t) = 1 + 6 t^2 + 18 t^3 + 114 t^4 + 660 t^5 + 3930 t^6 + 25620
t^7 + 163170 t^8 + 1101240 t^9 + \cdots
\]

\subsection*{Minkowski period sequence:} \href{http://www.grdb.co.uk/search/period3?id=64&printlevel=2}{64}


\addtocounter{CustomSectionCounter}{1}

\section{The Fano Manifold $\MM{5}{3}$}
\label{anchor:5--3}

\subsection*{Mori--Mukai name:} 5--3

\subsection*{Mori--Mukai construction:} $S_6 \times \PP^1$.

\subsection*{Our construction:} $S_6 \times \PP^1$.

\subsection*{The two constructions coincide:}  Obvious.

\subsection*{The quantum period:}
Combining Corollary~\ref{cor:products} with Example~\ref{ex:P1} and
Example~\ref{ex:S6} yields:
\[
G_X(t) = \sum_{a=0}^\infty \sum_{b=0}^\infty \sum_{c=0}^\infty \sum_{d=\max(a-c,0)}^{a+b} \sum_{e=0}^\infty
  \frac{t^{a+2b+2c+d+2e}}{a!b!c!d!(a+b-d)!(c+d-a)!(e!)^2}
\]
Regularizing gives:
\[
\hG_X(t) = 1 + 8 t^2 + 12 t^3 + 168 t^4 + 600 t^5 + 5300 t^6 + 27720
t^7 + 210280 t^8 + 1308720 t^9 + \cdots
\]

\subsection*{Minkowski period sequence:} \href{http://www.grdb.co.uk/search/period3?id=76&printlevel=2}{76}


\addtocounter{CustomSectionCounter}{1}

\section{The Fano Manifold $\MM{6}{1}$}
\label{anchor:6--1}

\subsection*{Mori--Mukai name:} 6--1 

\subsection*{Mori--Mukai construction:} $S_5 \times \PP^1$.

\subsection*{Our construction:} $S_5 \times \PP^1$.

\subsection*{The two constructions coincide:}  Obvious.

\subsection*{The quantum period:}
Combining Corollary~\ref{cor:products} with Example~\ref{ex:P1} and
Example~\ref{ex:S5} yields:
\[
G_X(t) =  e^{-3t}
  \sum_{l=0}^\infty \sum_{m=0}^\infty \sum_{n=0}^\infty
  t^{l+m+2n} \frac{(l+2m)!}{(l!)^2(m!)^3(n!)^2}
\]
Regularizing gives:
\[
\hG_X(t) = 1 + 12 t^2 + 30 t^3 + 396 t^4 + 2160 t^5 + 20370 t^6 +
149520 t^7 + 1315020 t^8 + 10864560 t^9 + \cdots
\]

\subsection*{Minkowski period sequence:} \href{http://www.grdb.co.uk/search/period3?id=107&printlevel=2}{107}


\addtocounter{CustomSectionCounter}{1}

\section{The Fano Manifold $\MM{7}{1}$}
\label{anchor:7--1}

\subsection*{Mori--Mukai name:} 7--1 

\subsection*{Mori--Mukai construction:} $S_4 \times \PP^1$.

\subsection*{Our construction:} $S_4 \times \PP^1$.

\subsection*{The two constructions coincide:}  Obvious.

\subsection*{The quantum period:}
Combining Corollary~\ref{cor:products} with Example~\ref{ex:P1} and
Example~\ref{ex:S4} yields:
\[
G_X(t) =  e^{-4t}
  \sum_{l=0}^\infty \sum_{m=0}^\infty
  t^{l+2m} \frac{(2l)!(2l)!}{(l!)^5(m!)^2}
\]
Regularizing gives:
\begin{multline*}
  \hG_X(t) = 1 + 22 t^2 + 96 t^3 + 1434 t^4 + 12480 t^5 + 148900 t^6 +
  1606080 t^7 \\ 
  + 18905530 t^8 + 220617600 t^9 + \cdots
\end{multline*}

\subsection*{Minkowski period sequence:} \href{http://www.grdb.co.uk/search/period3?id=136&printlevel=2}{136}


\addtocounter{CustomSectionCounter}{1}

\section{The Fano Manifold $\MM{8}{1}$}
\label{anchor:8--1}

\subsection*{Mori--Mukai name:} 8--1

\subsection*{Mori--Mukai construction:} $S_3 \times \PP^1$.

\subsection*{Our construction:} $S_3 \times \PP^1$.

\subsection*{The two constructions coincide:}  Obvious.

\subsection*{The quantum period:}
Combining Corollary~\ref{cor:products} with Example~\ref{ex:P1} and
Example~\ref{ex:S3} yields:
\[
G_X(t) =  e^{-6t}
  \sum_{l=0}^\infty \sum_{m=0}^\infty
  t^{l+2m} \frac{(3l)!}{(l!)^4(m!)^2}
\]
Regularizing gives:
\begin{multline*}
  \hG_X(t) = 1 + 56 t^2 + 492 t^3 + 10536 t^4 + 168600 t^5 + 3180980 t^6
  + 58753800 t^7 \\
  + 1129788520 t^8 + 21955158960 t^9 + \cdots
\end{multline*}

\subsection*{Minkowski period sequence:} \href{http://www.grdb.co.uk/search/period3?id=155&printlevel=2}{155}


\addtocounter{CustomSectionCounter}{1}

\section{The Fano Manifold $\MM{9}{1}$}
\label{anchor:9--1}

\subsection*{Mori--Mukai name:} 9--1  

\subsection*{Mori--Mukai construction:} $S_2 \times \PP^1$.

\subsection*{Our construction:} $S_2 \times \PP^1$.

\subsection*{The two constructions coincide:}  Obvious.

\subsection*{The quantum period:}
Combining Corollary~\ref{cor:products} with Example~\ref{ex:P1} and
Example~\ref{ex:S2} yields:
\[
G_X(t) =  e^{-12t}
  \sum_{l=0}^\infty \sum_{m=0}^\infty
  t^{l+2m} \frac{(4l)!}{(l!)^3(2l)!(m!)^2}
\]
Regularizing gives:
\begin{multline*}
  \hG_X(t) = 1 + 278 t^2 + 6816 t^3 + 317850 t^4 + 12989760 t^5 +
  578870180 t^6 + 26074520640 t^7 \\
  + 1202038745530 t^8 + 56188933046400 t^9 + \cdots
\end{multline*}

\subsection*{Minkowski period sequence:} None.  Note that the
anticanonical line bundle of $S_2 \times \PP^1$ is not very ample.


\addtocounter{CustomSectionCounter}{1}

\section{The Fano Manifold $\MM{10}{1}$}
\label{anchor:10--1}

\subsection*{Mori--Mukai name:} 10--1

\subsection*{Mori--Mukai construction:} $S_1 \times \PP^1$.

\subsection*{Our construction:} $S_1 \times \PP^1$.

\subsection*{The two constructions coincide:}  Obvious.

\subsection*{The quantum period:}
Combining Corollary~\ref{cor:products} with Example~\ref{ex:P1} and
Example~\ref{ex:S1} yields:
\[
G_X(t) =   e^{-60t}
  \sum_{l=0}^\infty \sum_{m=0}^\infty
  t^{l+2m} \frac{(6l)!}{(l!)^2(2l)!(3l)!(m!)^2}
\]
Regularizing gives:
\begin{multline*}
  \hG_X(t) = 1 + 10262 t^2 + 2021280 t^3 + 618997146 t^4 +
  184490852160 t^5 + 57894898611620 t^6 \\ + 18577980262739520 t^7 +
  6078628630941923770 t^8 + 2017980469547810194560 t^9 + \cdots
\end{multline*}

\subsection*{Minkowski period sequence:} None.  Note that the
anticanonical line bundle of $S_1 \times \PP^1$ is not very ample.


\section*{Conclusion}

This completes the calculation of the quantum periods for all
3-dimensional Fano manifolds, and the proof of
Theorem~\ref{thm:models}.  It also completes the proof of our
conjecture with Golyshev \cite{CCGGK}: that there is a one-to-one
correspondence between deformation families of smooth 3-dimensional
Fano manifolds $X$ with very ample anticanonical bundle and
equivalence classes of Minkowski polynomials $f$ of manifold type,
such that the regularized quantum period $\hG_X$ of $X$ coincides with
the period $\pi_f$ of $f$.

\addtocounter{CustomSectionCounter}{1}
\section{A Fano Manifold With Non-Unirational Moduli Space}
\label{sec:moduli_not_unirational}
We conclude by giving an example of a Fano manifold $X$ such that the
moduli space of $X$ is not unirational.  The manifold $X$ has complex
dimension~66 and, since unirationality of moduli spaces is a
straightforward consequence of Theorem~\ref{thm:models}, this example
shows that the analog of Theorem~\ref{thm:models} fails in
dimension~66.  The same technique allows one to construct Fano
manifolds $X_{3k}$ of dimension $3k$ for every $k \geq 22$ such that
the moduli space of $X_{3k}$ is not unirational.  Let $C$ be a smooth
curve of genus~23, let $L$ be a line bundle of degree~1 on $C$, and
let $X$ be the moduli space of stable vector bundles over $C$ of
rank~2 with fixed determinant~$L$.  It is known that $X$ is a
non-singular projective variety \cite{Newstead} which is Fano
\cite{Ramanan}.  The moduli space of $X$ is isomorphic to the moduli
space of curves of genus~23 \cite{Tjurin}*{\S2}, which has
non-negative Kodaira dimension \cite{Harris--Mumford} and thus is
not unirational.

\appendix

\section{Laurent Polynomial Mirrors for 3-Dimensional Fano Manifolds}
\label{appendix}

The table below exhibits Laurent polynomial mirrors for each of
the~105 deformation families of 3-dimensional Fano manifolds.  The
`Method' column summarizes the method by which we computed the quantum
period in each case: ``Quantum Lefschetz'' means ``Quantum Lefschetz
with Fano ambient space and no mirror map''; ``Quantum Lefschetz with
weak Fano ambient'' means ``Quantum Lefschetz with non-Fano but weak
Fano ambient space''; ``Quantum Lefschetz with mirror map'' means
``Quantum Lefschetz with non-trivial mirror map''; and other entries
should be self-explanatory.  The `Minkowski~ID' column records the ID
in the Graded Ring Database \cite{CGK} of the corresponding Minkowski
period sequence of manifold type; there are only~98 non-trivial
entries in this column as only the~98 deformation families of
3-dimensional Fano manifolds with very ample anticanonical bundle give
rise to Minkowski polynomial mirrors.  There are in general many
Minkowski polynomials (and infinitely many other Laurent polynomials)
mirror to a given 3-dimensional Fano manifold, but we have listed only
one such Laurent polynomial in each case.

\begin{landscape}
  \begin{center}
\footnotesize
\begin{longtable}{>{\hspace{0.8ex}}lr<{\hspace{2ex}}p{9.5cm}p{6.3cm}r<{\hspace{5ex}}}
\caption{Mirror Laurent polynomials for 3-dimensional Fano manifolds.}\\

\toprule 
\multicolumn{1}{c}{{Name}}&
\multicolumn{1}{c}{{Degree}} & 
\multicolumn{1}{c}{{Laurent polynomial}} & 
\multicolumn{1}{c}{{Method}} & 
\multicolumn{1}{c}{{Minkowski ID}} 
\\ \midrule \endfirsthead

\multicolumn{5}{c}{{\tablename\ \thetable{}: Mirror Laurent polynomials for 3-dimensional Fano manifolds -- continued from previous page}} \\ \addlinespace[1.7ex] \midrule
\multicolumn{1}{c}{{Name}}&
\multicolumn{1}{c}{{Degree}} & 
\multicolumn{1}{c}{{Laurent polynomial}} & 
\multicolumn{1}{c}{{Method}} & 
\multicolumn{1}{c}{{Minkowski ID}} 
\\ \midrule\endhead

\midrule \multicolumn{5}{c}{{Continued on next page}} \endfoot

\bottomrule \endlastfoot

\hyperref[anchor:V2]{$V_2$} & 2 & $xy^6+6xy^5z+6xy^5+15xy^4z^2+30xy^4z+15xy^4+20xy^3z^3+60xy^3z^2+60xy^3z+20xy^3+15xy^2z^4+60xy^2z^3+90xy^2z^2+60xy^2z+15xy^2+6xyz^5+30xyz^4+60xyz^3+60xyz^2+30xyz+6xy+xz^6+6xz^5+15xz^4+20xz^3+15xz^2+6xz+x+\frac{6y^2}{z}+30y+\frac{30y}{z}+60z+\frac{60}{z}+\frac{60z^2}{y}+\frac{180z}{y}+\frac{180}{y}+\frac{60}{yz}+\frac{30z^3}{y^2}+\frac{120z^2}{y^2}+\frac{180z}{y^2}+\frac{120}{y^2}+\frac{30}{y^2z}+\frac{6z^4}{y^3}+\frac{30z^3}{y^3}+\frac{60z^2}{y^3}+\frac{60z}{y^3}+\frac{30}{y^3}+\frac{6}{y^3z}+\frac{15}{xy^2z^2}+\frac{60}{xy^3z}+\frac{60}{xy^3z^2}+\frac{90}{xy^4}+\frac{180}{xy^4z}+\frac{90}{xy^4z^2}+\frac{60z}{xy^5}+\frac{180}{xy^5}+\frac{180}{xy^5z}+\frac{60}{xy^5z^2}+\frac{15z^2}{xy^6}+\frac{60z}{xy^6}+\frac{90}{xy^6}+\frac{60}{xy^6z}+\frac{15}{xy^6z^2}+\frac{20}{x^2y^6z^3}+\frac{60}{x^2y^7z^2}+\frac{60}{x^2y^7z^3}+\frac{60}{x^2y^8z}+\frac{120}{x^2y^8z^2}+\frac{60}{x^2y^8z^3}+\frac{20}{x^2y^9}+\frac{60}{x^2y^9z}+\frac{60}{x^2y^9z^2}+\frac{20}{x^2y^9z^3}+\frac{15}{x^3y^{10}z^4}+\frac{30}{x^3y^{11}z^3}+\frac{30}{x^3y^{11}z^4}+\frac{15}{x^3y^{12}z^2}+\frac{30}{x^3y^{12}z^3}+\frac{15}{x^3y^{12}z^4}+\frac{6}{x^4y^{14}z^5}+\frac{6}{x^4y^{15}z^4}+\frac{6}{x^4y^{15}z^5}+\frac{1}{x^5y^{18}z^6}$ & Weighted projective complete intersection & n/a\\ \addlinespace[1.3ex] 
\rowcolor[gray]{0.95}
\hyperref[anchor:V4]{$V_4$} & 4 & $xy^4+4xy^3z+4xy^3+6xy^2z^2+12xy^2z+6xy^2+4xyz^3+12xyz^2+12xyz+4xy+xz^4+4xz^3+6xz^2+4xz+x+\frac{4y^2}{z}+12y+\frac{12y}{z}+12z+\frac{12}{z}+\frac{4z^2}{y}+\frac{12z}{y}+\frac{12}{y}+\frac{4}{yz}+\frac{6}{xz^2}+\frac{12}{xyz}+\frac{12}{xyz^2}+\frac{6}{xy^2}+\frac{12}{xy^2z}+\frac{6}{xy^2z^2}+\frac{4}{x^2y^2z^3}+\frac{4}{x^2y^3z^2}+\frac{4}{x^2y^3z^3}+\frac{1}{x^3y^4z^4}$ & Quantum Lefschetz & \href{http://www.grdb.co.uk/search/period3?id=165&printlevel=2}{165}\\ \addlinespace[1.3ex] 
\hyperref[anchor:V6]{$V_6$} & 6 & $xy^2z^3+3xy^2z^2+3xy^2z+xy^2+2xyz^3+6xyz^2+6xyz+2xy+xz^3+3xz^2+3xz+x+3yz+6y+\frac{3y}{z}+6z+\frac{6}{z}+\frac{3z}{y}+\frac{6}{y}+\frac{3}{yz}+\frac{3}{xz}+\frac{3}{xz^2}+\frac{6}{xyz}+\frac{6}{xyz^2}+\frac{3}{xy^2z}+\frac{3}{xy^2z^2}+\frac{1}{x^2yz^3}+\frac{2}{x^2y^2z^3}+\frac{1}{x^2y^3z^3}$ & Quantum Lefschetz & \href{http://www.grdb.co.uk/search/period3?id=164&printlevel=2}{164}\\ \addlinespace[1.3ex] 
\rowcolor[gray]{0.95}
\hyperref[anchor:V8]{$V_8$} & 8 & $xy^2+2xyz^2+4xyz+2xy+xz^4+4xz^3+6xz^2+4xz+x+\frac{4y}{z}+4z+\frac{4}{z}+\frac{6}{xz^2}+\frac{2}{xy}+\frac{4}{xyz}+\frac{2}{xyz^2}+\frac{4}{x^2yz^3}+\frac{1}{x^3y^2z^4}$ & Quantum Lefschetz & \href{http://www.grdb.co.uk/search/period3?id=163&printlevel=2}{163}\\ \addlinespace[1.3ex] 
\hyperref[anchor:B1]{$B_1$} & 8 & $xz^4+4xz^3+6xz^2+4xz+x+yz^4+4yz^3+6yz^2+4yz+y+\frac{2}{yz^2}+\frac{4}{yz^3}+\frac{2}{yz^4}+\frac{2}{xz^2}+\frac{4}{xz^3}+\frac{2}{xz^4}+\frac{1}{xy^2z^8}+\frac{1}{x^2yz^8}$ & Weighted projective complete intersection & n/a\\ \addlinespace[1.3ex] 
\rowcolor[gray]{0.95}
\hyperref[anchor:V10]{$V_{10}$} & 10 & $xyz^3+3xyz^2+3xyz+xy+xz^2+2xz+x+yz^2+2yz+y+3z+\frac{3}{z}+\frac{2}{y}+\frac{2}{yz}+\frac{2}{x}+\frac{2}{xz}+\frac{3}{xyz}+\frac{3}{xyz^2}+\frac{1}{xy^2z^2}+\frac{1}{x^2yz^2}+\frac{1}{x^2y^2z^3}$ & Abelian/non-Abelian correspondence & \href{http://www.grdb.co.uk/search/period3?id=160&printlevel=2}{160}\\ \addlinespace[1.3ex] 
\hyperref[anchor:V12]{$V_{12}$} & 12 & $x^2y^3z+x^2y^2z+2xy^2z+xy^2+2xyz+2xy+x+yz+3y+z+\frac{2}{y}+\frac{1}{x}+\frac{1}{xz}+\frac{2}{xy}+\frac{3}{xyz}+\frac{1}{xy^2}+\frac{3}{xy^2z}+\frac{1}{xy^3z}$ & Abelian/non-Abelian correspondence & \href{http://www.grdb.co.uk/search/period3?id=150&printlevel=2}{150}\\ \addlinespace[1.3ex] 
\rowcolor[gray]{0.95}
\hyperref[anchor:V14]{$V_{14}$} & 14 & $xz+x+\frac{x}{yz}+yz^3+3yz^2+3yz+y+z+\frac{3}{z}+\frac{1}{yz}+\frac{3}{yz^2}+\frac{1}{y^2z^3}+\frac{z}{x}+\frac{1}{x}+\frac{1}{xyz}$ & Abelian/non-Abelian correspondence & \href{http://www.grdb.co.uk/search/period3?id=147&printlevel=2}{147}\\ \addlinespace[1.3ex] 
\hyperref[anchor:V16]{$V_{16}$} & 16 & $x+\frac{2x}{yz}+\frac{x}{y^2z^2}+yz^2+2yz+y+2z+\frac{2}{z}+\frac{1}{y}+\frac{2}{yz}+\frac{1}{yz^2}+\frac{z}{x}+\frac{2}{x}+\frac{1}{xz}$ & Abelian/non-Abelian correspondence & \href{http://www.grdb.co.uk/search/period3?id=143&printlevel=2}{143}\\ \addlinespace[1.3ex] 
\rowcolor[gray]{0.95}
\hyperref[anchor:B2]{$B_2$} & 16 & $xy^2+2xyz+2xy+xz^2+2xz+x+\frac{2}{xz}+\frac{2}{xy}+\frac{2}{xyz}+\frac{1}{x^3y^2z^2}$ & Weighted projective complete intersection & \href{http://www.grdb.co.uk/search/period3?id=140&printlevel=2}{140}\\ \addlinespace[1.3ex] 
\hyperref[anchor:V18]{$V_{18}$} & 18 & $xy^2+2xy+x+2y+z+\frac{1}{z}+\frac{2}{y}+\frac{1}{yz}+\frac{1}{x}+\frac{2z}{xy}+\frac{1}{xy}+\frac{1}{xy^2}+\frac{z}{x^2y^2}$ & Abelian/non-Abelian correspondence & \href{http://www.grdb.co.uk/search/period3?id=124&printlevel=2}{124}\\ \addlinespace[1.3ex] 
\rowcolor[gray]{0.95}
\hyperref[anchor:V22]{$V_{22}$} & 22 & $xy+\frac{xy}{z}+x+y+\frac{2y}{z}+z+\frac{2z}{y}+\frac{1}{y}+\frac{z}{y^2}+\frac{y}{xz}+\frac{1}{x}$ & Abelian/non-Abelian correspondence & \href{http://www.grdb.co.uk/search/period3?id=113&printlevel=2}{113}\\ \addlinespace[1.3ex] 
\hyperref[anchor:B3]{$B_3$} & 24 & $x+\frac{x}{yz}+y+z+\frac{2}{z}+\frac{2}{y}+\frac{y}{xz}+\frac{2}{x}+\frac{z}{xy}$ & Quantum Lefschetz & \href{http://www.grdb.co.uk/search/period3?id=106&printlevel=2}{106}\\ \addlinespace[1.3ex] 
\rowcolor[gray]{0.95}
\hyperref[anchor:B4]{$B_4$} & 32 & $x+yz^2+2yz+y+\frac{2}{yz}+\frac{1}{xy^2z^2}$ & Quantum Lefschetz & \href{http://www.grdb.co.uk/search/period3?id=75&printlevel=2}{75}\\ \addlinespace[1.3ex] 
\hyperref[anchor:B5]{$B_5$} & 40 & $x+y+z+\frac{1}{z}+\frac{1}{y}+\frac{1}{x}+\frac{1}{xyz}$ & Abelian/non-Abelian correspondence & \href{http://www.grdb.co.uk/search/period3?id=46&printlevel=2}{46}\\ \addlinespace[1.3ex] 
\rowcolor[gray]{0.95}
\hyperref[anchor:Q3]{$Q^3$} & 54 & $x+y+z+\frac{1}{xz}+\frac{1}{xy}$ & Quantum Lefschetz & \href{http://www.grdb.co.uk/search/period3?id=3&printlevel=2}{3}\\ \addlinespace[1.3ex] 
\hyperref[anchor:P3]{$\PP^3$} & 64 & $x+y+z+\frac{1}{xyz}$ & Toric variety & \href{http://www.grdb.co.uk/search/period3?id=1&printlevel=2}{1}\\ \addlinespace[1.3ex] 
\midrule
\rowcolor[gray]{0.95}
\hyperref[anchor:2--1]{$\MM{2}{1}$} & 4 & $x^7y^7z^{18}+6x^6y^6z^{15}+6x^5y^5z^{13}+15x^5y^5z^{12}+30x^4y^4z^{10}+20x^4y^4z^9+x^4y^3z^9+x^3y^4z^9+15x^3y^3z^8+60x^3y^3z^7+15x^3y^3z^6+3x^3y^2z^6+3x^2y^3z^6+60x^2y^2z^5+60x^2y^2z^4+6x^2y^2z^3+3x^2yz^4+3x^2yz^3+3xy^2z^4+3xy^2z^3+20xyz^3+90xyz^2+30xyz+xy+6xz+x+6yz+y+\frac{60}{z}+\frac{6}{z^2}+\frac{3}{yz}+\frac{3}{yz^2}+\frac{3}{xz}+\frac{3}{xz^2}+\frac{15}{xyz^2}+\frac{60}{xyz^3}+\frac{15}{xyz^4}+\frac{3}{xy^2z^4}+\frac{3}{x^2yz^4}+\frac{30}{x^2y^2z^5}+\frac{20}{x^2y^2z^6}+\frac{1}{x^2y^3z^6}+\frac{1}{x^3y^2z^6}+\frac{6}{x^3y^3z^7}+\frac{15}{x^3y^3z^8}+\frac{6}{x^4y^4z^{10}}+\frac{1}{x^5y^5z^{12}}$ & Hypersurface in product & n/a\\ \addlinespace[1.3ex] 
\hyperref[anchor:2--2]{$\MM{2}{2}$} & 6 & $xy^2+2xyz+2xy+xz^2+2xz+x+\frac{y^2}{z}+4y+\frac{4y}{z}+6z+\frac{6}{z}+\frac{4z^2}{y}+\frac{14z}{y}+\frac{14}{y}+\frac{4}{yz}+\frac{z^3}{y^2}+\frac{4z^2}{y^2}+\frac{6z}{y^2}+\frac{4}{y^2}+\frac{1}{y^2z}+\frac{4}{xz}+\frac{12}{xy}+\frac{12}{xyz}+\frac{12z}{xy^2}+\frac{25}{xy^2}+\frac{12}{xy^2z}+\frac{4z^2}{xy^3}+\frac{12z}{xy^3}+\frac{12}{xy^3}+\frac{4}{xy^3z}+\frac{6}{x^2y^2z}+\frac{12}{x^2y^3}+\frac{12}{x^2y^3z}+\frac{6z}{x^2y^4}+\frac{12}{x^2y^4}+\frac{6}{x^2y^4z}+\frac{4}{x^3y^4z}+\frac{4}{x^3y^5}+\frac{4}{x^3y^5z}+\frac{1}{x^4y^6z}$ & Quantum Lefschetz with mirror map & n/a\\ \addlinespace[1.3ex] 
\rowcolor[gray]{0.95}
\hyperref[anchor:2--3]{$\MM{2}{3}$} & 8 & $x^2y^5z^2+4x^2y^4z^2+6x^2y^3z^2+4x^2y^2z^2+x^2yz^2+xy^3z^2+4xy^3z+2xy^2z^2+12xy^2z+xy^2+xyz^2+12xyz+2xy+4xz+x+2yz+6y+2z+\frac{2}{z}+\frac{6}{y}+\frac{2}{yz}+\frac{1}{xy}+\frac{4}{xyz}+\frac{4}{xy^2z}+\frac{1}{xy^2z^2}+\frac{1}{x^2y^3z^2}$ & Hypersurface in product & n/a\\ \addlinespace[1.3ex] 
\hyperref[anchor:2--4]{$\MM{2}{4}$} & 10 & $xyz^3+3xyz^2+3xyz+xy+xz^2+2xz+x+yz^2+2yz+y+4z+\frac{3}{z}+\frac{2}{y}+\frac{2}{yz}+\frac{2}{x}+\frac{2}{xz}+\frac{4}{xyz}+\frac{3}{xyz^2}+\frac{1}{xy^2z^2}+\frac{1}{x^2yz^2}+\frac{1}{x^2y^2z^3}$ & Quantum Lefschetz & \href{http://www.grdb.co.uk/search/period3?id=161&printlevel=2}{161}\\ \addlinespace[1.3ex] 
\rowcolor[gray]{0.95}
\hyperref[anchor:2--5]{$\MM{2}{5}$} & 12 & $\frac{x^2}{yz}+x+\frac{3x}{z}+\frac{3x}{y}+\frac{x}{yz}+y+\frac{3y}{z}+z+\frac{2}{z}+\frac{3z}{y}+\frac{2}{y}+\frac{y^2}{xz}+\frac{3y}{x}+\frac{y}{xz}+\frac{3z}{x}+\frac{2}{x}+\frac{z^2}{xy}+\frac{z}{xy}$ & Quantum Lefschetz & \href{http://www.grdb.co.uk/search/period3?id=158&printlevel=2}{158}\\ \addlinespace[1.3ex] 
\hyperref[anchor:2--6]{$\MM{2}{6}$} & 12 & $x^2yz^2+2xyz^2+2xyz+2xz+x+yz^2+2yz+y+2z+\frac{2}{z}+\frac{1}{y}+\frac{2}{yz}+\frac{1}{x}+\frac{2}{xz}+\frac{1}{xz^2}+\frac{2}{xyz}+\frac{2}{xyz^2}+\frac{1}{xy^2z^2}$ & Quantum Lefschetz & \href{http://www.grdb.co.uk/search/period3?id=149&printlevel=2}{149}\\ \addlinespace[1.3ex] 
\rowcolor[gray]{0.95}
\hyperref[anchor:2--7]{$\MM{2}{7}$} & 14 & $xy^3z^3+xy^2z^3+3xy^2z^2+xyz^2+3xyz+x+y^2z+yz+y+z+\frac{3}{yz}+\frac{1}{xz}+\frac{2}{xyz}+\frac{3}{xy^2z^2}+\frac{1}{x^2y^3z^3}$ & Quantum Lefschetz & \href{http://www.grdb.co.uk/search/period3?id=148&printlevel=2}{148}\\ \addlinespace[1.3ex] 
\hyperref[anchor:2--8]{$\MM{2}{8}$} & 14 & $\frac{x^2}{y^2z}+x+\frac{x}{y}+\frac{2x}{yz}+\frac{x}{y^2}+yz+y+z+\frac{1}{z}+\frac{3}{y}+\frac{y^2z}{x}+\frac{2yz}{x}+\frac{y}{x}+\frac{3}{x}+\frac{y^2z}{x^2}+\frac{y}{x^2}$ & Quantum Lefschetz with weak Fano ambient & \href{http://www.grdb.co.uk/search/period3?id=144&printlevel=2}{144}\\ \addlinespace[1.3ex] 
\rowcolor[gray]{0.95}
\hyperref[anchor:2--9]{$\MM{2}{9}$} & 16 & $x+\frac{x}{z}+\frac{x}{yz}+\frac{x}{yz^2}+yz^2+2yz+y+2z+\frac{2}{z}+\frac{1}{y}+\frac{1}{yz}+\frac{1}{yz^2}+\frac{yz}{x}+\frac{2}{x}+\frac{1}{xyz}$ & Quantum Lefschetz & \href{http://www.grdb.co.uk/search/period3?id=139&printlevel=2}{139}\\ \addlinespace[1.3ex] 
\hyperref[anchor:2--10]{$\MM{2}{10}$} & 16 & $xy^2+2xy+x+\frac{x}{z}+y^2z+2yz+2y+z+\frac{2}{y}+\frac{2}{yz}+\frac{1}{x}+\frac{2}{xy}+\frac{1}{xy^2}+\frac{1}{xy^2z}$ & Quantum Lefschetz & \href{http://www.grdb.co.uk/search/period3?id=145&printlevel=2}{145}\\ \addlinespace[1.3ex] 
\rowcolor[gray]{0.95}
\hyperref[anchor:2--11]{$\MM{2}{11}$} & 18 & $x+\frac{x}{z}+\frac{x}{y}+yz+y+z+\frac{2}{z}+\frac{2}{y}+\frac{yz}{x}+\frac{y}{x}+\frac{z}{x}+\frac{1}{x}+\frac{1}{xz}+\frac{1}{xy}$ & Quantum Lefschetz & \href{http://www.grdb.co.uk/search/period3?id=120&printlevel=2}{120}\\ \addlinespace[1.3ex] 
\hyperref[anchor:2--12]{$\MM{2}{12}$} & 20 & $\frac{x^2}{yz}+x+\frac{x}{y}+\frac{2x}{yz}+y+z+\frac{1}{y}+\frac{1}{yz}+\frac{2yz}{x}+\frac{y}{x}+\frac{1}{x}+\frac{y^2z}{x^2}$ & Quantum Lefschetz & \href{http://www.grdb.co.uk/search/period3?id=118&printlevel=2}{118}\\ \addlinespace[1.3ex] 
\rowcolor[gray]{0.95}
\hyperref[anchor:2--13]{$\MM{2}{13}$} & 20 & $xy+x+\frac{x}{z}+y+z+\frac{2}{z}+\frac{z}{y}+\frac{2}{y}+\frac{1}{yz}+\frac{z}{x}+\frac{2}{x}+\frac{1}{xz}$ & Quantum Lefschetz & \href{http://www.grdb.co.uk/search/period3?id=119&printlevel=2}{119}\\ \addlinespace[1.3ex] 
\hyperref[anchor:2--14]{$\MM{2}{14}$} & 20 & $xy^2+2xy+x+2y+z+\frac{2}{y}+\frac{1}{x}+\frac{z}{xy}+\frac{1}{xy}+\frac{1}{xyz}+\frac{1}{xy^2}+\frac{1}{xy^2z}$ & Hypersurface in product & \href{http://www.grdb.co.uk/search/period3?id=122&printlevel=2}{122}\\ \addlinespace[1.3ex] 
\rowcolor[gray]{0.95}
\hyperref[anchor:2--15]{$\MM{2}{15}$} & 22 & $x+\frac{x}{z}+\frac{x}{yz}+y+\frac{y}{z}+z+\frac{2}{z}+\frac{2}{y}+\frac{y}{xz}+\frac{2}{x}+\frac{z}{xy}$ & Quantum Lefschetz & \href{http://www.grdb.co.uk/search/period3?id=109&printlevel=2}{109}\\ \addlinespace[1.3ex] 
\hyperref[anchor:2--16]{$\MM{2}{16}$} & 22 & $xy+x+y+z+\frac{1}{z}+\frac{z}{y}+\frac{2}{y}+\frac{1}{yz}+\frac{z}{x}+\frac{2}{x}+\frac{1}{xz}$ & Quantum Lefschetz & \href{http://www.grdb.co.uk/search/period3?id=104&printlevel=2}{104}\\ \addlinespace[1.3ex] 
\rowcolor[gray]{0.95}
\hyperref[anchor:2--17]{$\MM{2}{17}$} & 24 & $\frac{x^2}{yz}+\frac{x^2}{yz^2}+x+\frac{2x}{z}+\frac{x}{yz}+y+z+\frac{2z}{x}+\frac{1}{x}+\frac{z}{x^2}$ & Abelian/non-Abelian correspondence & \href{http://www.grdb.co.uk/search/period3?id=101&printlevel=2}{101}\\ \addlinespace[1.3ex] 
\hyperref[anchor:2--18]{$\MM{2}{18}$} & 24 & $x+\frac{x}{z}+\frac{x}{yz}+yz+y+z+\frac{1}{y}+\frac{y}{x}+\frac{2}{x}+\frac{1}{xy}$ & Quantum Lefschetz & \href{http://www.grdb.co.uk/search/period3?id=74&printlevel=2}{74}\\ \addlinespace[1.3ex] 
\rowcolor[gray]{0.95}
\hyperref[anchor:2--19]{$\MM{2}{19}$} & 26 & $\frac{x^2}{yz}+x+\frac{2x}{yz}+y+z+\frac{1}{yz}+\frac{2yz}{x}+\frac{y}{x}+\frac{y^2z}{x^2}$ & Quantum Lefschetz & \href{http://www.grdb.co.uk/search/period3?id=86&printlevel=2}{86}\\ \addlinespace[1.3ex] 
\hyperref[anchor:2--20]{$\MM{2}{20}$} & 26 & $x+\frac{x}{y}+y+\frac{y}{z}+z+\frac{1}{z}+\frac{1}{y}+\frac{z}{x}+\frac{2}{x}+\frac{1}{xz}$ & Abelian/non-Abelian correspondence & \href{http://www.grdb.co.uk/search/period3?id=87&printlevel=2}{87}\\ \addlinespace[1.3ex] 
\rowcolor[gray]{0.95}
\hyperref[anchor:2--21]{$\MM{2}{21}$} & 28 & $x+\frac{x}{yz}+y^2z+2yz+y+z+\frac{2}{yz}+\frac{1}{xyz}$ & Abelian/non-Abelian correspondence & \href{http://www.grdb.co.uk/search/period3?id=84&printlevel=2}{84}\\ \addlinespace[1.3ex] 
\hyperref[anchor:2--22]{$\MM{2}{22}$} & 30 & $xy+x+\frac{x}{z}+y+z+\frac{1}{z}+\frac{1}{y}+\frac{1}{x}+\frac{z}{xy}$ & Abelian/non-Abelian correspondence & \href{http://www.grdb.co.uk/search/period3?id=69&printlevel=2}{69}\\ \addlinespace[1.3ex] 
\rowcolor[gray]{0.95}
\hyperref[anchor:2--23]{$\MM{2}{23}$} & 30 & $x^2y+2xy+x+y+z+\frac{2}{xy}+\frac{1}{x^2y^2z}$ & Quantum Lefschetz & \href{http://www.grdb.co.uk/search/period3?id=78&printlevel=2}{78}\\ \addlinespace[1.3ex] 
\hyperref[anchor:2--24]{$\MM{2}{24}$} & 30 & $\frac{xy}{z}+x+\frac{x}{z}+y+z+\frac{z}{y}+\frac{1}{y}+\frac{y}{x}+\frac{1}{x}$ & Quantum Lefschetz & \href{http://www.grdb.co.uk/search/period3?id=44&printlevel=2}{44}\\ \addlinespace[1.3ex] 
\rowcolor[gray]{0.95}
\hyperref[anchor:2--25]{$\MM{2}{25}$} & 32 & $x+\frac{x}{z}+y+z+\frac{1}{y}+\frac{1}{yz}+\frac{yz}{x}+\frac{1}{x}$ & Quantum Lefschetz & \href{http://www.grdb.co.uk/search/period3?id=43&printlevel=2}{43}\\ \addlinespace[1.3ex] 
\hyperref[anchor:2--26]{$\MM{2}{26}$} & 34 & $xy+x+y+z+\frac{1}{z}+\frac{1}{y}+\frac{1}{x}+\frac{1}{xyz}$ & Abelian/non-Abelian correspondence & \href{http://www.grdb.co.uk/search/period3?id=58&printlevel=2}{58}\\ \addlinespace[1.3ex] 
\rowcolor[gray]{0.95}
\hyperref[anchor:2--27]{$\MM{2}{27}$} & 38 & $x+\frac{x}{z}+y+z+\frac{1}{yz}+\frac{1}{x}+\frac{1}{xy}$ & Quantum Lefschetz & \href{http://www.grdb.co.uk/search/period3?id=19&printlevel=2}{19}\\ \addlinespace[1.3ex] 
\hyperref[anchor:2--28]{$\MM{2}{28}$} & 40 & $xyz^2+xyz+x+y+z+\frac{1}{yz}+\frac{1}{xz}$ & Quantum Lefschetz & \href{http://www.grdb.co.uk/search/period3?id=5&printlevel=2}{5}\\ \addlinespace[1.3ex] 
\rowcolor[gray]{0.95}
\hyperref[anchor:2--29]{$\MM{2}{29}$} & 40 & $x+\frac{x}{y}+y+z+\frac{2}{x}+\frac{1}{x^2z}$ & Quantum Lefschetz & \href{http://www.grdb.co.uk/search/period3?id=35&printlevel=2}{35}\\ \addlinespace[1.3ex] 
\hyperref[anchor:2--30]{$\MM{2}{30}$} & 46 & $xyz+x+y+z+\frac{1}{xz}+\frac{1}{xy}$ & Quantum Lefschetz & \href{http://www.grdb.co.uk/search/period3?id=4&printlevel=2}{4}\\ \addlinespace[1.3ex] 
\rowcolor[gray]{0.95}
\hyperref[anchor:2--31]{$\MM{2}{31}$} & 46 & $x+\frac{x}{y}+y+z+\frac{1}{yz}+\frac{1}{x}$ & Quantum Lefschetz & \href{http://www.grdb.co.uk/search/period3?id=15&printlevel=2}{15}\\ \addlinespace[1.3ex] 
\hyperref[anchor:2--32]{$\MM{2}{32}$} & 48 & $x+y+z+\frac{1}{y}+\frac{1}{x}+\frac{1}{xyz}$ & Quantum Lefschetz & \href{http://www.grdb.co.uk/search/period3?id=24&printlevel=2}{24}\\ \addlinespace[1.3ex] 
\rowcolor[gray]{0.95}
\hyperref[anchor:2--33]{$\MM{2}{33}$} & 54 & $x+\frac{x}{z}+y+z+\frac{1}{xy}$ & Toric variety & \href{http://www.grdb.co.uk/search/period3?id=2&printlevel=2}{2}\\ \addlinespace[1.3ex] 
\hyperref[anchor:2--34]{$\MM{2}{34}$} & 54 & $x+y+z+\frac{1}{yz}+\frac{1}{x}$ & Toric variety & \href{http://www.grdb.co.uk/search/period3?id=10&printlevel=2}{10}\\ \addlinespace[1.3ex] 
\rowcolor[gray]{0.95}
\hyperref[anchor:2--35]{$\MM{2}{35}$} & 56 & $x+\frac{x}{yz}+y+z+\frac{1}{x}$ & Toric variety & \href{http://www.grdb.co.uk/search/period3?id=7&printlevel=2}{7}\\ \addlinespace[1.3ex] 
\hyperref[anchor:2--36]{$\MM{2}{36}$} & 62 & $\frac{x^2}{yz}+x+y+z+\frac{1}{x}$ & Toric variety & \href{http://www.grdb.co.uk/search/period3?id=6&printlevel=2}{6}\\ \addlinespace[1.3ex] 
\midrule
\rowcolor[gray]{0.95}
\hyperref[anchor:3--1]{$\MM{3}{1}$} & 12 & $xy^2+2xyz+2xy+xz^2+2xz+x+2y+\frac{2y}{z}+2z+\frac{2}{z}+\frac{2z}{y}+\frac{2}{y}+\frac{1}{x}+\frac{2}{xz}+\frac{1}{xz^2}+\frac{2}{xy}+\frac{2}{xyz}+\frac{1}{xy^2}$ & Quantum Lefschetz with weak Fano ambient & \href{http://www.grdb.co.uk/search/period3?id=154&printlevel=2}{154}\\ \addlinespace[1.3ex] 
\hyperref[anchor:3--2]{$\MM{3}{2}$} & 14 & $xyz^2+xyz+3xz+x+\frac{3x}{y}+\frac{x}{y^2z}+3yz+y+z+\frac{1}{y}+\frac{3}{yz}+\frac{3y}{x}+\frac{1}{x}+\frac{3}{xz}+\frac{y}{x^2z}$ & Quantum Lefschetz with mirror map & \href{http://www.grdb.co.uk/search/period3?id=157&printlevel=2}{157}\\ \addlinespace[1.3ex] 
\rowcolor[gray]{0.95}
\hyperref[anchor:3--3]{$\MM{3}{3}$} & 18 & $x+\frac{2x}{y}+\frac{x}{yz}+\frac{x}{y^2}+yz+y+z+\frac{2}{z}+\frac{2}{y}+\frac{yz}{x}+\frac{2y}{x}+\frac{y}{xz}+\frac{1}{x}$ & Quantum Lefschetz & \href{http://www.grdb.co.uk/search/period3?id=135&printlevel=2}{135}\\ \addlinespace[1.3ex] 
\hyperref[anchor:3--4]{$\MM{3}{4}$} & 18 & $xyz+x+yz^2+2yz+y+2z+\frac{2}{z}+\frac{1}{y}+\frac{2}{yz}+\frac{1}{yz^2}+\frac{z}{x}+\frac{2}{x}+\frac{1}{xz}$ & Quantum Lefschetz with weak Fano ambient & \href{http://www.grdb.co.uk/search/period3?id=142&printlevel=2}{142}\\ \addlinespace[1.3ex] 
\rowcolor[gray]{0.95}
\hyperref[anchor:3--5]{$\MM{3}{5}$} & 20 & $xyz+xz^2+2xz+x+y+2z+\frac{2}{z}+\frac{1}{y}+\frac{1}{yz}+\frac{1}{x}+\frac{2}{xz}+\frac{1}{xz^2}$ & Quantum Lefschetz with mirror map & \href{http://www.grdb.co.uk/search/period3?id=138&printlevel=2}{138}\\ \addlinespace[1.3ex] 
\hyperref[anchor:3--6]{$\MM{3}{6}$} & 22 & $\frac{x^2}{yz}+\frac{x^2}{y^2z}+x+\frac{2x}{y}+\frac{x}{yz}+y+z+\frac{1}{y}+\frac{2y}{x}+\frac{2}{x}+\frac{y}{x^2}$ & Quantum Lefschetz & \href{http://www.grdb.co.uk/search/period3?id=117&printlevel=2}{117}\\ \addlinespace[1.3ex] 
\rowcolor[gray]{0.95}
\hyperref[anchor:3--7]{$\MM{3}{7}$} & 24 & $\frac{xy}{z}+x+\frac{x}{z}+\frac{x}{y}+y+z+\frac{2}{y}+\frac{y}{x}+\frac{2}{x}+\frac{1}{xy}$ & Quantum Lefschetz & \href{http://www.grdb.co.uk/search/period3?id=103&printlevel=2}{103}\\ \addlinespace[1.3ex] 
\hyperref[anchor:3--8]{$\MM{3}{8}$} & 24 & $x+\frac{x}{z}+\frac{x}{y}+y+\frac{y}{z}+z+\frac{1}{z}+\frac{2}{y}+\frac{y}{x}+\frac{2}{x}+\frac{1}{xy}$ & Quantum Lefschetz & \href{http://www.grdb.co.uk/search/period3?id=112&printlevel=2}{112}\\ \addlinespace[1.3ex] 
\rowcolor[gray]{0.95}
\hyperref[anchor:3--9]{$\MM{3}{9}$} & 26 & $\frac{x^2}{yz}+x+\frac{2x}{yz}+y+z+\frac{1}{yz}+\frac{y}{x}+\frac{z}{x}+\frac{1}{x}$ & Quantum Lefschetz & \href{http://www.grdb.co.uk/search/period3?id=22&printlevel=2}{22}\\ \addlinespace[1.3ex] 
\hyperref[anchor:3--10]{$\MM{3}{10}$} & 26 & $\frac{xy}{z}+x+\frac{x}{y}+y+z+\frac{2}{y}+\frac{y}{x}+\frac{2}{x}+\frac{1}{xy}$ & Quantum Lefschetz & \href{http://www.grdb.co.uk/search/period3?id=99&printlevel=2}{99}\\ \addlinespace[1.3ex] 
\rowcolor[gray]{0.95}
\hyperref[anchor:3--11]{$\MM{3}{11}$} & 28 & $x+\frac{x}{z}+\frac{x}{yz}+y+z+\frac{1}{y}+\frac{y}{x}+\frac{2}{x}+\frac{1}{xy}$ & Quantum Lefschetz & \href{http://www.grdb.co.uk/search/period3?id=72&printlevel=2}{72}\\ \addlinespace[1.3ex] 
\hyperref[anchor:3--12]{$\MM{3}{12}$} & 28 & $xz+x+y+\frac{y}{z}+z+\frac{1}{z}+\frac{z}{y}+\frac{1}{y}+\frac{y}{xz}+\frac{1}{x}$ & Quantum Lefschetz & \href{http://www.grdb.co.uk/search/period3?id=85&printlevel=2}{85}\\ \addlinespace[1.3ex] 
\rowcolor[gray]{0.95}
\hyperref[anchor:3--13]{$\MM{3}{13}$} & 30 & $xy+x+y+z+\frac{1}{z}+\frac{1}{y}+\frac{1}{yz}+\frac{z}{x}+\frac{1}{x}$ & Quantum Lefschetz & \href{http://www.grdb.co.uk/search/period3?id=70&printlevel=2}{70}\\ \addlinespace[1.3ex] 
\hyperref[anchor:3--14]{$\MM{3}{14}$} & 32 & $\frac{x^2}{yz}+x+\frac{x}{yz}+y+z+\frac{y}{x}+\frac{z}{x}+\frac{1}{x}$ & Quantum Lefschetz with weak Fano ambient & \href{http://www.grdb.co.uk/search/period3?id=21&printlevel=2}{21}\\ \addlinespace[1.3ex] 
\rowcolor[gray]{0.95}
\hyperref[anchor:3--15]{$\MM{3}{15}$} & 32 & $x+\frac{x}{yz}+y+z+\frac{1}{y}+\frac{y}{x}+\frac{2}{x}+\frac{1}{xy}$ & Quantum Lefschetz & \href{http://www.grdb.co.uk/search/period3?id=67&printlevel=2}{67}\\ \addlinespace[1.3ex] 
\hyperref[anchor:3--16]{$\MM{3}{16}$} & 34 & $x+\frac{x}{y}+y+\frac{y}{z}+z+\frac{1}{y}+\frac{y}{xz}+\frac{1}{x}$ & Quantum Lefschetz with weak Fano ambient & \href{http://www.grdb.co.uk/search/period3?id=42&printlevel=2}{42}\\ \addlinespace[1.3ex] 
\rowcolor[gray]{0.95}
\hyperref[anchor:3--17]{$\MM{3}{17}$} & 36 & $x+y+\frac{y}{z}+z+\frac{1}{y}+\frac{y}{xz}+\frac{1}{x}+\frac{1}{xy}$ & Quantum Lefschetz & \href{http://www.grdb.co.uk/search/period3?id=39&printlevel=2}{39}\\ \addlinespace[1.3ex] 
\hyperref[anchor:3--18]{$\MM{3}{18}$} & 36 & $x+\frac{x}{y}+y+z+\frac{z}{x}+\frac{2}{x}+\frac{1}{xz}$ & Quantum Lefschetz & \href{http://www.grdb.co.uk/search/period3?id=41&printlevel=2}{41}\\ \addlinespace[1.3ex] 
\rowcolor[gray]{0.95}
\hyperref[anchor:3--19]{$\MM{3}{19}$} & 38 & $xz+x+y+z+\frac{1}{yz}+\frac{1}{x}+\frac{1}{xyz}$ & Quantum Lefschetz & \href{http://www.grdb.co.uk/search/period3?id=18&printlevel=2}{18}\\ \addlinespace[1.3ex] 
\hyperref[anchor:3--20]{$\MM{3}{20}$} & 38 & $xy+x+y+z+\frac{1}{y}+\frac{1}{x}+\frac{1}{xyz}$ & Quantum Lefschetz & \href{http://www.grdb.co.uk/search/period3?id=38&printlevel=2}{38}\\ \addlinespace[1.3ex] 
\rowcolor[gray]{0.95}
\hyperref[anchor:3--21]{$\MM{3}{21}$} & 38 & $x+yz+y+z+\frac{1}{z}+\frac{1}{y}+\frac{yz}{x}+\frac{1}{x}$ & Quantum Lefschetz & \href{http://www.grdb.co.uk/search/period3?id=49&printlevel=2}{49}\\ \addlinespace[1.3ex] 
\hyperref[anchor:3--22]{$\MM{3}{22}$} & 40 & $xz+x+\frac{x}{yz}+y+z+\frac{1}{yz}+\frac{1}{x}$ & Quantum Lefschetz & \href{http://www.grdb.co.uk/search/period3?id=13&printlevel=2}{13}\\ \addlinespace[1.3ex] 
\rowcolor[gray]{0.95}
\hyperref[anchor:3--23]{$\MM{3}{23}$} & 42 & $xz+x+\frac{x}{y}+y+z+\frac{1}{yz}+\frac{1}{x}$ & Quantum Lefschetz & \href{http://www.grdb.co.uk/search/period3?id=17&printlevel=2}{17}\\ \addlinespace[1.3ex] 
\hyperref[anchor:3--24]{$\MM{3}{24}$} & 42 & $x+y+z+\frac{1}{y}+\frac{y}{x}+\frac{1}{x}+\frac{1}{xyz}$ & Quantum Lefschetz & \href{http://www.grdb.co.uk/search/period3?id=31&printlevel=2}{31}\\ \addlinespace[1.3ex] 
\rowcolor[gray]{0.95}
\hyperref[anchor:3--25]{$\MM{3}{25}$} & 44 & $x+\frac{x}{z}+y+z+\frac{1}{x}+\frac{1}{xy}$ & Toric variety & \href{http://www.grdb.co.uk/search/period3?id=16&printlevel=2}{16}\\ \addlinespace[1.3ex] 
\hyperref[anchor:3--26]{$\MM{3}{26}$} & 46 & $xy+x+y+z+\frac{1}{yz}+\frac{1}{x}$ & Toric variety & \href{http://www.grdb.co.uk/search/period3?id=12&printlevel=2}{12}\\ \addlinespace[1.3ex] 
\rowcolor[gray]{0.95}
\hyperref[anchor:3--27]{$\MM{3}{27}$} & 48 & $x+y+z+\frac{1}{z}+\frac{1}{y}+\frac{1}{x}$ & Toric variety & \href{http://www.grdb.co.uk/search/period3?id=45&printlevel=2}{45}\\ \addlinespace[1.3ex] 
\hyperref[anchor:3--28]{$\MM{3}{28}$} & 48 & $x+\frac{x}{z}+y+z+\frac{1}{y}+\frac{1}{x}$ & Toric variety & \href{http://www.grdb.co.uk/search/period3?id=28&printlevel=2}{28}\\ \addlinespace[1.3ex] 
\rowcolor[gray]{0.95}
\hyperref[anchor:3--29]{$\MM{3}{29}$} & 50 & $xy+x+\frac{x}{yz}+y+z+\frac{1}{x}$ & Toric variety & \href{http://www.grdb.co.uk/search/period3?id=8&printlevel=2}{8}\\ \addlinespace[1.3ex] 
\hyperref[anchor:3--30]{$\MM{3}{30}$} & 50 & $x+\frac{x}{y}+y+\frac{y}{z}+z+\frac{1}{x}$ & Toric variety & \href{http://www.grdb.co.uk/search/period3?id=11&printlevel=2}{11}\\ \addlinespace[1.3ex] 
\rowcolor[gray]{0.95}
\hyperref[anchor:3--31]{$\MM{3}{31}$} & 52 & $x+\frac{x}{z}+\frac{x}{y}+y+z+\frac{1}{x}$ & Toric variety & \href{http://www.grdb.co.uk/search/period3?id=14&printlevel=2}{14}\\ \addlinespace[1.3ex] 
\midrule
\hyperref[anchor:4--1]{$\MM{4}{1}$} & 24 & $x^2z+2xz+x+y+z+\frac{1}{y}+\frac{y}{xz}+\frac{1}{x}+\frac{2}{xz}+\frac{1}{xyz}$ & Quantum Lefschetz & \href{http://www.grdb.co.uk/search/period3?id=111&printlevel=2}{111}\\ \addlinespace[1.3ex] 
\rowcolor[gray]{0.95}
\hyperref[anchor:4--2]{$\MM{4}{2}$} & 26 & $x+\frac{x}{z}+\frac{x}{y}+y+z+\frac{1}{z}+\frac{2}{y}+\frac{y}{x}+\frac{2}{x}+\frac{1}{xy}$ & Quantum Lefschetz with mirror map & \href{http://www.grdb.co.uk/search/period3?id=110&printlevel=2}{110}\\ \addlinespace[1.3ex] 
\hyperref[anchor:4--3]{$\MM{4}{3}$} & 28 & $\frac{x^2}{y^2z}+x+\frac{2x}{y}+y+z+\frac{2y}{x}+\frac{1}{x}+\frac{y}{x^2}$ & Quantum Lefschetz & \href{http://www.grdb.co.uk/search/period3?id=88&printlevel=2}{88}\\ \addlinespace[1.3ex] 
\rowcolor[gray]{0.95}
\hyperref[anchor:4--4]{$\MM{4}{4}$} & 30 & $x+y+z+\frac{1}{z}+\frac{z}{y}+\frac{2}{y}+\frac{1}{yz}+\frac{y}{x}+\frac{1}{x}$ & Quantum Lefschetz & \href{http://www.grdb.co.uk/search/period3?id=83&printlevel=2}{83}\\ \addlinespace[1.3ex] 
\hyperref[anchor:4--5]{$\MM{4}{5}$} & 32 & $x+\frac{x}{z}+y+z+\frac{1}{y}+\frac{y}{x}+\frac{2}{x}+\frac{1}{xy}$ & Quantum Lefschetz & \href{http://www.grdb.co.uk/search/period3?id=68&printlevel=2}{68}\\ \addlinespace[1.3ex] 
\rowcolor[gray]{0.95}
\hyperref[anchor:4--6]{$\MM{4}{6}$} & 32 & $x+y+\frac{y}{z}+z+\frac{1}{z}+\frac{z}{y}+\frac{1}{y}+\frac{y}{x}+\frac{1}{x}$ & Quantum Lefschetz with weak Fano ambient & \href{http://www.grdb.co.uk/search/period3?id=81&printlevel=2}{81}\\ \addlinespace[1.3ex] 
\hyperref[anchor:4--7]{$\MM{4}{7}$} & 34 & $x+\frac{x}{y}+y+z+\frac{1}{y}+\frac{z}{x}+\frac{2}{x}+\frac{1}{xz}$ & Quantum Lefschetz & \href{http://www.grdb.co.uk/search/period3?id=65&printlevel=2}{65}\\ \addlinespace[1.3ex] 
\rowcolor[gray]{0.95}
\hyperref[anchor:4--8]{$\MM{4}{8}$} & 36 & $x+y+z+\frac{1}{z}+\frac{z}{y}+\frac{1}{y}+\frac{1}{x}+\frac{1}{xz}$ & Quantum Lefschetz & \href{http://www.grdb.co.uk/search/period3?id=57&printlevel=2}{57}\\ \addlinespace[1.3ex] 
\hyperref[anchor:4--9]{$\MM{4}{9}$} & 38 & $xy+x+y+z+\frac{1}{y}+\frac{2}{x}+\frac{1}{x^2z}$ & Quantum Lefschetz & \href{http://www.grdb.co.uk/search/period3?id=54&printlevel=2}{54}\\ \addlinespace[1.3ex] 
\rowcolor[gray]{0.95}
\hyperref[anchor:4--10]{$\MM{4}{10}$} & 40 & $xy+x+y+z+\frac{1}{y}+\frac{1}{yz}+\frac{1}{x}$ & Toric variety & \href{http://www.grdb.co.uk/search/period3?id=37&printlevel=2}{37}\\ \addlinespace[1.3ex] 
\hyperref[anchor:4--11]{$\MM{4}{11}$} & 42 & $xy+x+y+z+\frac{1}{z}+\frac{1}{y}+\frac{1}{x}$ & Product & \href{http://www.grdb.co.uk/search/period3?id=48&printlevel=2}{48}\\ \addlinespace[1.3ex] 
\rowcolor[gray]{0.95}
\hyperref[anchor:4--12]{$\MM{4}{12}$} & 44 & $xy+x+\frac{x}{z}+y+z+\frac{1}{y}+\frac{1}{x}$ & Toric variety & \href{http://www.grdb.co.uk/search/period3?id=34&printlevel=2}{34}\\ \addlinespace[1.3ex] 
\hyperref[anchor:4--13]{$\MM{4}{13}$} & 46 & $xy+\frac{xy}{z}+x+y+z+\frac{1}{y}+\frac{1}{x}$ & Toric variety & \href{http://www.grdb.co.uk/search/period3?id=29&printlevel=2}{29}\\ \addlinespace[1.3ex] 
\midrule
\rowcolor[gray]{0.95}
\hyperref[anchor:5--1]{$\MM{5}{1}$} & 28 & $x+\frac{x}{z}+\frac{x}{y}+y+z+\frac{2}{y}+\frac{y}{x}+\frac{2}{x}+\frac{1}{xy}$ & Quantum Lefschetz with weak Fano ambient & \href{http://www.grdb.co.uk/search/period3?id=100&printlevel=2}{100}\\ \addlinespace[1.3ex] 
\hyperref[anchor:5--2]{$\MM{5}{2}$} & 36 & $x+\frac{x}{z}+\frac{x}{y}+y+z+\frac{1}{y}+\frac{y}{x}+\frac{1}{x}$ & Toric variety & \href{http://www.grdb.co.uk/search/period3?id=64&printlevel=2}{64}\\ \addlinespace[1.3ex] 
\rowcolor[gray]{0.95}
\hyperref[anchor:5--3]{$\MM{5}{3}$} & 36 & $x+y+\frac{y}{z}+z+\frac{1}{z}+\frac{z}{y}+\frac{1}{y}+\frac{1}{x}$ & Product & \href{http://www.grdb.co.uk/search/period3?id=76&printlevel=2}{76}\\ \addlinespace[1.3ex] 
\midrule
\hyperref[anchor:6--1]{$\MM{6}{1}$} & 30 & $x+\frac{x}{y}+y+z+\frac{1}{z}+\frac{2}{y}+\frac{y}{x}+\frac{2}{x}+\frac{1}{xy}$ & Product & \href{http://www.grdb.co.uk/search/period3?id=107&printlevel=2}{107}\\ \addlinespace[1.3ex] 
\midrule
\rowcolor[gray]{0.95}
\hyperref[anchor:7--1]{$\MM{7}{1}$} & 24 & $x+yz^2+2yz+y+2z+\frac{2}{z}+\frac{1}{y}+\frac{2}{yz}+\frac{1}{yz^2}+\frac{1}{x}$ & Product & \href{http://www.grdb.co.uk/search/period3?id=136&printlevel=2}{136}\\ \addlinespace[1.3ex] 
\midrule
\hyperref[anchor:8--1]{$\MM{8}{1}$} & 18 & $x+yz^3+3yz^2+3yz+y+3z+\frac{3}{z}+\frac{3}{yz}+\frac{3}{yz^2}+\frac{1}{y^2z^3}+\frac{1}{x}$ & Product & \href{http://www.grdb.co.uk/search/period3?id=155&printlevel=2}{155}\\ \addlinespace[1.3ex] 
\midrule
\rowcolor[gray]{0.95}
\hyperref[anchor:9--1]{$\MM{9}{1}$} & 12 & $xz^4+4xz^3+6xz^2+4xz+x+y+4z^2+12z+\frac{4}{z}+\frac{1}{y}+\frac{6}{x}+\frac{12}{xz}+\frac{6}{xz^2}+\frac{4}{x^2z^2}+\frac{4}{x^2z^3}+\frac{1}{x^3z^4}$ & Product & n/a\\ \addlinespace[1.3ex] 
\midrule
\hyperref[anchor:10--1]{$\MM{10}{1}$} & 6 & $xz^6+6xz^5+15xz^4+20xz^3+15xz^2+6xz+x+y+6z^3+30z^2+60z+\frac{30}{z}+\frac{6}{z^2}+\frac{1}{y}+\frac{15}{x}+\frac{60}{xz}+\frac{90}{xz^2}+\frac{60}{xz^3}+\frac{15}{xz^4}+\frac{20}{x^2z^3}+\frac{60}{x^2z^4}+\frac{60}{x^2z^5}+\frac{20}{x^2z^6}+\frac{15}{x^3z^6}+\frac{30}{x^3z^7}+\frac{15}{x^3z^8}+\frac{6}{x^4z^9}+\frac{6}{x^4z^{10}}+\frac{1}{x^5z^{12}}$ & Product & n/a\\ \addlinespace[1.3ex] 
\end{longtable}
\end{center}

\end{landscape}

\begin{bibdiv}
  \begin{biblist}

    \bib{ACGK}{article}{
      author={Akhtar, Mohammad},
      author={Coates, Tom},
      author={Galkin, Sergey},
      author={Kasprzyk, Alexander},
      title={Minkowski Polynomials and Mutations},
      journal={SIGMA},
      volume={8},
      year={2012},
      number={094},
      pages={707 pages},
      eprint={arXiv:1212.1785},
    }

    \bib{Auroux:complement}{article}{
      author={Auroux, Denis},
      title={Mirror symmetry and $T$-duality in the complement of an
        anticanonical divisor},
      journal={J. G\"okova Geom. Topol. GGT},
      volume={1},
      date={2007},
      pages={51--91},
      issn={1935-2565},
    }

    \bib{Auroux:special}{article}{
      author={Auroux, Denis},
      title={Special Lagrangian fibrations, wall-crossing, and mirror symmetry},
      conference={
        title={Surveys in differential geometry. Vol. XIII. Geometry,
          analysis, and algebraic geometry: forty years of the Journal of
          Differential Geometry},
      },
      book={
        series={Surv. Differ. Geom.},
        volume={13},
        publisher={Int. Press, Somerville, MA},
      },
      date={2009},
      pages={1--47},
    }

    \bib{Batyrev}{article}{
      author={Batyrev, Victor V.},
      title={Toric degenerations of Fano varieties and constructing mirror
        manifolds},
      conference={
        title={The Fano Conference},
      },
      book={
        publisher={Univ. Torino, Turin},
      },
      date={2004},
      pages={109--122},
    }

    \bib{Beauville}{article}{
      author={Beauville, Arnaud},
      title={Quantum cohomology of complete intersections},
      language={English, with English, Russian and Ukrainian summaries},
      journal={Mat. Fiz. Anal. Geom.},
      volume={2},
      date={1995},
      number={3-4},
      pages={384--398},
      issn={1027-1767},
    }

    \bib{Behrend:products}{article}{
      author={Behrend, K.},
      title={The product formula for Gromov-Witten invariants},
      journal={J. Algebraic Geom.},
      volume={8},
      date={1999},
      number={3},
      pages={529--541},
      issn={1056-3911},
    }

    \bib{Behrend--Fantechi}{article}{
      author={Behrend, K.},
      author={Fantechi, B.},
      title={The intrinsic normal cone},
      journal={Invent. Math.},
      volume={128},
      date={1997},
      number={1},
      pages={45--88},
      issn={0020-9910},
    }

    \bib{CFKS}{article}{
      author={Ciocan-Fontanine, Ionu{\c{t}}},
      author={Kim, Bumsig},
      author={Sabbah, Claude},
      title={The abelian/nonabelian correspondence and Frobenius manifolds},
      journal={Invent. Math.},
      volume={171},
      date={2008},
      number={2},
      pages={301--343},
      issn={0020-9910},
    }

    \bib{Ciolli}{article}{
      author={Ciolli, Gianni},
      title={Computing the quantum cohomology of some Fano threefolds and its
        semisimplicity},
      language={English, with English and Italian summaries},
      journal={Boll. Unione Mat. Ital. Sez. B Artic. Ric. Mat. (8)},
      volume={7},
      date={2004},
      number={2},
      pages={511--517},
      issn={0392-4041},
    }

    \bib{CCGGK}{article}{
      author={Coates, Tom},
      author={Corti, Alessio},
      author={Galkin, Sergey},
      author={Golyshev, Vasily},
      author={Kasprzyk, Alexander},
      title={Mirror Symmetry and Fano Manifolds},
      eprint={ arXiv:1212.1722},
      note={To appear in the Proceedings of the 6th European Congress
        of Mathematics},
   }

    \bib{CCIT}{article}{
      author={Coates, Tom},
      author={Corti, Alessio},
      author={Iritani, Hiroshi},
      author={Tseng, Hsian-Hua},
      title={Computing genus-zero twisted Gromov-Witten invariants},
      journal={Duke Math. J.},
      volume={147},
      date={2009},
      number={3},
      pages={377--438},
      issn={0012-7094},
    }

    \bib{CCLT}{article}{
      author={Coates, Tom},
      author={Corti, Alessio},
      author={Lee, Yuan-Pin},
      author={Tseng, Hsian-Hua},
      title={The quantum orbifold cohomology of weighted projective spaces},
      journal={Acta Math.},
      volume={202},
      date={2009},
      number={2},
      pages={139--193},
      issn={0001-5962},
    }

    \bib{CGIJJM}{article}{
      author={Coates, Tom},
      author={Gholampour, Amin},
      author={Iritani, Hiroshi},
      author={Jiang, Yunfeng},
      author={Johnson, Paul},
      author={Manolache, Cristina},
      title={The Quantum Lefschetz Hyperplane Principle can fail for
        positive orbifold hypersurfaces},
      eprint = {arXiv:1202.2754 [math.AG]},
    }

    \bib{CGK}{article}{
      author={Coates, Tom},
      author={Galkin, Sergey},
      author={Kasprzyk, Alexander},
      title={3d Minkowski period sequences},
      note={Available via the Graded Ring Database \url{http://www.grdb.co.uk}}
    }

    \bib{Coates--Givental}{article}{
      author={Coates, Tom},
      author={Givental, Alexander},
      title={Quantum Riemann-Roch, Lefschetz and Serre},
      journal={Ann. of Math. (2)},
      volume={165},
      date={2007},
      number={1},
      pages={15--53},
      issn={0003-486X},
    }

    \bib{Cox--Katz}{book}{
      author={Cox, David A.},
      author={Katz, Sheldon},
      title={Mirror symmetry and algebraic geometry},
      series={Mathematical Surveys and Monographs},
      volume={68},
      publisher={American Mathematical Society},
      place={Providence, RI},
      date={1999},
      pages={xxii+469},
      isbn={0-8218-1059-6},
    }

    \bib{Fantechi}{article}{
      author={Fantechi, Barbara},
      title={Stacks for everybody},
      conference={
        title={European Congress of Mathematics, Vol. I},
        address={Barcelona},
        date={2000},
      },
      book={
        series={Progr. Math.},
        volume={201},
        publisher={Birkh\"auser, Basel},
      },
      date={2001},
      pages={349--359},
    }

    \bib{Galkin:preprint}{article}{
      author={Galkin, Sergey},
      title={Small toric degenerations of Fano threefolds},
      note={Preprint, available at \url{http://www.mi.ras.ru/~galkin}},
      year={2008},
    }

    \bib{Galkin:thesis}{article}{
      author={Galkin, Sergey},
      title={Toric degenerations of Fano manifolds},
      note={Ph.D.~thesis, Steklov Math.~Institute, in Russian,
        available at \url{http:/www.mi.ras.ru/~galkin}},
      year={2008},
    }

    \bib{Gathmann}{article}{
      author={Gathmann, Andreas},
      title={Gromov-Witten invariants of blow-ups},
      journal={J. Algebraic Geom.},
      volume={10},
      date={2001},
      number={3},
      pages={399--432},
      issn={1056-3911},
    }

    \bib{Givental:equivariant}{article}{
      author={Givental, Alexander B.},
      title={Equivariant Gromov-Witten invariants},
      journal={Internat. Math. Res. Notices},
      date={1996},
      number={13},
      pages={613--663},
      issn={1073-7928},
    }

    \bib{Givental:toric}{article}{
      author={Givental, Alexander},
      title={A mirror theorem for toric complete intersections},
      conference={
        title={Topological field theory, primitive forms and related topics
          (Kyoto, 1996)},
      },
      book={
        series={Progr. Math.},
        volume={160},
        publisher={Birkh\"auser Boston},
        place={Boston, MA},
      },
      date={1998},
      pages={141--175},
    }

    \bib{Golyshev}{article}{
      author={Golyshev, Vasily V.},
      title={Classification problems and mirror duality},
      conference={
        title={Surveys in geometry and number theory: reports on contemporary
          Russian mathematics},
      },
      book={
        series={London Math. Soc. Lecture Note Ser.},
        volume={338},
        publisher={Cambridge Univ. Press},
        place={Cambridge},
      },
      date={2007},
      pages={88--121},
    }

    \bib{Gross}{article}{
      author={Gross, Mark},
      title={Toric degenerations and Batyrev-Borisov duality},
      journal={Math. Ann.},
      volume={333},
      date={2005},
      number={3},
      pages={645--688},
      issn={0025-5831},
    }

    \bib{Gross--Siebert:affine}{article}{
      author={Gross, Mark},
      author={Siebert, Bernd},
      title={Affine manifolds, log structures, and mirror symmetry},
      journal={Turkish J. Math.},
      volume={27},
      date={2003},
      number={1},
      pages={33--60},
      issn={1300-0098},
    }

    \bib{Gross--Siebert:1}{article}{
      author={Gross, Mark},
      author={Siebert, Bernd},
      title={Mirror symmetry via logarithmic degeneration data. I},
      journal={J. Differential Geom.},
      volume={72},
      date={2006},
      number={2},
      pages={169--338},
      issn={0022-040X},
    }

    \bib{Gross--Siebert:2}{article}{
      author={Gross, Mark},
      author={Siebert, Bernd},
      title={Mirror symmetry via logarithmic degeneration data, II},
      journal={J. Algebraic Geom.},
      volume={19},
      date={2010},
      number={4},
      pages={679--780},
      issn={1056-3911},
    }

    \bib{Gushel}{article}{
      author={Gushel{\cprime}, N. P.},
      title={Fano varieties of genus $6$},
      language={Russian},
      journal={Izv. Akad. Nauk SSSR Ser. Mat.},
      volume={46},
      date={1982},
      number={6},
      pages={1159--1174, 1343},
      issn={0373-2436},
    }

    \bib{Gushel:83}{article}{
      author={Gushel{\cprime}, N. P.},
      title={Fano varieties of genus $8$},
      language={Russian},
      journal={Uspekhi Mat. Nauk},
      volume={38},
      date={1983},
      number={1(229)},
      pages={163--164},
      issn={0042-1316},
    }

    \bib{Gushel:92}{article}{
      author={Gushel{\cprime}, N. P.},
      title={Fano $3$-folds of genus $8$},
      language={Russian},
      journal={Algebra i Analiz},
      volume={4},
      date={1992},
      number={1},
      pages={120--134},
      issn={0234-0852},
      translation={
        journal={St. Petersburg Math. J.},
        volume={4},
        date={1993},
        number={1},
        pages={115--129},
        issn={1061-0022},
      },
    }

    \bib{Harris--Mumford}{article}{
      author={Harris, Joe},
      author={Mumford, David},
      title={On the Kodaira dimension of the moduli space of curves},
      note={With an appendix by William Fulton},
      journal={Invent. Math.},
      volume={67},
      date={1982},
      number={1},
      pages={23--88},
      issn={0020-9910},
    }

    \bib{Hu:curves}{article}{
      author={Hu, J.},
      title={Gromov-Witten invariants of blow-ups along points and curves},
      journal={Math. Z.},
      volume={233},
      date={2000},
      number={4},
      pages={709--739},
      issn={0025-5874},
    }

    \bib{Hu:surfaces}{article}{
      author={Hu, Jianxun},
      title={Gromov-Witten invariants of blow-ups along surfaces},
      journal={Compositio Math.},
      volume={125},
      date={2001},
      number={3},
      pages={345--352},
      issn={0010-437X},
    }

    \bib{Iritani:integral}{article}{
      author={Iritani, Hiroshi},
      title={An integral structure in quantum cohomology and mirror symmetry
        for toric orbifolds},
      journal={Adv. Math.},
      volume={222},
      date={2009},
      number={3},
      pages={1016--1079},
      issn={0001-8708},
    }

    \bib{Isk:1}{article}{
      author={Iskovskih, V. A.},
      title={Fano threefolds. I},
      language={Russian},
      journal={Izv. Akad. Nauk SSSR Ser. Mat.},
      volume={41},
      date={1977},
      number={3},
      pages={516--562, 717},
      issn={0373-2436},
    }

    \bib{Isk:2}{article}{
      author={Iskovskih, V. A.},
      title={Fano threefolds. II},
      language={Russian},
      journal={Izv. Akad. Nauk SSSR Ser. Mat.},
      volume={42},
      date={1978},
      number={3},
      pages={506--549},
      issn={0373-2436},
    }

    \bib{Isk:3}{article}{
      author={Iskovskih, V. A.},
      title={Anticanonical models of three-dimensional algebraic varieties},
      language={Russian},
      conference={
        title={Current problems in mathematics, Vol. 12 (Russian)},
      },
      book={
        publisher={VINITI, Moscow},
      },
      date={1979},
      pages={59--157, 239 (loose errata)},
    }

    \bib{Katzarkov--Kontsevich--Pantev}{article}{
      author={Katzarkov, L.},
      author={Kontsevich, M.},
      author={Pantev, T.},
      title={Hodge theoretic aspects of mirror symmetry},
      conference={
        title={From Hodge theory to integrability and TQFT tt*-geometry},
      },
      book={
        series={Proc. Sympos. Pure Math.},
        volume={78},
        publisher={Amer. Math. Soc.},
        place={Providence, RI},
      },
      date={2008},
      pages={87--174},
    }

    \bib{Kontsevich:enumeration}{article}{
      author={Kontsevich, Maxim},
      title={Enumeration of rational curves via torus actions},
      conference={
        title={The moduli space of curves},
        address={Texel Island},
        date={1994},
      },
      book={
        series={Progr. Math.},
        volume={129},
        publisher={Birkh\"auser Boston},
        place={Boston, MA},
      },
      date={1995},
      pages={335--368},
    }

    \bib{Kontsevich:ICM}{article}{
      author={Kontsevich, Maxim},
      title={Homological algebra of mirror symmetry},
      conference={
        title={ 2},
        address={Z\"urich},
        date={1994},
      },
      book={
        publisher={Birkh\"auser},
        place={Basel},
      },
      date={1995},
      pages={120--139},
    }

    \bib{Kontsevich--Manin}{article}{
      author={Kontsevich, M.},
      author={Manin, Yu.},
      title={Gromov-Witten classes, quantum cohomology, and enumerative
        geometry},
      journal={Comm. Math. Phys.},
      volume={164},
      date={1994},
      number={3},
      pages={525--562},
      issn={0010-3616},
    }

    \bib{Kontsevich--Soibelman:1}{article}{
      author={Kontsevich, Maxim},
      author={Soibelman, Yan},
      title={Homological mirror symmetry and torus fibrations},
      conference={
        title={Symplectic geometry and mirror symmetry},
        address={Seoul},
        date={2000},
      },
      book={
        publisher={World Sci. Publ., River Edge, NJ},
      },
      date={2001},
      pages={203--263},
    }

    \bib{Kontsevich--Soibelman:2}{article}{
      author={Kontsevich, Maxim},
      author={Soibelman, Yan},
      title={Affine structures and non-Archimedean analytic spaces},
      conference={
        title={The unity of mathematics},
      },
      book={
        series={Progr. Math.},
        volume={244},
        publisher={Birkh\"auser Boston},
        place={Boston, MA},
      },
      date={2006},
      pages={321--385},
    }

    \bib{Lai}{article}{
      author={Lai, Hsin-Hong},
      title={Gromov-Witten invariants of blow-ups along submanifolds with
        convex normal bundles},
      journal={Geom. Topol.},
      volume={13},
      date={2009},
      number={1},
      pages={1--48},
      issn={1465-3060},
    }

    \bib{Lazarsfeld}{book}{
      author={Lazarsfeld, Robert},
      title={Positivity in algebraic geometry. II},
      series={Ergebnisse der Mathematik und ihrer Grenzgebiete. 3. Folge. A
        Series of Modern Surveys in Mathematics [Results in Mathematics and
        Related Areas. 3rd Series. A Series of Modern Surveys in Mathematics]},
      volume={49},
      note={Positivity for vector bundles, and multiplier ideals},
      publisher={Springer-Verlag},
      place={Berlin},
      date={2004},
      pages={xviii+385},
      isbn={3-540-22534-X},
    }

   \bib{Li--Tian}{article}{
      author={Li, Jun},
      author={Tian, Gang},
      title={Virtual moduli cycles and Gromov-Witten invariants of algebraic
        varieties},
      journal={J. Amer. Math. Soc.},
      volume={11},
      date={1998},
      number={1},
      pages={119--174},
      issn={0894-0347},
    }

    \bib{Manolache}{article}{
      author={Manolache, Cristina},
      title={Virtual pull-backs},
      journal={J. Algebraic Geom.},
      volume={21},
      date={2012},
      number={2},
      pages={201--245},
      issn={1056-3911},
    }

    \bib{Matsuki}{article}{
      author={Matsuki, Kenji},
      title={Weyl groups and birational transformations among minimal models},
      journal={Mem. Amer. Math. Soc.},
      volume={116},
      date={1995},
      number={557},
      pages={vi+133},
      issn={0065-9266},
    }

    \bib{MM:Manuscripta}{article}{
      author={Mori, Shigefumi},
      author={Mukai, Shigeru},
      title={Classification of Fano $3$-folds with $B_{2}\geq 2$},
      journal={Manuscripta Math.},
      volume={36},
      date={1981/82},
      number={2},
      pages={147--162},
      issn={0025-2611},
    }

    \bib{MM81}{article}{
      author={Mori, Shigefumi},
      author={Mukai, Shigeru},
      title={On Fano $3$-folds with $B_{2}\geq 2$},
      conference={
        title={Algebraic varieties and analytic varieties},
        address={Tokyo},
        date={1981},
      },
      book={
        series={Adv. Stud. Pure Math.},
        volume={1},
        publisher={North-Holland},
        place={Amsterdam},
      },
      date={1983},
      pages={101--129},
    }

    \bib{MM84}{article}{
      author={Mori, Shigefumi},
      author={Mukai, Shigeru},
      title={Classification of Fano $3$-folds with $B_2\geq 2$. I},
      conference={
        title={Algebraic and topological theories},
        address={Kinosaki},
        date={1984},
      },
      book={
        publisher={Kinokuniya},
        place={Tokyo},
      },
      date={1986},
      pages={496--545},
    }

    \bib{Mori--Mukai:erratum}{article}{
      author={Mori, Shigefumi},
      author={Mukai, Shigeru},
      title={Erratum: ``Classification of Fano 3-folds with $B_2\geq 2$''
        [Manuscripta Math. {\bf 36} (1981/82), no. 2, 147--162]},
      journal={Manuscripta Math.},
      volume={110},
      date={2003},
      number={3},
      pages={407},
      issn={0025-2611},
    }

    \bib{MM:fanoconf}{article}{
      author={Mori, Shigefumi},
      author={Mukai, Shigeru},
      title={Extremal rays and Fano 3-folds},
      conference={
        title={The Fano Conference},
      },
      book={
        publisher={Univ. Torino, Turin},
      },
      date={2004},
      pages={37--50},
    }

    \bib{Mukai:natacad}{article}{
      author={Mukai, Shigeru},
      title={Biregular classification of Fano $3$-folds and Fano manifolds of
        coindex $3$},
      journal={Proc. Nat. Acad. Sci. U.S.A.},
      volume={86},
      date={1989},
      number={9},
      pages={3000--3002},
      issn={0027-8424},
    }

    \bib{Mukai:Trieste}{article}{
      author={Mukai, Shigeru},
      title={Fano $3$-folds},
      conference={
        title={Complex projective geometry},
        address={Trieste, 1989/Bergen},
        date={1989},
      },
      book={
        series={London Math. Soc. Lecture Note Ser.},
        volume={179},
        publisher={Cambridge Univ. Press},
        place={Cambridge},
      },
      date={1992},
      pages={255--263},
    }

    \bib{Mukai:CS}{article}{
      author={Mukai, Shigeru},
      title={Curves and symmetric spaces},
      journal={Proc. Japan Acad. Ser. A Math. Sci.},
      volume={68},
      date={1992},
      number={1},
      pages={7--10},
      issn={0386-2194},
    }

    \bib{Mukai-CG}{article}{
      author={Mukai, Shigeru},
      title={Curves and Grassmannians},
      conference={
        title={Algebraic geometry and related topics},
        address={Inchon},
        date={1992},
      },
      book={
        series={Conf. Proc. Lecture Notes Algebraic Geom., I},
        publisher={Int. Press, Cambridge, MA},
      },
      date={1993},
      pages={19--40},
    }

    \bib{Mukai-CSI}{article}{
      author={Mukai, Shigeru},
      title={Curves and symmetric spaces. I},
      journal={Amer. J. Math.},
      volume={117},
      date={1995},
      number={6},
      pages={1627--1644},
      issn={0002-9327},
    }

    \bib{Mukai:new_developments}{article}{
      author={Mukai, Shigeru},
      title={New developments in the theory of Fano threefolds: vector bundle
        method and moduli problems [translation of S\=ugaku {\bf 47} (1995), no.\
        2, 125--144]},
      note={Sugaku expositions},
      journal={Sugaku Expositions},
      volume={15},
      date={2002},
      number={2},
      pages={125--150},
      issn={0898-9583},
    }

    \bib{Mukai-CSII}{article}{
      author={Mukai, Shigeru},
      title={Curves and symmetric spaces, II},
      journal={Ann. of Math. (2)},
      volume={172},
      date={2010},
      number={3},
      pages={1539--1558},
      issn={0003-486X},
    }

  \bib{Mukai-G2}{article}{
      author={Mukai, Shigeru},
      author={Reid, Miles},
      author={Takagi, Hiromichi},
      title={Classification of indecomposable Gorenstein Fano 3-folds},
      journal={unpublished manuscript},
      date={unknown date},
      pages={44 pages},
    }

    \bib{Newstead}{book}{
      author={Newstead, P. E.},
      title={Introduction to moduli problems and orbit spaces},
      series={Tata Institute of Fundamental Research Lectures on Mathematics
        and Physics},
      volume={51},
      publisher={Tata Institute of Fundamental Research},
      place={Bombay},
      date={1978},
      pages={vi+183},
      isbn={0-387-08851-2},
    }

   \bib{Pandharipande}{article}{
      author={Pandharipande, Rahul},
      title={Rational curves on hypersurfaces (after A. Givental)},
      note={S\'eminaire Bourbaki. Vol.\ 1997/98},
      journal={Ast\'erisque},
      number={252},
      date={1998},
      pages={Exp.\ No.\ 848, 5, 307--340},
      issn={0303-1179},
    }

    \bib{Przyjalkowski:68}{article}{
      author={Przyjalkowski, Victor},
      title={Gromov-Witten invariants of Fano threefolds of genera 6 and 8},
      language={Russian, with Russian summary},
      journal={Mat. Sb.},
      volume={198},
      date={2007},
      number={3},
      pages={145--158},
      issn={0368-8666},
      translation={
        journal={Sb. Math.},
        volume={198},
        date={2007},
        number={3-4},
        pages={433--446},
        issn={1064-5616},
      },
    }

    \bib{Przyjalkowski:LG}{article}{
      author={Przyjalkowski, Victor},
      title={On Landau-Ginzburg models for Fano varieties},
      journal={Commun. Number Theory Phys.},
      volume={1},
      date={2007},
      number={4},
      pages={713--728},
      issn={1931-4523},
    }

    \bib{Przyjalkowski:QC}{article}{
      author={Przyjalkowski, Victor},
      title={Quantum cohomology of smooth complete intersections in weighted
        projective spaces and in singular toric varieties},
      language={Russian, with Russian summary},
      journal={Mat. Sb.},
      volume={198},
      date={2007},
      number={9},
      pages={107--122},
      issn={0368-8666},
      translation={
        journal={Sb. Math.},
        volume={198},
        date={2007},
        number={9-10},
        pages={1325--1340},
        issn={1064-5616},
      },
    }

    \bib{Qureshi}{thesis}{
      author={Qureshi, Muhammad Imran},
      title={Families of polarised varieties in weighted flag
        varieties},
      type={Ph.D.~thesis},
      date={2011},
      note={University of Oxford},
    }

    \bib{Ramanan}{article}{
      author={Ramanan, S.},
      title={The moduli spaces of vector bundles over an algebraic curve},
      journal={Math. Ann.},
      volume={200},
      date={1973},
      pages={69--84},
      issn={0025-5831},
    }

    \bib{Strangeway}{article}{
      title={A Reconstruction Theorem for Quantum Cohomology of Fano
        Bundles on Projective Space},
      author={Strangeway, Andrew}
      eprint={arXiv:1302.5089v2 [math.AG]},
    }

    \bib{SYZ}{article}{
      author={Strominger, Andrew},
      author={Yau, Shing-Tung},
      author={Zaslow, Eric},
      title={Mirror symmetry is $T$-duality},
      journal={Nuclear Phys. B},
      volume={479},
      date={1996},
      number={1-2},
      pages={243--259},
      issn={0550-3213},
    }

    \bib{Tjurin}{article}{
      author={Tjurin, A. N.},
      title={Geometry of moduli of vector bundles},
      language={Russian},
      journal={Uspehi Mat. Nauk},
      volume={29},
      date={1974},
      number={6(180)},
      pages={59--88},
      issn={0042-1316},
    }

    \bib{Vistoli}{article}{
      author={Vistoli, Angelo},
      title={Intersection theory on algebraic stacks and on their moduli
        spaces},
      journal={Invent. Math.},
      volume={97},
      date={1989},
      number={3},
      pages={613--670},
      issn={0020-9910},
      doi={10.1007/BF01388892},
    }
  \end{biblist}
\end{bibdiv}

\end{document}